\documentclass[letterpaper,11pt]{article}
\usepackage{tabularx} % extra features for tabular environment
\usepackage{amsmath}  % improve math presentation
\usepackage[shortlabels]{enumitem}
\usepackage[margin=1in,letterpaper]{geometry} % decreases margins
\usepackage[final]{hyperref} % adds hyper links inside the generated pdf file
\usepackage{dsfont}
\usepackage{enumitem}
\usepackage{bm}

\usepackage{dingbat}
\usepackage{lmodern}
\usepackage{comment}
\usepackage{xfrac}
\usepackage[T1]{fontenc}
\usepackage{amsmath}
\usepackage{subcaption}
\usepackage{graphicx}
\usepackage{chngcntr}
\usepackage{amsthm}
\usepackage[export]{adjustbox}
\usepackage[english]{babel}
\usepackage{amsmath, varwidth}
\usepackage{bbm}
\usepackage{amsfonts}

\usepackage[refpage,noprefix,intoc]{nomencl} %for list of notation.
\usepackage[dvipsnames]{xcolor}
\usepackage[utf8]{inputenc}
\usepackage{csquotes}
\usepackage[normalem]{ulem}
\usepackage{color,xcolor}
\definecolor{cjl}{rgb}{0.5,0.3,0.9}
\definecolor{darkgreen}{rgb}{0,0.5,0}

\usepackage{etoolbox}
\renewcommand\nomgroup[1]{%
  \item[\bfseries
  \ifstrequal{#1}{A}{Construction of the BBM}{%
  \ifstrequal{#1}{B}{Relation with the F-KPP}{%
  \ifstrequal{#1}{C}{Approximation of convex sets}{%
  \ifstrequal{#1}{D}{Prerequisites}{%
  \ifstrequal{#1}{E}{The shape theorem}{%
  \ifstrequal{#1}{F}{Appendix}
  }}}}}%
]}

\newlist{assumptionlistA}{enumerate}{1} % Defines a new list named 'assumptionlist'
\setlist[assumptionlistA, 1]{
    label=\textbf{(A\arabic*)}, % How the assumption looks in the list: (A1)
    ref=(A\arabic*),             % How the reference looks: A1
    font=\normalfont            % Optional: Ensures the text isn't bold or italic
}

\newlist{assumptionlistB}{enumerate}{1} % Defines a new list named 'assumptionlist'
\setlist[assumptionlistB, 1]{
    label=\textbf{(B\arabic*)}, % How the assumption looks in the list: (A1)
    ref=(B\arabic*),             % How the reference looks: A1
    font=\normalfont            % Optional: Ensures the text isn't bold or italic
}

\newlist{assumptionlistC}{enumerate}{1} % Defines a new list named 'assumptionlist'
\setlist[assumptionlistC, 1]{
    label=\textbf{(C\arabic*)}, % How the assumption looks in the list: (A1)
    ref=(C\arabic*),             % How the reference looks: A1
    font=\normalfont            % Optional: Ensures the text isn't bold or italic
}

\renewcommand*{\pagedeclaration}[1]{\unskip\dotfill}	
\makenomenclature										
\newcommand{\vb}{\bar{\mathbf{v}}}

\newcommand{\csf}{\mathfrak{c}^*}
\newcommand{\css}{\Tilde{c}}
\newtheorem{theorem}{Theorem}[section]
\newtheorem{lemma}[theorem]{Lemma}
\newtheorem{corollary}[theorem]{Corollary}

\newtheorem{proposition}[theorem]{Proposition}

\theoremstyle{remark}
\newtheorem{remark}[theorem]{Remark}



\newcommand{\TT}{\mathbb{T}}
\newcommand{\R}{\mathbb{R}}
\newcommand{\N}{\mathbb{N}}

\newcommand{\bP}{\mathbb{P}}

\newcommand{\bQ}{\mathbb{Q}}
\newcommand{\bE}{\mathbb{E}}
\newcommand{\Z}{\mathbb{Z}}

\newcommand{\cM}{\mathcal{M}}
\newcommand{\cN}{\mathcal{N}}
\newcommand{\cH}{\mathcal{H}}
\newcommand{\cW}{\mathcal{W}}
\newcommand{\cR}{\mathcal{R}}
\newcommand{\cQ}{\mathcal{Q}}

\newcommand{\cF}{\mathcal{F}}
\newcommand{\cs}{c^*}

\newcommand{\gs}{\gamma}

\newcommand{\proba}[3]{\mathbf{P}_{#1}^{#2}[#3]}
\newcommand{\probaa}[3]{\mathbf{P}_{#1}^{#2}\bigl[#3\bigl]}
\newcommand{\probaaa}[3]{\mathbf{P}_{#1}^{#2}\Bigl[#3\Bigl]}

\newcommand{\prob}[3]{\mathbb{P}_{#1}^{#2}[#3]}

\newcommand{\probbb}[3]{\mathbb{P}_{#1}^{#2}\Bigl[#3\Bigl]}

\newcommand{\parentesiss}[1]{\bigl(#1\bigl)}

\newcommand{\expec}[3]{\mathbb{E}_{#1}^{#2}[#3]}
\newcommand{\expecdd}[3]{\mathbb{E}_{#1}^{#2}\bigl[#3\bigl]}
\newcommand{\expecddd}[3]{\mathbb{E}_{#1}^{#2}\Bigl[#3\Bigl]}
\newcommand{\expecdddd}[3]{\mathbb{E}_{#1}^{#2}\biggl[#3\biggl]}

\newcommand{\expected}[3]{\mathbf{E}_{#1}^{#2}[#3]}

\newcommand{\expecteddd}[3]{\mathbf{E}_{#1}^{#2}\Bigl[#3\Bigl]}

\newcommand{\indc}{\mathds{1}}

\numberwithin{equation}{section}

\usepackage[numbers, sort&compress]{natbib}

\begin{document}

\title{A shape theorem for periodic branching diffusion}
\author{
Arturo Arellano Arias
\thanks{Department of Mathematics and Statistics, McGill University.
{\footnotesize \href{mailto:arturo.arellanoarias@mail.mcgill.ca}{arturo.arellanoarias@mail.mcgill.ca}.}
}
}
\date{\today}
\maketitle

\begin{abstract}
    We prove a shape theorem for a multidimensional periodic branching diffusion, which is a branching particle system where particle trajectories evolve according to a diffusion, drift, branching rate and offspring distribution which all depend periodically on space. More precisely, we show that almost surely as $t\to\infty$, the particle cloud at time $t$, normalized by $t$, converges in Hausdorff distance to a deterministic convex set $\cW$. We further establish that the boundary of the asymptotic shape $\cW$ is of class $C^2$, strictly convex, and has positive Gaussian curvature bounded away from zero. This work generalizes the approach of \cite{addarioberry2025shapetheorembbmperiodic} for $g$-BBM and furthers our understanding of the shape $\cW$ in that setting.
\end{abstract}

%\tableofcontents
\newpage

\section{Introduction}\label{sec:Intro}

    In the recent paper \cite{addarioberry2025shapetheorembbmperiodic}, Addario-Berry, Lin and the author proved a shape theorem for $g$-BBM, a multidimensional dyadic branching Brownian motion in a periodic environment, where particles branch heterogeneously in space according to a $\Z^d$-periodic branching rate $g:\R^d\to [0,\infty)$. They show that there exists a deterministic convex set $\cW$ such that, after rescaling the particle cloud by $t$, the $g$-BBM at time $t$ converges asymptotically, almost surely in the Hausdorff distance to $\cW$. Here, we generalize their results to a general periodic setting, allowing particle trajectories to undergo periodic diffusion with periodic drift, and the offspring distribution to depend periodically on space. We further prove more detailed properties of the limit shape $\cW$, including its regularity, which also applies to the setting in \cite{addarioberry2025shapetheorembbmperiodic}.

\subsection{Main results}
    Let both $b:\R^d\to \R^d$ and the matrix valued function $\sigma:\R^d \to M^{d\times d}$ be $\Z^d$-periodic, where $M^{d\times d}$ denotes the space of $d\times d$ matrices over $\R$. In \textit{periodic branching diffusion}, an initial particle named $\emptyset$ is located at $x\in \R^d$ and evolves in space as a weak solution of the SDE
    \begin{equation}\label{eq:Intro:Diffusion}
        Z_t = Z_0 + \int_0^t b(Z_s) \ ds + \int_0^t \sigma(Z_s) \ dB_s, \ \ \ Z_0 = x,
    \end{equation}
    where $(B_t)_{t\geq 0}$ is a $d$-dimensional Brownian motion. 

    The initial particle dies at a random time $\tau_\emptyset$, called the \textit{branching time}, according to an instantaneous $\Z^d$-periodic spatial branching rate $g:\R^d \to [0,+\infty)$. At the exact moment the particle dies, it is replaced by a random number of offspring $N_\emptyset$, where the distribution of $N_\emptyset$ depends periodically on the position $X_{\tau_\emptyset}(\emptyset)$ at which the initial particle dies. More rigorously, we consider a family of probability measures $\cM=(\mu^{(x)})_{x\in \R^d}$ with support on $\N\setminus \{0,1\}$ such that, for every $k\in \N$, $\mu^{(x)}(k)=\mu^{(y)}(k)$ whenever $x-y \in \Z^d$, and we say that the family $\cM$ is periodic. Thus, $N_\emptyset$ satisfies $\mathbb{P}[N_\emptyset = k | X_{\tau_\emptyset}(\emptyset)] = \mu^{(X_{\tau_\emptyset}(\emptyset))}(k)$.
    
    Positioned at the location where its parent died, each of the newborn particles repeats the same process independently. Namely, given $X_{\tau_\emptyset}(\emptyset)$, each of the particles evolve as independent weak solutions of \eqref{eq:Intro:Diffusion} with $Z_0 = X_{\tau_\emptyset}(\emptyset)$, and each of them branches independently with branching rate $g$. At their branching time, they independently produce a random number of offspring according to the periodic family of offspring distributions $\cM$. The process continues in this way indefinitely.

    Each particle is named using a word from the dictionary $\mathcal{U} := \bigcup_{n=0}^\infty \N^n$. If a particle is named $v \in \mathcal{U}$, its offspring are named with the concatenated words $v1,...,vN_\emptyset$. Let $\cN_t$ denote the random subset of $\mathcal{U}$ formed by the names of the particles that are alive at time $t$, observe that $\cN_0=\{\emptyset\}$. For $v\in \cN_t$ and $s\in [0,t]$, let $X_s(v)$ denote the position of the unique ancestor of $v$ at time $t$; and set $\mathcal{X}_t=\{X_t(v): v \in \cN_t\}$. For $x\in \R^{d}$, we let \( \mathbf{P}_x \) denote the law of the periodic branching diffusion starting with a single particle at $x$.
     
    Our first main result generalizes \cite[Theorem 1.1]{addarioberry2025shapetheorembbmperiodic} to this setting. It says that, after normalizing by $t$ and centering by the initial position, there exists a convex set $\cW$ such that with probability $1$, $\mathcal{X}_t$ approaches $\cW$ in the Hausdorff distance as $t$ goes to $+\infty$. 

    We impose the following regularity on the diffusion: $\sigma$ is of class $C^{2,\alpha}(\R^d)$, $\sigma^T \sigma$ is uniformly elliptic, and $b$ is of class $C^{1,\alpha}(\R^d)$. We further require that the branching rate $g$ is of class $C^{0,\alpha}(\R^d)$, and that the first moment function of the family of offspring distributions $\cM$ is of class $C^{0,\alpha}(\R^d)$. Additionally, we impose pure branching events, and a stochastic dominance condition on the family of offspring distributions $\cM$, in order to gain uniform control over the expected number of particles. For a detailed mathematical description of the assumptions we refer to assumptions \textbf{\ref{AssumptionA:bSigmaRegularity}}, \textbf{\ref{AssumptionA:SigmaEllipticity}}, \textbf{\ref{AssumptionB:g}}, \textbf{\ref{AssumptionC:measurability}}, \textbf{\ref{AssumptionC:NoKillingPureBranching}}, \textbf{\ref{AssumptionC:HolderContinuity}} and \textbf{\ref{AssumptionC:StochasticDomination}} in Section \ref{sec:Intro:Assumptions}.
    
    \begin{theorem}\label{thm:Intro:ShapeHauss}
        Under the assumptions above, there exists a convex set $\cW= \cW(b, \sigma, g, \cM)\subset \R^d$ with non-empty interior, such that, for every $\epsilon\in (0,1)$ and $x\in \R^d$,
        \begin{equation*}
            \proba{x}{}{\exists T>0 \ \mathrm{s.t.} \ \forall t\geq T, d_{\mathrm{H}}(\cW, t^{-1}(\mathcal{X}_t-x))<\epsilon} = 1.
        \end{equation*}
    \end{theorem}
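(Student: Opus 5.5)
The plan follows the strategy of \cite{addarioberry2025shapetheorembbmperiodic}: identify $\cW$ through the first-moment (many-to-one) structure, and then control the cloud from above and from below.

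\textbf{Step 1: the candidate shape.} For $\lambda\in\R^d$ I would consider the periodic principal eigenvalue problem
\[
L_\lambda\phi := \tfrac12\mathrm{tr}(\sigma\sigma^T D^2\phi) + (b+\sigma\sigma^T\lambda)\cdot\nabla\phi + \bigl(\tfrac12\lambda\cdot\sigma\sigma^T\lambda + b\cdot\lambda + g(m-1)\bigr)\phi = k(\lambda)\phi,
\]
with $\phi_\lambda>0$ and $\Z^d$-periodic. Here $m(x)=\sum_k k\mu^{(x)}(k)$. The regularity assumptions (A1)--(C3) give existence and uniqueness via Krein--Rutman on the torus. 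The map $\lambda\mapsto k(\lambda)$ is convex and analytic, and it is superquadratic by uniform ellipticity. I would define
\[
\cW := \{v\in\R^d : \lambda\cdot v \le k(\lambda)\ \text{for all }\lambda\},
\]
equivalently $\{v: k^*(v)\le 0\}$. This set is convex and compact, and it contains a neighbourhood of the asymptotic drift point because $k(0)>0$. That last fact uses $g(m-1)\ge g>0$ somewhere, which comes from pure branching. Hence $\cW$ has non-empty interior.

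\textbf{Step 2: outer bound.} By the many-to-one lemma, $\mathbf E_x[\#\{v\in\cN_t: X_t(v)\in A\}]$ equals an expectation over a single diffusion weighted by $e^{\int_0^t g(m-1)(Z_s)ds}$. Using the martingale $e^{\lambda\cdot(Z_t-x)-k(\lambda)t}\phi_\lambda(Z_t)/\phi_\lambda(x)$ (the additive martingale $W_t(\lambda)$ for the branching system), I get
\[
\mathbf E_x\bigl[\#\{v: \lambda\cdot(X_t(v)-x)\ge ta\}\bigr]\le Ce^{-(a\lambda\text{-}\mathrm{gap})t}.
\]
Covering the complement of $\cW^{\epsilon}$ by finitely many half-spaces, which compactness allows, gives exponentially small probabilities at integer times. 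Borel--Cantelli then applies. Between integer times I would control excursions with a union bound on the maximal displacement of the diffusions over unit intervals. The stochastic domination assumption (C4) bounds the number of particles in expectation, which makes that union bound work.

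\textbf{Step 3: inner bound (main obstacle).} I need to show that for each $v$ in the interior of $\cW$ there is a.s. eventually a particle within $\epsilon t$ of $tv+x$. Finiteness of a net then gives the full Hausdorff statement. For this I would take $\lambda$ with $\nabla k(\lambda)=v$ and change measure by $W_t(\lambda)$ (spine decomposition). Under the tilted law the spine is a periodic diffusion with drift $b+\sigma\sigma^T(\lambda+\nabla\log\phi_\lambda)$. By homogenization/ergodicity on the torus its velocity is $\nabla k(\lambda)$.

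The resulting first-moment lower bound must be upgraded to a high-probability statement. I would use a second-moment (Paley--Zygmund) bound on the number of particles in a ball around $tv$ at time $t$, restricted to paths staying near the line. This gives a positive probability $p>0$ uniformly in the periodic starting point.

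To boost $p$ to 1, I would run the process for time $\delta t$ first. The population then has exponentially many particles, since supercriticality and survival are guaranteed because $N\ge2$. These particles are located within $o(t)$ of $x$, so the $e^{ct}$ independent trials each succeed with probability $\ge p$. Failure is then doubly exponentially unlikely, and Borel--Cantelli over integer times, plus a continuity argument in between, concludes.

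The delicate points are the uniformity of $p$ over starting positions, which periodicity and Harnack inequalities should provide, and the second-moment estimate under a space-dependent offspring law. The latter needs only bounded second moments on the truncated event, or a truncation of $N$ justified by (C4).
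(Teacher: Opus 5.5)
Your route is viable, but it differs from the paper's in the inner bound. One claim in Step 3 is false as stated, although your own boosting step repairs it.

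\textbf{Comparison with the paper.} Steps 1 and 2 are essentially the paper's (your $k$ is the paper's $\gamma$). Your $\cW=\{k^*\le 0\}$ is the same set as $\vb+\bigcap_{e}\cH^-_{e,\css(e)}=\bigcap_e\cH^-_{e,\cs(e)}$, because $\zeta\cdot v\le k(\zeta)$ for all $\zeta$ is exactly $v\cdot e\le k(\lambda e)/\lambda$ for all $e\in S^{d-1}$, $\lambda>0$. Your outer bound is Proposition~\ref{lemma:UpperBound} plus a finite covering and the interpolation lemma.

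The paper never uses second moments. For each direction $e$ it runs the embedded process $I^{T_0,\epsilon}_n$ of particles gaining at least $(1-\epsilon)\css(e)T_0$ per block. It shows this process survives with probability bounded below uniformly in $x$, via the criterion of Lemma~\ref{lemma:criterion_survival}. That criterion needs only a large $\inf_x\mathbf{E}_x[\# I^{T_0,\epsilon}_1]$ (many-to-one at $\zeta_e$ plus the LDP for the tilted diffusion) and uniform integrability of $\# I^{T_0,\epsilon}_1$, which is exactly what (C4) supplies. The paper then boosts by a cutoff argument, and only afterwards reaches points: half-spaces, then the Wulff approximation (Proposition~\ref{prop:wulffapprox}), then extreme points (Lemma~\ref{lemma:ApproxExtShape}), then Carath\'eodory with the Markov property.

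Your direct tilt at $\lambda=(\nabla k)^{-1}(v)$, with Paley--Zygmund and doubly exponential boosting, skips all that convex geometry. The price is a many-to-two formula, hence an offspring truncation. Under (C1), $g(\mathbb{E}[\min(N,K)]-1)$ is only measurable. Compare it instead with the H\"older minorant $R-\|g\|_\infty\mathbb{E}_{\bar\mu}[(\bar N-K)^+]$, whose principal eigenvalue is $k$ minus a constant tending to $0$.

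\textbf{The false claim: Paley--Zygmund gives a $t$-independent $p$.} It does not for $v$ near $\partial\cW$. In the many-to-two integral, a branching at time $s$ from an ancestor of speed $u$, with both subtrees reaching $tv$, has exponent $-s\,k^*(u)-2(t-s)\,k^*\bigl(\tfrac{tv-su}{t-s}\bigr)$. Its $s$-derivative at $s=0$, maximized over $u$, equals $k(2\lambda)-2k(\lambda)$, attained at $u=\nabla k(2\lambda)$. On $\partial\cW$ one has $k(\lambda)=\lambda\cdot\nabla k(\lambda)$, so strict convexity gives $k(2\lambda)>2k(\lambda)$ there, hence also near $\partial\cW$. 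For standard BBM this is the range $1<|v|<\sqrt2$, where the additive martingale is not $L^2$-bounded.

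A tube of width $\epsilon' t$ admits such branchings up to $s\asymp\epsilon' t$. So, for example with a target ball of radius $\ll\epsilon' t$, $\mathbf{E}N^2\ge e^{c\epsilon' t}(\mathbf{E}N)^2$. The exponential-martingale bound shows this order is sharp, so you only get $p\ge e^{-C\epsilon' t}$. That is enough: with $e^{c_0\delta t}$ conditionally independent trials and $\epsilon'\ll\delta$, the failure probability is at most $\exp(-e^{(c_0\delta-C\epsilon')t})$.

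\textbf{A smaller correction.} The particles at time $\delta t$ lie within $O(\delta t)$ of $x$, not $o(t)$. Each trial therefore aims at a velocity within $O(\delta)$ of $v$. You need the bound on $p$ uniformly over a compact subset of $\mathrm{int}\,\cW$, with $\delta$ small relative to $\mathrm{dist}(v,\partial\cW)$.

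With these changes your argument closes. What the paper's survival-criterion route buys is that it needs only first moments, with no tube or many-to-two estimates.
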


    In contrast with \cite[Theorem 1.1]{addarioberry2025shapetheorembbmperiodic}, here the shape $\cW$ does not necessarily contain the origin in its interior. This has to do with the fact that the periodic diffusion \eqref{eq:Intro:Diffusion} has a non-zero drift, which may induce an effective drift in the periodic branching diffusion. In proving Theorem \ref{thm:Intro:ShapeHauss}, we decompose the shape in the form $\cW = \Tilde{\cW} + \vb$, where $0 \in \mathrm{int}(\Tilde{\cW})$ and $\vb\in \R^d$ acts as this effective drift for the periodic branching diffusion.

    Our second main result concerns regularity properties of the shape $\cW$. For the classical binary branching Brownian motion, i.e., when $g\equiv 1$, $\sigma=\mathrm{Id}$, $b=0$, and $\mu^{(x)}(n) \equiv \indc_{n = 2}$, the shape $\cW$ is the Euclidean ball $B_{\sqrt{2}}$. In this case, the shape is a strictly convex set and its boundary $\partial B_{\sqrt{2}}$ is of class $C^\infty$, meaning that locally, up to a change of coordinates, it is the graph of a $C^\infty$ function defined on an open subset of $\R^{d-1}$. Moreover, the Gaussian curvature $K_{B_{\sqrt{2}}}:\partial B_{\sqrt{2}} \to \R$ of $\partial B_{\sqrt{2}}$ (considered as a manifold) is constant, and satisfies $K_{B_{\sqrt{2}}} \equiv 2^{-(d-1)/2}>0$. Under the assumptions of Theorem \ref{thm:Intro:ShapeHauss} (see Section \ref{sec:Intro:Assumptions}), we obtain the following geometric properties of the limiting shape of the periodic branching diffusion.

    \begin{proposition}\label{prop:Intro:Regularity}
        Under the assumptions of Theorem \ref{thm:Intro:ShapeHauss}, the shape $\cW$ is strictly convex, and its boundary $\partial \cW$ is of class $C^2$. Furthermore, the Gaussian curvature of $\partial \cW$ is positive and bounded away from zero.
    \end{proposition}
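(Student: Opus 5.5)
The plan is to use the standard description of the shape through the principal eigenvalue of a family of periodic eigenvalue problems, and to read off the regularity of $\partial\cW$ from the regularity and strict convexity of that eigenvalue. For $\lambda\in\R^d$, let $k(\lambda)$ be the principal eigenvalue of the operator
\begin{equation*}
L_\lambda \phi = \tfrac12 \mathrm{tr}\bigl(\sigma\sigma^T D^2\phi\bigr) + (b - \sigma\sigma^T\lambda)\cdot\nabla\phi + \Bigl(\tfrac12\lambda\cdot\sigma\sigma^T\lambda - b\cdot\lambda + g(m-1)\Bigr)\phi
\end{equation*}
on the torus $\TT^d$, where $m(x)=\sum_k k\mu^{(x)}(k)$. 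This is the operator obtained by conjugating the linearized (many-to-one) generator with $e^{-\lambda\cdot x}$. The proof of Theorem \ref{thm:Intro:ShapeHauss} identifies the support function of $\cW$ (up to the sign convention in $\lambda$) with the Legendre-type quantity $h_\cW(e)=\inf_{\mu>0} k(\mu e)/\mu$, equivalently
\begin{equation*}
\cW=\{v\in\R^d:\ k(\lambda)-\lambda\cdot v\ge 0\ \forall \lambda\}.
\end{equation*}
So $\cW=\{v: I(v)\le 0\}$, where $I=k^*$ is the convex conjugate of $k$; in the notation of the paper, $I(v)=\sup_\lambda(\lambda\cdot v-k(\lambda))$. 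I will take this characterization as given from the proof of the shape theorem.

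\textbf{Step 1: regularity and strict convexity of $k$.} By Krein--Rutman and analytic perturbation theory (Kato), the simple principal eigenvalue $k(\lambda)$ is real-analytic in $\lambda$, since $L_\lambda$ depends polynomially on $\lambda$. The regularity assumptions on $\sigma,b,g,m$ give $C^{2,\alpha}$ eigenfunctions $\phi_\lambda>0$. Convexity of $k$ follows from the Donsker--Varadhan / log-convexity representation. For uniform strict convexity I will use the following facts:
\begin{itemize}
\item the Hessian $D^2k(\lambda)$ is the asymptotic variance matrix of the additive functional $Z_t$ under the tilted (Doob-transformed) ergodic diffusion;
\item that variance is bounded below by $c_0\,\mathrm{Id}$ with $c_0>0$ depending only on the ellipticity constant, because the martingale part $\int\sigma\,dB$ cannot be cancelled by a bounded corrector, via the standard homogenization variational formula $D^2k = \int (I+\nabla\chi)^T\sigma\sigma^T(I+\nabla\chi)\,d\pi\ge \theta\,\inf$-type estimates;
\item also $k(\lambda)\le C(1+|\lambda|^2)$, and $D^2k\le C\,\mathrm{Id}$ uniformly by Schauder bounds.
\end{itemize}
This yields $c_0\,\mathrm{Id}\le D^2k\le C\,\mathrm{Id}$ on all of $\R^d$.

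\textbf{Step 2: transfer to $I$ and to $\partial\cW$.} Since $k$ is $C^2$ with Hessian bounded between $c_0$ and $C$, the gradient map $\nabla k$ is a $C^1$ diffeomorphism of $\R^d$, and $I=k^*$ is $C^2$ with $D^2I(v)=(D^2k(\lambda))^{-1}$ at $v=\nabla k(\lambda)$. Hence $C^{-1}\mathrm{Id}\le D^2 I\le c_0^{-1}\mathrm{Id}$. Then $\partial\cW=\{I=0\}$, and the following hold:
\begin{itemize}
\item $\nabla I\neq 0$ there, since the minimum of $I$ is $-k(0)<0$. Indeed $k(0)$ is the principal eigenvalue with potential $g(m-1)$, which is positive because $g\not\equiv 0$ and $m\ge2$ by pure branching. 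The minimum is attained only at $\bar v=\nabla k(0)$, which is the effective drift $\vb$.
\item By the implicit function theorem $\partial\cW$ is a $C^2$ hypersurface.
\item It is strictly convex, as a sublevel set of a strictly convex function.
\end{itemize}
The Gaussian curvature at $v$ equals $\det$ of $D^2I$ restricted to $\nabla I^\perp$, divided by $|\nabla I|^{d-1}$. This is at least $C^{-(d-1)}/|\nabla I(v)|^{d-1}$. Since $\cW$ is compact and $\nabla I$ is continuous, $|\nabla I|$ is bounded on $\partial\cW$, so the curvature is bounded away from zero.

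\textbf{Main obstacle.} I expect the hardest part to be the uniform lower bound $D^2k\ge c_0\,\mathrm{Id}$, valid for all $\lambda$ rather than just locally. Locally, positivity comes from non-degeneracy of the tilted diffusion. Only compactness of the relevant $\lambda$-range is actually needed, though: $\partial\cW$ corresponds to $\lambda\in\nabla k^{-1}(\partial\cW)$, which is a compact set. So I will first prove $D^2k(\lambda)>0$ pointwise, by writing $\xi\cdot D^2k\,\xi$ as a variance and showing it vanishes only if $\xi\cdot Z_t$ minus a periodic corrector is constant, which ellipticity forbids. Continuity then gives the uniform bound on the compact set, which suffices for all three conclusions.
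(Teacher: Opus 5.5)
Your proposal is correct. For the regularity and curvature it is essentially the paper's proof; the one real difference is how you identify $\cW$ with a sublevel set.

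\textbf{Where you agree with the paper.} The paper also passes to the convex conjugate $\Phi$ of $\gamma$, which is your $I$. It gets $\Phi\in C^2$ with $\nabla^2\Phi(\nabla\gamma(\zeta))=\nabla^2\gamma(\zeta)^{-1}$ from pointwise positive-definiteness of $\nabla^2\gamma$. That positivity is proved exactly by your corrector formula, with $\lambda+\nabla\eta^\lambda\equiv 0$ excluded by periodicity. The paper then uses $\Phi(\vb)=-\gamma(0)<0$ to get $\nabla\Phi\neq 0$ on $\{\Phi=0\}$. Finally it bounds the Gaussian curvature from below via the level-set curvature formula on the compact set $\partial\cW$.

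\textbf{Where you differ: the identification $\cW=\{I\le 0\}$.} The paper proves this (Proposition~\ref{prop:shape=LargeDev}) probabilistically. It uses the sharp transition-kernel asymptotics of Hebbar, Koralov and Nolen \cite{MR4162842} to show (Lemma~\ref{lemma:LargeDevBalls}) that the expected number of particles in $tB(\zeta,\epsilon)$ tends to $0$ or $+\infty$ according to the sign of $\Phi$. It then plays this against Proposition~\ref{prop:probabilityParticlesBalls} and the upper half-space estimate.

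You read it off the Wulff description instead. That is legitimate, via a one-line duality computation you should write out rather than leave behind ``equivalently''. Since $\cW=\vb+\Tilde{\cW}=\bigcap_{e\in S^{d-1}}\cH^-_{e,\cs(e)}$ with $\cs(e)=\inf_{\mu>0}\gamma(\mu e)/\mu$, a point $y$ lies in $\cW$ iff $\zeta\cdot y\le\gamma(\zeta)$ for every $\zeta\neq 0$. Because $\gamma(0)>0$, this is exactly $\sup_{\zeta}(\zeta\cdot y-\gamma(\zeta))\le 0$.

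Your route therefore needs neither the heat-kernel asymptotics nor the supporting-hyperplane property of Proposition~\ref{prop:ShapeTheorem:ContinuityCSS}. The paper's route additionally interprets $\Phi$ as the exponential growth rate of the first-moment measure, which this proposition does not need.

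\textbf{Points to tidy.}
\begin{enumerate}
\item Conjugating by $e^{-\lambda\cdot x}$ gives $k(\lambda)=\gamma(-\lambda)$. Your sublevel set is then $-\cW$ and $\nabla k(0)=-\vb$. This is harmless for regularity, but conjugate by $e^{\lambda\cdot x}$ to match the paper.
\item The global bounds $c_0\mathrm{Id}\le D^2k\le C\,\mathrm{Id}$ on all of $\R^d$ are not justified by your sketch. The tilted drift grows with $|\lambda|$, and nothing controls the tilted invariant density uniformly. Drop them and keep your fallback: pointwise positivity plus continuity on compact sets.
\item With only that fallback, you still need $\nabla k$ to be onto $\R^d$. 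Otherwise $I$ need not be finite and $C^2$ near $\partial\cW$. For example, $k(\lambda)=1+\sqrt{1+|\lambda|^2}$ has positive-definite Hessian everywhere, yet its conjugate equals $-1$ on the unit sphere and $+\infty$ outside. Surjectivity requires superlinear growth of $k$. That comes from the quadratic \emph{lower} bound $k(\lambda)\ge c|\lambda|^2-C$ with $c>0$ (Feynman--Kac, as in Proposition~\ref{Prop:PropertiesGamma}(i)), not from the upper bound you list.
\item Real-analyticity via Kato is more than you need; $k\in C^2$ suffices.
\end{enumerate}
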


    To prove Theorem \ref{thm:Intro:ShapeHauss}, we adapt the approach in \cite{addarioberry2025shapetheorembbmperiodic} to this general setting. Some tools for our analysis have already been introduced by Hebbar, Koralov and Nolen in \cite{MR4162842} to study the transition kernel of the periodic branching diffusion. We begin by introducing these tools, and then we briefly describe the approach.

    \subsection{The transition kernel}\label{sec:TransitionKernel}

The approach to prove Theorem \ref{thm:Intro:ShapeHauss}   relies on bounds on the expected number of particles in different regions of space. This can be achieved by studying the transition kernel, which we proceed to introduce. The \textit{transition kernel} $p:(0,\infty)\times \R^d \times \R^d \to \R$ of the periodic branching diffusion, satisfies, for every measurable function $f:\R^d\to [0,\infty)$, bounded and with compact support, 
\begin{equation}\label{eq:Intro:AdditiveFunctional:TransitionKernel}
    \int_{\R^d} p(t,x,y)f(y) dy = \mathbf{E}_x\Big[\sum_{v\in \cN_t}f(X_t(v)) \Big].
\end{equation}

While $p$ does not characterize the periodic branching diffusion (see \cite[Remark 2.1]{maillard2025generalisedprincipaleigenvaluesglobal}), under our assumptions, we will see that it does characterize the limiting shape. Let us denoted by $m_1:\R^d \to (0,+\infty)$ the first-moment function associated to the periodic family of offspring distributions $\cM$; that is, for every $x\in \R^d$,
\begin{equation*}
    m_1(x):=\sum_{n=2}^\infty n\mu^{(x)}(n)>1. 
\end{equation*}
Under the hypotheses of Theorem \ref{thm:Intro:ShapeHauss}, we will see that the transition kernel of any periodic branching diffusion, with underlying diffusion \eqref{eq:Intro:Diffusion}, is uniquely determined by its \textit{effective branching rate}, which is the {$\Z^d$-periodic} function $R:\R^d\to \R$ defined by
\begin{equation}\label{eq:Intro:EffectiveBranching}
    R(x):=(m_1(x)-1)g(x), \ \forall x\in \R^d.
\end{equation}
We impose hypotheses so that $R\in C^{0,\alpha}(\R^d)$.

The effective branching rate combines the input of the original branching rate and the family of offspring distributions $(\mu^{(x)})_{x\in \R^d}$. To study how this interacts with the trajectory of the particles, we introduce the following family of eigenvalue problems parametrized by $\zeta \in \R^d$. Let us consider the matrix-valued function $A=\sigma \sigma^T$. We will denote by $a_{ij}$ the entries of $A$, which clearly satisfy $a_{ij}= a_{ji}$, and we will denote the $i$-th row of $A$ by $a_i$. Fixing $\zeta \in \R^d$, and letting $\psi \in C^2(\R^d)$, and $x\in \R^d$, we define
\begin{equation}\label{eq:Intro:GeneratorExponential}
    \mathcal{L}^\zeta \psi(x) = \frac{1}{2} \sum_{i=1}^d \sum_{j=1}^d (a_{ij} \partial_{ij} \psi)(x) + (\zeta^T A \nabla \psi)(x) + (b\cdot \nabla \psi)(x) \ \text{ and } \ K^\zeta(x)   = \frac{1}{2}(\zeta^T A \zeta)(x) + \zeta\cdot b(x) + R(x).
\end{equation}
We seek the largest $\gamma(\zeta)\in \R$ for which we can find a non-trivial $\Z^d$-periodic function $\psi(\cdot;\zeta) \in C^{2}(\R^d)$ satisfying,
\begin{equation}\label{eq:Intro:EigenvalueProblem}
         \mathcal{L}^\zeta \psi(x;\zeta) + K^\zeta(x)\psi(x;\zeta) = \gamma(\zeta) \psi(x;\zeta), \ \ \ \mathrm{ for } \ \mathrm{ every } \ x\in \R^d.
\end{equation}
If such a number $\gamma(\zeta)$ exists, it is known as the \textit{principal eigenvalue} of \eqref{eq:Intro:EigenvalueProblem}, and any function $\psi(\cdot;\zeta)\not \equiv 0$ satisfying \eqref{eq:Intro:EigenvalueProblem} is a \textit{principal eigenfunction} of \eqref{eq:Intro:EigenvalueProblem}. The principal eigenvalue has been studied in many works, including for example \cite[Theorem 4.11.1]{MR1326606}, \cite{berestycki:hal-00003742}, \cite{MR2155900}, \cite{MR1900178}, \cite{MR2555178}, \cite{MR2127993}. We present some properties of the principal eigenvalue in Section \ref{sec:EigenvalueProblem}, as they play a crucial role in the proof of the shape theorem, and in understanding regularity properties of the shape. 

With the principal eigenvalue at hand, we introduce the effective drift of the periodic branching diffusion. For $\gamma$ as in \eqref{eq:Intro:EigenvalueProblem}, let the \textit{effective drift} $\vb\in \R^d$ be defined by
\begin{equation}\label{eq:Intro:EffectiveDrift}
    \vb = \vb(b,\sigma,R):= \nabla \gamma(0).
\end{equation}
The fact that $\gamma:\R^d \to \R$ is differentiable will be shown in Proposition \ref{Prop:PropertiesGamma}. The definition of $\vb$ is borrowed from \cite[Lemma 4.1]{MR4162842}, its name is justified by the following asymptotics for $p$. Given $L>0$, for $x,y \in \R^d$ such that $|x-y|<Lt$, in  \cite{MR4162842}, Hebbar, Koralov and Nolen prove
\begin{equation}\label{eq:Intro:AsympKernel}
    p(t,x,y) = (2\pi t)^{-2d}e^{-t \Phi (\frac{y-x}{t}) } q\Big(\frac{y-x}{t},x,y\Big)(1+o_L(1)),
\end{equation}
where $\Phi:\R^d \to \R$ is a strictly convex function that achieves a unique negative minimum at $\vb$, and $q$ is bounded on compact sets (see Section \ref{sec:PropertiesShape} for a precise definition of $\Phi$). The exponential term in \eqref{eq:Intro:AsympKernel} is maximal at $y(t,x) = t\vb + x$, so up to small periodic fluctuations, the largest expected concentration of particles effectively drifts as $t\vb$. 

Our argument for the shape theorem does not use the asymptotics \eqref{eq:Intro:AsympKernel}, nor the function $\Phi$ above. While we borrow the definition of the effective drift and use some of its properties from \cite{MR4162842}, we also use a different characterization of $\vb$ in terms of an associated periodic diffusion, which will be useful for establishing its ergodic properties (see Corollary \ref{corollary:Ergodic:AsymptoticDriftDiffusion}). The effective drift has also appeared previously in the Aronson-type estimates for the transition kernel in \cite{MR1482931}. 

Given $e\in S^{d-1}$, relative to the effective drift $\vb$, the speed at which the periodic branching diffusion spreads into half-spaces in the direction $e$ (see Proposition \ref{prop:Intro:DirectionalBounds}) is given by
\begin{equation}\label{eq:Intro:SpeedRelativeTo}
    \css(e) := \Bigg(\min_{\lambda>0}\frac{\gamma(\lambda e)}{\lambda}\Bigg) - \vb \cdot e > 0.
\end{equation}
The expression above and its prior description will be justified in Section \ref{sec:EigenvalueProblem} (see Remark \ref{r:derivative_cs}). The limiting shape is obtained as an intersection of half-spaces. For $e\in S^{d-1}$ and $c\in \R$, let us introduce the lower half-space $\cH^-_{e,c}:=\{x\in \R^d:x\cdot e\leq c\}$. With the function $\css:S^{d-1}\to \R$ at hand, the shape $\Tilde{\cW}$ is obtained by the following Wulff shape construction,
    \begin{equation}\label{eq:Intro:WulffShape}
        \Tilde{\cW} =  \Tilde{\cW}(\css) := \bigcap_{e\in S^{d-1}} \cH_{e,\css(e)}^-.
    \end{equation}
The convex set $\cW$ appearing in Theorem \ref{thm:Intro:ShapeHauss} is the set $\cW = \vb + \Tilde{\cW}$.

\subsection{Brief literature review and the approach}\label{sec:EffectiveDrift}

To prove Theorem \ref{thm:Intro:ShapeHauss}, we adapt the approach of \cite{addarioberry2025shapetheorembbmperiodic} to this general periodic setting to include diffusion, drift and general offspring distribution. While we do not use the asymptotics in \eqref{eq:Intro:AsympKernel} to prove Theorem \ref{thm:Intro:ShapeHauss}, we do use said asymptotics to prove further properties of $\cW$ in Proposition \ref{prop:Intro:Regularity}.

We start by describing the approach to prove Theorem \ref{thm:Intro:ShapeHauss}, which we adapt from the argument in \cite[Theorem 1.1]{addarioberry2025shapetheorembbmperiodic}. Theorem \ref{thm:Intro:ShapeHauss} is a fairly straightforward consequence of the following quantitative shape theorem for the convex hull of the periodic branching diffusion (see Section \ref{sec:ProofOfHaussShape}). 
    
\begin{proposition}\label{prop:Intro:QuantShape}
    Let $H_t$ be the convex hull of $\mathcal{X}_t$. Under the assumptions of Theorem \ref{thm:Intro:ShapeHauss} on the $\Z^d$-periodic coefficients $b,\sigma$, the branching rate $g$, and the family of offspring distributions $\cM$, there exists $\Tilde{\cW}= \Tilde{\cW}(b, \sigma, g, \cM)\subset \R^d$ with $0 \in \mathrm{int}(\Tilde{\cW})$, and $\vb = \vb(b, \sigma, g, \cM)\in \R^d$ such that, for every $\epsilon \in (0,1)$, there exists $C=C(\epsilon)>0$, $\Gamma=\Gamma(\epsilon)>0$ and $T_0=T_0(\epsilon, d, g, \sigma, b, \cM)$ such that, for every $t>T_0$,
        \begin{equation}\label{eq:quantShapeThm}
            \inf_{x\in \R^d}\mathbf{P}_x[(1-\epsilon)\Tilde{\cW}\subset t^{-1}(H_t-x-t\vb) \subset(1+\epsilon)\Tilde{\cW}] \geq 1-Ce^{-t\Gamma(\epsilon)}.
        \end{equation}
\end{proposition}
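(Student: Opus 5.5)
The proof splits into an outer inclusion and an inner inclusion, each holding with probability $1-Ce^{-t\Gamma}$ uniformly in $x$. Periodicity lets us take $x\in[0,1)^d$ throughout. Since $H_t$ is convex, the outer inclusion $t^{-1}(H_t-x-t\vb)\subset(1+\epsilon)\Tilde{\cW}$ only needs to be checked on half-spaces. For $\delta=\delta(\epsilon)>0$, choose a finite $\delta$-net $\{e_1,\dots,e_k\}\subset S^{d-1}$. Using continuity of $\css$ on the sphere (from Proposition \ref{Prop:PropertiesGamma}) together with $\min\css>0$, we get
\[
\bigcap_i \cH^-_{e_i,(1+\epsilon/2)\css(e_i)}\subset(1+\epsilon)\Tilde{\cW}.
\]

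For each fixed $e$, we need that with exponentially high probability no particle satisfies $(X_t(v)-x-t\vb)\cdot e>(1+\epsilon/2)t\css(e)$. This follows from a first-moment (Markov) bound on the number of such particles. Take the principal eigenfunction $\psi(\cdot;\lambda e)$. The process $\sum_{v\in\cN_t} e^{\lambda e\cdot X_t(v)-\gamma(\lambda e)t}\psi(X_t(v);\lambda e)$ is a mean-one martingale (after normalisation), by the many-to-one formula with effective rate $R$. Since $\psi$ is bounded above and below, this gives
\[
\mathbf{E}_x\bigl[\#\{v:\ X_t(v)\cdot e\ge x\cdot e+at\}\bigr]\le Ce^{t(\gamma(\lambda e)-\lambda a)}.
\]
Choosing $\lambda$ at the minimiser in \eqref{eq:Intro:SpeedRelativeTo} and $a=\vb\cdot e+(1+\epsilon/2)\css(e)$ makes the exponent $-\lambda\epsilon\css(e)t/2$, which is negative. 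A union bound over the finite net finishes the outer inclusion. This is exactly the upper half of the directional bounds, Proposition \ref{prop:Intro:DirectionalBounds}, which I would invoke directly.

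For the inner inclusion, write $(1-\epsilon)\Tilde{\cW}$ as the convex hull of finitely many points $w_1,\dots,w_m$ lying in $(1-\epsilon/2)\Tilde{\cW}$; polytope approximation of the convex body gives this. It then suffices to show that, for each $j$, with exponentially high probability some particle lies within $\epsilon' t$ of $x+t\vb+tw_j$. The key input is a lower bound: for $w$ in the interior of $\Tilde{\cW}$ (possibly slightly shrunk), the expected number of particles near $x+t(\vb+w)$ is $e^{ct}$ with $c>0$. I would get this by a change of measure. Pick $\zeta$ with $\nabla\gamma(\zeta)=\vb+w$ (strict convexity of $\gamma$ gives such a $\zeta$ with $\gamma(\zeta)-\zeta\cdot(\vb+w)>0$). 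Tilt by $e^{\zeta\cdot X}\psi(\cdot;\zeta)$; under the tilted law, a spine particle is an ergodic periodic diffusion with asymptotic drift $\nabla\gamma(\zeta)$, which is where Corollary \ref{corollary:Ergodic:AsymptoticDriftDiffusion} enters.

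Expectation alone is not enough, and this is the main obstacle: turning the first-moment lower bound into a high-probability statement with exponential error. Here I would follow \cite{addarioberry2025shapetheorembbmperiodic}. First run the process for time $\eta t$. Because $m_1>1$, branching is pure ($N\ge2$), and there is no killing, at least $e^{c\eta t}$ particles exist with probability $1-e^{-\Gamma t}$, by comparison with a Yule process using $g$ bounded below on a periodic set and uniform ellipticity. By the outer bound these particles sit within $O(\eta t)$ of the start. Next, show that a single particle has probability at least $e^{-(\gamma(\zeta)-\zeta\cdot(\vb+w)-\kappa)t}$, with $\kappa$ small, of having a descendant near its target. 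This requires a second-moment or truncated-moment estimate for the number of descendants that follow tilted paths, using the spine decomposition together with the stochastic domination assumption \ref{AssumptionC:StochasticDomination} to control the offspring numbers. Finally, the independence of the $e^{c\eta t}$ subtrees gives a failure probability of at most $\exp(-e^{c'\eta t})$.

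The delicate part is balancing $\eta$, $\kappa$ and $\epsilon'$ so that all of these fit inside $\epsilon$ uniformly in $x$. Uniformity in $x$ comes from periodicity of the coefficients and the Harnack-type bounds on $\psi$.
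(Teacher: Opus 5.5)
Your outer inclusion is fine and follows the paper's route: the upper estimate \eqref{eq:Intro:UpperEstimate} in finitely many directions, combined with Proposition~\ref{prop:wulffapprox}(i). Continuity and positivity of $\css$ come from Lemma~\ref{lemma:DifferentiabilityLambda_e}.

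The gap is in the inner inclusion, exactly at the step you flag as the main obstacle, and as written it does not close. Put $f(w):=\inf_{\zeta\in\R^d}\{\gamma(\zeta)-\zeta\cdot(\vb+w)\}=\gamma(\zeta_w)-\zeta_w\cdot(\vb+w)$, where $\nabla\gamma(\zeta_w)=\vb+w$. This $f$ is concave, $f(0)=\gamma(0)>0$, and $f\geq 0$ on $\Tilde{\cW}$ (this is the Wulff construction \eqref{eq:Intro:WulffShape} unwound). Hence $f\geq \epsilon\gamma(0)/2$ on $(1-\epsilon/2)\Tilde{\cW}$.

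Your per-particle bound $p=e^{-(f(w_j)-\kappa)t}$ is therefore exponentially small, at a rate of order $\epsilon$ that does not improve as $\eta\to 0$. Meanwhile the number $N$ of subtrees at time $\eta t$ is at most of order $e^{\gamma(0)\eta t}$ (many-to-one with $\zeta=0$). So the independence bound $(1-p)^N\leq e^{-pN}$ stays close to $1$ unless $\eta$ is at least of order $\epsilon$. That is incompatible with the $O(\eta t)$ initial spread being negligible at scale $\epsilon' t\ll\epsilon t$.

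What this step really needs is a per-particle success probability bounded below uniformly in $x$ and $t$, or decaying only at a rate much smaller than $\eta$. Extracting such a bound from a first-moment estimate is the whole difficulty. Saying ``a second-moment or truncated-moment estimate via the spine'' only names it; carrying it out would require a many-to-two formula and truncation estimates that you do not supply.

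The paper needs no point localization at this stage; Proposition~\ref{prop:probabilityParticlesBalls} is derived afterwards \emph{from} this proposition. Apply Proposition~\ref{prop:wulffapprox}(ii) to the compact convex set $K=t^{-1}(H_t-x-t\vb)$, which lies in $B(0,2a^{-1})$ on the outer-inclusion event for suitable $a$, and to a slightly shrunk $\css$. Then the inner inclusion only requires that, for each $q$ in a finite set $\cQ\subset S^{d-1}$, some particle satisfies $(X_t(v)-x-t\vb)\cdot q>(1-\epsilon')\css(q)t$. That is precisely the lower half-space estimate \eqref{eq:Intro:LowerEstimate}, the other half of the Proposition~\ref{prop:Intro:DirectionalBounds} you already invoke. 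A union bound over $\cR\cup\cQ$ finishes.

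The obstacle you identified is handled inside \eqref{eq:Intro:LowerEstimate} without any second moment. In the embedded process $I_n^{T_0,\epsilon}$, each generation must advance, relative to $T_0\vb$, by at least $(1-\epsilon)\css(e)T_0$ in direction $e$, so the gains add up as in \eqref{eq:generationEmbed}. Three inputs then give a survival probability bounded below uniformly in $x$ (Lemma~\ref{lemma:SurvivalProbability}):
\begin{itemize}
\item a uniform first-moment lower bound on $\# I_1^{T_0,\epsilon}$ (Lemma~\ref{lemma:lowerestimate});
\item uniform integrability from assumption \textbf{\ref{AssumptionC:StochasticDomination}};
\item the survival criterion, Lemma~\ref{lemma:criterion_survival}.
\end{itemize}
After interpolation (Lemma~\ref{lemma:Half-space:WeakLowerEstimate}), each of the many particles alive at a small linear time succeeds independently with probability at least a fixed constant. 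That is what lets the cutoff argument reach $1-Ce^{-\Gamma t}$.

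The same survival device would also repair your ball-based reduction. Increments confined to $T_0B(\vb+w,\epsilon')$ sum to $kT_0B(\vb+w,\epsilon')$, and your tilt with $\nabla\gamma(\zeta)=\vb+w$ together with Corollary~\ref{corollary:Ergodic:AsymptoticDriftDiffusion} supplies the needed first moment. But half-space crossings in finitely many directions are all the convex hull requires.
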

    
The convex hull $H_t$ of the periodic branching diffusion is completely determined by its extreme points, and thus, the quantitative shape theorem above simplifies the problem to determine the typical position of the extreme particles of the periodic branching diffusion. This can be achieved employing a direction-by-direction approach. We adopt the following notation: for $e\in S^{d-1}$ and $c\in \R$, we let
    \begin{equation}\label{eq:Intro:HyperplaneHalf-space}
        \cH_{e,c}:=\{x\in \R^d: x\cdot e = c\},\
        \cH_{e,c}^+:=\{x\in \R^d: x\cdot e> c\}  \text{ and } \cH_{e,c}^-:=\{x\in \R^d: x\cdot e \leq c\},
    \end{equation}
and we say that $\cH_{e,c}$ is a \textit{hyperplane} with normal $e$, $\cH_{e,c}^+$ is an \textit{upper half-space} in the direction $e$, and $\cH_{e,c}^-$ is a \textit{lower half-space} in the direction $e$. 

In \cite{addarioberry2025shapetheorembbmperiodic}, the authors introduced geometric tools that reduce the proof of the quantitative shape theorem above to obtaining estimates on the probability of particles traveling into moving half-spaces. Relative to the effective drift, we identify the speed at which the periodic branching diffusion spreads into half-spaces in a given direction $e$.

\begin{proposition}\label{prop:Intro:DirectionalBounds}
    Assume the hypotheses of Theorem \ref{thm:Intro:ShapeHauss}. Let $\css:S^{d-1}\to (0,+\infty)$ be defined by \eqref{eq:Intro:SpeedRelativeTo}. There exists a positive constant $C>0$ depending on $b$, $\sigma$, $g$ and $\mathcal{M}$,  such that, for every $\epsilon\in (0,1)$, there exists $\Gamma>0$ with
        \begin{equation}\label{eq:Intro:UpperEstimate}
            \sup_{x\in \R^d}\probaa{x}{}{ (\mathcal{X}_t-x-t\vb)\cap t\cH^+_{e,(1+\epsilon)\css(e)} \neq \emptyset} \leq Ce^{-t\Gamma}, \ \forall\ t\geq 0, \ \mathrm{and } \  e\in S^{d-1},
        \end{equation}
        and, for every $e\in S^{d-1}$, there exists $T=T(e)>0$ with
        \begin{equation}\label{eq:Intro:LowerEstimate}
            \sup_{x\in \R^d}\probaa{x}{}{(\mathcal{X}_t-x-t\vb)\subset t\cH^-_{e, (1-\epsilon)\css(e)} } \leq Ce^{-t\Gamma},\ \text{for all } t>T.
        \end{equation}
    \end{proposition}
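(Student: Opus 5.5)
The plan is to prove the two bounds by separate arguments. The upper bound \eqref{eq:Intro:UpperEstimate} follows from a first-moment estimate with an exponential martingale. The lower bound \eqref{eq:Intro:LowerEstimate} needs a second-moment (or truncated first-moment) argument along a tilted direction, followed by a bootstrapping step that upgrades a positive probability to an exponentially small failure probability.

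\textbf{Upper bound.} Fix $e$ and $\lambda>0$, and let $\psi(\cdot;\lambda e)>0$ be the periodic principal eigenfunction of \eqref{eq:Intro:EigenvalueProblem}, normalised so that $\min\psi=1$. By the Feynman--Kac representation of the first moment (many-to-one with effective branching rate $R$), the process
\[
M_t=\sum_{v\in\cN_t} e^{\lambda e\cdot (X_t(v)-x)-\gamma(\lambda e)t}\psi(X_t(v);\lambda e)
\]
is a mean-one-up-to-constants martingale under $\mathbf{P}_x$. Indeed, $\mathcal{L}^\zeta+K^\zeta$ is precisely the conjugation of the generator plus $R$ by $e^{\zeta\cdot x}$. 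Markov's inequality then gives
\[
\mathbf{P}_x\bigl[\exists v:\ e\cdot(X_t(v)-x)>t(\vb\cdot e+(1+\epsilon)\css(e))\bigr]\le \|\psi\|_\infty\, e^{\gamma(\lambda e)t-\lambda t(\vb\cdot e+(1+\epsilon)\css(e))}.
\]
Choose $\lambda=\lambda^*(e)$ to be the minimiser in \eqref{eq:Intro:SpeedRelativeTo}. Then $\gamma(\lambda^*)/\lambda^*=\vb\cdot e+\css(e)$, and the exponent equals $-\lambda^*\epsilon\css(e)t$. Uniformity in $e$ follows from the continuity of $\gamma$ and $\psi$ in $\zeta$ (Proposition~\ref{Prop:PropertiesGamma}), from $\lambda^*(e)$ and $\css(e)$ being bounded away from $0$ and $\infty$ by compactness of $S^{d-1}$, and from a uniform bound on $\|\psi(\cdot;\zeta)\|_\infty/\min\psi$ obtained via the Harnack inequality.

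\textbf{Lower bound.} First I obtain a positive-probability statement. Work with slightly smaller tilts $\lambda<\lambda^*$, where $\gamma'$ in direction $e$ gives a velocity $w=\nabla\gamma(\lambda e)$ satisfying $w\cdot e>\vb\cdot e+(1-\epsilon)\css(e)$. This uses strict convexity of $\gamma$ and Remark~\ref{r:derivative_cs}.

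Under the spine change of measure induced by $M_t$, the spine is a periodic diffusion with drift $b+A\lambda e+A\nabla\log\psi$. By the ergodic result (Corollary~\ref{corollary:Ergodic:AsymptoticDriftDiffusion} applied to the tilted problem), its asymptotic velocity is $\nabla\gamma(\lambda e)$. A standard truncated second-moment / spine argument, restricting to particles whose paths stay in a tube around the spine and controlling moments via the stochastic domination assumption, then gives
\[
\inf_x\mathbf{P}_x\bigl[\#\{v\in\cN_t: e\cdot(X_t(v)-x-t\vb)>(1-\epsilon)\css(e)t\}\ge1\bigr]\ge c>0
\]
for $t$ large. Periodicity makes this infimum over $x$ a minimum over the unit cell, so $c$ is uniform in $x$.

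Next I boost this to probability $1-Ce^{-\Gamma t}$. Pure branching (no deaths) and a positive branching rate imply that, with failure probability at most $e^{-\Gamma\delta t}$, there are at least $e^{\kappa\delta t}$ particles at time $\delta t$, all within distance $O(\delta t)$ of $x+\delta t\vb$; this uses the upper bound just proven. Each of these runs independently for the remaining time $(1-\delta)t$ and succeeds with probability at least $c$. Hence the failure probability is at most
\[
(1-c)^{e^{\kappa\delta t}}+e^{-\Gamma\delta t},
\]
and choosing $\delta$ small absorbs the $O(\delta t)$ displacement into $\epsilon$.

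\textbf{Main obstacle.} I expect the hard step to be the second-moment/spine estimate in the heterogeneous setting with general offspring laws. It requires uniform-in-$x$ control of the tilted diffusion's large deviations and of the many-to-two formula. Bounding the second moment through the stochastic domination condition together with the Harnack inequality is where most of the technical work lies.
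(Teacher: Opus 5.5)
Your upper bound is the paper's argument: many-to-one with the tilt $\zeta_e=\lambda_e e$, then Markov's inequality. Your bootstrap is essentially the paper's cutoff step. That step likewise combines three ingredients: exponential growth of $\#\cN_s$, taken from Corollary~\ref{corollary:tailBranchingTime}; the upper estimate in direction $-e$; and independence of subtrees. (Assumption \textbf{\ref{AssumptionB:g}} allows $g$ to vanish on open sets, so your ``positive branching rate'' has to be replaced by that ergodic tail bound.)

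The gap is in the positive-probability input. Any second-moment bound for your tube count $N$, whether via Paley--Zygmund or a many-to-two formula, involves the second factorial moments $\sum_n n(n-1)\mu^{(x)}(n)$. A size-biased spine argument involves the same quantity, through the size-biased offspring law on the spine. Assumption \textbf{\ref{AssumptionC:StochasticDomination}} only gives a dominating law $\bar\mu$ with finite \emph{first} moment. That yields uniform integrability of the offspring numbers, not second moments. If, for instance, $\mu^{(x)}\equiv\bar\mu$ with $\bar\mu(n)\propto n^{-3}$ on $n\ge 2$, then $\mathbf{E}_x[N^2]=\infty$ and the method gives nothing. So the step you describe as ``controlling moments via the stochastic domination assumption'' fails under the stated hypotheses.

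To keep your route you need an extra idea: thin the tree by keeping at most $K$ children at each branching event. The retained particles form a sub-population with bounded offspring. Its effective rate satisfies $R_K\ge R-\delta_K g$, where $\delta_K=\sum_{j>K}\bar\mu([j,\infty))\to0$. You would then:
\begin{itemize}
\item run the second-moment argument for that sub-population, working with the H\"older minorant $R-\delta_K g$, since $R_K$ itself need not be H\"older;
\item check that the relevant tilted exponents and velocities converge as $K\to\infty$.
\end{itemize}

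The paper avoids second moments entirely. It lower-bounds $\inf_x\mathbf{E}_x[\#I_1^{T_0,\epsilon}]$ by a first-moment computation: many-to-one plus the uniform large deviation principle for $Y$ (Lemma~\ref{lemma:lowerestimate}). It then applies the survival criterion Lemma~\ref{lemma:criterion_survival} with $q\equiv1$. That criterion needs only $\mathbf{E}_x[\#I_1^{T_0,\epsilon}e^{-\delta\#I_1^{T_0,\epsilon}}]\ge\rho$, a truncated first moment controlled exactly by the uniform integrability that $\bar\mu$ provides. This gives survival of the embedded process with probability $\beta>0$ uniformly in $x$ (Lemma~\ref{lemma:SurvivalProbability}). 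The interpolation lemma then turns this into the weak lower estimate that feeds the cutoff.

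Your bootstrap would in fact tolerate a success probability decaying subexponentially in $t$, so $c$ need not be constant. But some valid lower bound is required, and the plan as written does not supply one under the paper's hypotheses.
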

We refer to \eqref{eq:Intro:UpperEstimate} as the upper half-space estimate, and \eqref{eq:Intro:LowerEstimate} as the lower half-space estimate. These bounds are achieved by using a similar approach to the one employed in Lubetzky, Thornett and Zeitouni \cite{MR4492971}, who obtain a finer convergence result for general one-dimensional periodic branching diffusions. Using our notation, for a constant $\lambda>0$, they determine the exact limit of the distribution of ${\sup\{c\in \R: (\mathcal{X}_t-x)\cap \cH_{1,c}^+\neq\emptyset\} - t\cs(1) + \frac{3}{2\lambda}\log(t)}$ (see \cite[Theorem 1]{MR4492971}), this in particular encodes a log correction similar to \cite{MR0494541, MR3043938, MR3463416, shabani2019logarithmic, MR0670523}. Their approach relies on a renewal argument that does not directly generalize to the multidimensional setting, and in particular does not give the logarithmic correction in our setting. However, their ideas adapt nicely at the level of large deviations, which is sufficient to prove Proposition \ref{prop:Intro:DirectionalBounds}.
    
With Proposition \ref{prop:Intro:DirectionalBounds} at hand, the proof of Proposition \ref{prop:Intro:QuantShape} is a straightforward consequence of an approximation result for Wulff shapes (see \cite[Proposition 1.6]{addarioberry2025shapetheorembbmperiodic}). The proof of Theorem \ref{thm:Intro:ShapeHauss} follows from Proposition \ref{prop:Intro:QuantShape} via a geometric argument. We refer to Section \ref{sec:TheProof}, or \cite[Section 1.3]{addarioberry2025shapetheorembbmperiodic} for a more detailed exposition.

We proceed to explain our approach to prove further properties of the shape $\cW$ in Proposition \ref{prop:Intro:Regularity}, for which we use the asymptotics \eqref{eq:Intro:AsympKernel}. From \eqref{eq:Intro:AsympKernel} and \eqref{eq:Intro:AdditiveFunctional:TransitionKernel}, we deduce that for fixed $x\in \R^d$, $\epsilon\in (0,1)$ sufficiently small, and any compact set $K \subset\{{y\in \R^d:\Phi(y)\leq -\epsilon}\}$, the expected number of particles in the set $tK+x$ grows exponentially fast to $+\infty$ as $t\to \infty$. This suggest that the shape $\cW$, which in principle is obtained via the Wulff construction in \eqref{eq:Intro:WulffShape}, can be characterized in terms of the level sets of the function $\Phi$, similar to \cite[Theorem VII.3.1]{Freidlin+1985} for the spreading of solutions of the F-KPP equations in periodic media. We achieve this in Proposition \ref{prop:shape=LargeDev}, employing the asymptotics \eqref{eq:Intro:AsympKernel}. The regularity of the shape is then derived from known regularity properties of $\Phi$ (see \cite[Remark 4.2]{MR4162842}).

Finally, the speed $\css:S^{d-1}\to (0,+\infty)$ appearing in Proposition \ref{prop:Intro:QuantShape} inherits some regularity properties from the regularity of $\Tilde{\cW}$ (see Proposition \ref{prop:PropertiesShape:regularitySupport}). This is achieved by observing that $\css:S^{d-1}\to (0,+\infty)$ can be recovered as the \textit{support function} of the convex body $\Tilde{\cW}$, that is, for every $e\in S^{d-1}$, $\css(e) := \inf\{c\in \R:  \Tilde{\cW}\subset \cH^-_{e,c} \}$.

For a detailed exposition of a closely detailed approach, and more detailed literature review, we refer the reader to \cite[Section 1.3, Section 1.4]{addarioberry2025shapetheorembbmperiodic}.

\subsection{Brief discussion of the F-KPP equation}
The shape theorem (Theorem \ref{thm:Intro:ShapeHauss}) is fundamentally linked to the spreading properties of solutions to the Fisher-Kolmogorov-Petrovsky-Piscounov (F-KPP) equation, which we now briefly discuss. It is known (see \cite[Theorem A.4]{maillard2025generalisedprincipaleigenvaluesglobal}, \cite[Chapter IV]{MR246376}) that for any continuous function $q_0:\R^d \to [0,1]$, such that $1-q_0$ has compact support, the multiplicative functional $q$ defined by 
\begin{equation}\label{eq:FKPP:multiplicative}
    q(t,x):=\mathbf{E}_x\Big[\prod_{v\in \cN_t}q_0(X_{t}(v))\Big],
\end{equation}
is a classical solution of the non-linear partial differential equation
\begin{equation}\label{eq:FKPP_Proba}
    \begin{cases}
        \partial_t q(t,x) = \mathcal{L} q + f(x,q) & \mathrm{for} \ (t,x)\in (0,\infty)\times \R^d,\\
        q(0,x) = q_0(x) & \mathrm{for} \ x\in \R^d,
    \end{cases}
\end{equation}
where $\mathcal{L}$ is the generator of the process defined by \eqref{eq:Intro:Diffusion} (see \eqref{eq:Prere:Generator}), and the non-linearity $f:\R^d \times [0,1] \to \R$ is given by
\begin{equation}
    f(x, q) = g(x)\sum_{n\geq 2} \mu^{(x)}(n) (q^n-q).
\end{equation}
The traditional F-KPP literature often focuses on the function $q^-(t,x) = 1-q(t,x)$ satisfying
\begin{equation}\label{eq:FKPP_PDE}
    \begin{cases}
        \partial_t q^-(t,x) = \mathcal{L} q^- + f^{-}(x,q^-) & \mathrm{for} \ (t,x)\in (0,\infty)\times \R^d,\\
        q^-(0,x) = q^-_0(x) & \mathrm{for} \ x\in \R^d,
    \end{cases}
\end{equation}
where the nonlinearity $f^{-}(x,q^-) := -f(x,1-q^-)$ satisfies the constraint $f^{-}(x,q^-) \geq 0$. 

The level set $\cW^{-}_{t,\epsilon}:=\{x\in \R^d:q^{-}(t,x)\geq 1-\epsilon\}$, where the function $q^{-}(t, \cdot)\approx 1$, satisfies a classical spreading result, analogous in nature to Theorem \ref{thm:Intro:ShapeHauss} (see \cite[Theorem 2]{MR553200}, \cite[Theorem 7.3.1]{Freidlin+1985}). This result for $\cW^-_{t,\epsilon}$ has been revisited many times in the PDE literature, employing different approaches including, a perturbation method of viscosity solutions by Evans and Souganidis \cite{MR982575}, a discrete dynamical system method by Weinberger \cite{MR1943224}, and a PDE approach for space-time periodic coefficients by Berestycki, Hamel and Nadin in \cite{MR2473253} and for general nonlinearities by Rossi in \cite{MR3682669}. 

A finer result for the spreading of solutions has been achieved both in homogeneous media, employing probabilistic techniques in \cite{MR0670523}, and in periodic media using purely PDE techniques in \cite{shabani2019logarithmic}. In the homogeneous setting, for $b\equiv 0$ and $\sigma\equiv \mathrm{Id}$, G\"artner shows in \cite{MR0670523} that for solutions with compactly supported and radially symmetric initial data, $\cW^{-}_{t,\epsilon}$ is within constant Hausdorff distance from a ball with radius $\sqrt{2}t-\tfrac{d-2}{2\sqrt{2}}\log t $. In \cite{shabani2019logarithmic}, Shabani proves an analogous result in periodic media for $b\equiv 0$ and $\sigma\equiv \mathrm{Id}$, and solutions with compactly supported initial conditions. In particular, Shabani determines continuous functions $w,\ell:S^{d-1}\to (0,\infty)$ such that, for every $t$ sufficiently large, $\cW^{-}_t$ is within a constant Hausdorff distance from the set $$\{se:e\in S^{d-1}, 0\leq s\leq w(e)-\tfrac{d-2}{\ell(e)}\log t\}.$$

For a more detailed discussion of this connection, we refer to \cite[Section 1.2]{addarioberry2025shapetheorembbmperiodic}.

\subsection{Challenges, improvements and further work}

Let us introduce the challenges and improvements in this paper. The new difficulties arise from generalizing key tools used in the proof of Proposition \ref{prop:Intro:DirectionalBounds}, and are summarized below.

        \textbf{Ergodic properties of diffusions on the torus:} 
        The approach in \cite{addarioberry2025shapetheorembbmperiodic} relies on ergodic properties of Brownian motion on the torus. Ergodic properties of general diffusions on the torus have been studied in \cite[Chapter 3]{MR503330}, \cite[Section 4.11]{MR1326606}, and \cite[Chapter 6.4]{MR2382139}, among other works. Building on these results, we develop the tools needed for our analysis, including well-posedness of the Poisson equation with periodic boundary conditions, quantitative ergodic theorems on the torus, and large deviation estimates for the displacement of general periodic diffusions from their asymptotic drift.

        \textbf{Generalization of the many-to-one lemma:} We obtain a many-to-one lemma in this setting (Proposition \ref{lemma:ManyToOneLemma}) by applying a general many-to-one lemma from \cite{MR3606740}. This requires that we introduce a martingale-functional of the trajectories of the diffusion \eqref{eq:Intro:Diffusion}, and characterize a diffusion under a Girsanov change of measure (see \cite[Proposition VIII.3.4]{MR1725357}).  

        \textbf{Interpolation lemma:} In \cite{addarioberry2025shapetheorembbmperiodic}, the heat kernel of the Brownian motion is used to control the probability of the event that a particle behaves ballistically on time intervals of order $O(1)$. This is called the interpolation lemma, as it allows us to strengthen some of the results for the branching process at discrete times, to continuous time.  Unlike in \cite{addarioberry2025shapetheorembbmperiodic}, we avoid using heat kernel estimates by applying ergodic properties of an associated periodic diffusion.

We proceed to mention improvements over existing literature. To our knowledge, one of the only references treating the regularity of the shape $\cW$ is that of Shabani \cite{shabani2019logarithmic}, for the case $b\equiv 0$, $\sigma\equiv \mathrm{Id}$, and assuming that the $\Z^d$-periodic branching rate $g$ is of class $C^\infty$ and bounded away from zero. In \cite[Section 2.2]{shabani2019logarithmic}, it is shown that $\cW$ is strictly convex. Additionally, it can be deduced from those results that $\partial \cW$ is of class $C^1$. Thus, Proposition \ref{prop:Intro:Regularity} is a direct improvement over the regularity obtained in \cite{shabani2019logarithmic}, as we allow for more general hypotheses on $\sigma$, $b$ and $g$, and obtain further regularity properties, including the fact that $\partial \cW$ is of class $C^2$, and that the Gaussian curvature is bounded away from zero.  Additionally, we present an independent complete proof of the strict convexity of the shape $\cW$.

The argument in \cite{shabani2019logarithmic} to prove that $\partial \cW$ is of class $C^1$ relies on the fact that the principal eigenvalue $\gamma$ is analytic. This fact has been obtained in \cite[Section 2.1]{MR1484944} by spectral methods, when the operator $\mathcal{L}$ associated to the SDE \eqref{eq:Intro:Diffusion} is self-adjoint, and under stronger regularity assumptions. For non self-adjoint operators $\mathcal{L}$, the fact that $\gamma$ is analytic has also been used in different references in the literature, but we were unable to find a complete proof of this fact. Under less restrictive assumptions on $\sigma$ and $b$, this has appeared in \cite[Section 4]{MR2746770} and \cite[Proof of Lemma 2.1]{MR2401143}, with a proof outline. We provide a detailed proof of this fact in Proposition \ref{prop:Eigenvalue:Analytic}. In addition to classical perturbation theory results \cite[Section VII.2]{MR1335452}, our argument relies on a device by Siciak \cite[Theorem 1]{MR279263}. The extra regularity we impose on $\sigma$ and $b$ is not necessary to prove that $\gamma$ is analytic. Nevertheless, the extra regularity we impose on $\sigma$ and $b$ is needed to applying \cite[Theorem 8.2.10, Theorem 4.11.1]{MR1326606}, which establish fundamental properties of the principal eigenvalue which are key to our analysis.

The regularity of the speed $\css:S^{d-1}\to (0,
\infty)$, introduced in \eqref{eq:Intro:SpeedRelativeTo}, appears to be of interest in the PDE literature. It is known that $\css$ is continuous, see for example \cite[Proposition 3.2(ii)]{MR4575461} for an explicit statement, and it is also a consequence of the PDE results in \cite{MR1900178, MR2607315}. We prove that $\css$ is in fact of class $C^1$ in Lemma \ref{lemma:DifferentiabilityLambda_e}. We also prove the continuity of $\css$ via a probabilistic argument in Proposition \ref{prop:ShapeTheorem:ContinuityCSS}, employing the fact that $\css$ is the spreading speed of the periodic branching diffusion. Under our hypotheses, we obtain further regularity for $\css$, showing that it is in fact of class $C^2$ in Proposition \ref{prop:PropertiesShape:regularitySupport}.

We finish this section by mentioning potential future research directions. A natural direction is to obtain a logarithmic correction for the particles that travel the farthest in a fixed direction. Two key ingredients for this would be a barrier estimate similar to \cite[Lemma 2.9]{MR4492971}, which can be likely achieved employing the ideas introduced in Section \ref{sec:Ergodic}, and a second-moment method employing a many-to-two lemma (see \cite[Section 4.2]{MR3606740}).

Going further, one could hope to obtain a multidimensional logarithmic correction that considers all directions simultaneously, in the spirit of \cite{MR3417448} or \cite{MR4564433} for the maximal displacement of $d$-dimensional branching Brownian motion (BBM). Analogous results for the F-KPP equation in homogeneous and periodic media have been obtained in \cite{MR0670523} and \cite{shabani2019logarithmic} respectively. The key insight in \cite{MR3417448} is that, employing an approximation result of the Euclidean ball (see \cite[Remark 1.2, Lemma 2.3]{MR3417448}), the maximal displacement of the BBM arises by examining the spread of the branching process in approximately $t^{\frac{d-1}{2}}$ independent directions. In our setting, the main difference is the possibility of an anisotropic shape $\cW$, for which analogous approximation results exist under the regularity achieved in Proposition \ref{prop:Intro:Regularity} (see \cite[Lemma 2.3]{MR3417448} and \cite[Section 4]{MR1242984}). The analysis would therefore need to be performed over $t^{\frac{d-1}{2}}$ independent directions, each with their own spreading rate. Beyond the barrier estimates and approximation results mentioned above, additional work is needed, including controlling the correlations of the number of extreme particles at different angles as in \cite[Lemma 4.4]{MR3417448}.

Finally, we believe that the approach in this work can be further adapted to prove a shape theorem for periodic branching diffusion with extinction events. Common approaches for introducing extinction are: by \textit{soft killing}, allowing $\mu^{(x)}(0)>0$; or by \textit{hard killing}, where particles are killed upon hitting a periodic hard obstacle. The criterion for survival is given by the principal eigenvalue $\gamma(0)$ in the spirit of \cite[Theorem 2.1]{berestycki:hal-00003742}, \cite[Theorem 3.1]{MR1698955} or \cite[Theorem 3]{MR2040776}. We conjecture that the branching process, conditioned on the event of survival, satisfies a shape theorem. Nevertheless, conditioning on survival introduces technical challenges. For instance, the trajectory of particles is expected to become biased towards regions of space that favor survival, or to avoid hard obstacles. This bias potentially alters the shape given by the transition kernel, as additive functionals may overestimate the probability that particles access hard-to-reach regions of space. A backbone decomposition of the branching process conditioned on survival in the spirit of \cite{MR3456337} would be needed, and our approach could be adapted to the backbone of said decomposition. Such an approach involves considerable technical difficulties, and will not be pursued here.

\subsection{Notation and assumptions}\label{sec:Intro:Assumptions}

We will use the convention that $[n]=\{1,...,n\}$. We let $M^{d\times d}$ denote the set of $d\times d$ matrices. For a vector $v= (v_1,...,v_d)\in \R^d$, let $|v|$ denote its Euclidean norm, and let $\|v\|_\infty=\max_{i\in [d]}|v_i|$. Also, for $\mathcal{Y}\subset \R^d$ and $f:\mathcal{Y}\to \R$ we let $\|f\|^{\mathcal{Y}}_\infty:= \sup_{x\in \mathcal{Y}}|f(x)|$. When $\mathcal{Y}=\R^d$ we simply write $\|f\|_\infty$. 

We let $L^\infty(\mathcal{Y})$ be the set formed by the functions $f:\mathcal{Y}\to \R$ with $\sup_{x\in \mathcal{Y}}|f(x)|<\infty$, $C^0(\mathcal{Y})$ the set of functions $f:\mathcal{Y}\to \R$ that are continuous, $C_b^0(\mathcal{Y})=C^0(\mathcal{Y}) \cap L^\infty(\mathcal{Y})$, and $C_p^0(\mathcal{Y})$ the subset of $C^0(\mathcal{Y})$ formed by $\Z^d$-periodic functions. 

For $k\in \N$, we let $C^k(\R^d)$ denote the set of $k$-times continuously differentiable real-valued functions, and $C_p^k(\R^d)$ the subset of $C^k(\R^d)$ formed by the $\Z^d$-periodic functions. Additionally, for $\alpha\in (0,1)$, we let $C^{0,\alpha}_p(\R^d)$ denote the set of $\Z^d$-periodic functions $f:\R^d\to \R$ such that $\sup_{x\neq y}|f(x)-f(y)|/|x-y|^\alpha<\infty$, and for $f\in C^{0,\alpha}_p(\R^d)$ we denote its $\alpha$-H\"older norm by ${\|f\|_{0,\alpha}= \|f\|_\infty + \sup_{x\neq y}|f(x)-f(y)|/|x-y|^\alpha}$. Finally, for $k\in \N$, we let $C^{k,\alpha}_p(\R^d)$ be the set formed by those $f\in C^{k}_p(\R^d)$ such that $\tfrac{\partial^k}{\partial_{x_{i_1}x_{i_2}...x_{i_k}}}f\in C^{0,\alpha}_p(\R^d)$ for every $i_1,i_2,...,i_k\in [d]$, with the norm
\begin{equation*}
        \|f\|_{k,\alpha} : = \sum_{0\leq |\beta| \leq k} \Big \| \frac{\partial^\beta f}{\partial x^\beta} \Big \|_\infty + \sup_{|\beta |=k} \sup_{x\neq y}\Big | \frac{\partial^\beta f}{\partial x^\beta}(x)- \frac{\partial^\beta f}{\partial x^\beta}(y)\Big ||x-y|^{-\alpha}.
    \end{equation*}

Let us first describe the assumptions made for the $\Z^d$-periodic drift $b:\R^d\to \R^d$ and the $\Z^d$-periodic matrix-valued function $\sigma:\R^d \to M^{d\times d}$. We assume:

\begin{assumptionlistA}
    \item \label{AssumptionA:bSigmaRegularity} For some $\alpha\in (0,1)$, each of the entries $b_i$ of $b$ belongs to $C_p^{1,\alpha}(\R^d)$, and each of the entries $\sigma_{ij}$ of $\sigma$ belongs to $C_p^{2,\alpha}(\R^d)$.
    \item \label{AssumptionA:SigmaEllipticity} There exists $\theta\in (0,1)$ such that $\theta \mathrm{Id} \leq \sigma \sigma^T \leq \theta^{-1} \mathrm{Id}$.
\end{assumptionlistA}

Assumption \textbf{\ref{AssumptionA:bSigmaRegularity}} is used in the application of results for periodic diffusions from \cite{MR1326606} (See \cite[Theorem 4.11.1]{MR1326606} and Assumption $\Tilde{H}$ in \cite[Section 3.7]{MR1326606}), which are stated for operators in divergence form. The results in our work apply to an operator in non-divergence form, which is why we require extra regularity to transform the operator into divergence form. Assumption \textbf{\ref{AssumptionA:SigmaEllipticity}} is a uniform ellipticity condition on $\sigma$.

For the $\Z^d$-periodic branching rate function $g:\R^d \to [0,+\infty)$ we assume:
\begin{assumptionlistB}
    \item \label{AssumptionB:g} For some $\alpha\in (0,1)$, $g$ belongs to $C^{0,\alpha}_p(\R^d)$, and $g\not \equiv 0$.
\end{assumptionlistB}
Without loss of generality, we can assume that the $\alpha$ appearing in \textbf{\ref{AssumptionA:bSigmaRegularity}} and the $\alpha$ appearing in \textbf{\ref{AssumptionB:g}} are the same, if they are different we simply take $\alpha$ to be the minimum of both. Notice that we allow $g$ to take the value zero. As long as $g$ is not identically zero, by ergodic properties of diffusions on the torus, the periodic diffusion spends enough time in positive branching rate regions to eventually branch into a shape.

The assumptions on the periodic family of offspring distributions $\cM =(\mu^{(x)})_{x\in \R^d}$ are:
\begin{assumptionlistC}
    \item \label{AssumptionC:measurability} For every $n\in \N$, the function $x \mapsto \mu^{(x)}(n)$ is Borel-measurable.
    \item \label{AssumptionC:NoKillingPureBranching}For every $x\in \R^d$, the support of $\mu^{(x)}$ is contained in $\N\setminus \{0,1\}$.
    \item \label{AssumptionC:HolderContinuity} Let $m_1:\R^d \to \R$ be defined by $m_1(x):=\sum_{i=2}^\infty n \mu^{(x)}(n)$. Then, $m_1\in C_p^{0,\alpha}(\R^d)$.
    \item \label{AssumptionC:StochasticDomination} There exists a probability measure $\bar{\mu}$ on $\N$, with finite first moment, that stochastically dominates $\mu^{(x)}$ for every $x\in \R^d$.
\end{assumptionlistC}
Assumption \textbf{\ref{AssumptionC:measurability}} is required for measurability purposes. Assumption \textbf{\ref{AssumptionC:NoKillingPureBranching}} ensures pure branching events. Assumption \textbf{\ref{AssumptionC:HolderContinuity}}, combined with \textbf{\ref{AssumptionB:g}}, provides regularity of the effective branching rate $R$ defined in \eqref{eq:Intro:EffectiveBranching}. An example of a family satisfying Assumption \textbf{\ref{AssumptionC:HolderContinuity}} is a family  $\cM=(\mu^{(x)})_{x\in \R^d}$ of integrable measures satisfying that, for every $n\in \N$, the function $x\mapsto \mu^{(x)}(n)$ belongs to $C^{0,\alpha}_p(\R^d)$. Assumption \textbf{\ref{AssumptionC:StochasticDomination}} provides uniform control over the family of offspring distributions $\cM=(\mu^{(x)})_{x\in \R^d}$. For instance, any family of distributions $\cM$ for which we can find $K \geq 2$ such that for every $k>K$ and $x\in \R^d$, $\mu^{(x)}(k) = 0$, is stochastically dominated by the homogeneous branching rate
\begin{equation*}
        \mu(n):=\begin{cases}
            1, \ \mathrm{if } \ n=K\\
            0, \ \mathrm{otherwise}.
        \end{cases}
\end{equation*}

\subsection{Outline}
We finish the introduction with an outline of the paper. In Section \ref{sec:prelim} we present preliminaries, including the ergodic properties of diffusions on the torus in Section \ref{sec:Ergodic}, properties of the principal eigenvalue in Section \ref{sec:EigenvalueProblem}, the many-to-one lemma in Section \ref{sec:ManyToOne}, and an interpolation result in Section \ref{sec:interpolation}. The proof of the upper and lower estimates (Proposition \ref{prop:Intro:DirectionalBounds}) are given in Section \ref{sec:Half-Space}, a survival result is proved in Section \ref{sec:SurvivalLargeDeviation}, and the geometric argument to prove Theorem \ref{thm:Intro:ShapeHauss} is given in Section \ref{sec:TheProof}. The geometric properties of the shape and its support function are studied in Section \ref{sec:PropertiesShape}.

\section*{Acknowledgements}

The author gratefully acknowledges the financial support provided by the Fonds de recherche du Qu\'ebec Doctoral research scholarship (DOI: \hyperlink{}{https://doi.org/10.69777/336282}). This work was partially funded by this scholarship, which provided the crucial resources and dedicated time necessary to conduct this study. The author would also like to thank Jessica Lin and Louigi Addario-Berry for their careful proofreading, useful discussions and guidance throughout the development of this work.

\section{Preliminaries}\label{sec:prelim}

Through this work, we will consider a weak solution of \eqref{eq:Intro:Diffusion} defined on a filtered probability space $(\Omega,\mathcal{F},\{\cF_t\}_{t \ge 0},\bP)$, where the filtration $\cF_t$ is right-continuous and complete. For $x\in \R^d$, we let $\bP_x$ be the measure on $(\Omega,\cF)$ under which $Z_t$ is a weak solution of \eqref{eq:Intro:Diffusion} with $Z_0 =x$. We let $\bE_x$ denote the expectation operator associated to $\bP_x$.

Let $A=\sigma \sigma^T$ be the \textit{diffusion matrix} associated with the diffusion \eqref{eq:Intro:Diffusion}. Let us denote by $(a_{ij})_{i,j \in [d]\times [d]}$ the entries of $A$ and by $a_i$ the $i$-th row of $A$. The generator of the diffusion \eqref{eq:Intro:Diffusion} is defined for every function $q\in C^2(\R^d)$ by
\begin{equation}\label{eq:Prere:Generator}
    \mathcal{L} q (x) = \frac{1}{2}\sum_{i=1}^d \sum_{j=1}^d a_{ij} (x) \partial_{ij} q(x) + b(x) \cdot \nabla q(x).
\end{equation}
The assumptions on $A$ and $b$ are \textbf{\ref{AssumptionA:bSigmaRegularity}} and \textbf{\ref{AssumptionA:SigmaEllipticity}} defined in Section \ref{sec:Intro:Assumptions}. We let $\kappa:\R^d \to \R^d$ be defined by
\begin{equation}\label{eq:Ergodic:DivDrift}
    \kappa(x):= b(x) - \frac{1}{2}\sum_{j=1}^d \partial_j a_j(x),
\end{equation} 
where by assumption \textbf{\ref{AssumptionA:bSigmaRegularity}}, each of the entries of $\kappa$ belongs to $C^{1,\alpha}_p(\R^d)$.

Since the coefficients of \eqref{eq:Intro:Diffusion} are $\Z^d$-periodic, the projected process $ \dot{Z}_t= Z_t \mod \mathbb{Z}^d$ is a well-defined diffusion on the torus, whose generator coincides with $\mathcal{L}$ on the core $C^2_p(\R^d)$. Expressed in divergence-form, \eqref{eq:Prere:Generator} has the form
\begin{equation}\label{eq:Generator:DivergenceForm}
    \mathcal{L} q (x) = \frac{1}{2}\mathrm{div}(A(x)\nabla q(x))+ \kappa(x) \cdot \nabla q(x).
\end{equation}
The formal adjoint of $\mathcal{L}$ is thus given by
\begin{equation}\label{eq:Prere:Adjoint}
    \mathcal{L}^* p(x) = \frac{1}{2}\mathrm{div}(A(x) \nabla p )(x)  - \kappa(x)\cdot \nabla p(x) - \mathrm{div}(\kappa)(x) p(x).
\end{equation}

Notice that any $\Z^d$-periodic function $f$ can always be identified with a function $\dot{f}$ defined on the torus $\TT^d$. We will use $f$ and $\dot{f}$ interchangeably without distinction for ease of notation. Given a measurable function $\nu:\TT^d \to [0,\infty)$, and $f:\TT^d\to \R$ such that the product $f\nu$ is integrable on the torus, we denote this integral by
\begin{equation}
    \nu(f):=\int_{\TT^d} f(x)\nu(x)dx.
\end{equation}

Several results in this section rely on the Feynman-Kac formula, which provides a probabilistic representation for classical solutions of the parabolic equation
\begin{equation}
    \begin{cases}\label{eq:FK}
        \partial_t q = \mathcal{L} q + K(x)q & \text{in } (0, \infty) \times \mathbb{R}^d, \\
        q(0, x) = q_0(x) & \text{in } \mathbb{R}^d.
    \end{cases}
\end{equation}
We will state this result in terms of the operator $\mathcal{L}$ in \eqref{eq:Intro:Diffusion} for ease of notation. This result holds for general operators with coefficients $\sigma$ and $b$ in \eqref{eq:Intro:Diffusion} which are Lipschitz continuous. For $K,q_0\in C^0_b(\R^d)$, the Feynman-Kac formula states that if $q$ is a classical solution of \eqref{eq:FK}, with $q\in L^\infty([0,T]\times \R^d)$ for every $T>0$, and such that $\partial_t q, \partial_{x_i} q, \partial_{x_i x_j}q \in  L^\infty([0,\infty)\times \R^d)$ for all $i,j\in [d]$, then it has the following probabilistic representation:
\begin{equation}\label{eq:FeynKac}
    q(t,x):=\mathbb{E}_x\Bigl[q_0(Z_{t})e^{\int_0^t K(Z_s)ds}\Bigl],
\end{equation}
where $Z_t$ is a weak solution of the periodic diffusion \eqref{eq:Intro:Diffusion}. For a more detailed exposition, see \cite[Section 2.2, Theorem 2.2]{Freidlin+1985}.

\subsection{Ergodic properties of periodic diffusions}\label{sec:Ergodic}

The ergodic properties of $\dot{Z}_t$ will play an important role in describing the long-term behavior of the periodic diffusion $Z_t$, which in turn will be useful to describe the long term behavior of the branching process. 

To avoid introducing unnecessary notation, we will express the results in terms of the periodic diffusion \eqref{eq:Intro:Diffusion} and the generator \eqref{eq:Prere:Generator}. All of the results that follow in this section only rely on \textbf{\ref{AssumptionA:bSigmaRegularity}} and \textbf{\ref{AssumptionA:SigmaEllipticity}}. In the future we will apply these results to other periodic diffusions of interest.

Let us denote by $p(t,x,y)$ the transition kernel of $\dot{Z}_t$ in the torus, that is, $p$ satisfies that, for every Borel subset $A\subset \TT^d$, $\mathbb{P}(\dot{Z}_t\in A) = \int_A p(t,x,y) dy$. We say that a probability measure $\mu$ on the Borel $\sigma$-algebra $\mathcal{B}(\TT^d)$ of $\TT^d$ is \textit{invariant} for $\dot{Z}_t$, if, for every $t\geq 0$ and $A\in \mathcal{B}(\TT^d)$, $\int_A p(t,x,y) \mu(dy) = \mu(A)$. The invariant measure of a diffusion on the torus has a known characterization in terms of the adjoint operator $\mathcal{L}^*$ described in the next proposition. 

\begin{proposition}\cite[Theorem 4.8.6 and Theorem 4.11.1 (vii)]{MR1326606}\label{prop:Ergodic:InvariantMeasure}
    Let $(Z_t)_{t\geq 0}$ be the periodic diffusion \eqref{eq:Intro:Diffusion} with $\sigma$ and $b$ satisfying \textbf{\ref{AssumptionA:bSigmaRegularity}} and $\textbf{\ref{AssumptionA:SigmaEllipticity}}$. Let $(\dot{Z}_t)_{t\geq 0}$ be its projection into $\TT^d$. There exists a unique invariant measure of $(\dot{Z}_t)_{t\geq 0}$ which has a strictly positive density $\nu \in C^{2,\alpha}_p(\R^d)$. Furthermore, $\nu$ satisfies the adjoint equation $\mathcal{L}^*\nu=0$, and $\nu$ is the unique periodic classical solution of the adjoint equation with the property that $\int_{\TT^d}\nu(x)dx=1$.
\end{proposition}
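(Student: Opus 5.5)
The plan is to reduce the statement to standard elliptic theory on the torus, which is essentially what \cite[Theorem 4.8.6 and Theorem 4.11.1 (vii)]{MR1326606} provide; I would verify that our hypotheses fit theirs and assemble the conclusions. The first step is to rewrite $\mathcal{L}$ in divergence form \eqref{eq:Generator:DivergenceForm}. Under \textbf{\ref{AssumptionA:bSigmaRegularity}} we have $A=\sigma\sigma^T\in C^{2,\alpha}_p$ and $\kappa\in C^{1,\alpha}_p$. Consequently the adjoint $\mathcal{L}^*$ in \eqref{eq:Prere:Adjoint} is a uniformly elliptic operator (by \textbf{\ref{AssumptionA:SigmaEllipticity}}) with coefficients regular enough to match Assumption $\Tilde{H}$ of \cite[Section 3.7]{MR1326606}. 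Its zeroth-order coefficient $-\mathrm{div}\,\kappa$ is in $C^{0,\alpha}_p$. Its first-order coefficient, once $\mathcal{L}^*$ is written in non-divergence form, is $\tfrac12\sum_j\partial_j a_j-\kappa$, which lies in $C^{1,\alpha}_p$.

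Next I would show existence and uniqueness of a normalized periodic solution of $\mathcal{L}^*\nu=0$. On the compact manifold $\TT^d$, the operator $\mathcal{L}$ has constants in its kernel, and by the strong maximum principle the kernel consists exactly of constants. The Fredholm alternative (or Krein--Rutman applied to the resolvent, which is compact and strongly positive on $C^0(\TT^d)$) then gives that $\ker\mathcal{L}^*$ is one-dimensional and spanned by a function of one sign. One can also phrase this through the principal eigenvalue: $\mathcal{L}$ has principal eigenvalue $0$ with eigenfunction $1$, so $\mathcal{L}^*$ has principal eigenvalue $0$ with a positive eigenfunction $\nu$. Normalizing gives $\int_{\TT^d}\nu=1$. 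Schauder estimates with $C^{0,\alpha}$ right-hand sides upgrade a weak solution to $\nu\in C^{2,\alpha}_p$, and the strong maximum principle (or Harnack) applied to $\mathcal{L}^*$ gives $\nu>0$ everywhere.

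Then I would identify $\nu\,dx$ as the invariant measure. For $f\in C^2_p$, the quantity $u(t,x)=\mathbb{E}_x[f(\dot Z_t)]$ solves $\partial_t u=\mathcal{L}u$. Integrating by parts on the torus gives $\frac{d}{dt}\int u\,\nu=\int u\,\mathcal{L}^*\nu=0$, so $\int \mathbb{E}_x f(\dot Z_t)\nu(x)dx=\int f\nu$. A density argument extends this to indicators of Borel sets. For uniqueness of the invariant measure, note that any invariant probability $\mu$ satisfies $\int \mathcal{L}f\,d\mu=0$ for all $f\in C^2_p$, i.e.\ $\mathcal{L}^*\mu=0$ in the distributional sense. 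Elliptic regularity for distributional solutions (the content of \cite[Theorem 4.8.6]{MR1326606}) makes $\mu$ a classical solution, and the one-dimensionality of the kernel forces $\mu=\nu\,dx$. Alternatively, uniqueness follows because the transition density $p(t,x,y)$ is strictly positive (Harnack), which makes the torus diffusion irreducible and strong Feller, and Doob's theorem then gives uniqueness.

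I expect the main obstacle to be the regularity bookkeeping: confirming that the non-divergence operator, once converted to divergence form, satisfies exactly the hypotheses of \cite{MR1326606}, and in particular that the needed $C^{2,\alpha}$ regularity of $\nu$ follows from $\sigma\in C^{2,\alpha}$ and $b\in C^{1,\alpha}$. The uniqueness of distributional solutions of $\mathcal{L}^*\mu=0$ among measures is the other delicate point. I would handle it by citing \cite[Theorem 4.8.6]{MR1326606} directly rather than reproving it.
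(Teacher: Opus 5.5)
Your proposal is correct and follows essentially the same route as the paper. The paper matches $\mathcal{L}$, $V=0$, $\mathcal{L}^*$ and $\nu$ to $L_0$, $V$, $\tilde L$ and $\tilde\phi_0$ in \cite[Theorem 4.11.1]{MR1326606}, with $\lambda_0=0$ by part (ii), to get existence, uniqueness up to constants, $C^{2,\alpha}$ regularity and positivity of the adjoint solution, and then invokes \cite[Theorem 4.8.6]{MR1326606} to identify the normalized solution as the invariant density; your integration-by-parts check of invariance and your strong Feller/irreducibility route to uniqueness are self-contained versions of what the paper takes from these citations.
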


\begin{proof}
    To help make the connection with \cite[Theorem 4.8.6, Theorem 4.11.1 (vii)]{MR1326606}, let us translate their notation to ours. In the notation of \cite[Theorem 4.11.1 (vii)]{MR1326606}, $\mathcal{L}$ plays the role of $L_0$, we take $V=0$ in their notation, and $\mathcal{L}^*$ plays the role of $\Tilde{L}$. In the same theorem, $\nu$ plays the role of $\Tilde{\phi}_0$, the eigenfunction associated to $\lambda_0$, which is equal to $0$ by \cite[ Theorem 4.11.1 (ii)]{MR1326606}. Thus, this result establishes the existence and uniqueness (up to a multiplicative constant) of the periodic solution $\nu$ to the adjoint equation $\mathcal{L}^*\nu=0$, its regularity, and the fact that $\nu$ can be taken to be strictly positive.

    What remains to be shown is that such a solution, properly normalized, is the density of the invariant measure of the projection of the periodic diffusion on the torus. This is established in \cite[Theorem 4.8.6]{MR1326606}, also in the proof of \cite[Corollary 8.2.11]{MR1326606}, and independently in \cite[Remark 3.3.4]{MR503330}
\end{proof}

The following result can be derived by combining some results for periodic diffusions from \cite{MR1326606} with the Fredholm alternative. While this is a standard method, we provide a proof here. For a function $\eta \in C^{2,\alpha}(\R^d)$, consider the $C^{2,\alpha}$-H\"older norm
    \begin{equation*}
        \|\eta\|_{2,\alpha} : = \sum_{0\leq |\beta| \leq 2} \Big \| \frac{\partial^\beta \eta}{\partial x^\beta} \Big \|_\infty + \sup_{|\beta |=2} \sup_{x\neq y}\Big | \frac{\partial^\beta \eta}{\partial x^\beta}(x)- \frac{\partial^\beta \eta}{\partial x^\beta}(y)\Big ||x-y|^{-\alpha}.
    \end{equation*}

\begin{proposition}\label{prop:Ergodic:Fredholm}
    Let $\nu\in C^{2,\alpha}_p(\R^d)$ be the strictly positive density of the unique invariant measure of $(\dot{Z}_t)_{t\geq 0}$ on the torus, and $f\in C^{0,\alpha}_p(\R^d)$ be such that $\nu(f)=0$. Then, there exists a unique $\eta^f\in C_p^{2,\alpha}(\R^d)$ that satisfies
    \begin{equation*}
        \mathcal{L} \eta^f = f,
    \end{equation*}
    and such that $\nu(\eta^f)=0$.
    Furthermore, there exists a positive constant $C=C(d,\|\sigma\|_{0,\alpha}, \|b\|_{0,\alpha})$ such that
    \begin{equation}\label{eq:Schauder:Fredholm}
        \|\eta^f\|_{2,\alpha} \leq C\|f\|_{0,\alpha}.
    \end{equation}
    
\end{proposition}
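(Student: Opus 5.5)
The plan is to apply the Fredholm alternative to $\mathcal{L}$ acting on periodic functions, that is, on functions on $\TT^d$, and then to obtain the $C^{2,\alpha}$ bound from periodic Schauder estimates together with a compactness argument.

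\textbf{Step 1: kernels.} Work on $C^{2,\alpha}_p(\R^d)$, viewed as $C^{2,\alpha}(\TT^d)$. The kernel of $\mathcal{L}$ consists of the constants. Indeed, if $\mathcal{L}u=0$ with $u$ periodic, then $u$ attains its maximum on the compact torus, so the strong maximum principle (which applies by \textbf{\ref{AssumptionA:SigmaEllipticity}}) forces $u$ to be constant. The adjoint kernel is spanned by $\nu$, by Proposition \ref{prop:Ergodic:InvariantMeasure}, since $\nu$ is the unique normalized periodic solution of $\mathcal{L}^*\nu=0$.

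\textbf{Step 2: existence.} To make compactness available, consider $\mathcal{L}-\lambda$ for some $\lambda>0$. By the maximum principle and standard Schauder theory on the compact manifold $\TT^d$, this operator is invertible from $C^{2,\alpha}(\TT^d)$ onto $C^{0,\alpha}(\TT^d)$. Alternatively, one can invoke \cite[Theorem 4.11.1]{MR1326606} with $V=-\lambda$, where the principal eigenvalue is negative. Its inverse $G_\lambda$ is compact on $C^{0,\alpha}$, because $C^{2,\alpha}\hookrightarrow C^{0,\alpha}$ compactly by Arzel\`a--Ascoli. The equation $\mathcal{L}\eta=f$ is then equivalent to $(I-\lambda G_\lambda)\eta=G_\lambda f$, which is a Fredholm equation of index zero.

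The Riesz--Schauder theory says this is solvable if and only if $G_\lambda f$ annihilates the kernel of the dual operator. That dual kernel is one-dimensional, because the kernel of $I-\lambda G_\lambda$ is the constants. It can be identified with $\nu$ by checking that $\int (\mathcal{L}\eta)\nu = \int \eta\,\mathcal{L}^*\nu=0$, integrating by parts on the torus with no boundary terms. The resulting condition reduces to $\nu(f)=0$.

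More concretely, the range of $\mathcal{L}$ on $C^{2,\alpha}_p$ is closed and has codimension one. It is contained in $\{f:\nu(f)=0\}$, which also has codimension one, so the two coincide. This gives a solution $\eta$.

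\textbf{Step 3: normalization and uniqueness.} Replace $\eta$ by $\eta^f=\eta-\nu(\eta)$. Then $\nu(\eta^f)=0$, using $\int\nu=1$. If two solutions exist, their difference lies in the kernel, so it is a constant $c$. The normalization gives $c\,\nu(1)=c=0$, so the solution is unique.

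\textbf{Step 4: the estimate, which I expect to be the main obstacle.} Global periodic Schauder estimates give
\[\|\eta^f\|_{2,\alpha}\le C(\|\mathcal{L}\eta^f\|_{0,\alpha}+\|\eta^f\|_\infty).\]
It remains to remove the term $\|\eta^f\|_\infty$, and I would do this by contradiction. Suppose there are $f_n$ with $\nu(f_n)=0$, $\|f_n\|_{0,\alpha}\to0$ and $\|\eta^{f_n}\|_{2,\alpha}=1$. By Arzel\`a--Ascoli, a subsequence of $\eta^{f_n}$ converges in $C^2$ to some $\eta$ with $\mathcal{L}\eta=0$ and $\nu(\eta)=0$. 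By Step 1, $\eta$ is constant, and the normalization then gives $\eta=0$. So $\|\eta^{f_n}\|_\infty\to0$, and the Schauder bound forces $\|\eta^{f_n}\|_{2,\alpha}\to0$, which contradicts $\|\eta^{f_n}\|_{2,\alpha}=1$.

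The delicate points are the dependence of the constant $C$ and the fact that Schauder theory needs Hölder coefficients. Here $A$ and $b$ are $C^{0,\alpha}$ by \textbf{\ref{AssumptionA:bSigmaRegularity}}, so the estimate holds. The compactness argument yields a constant depending on the fixed coefficients, through $\theta$ and their Hölder norms, which is consistent with the stated dependence $C(d,\|\sigma\|_{0,\alpha},\|b\|_{0,\alpha})$ for the fixed operator.
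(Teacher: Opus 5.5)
Your proof is correct. The existence and uniqueness part follows the paper's argument with slightly different bookkeeping. Both use the compact resolvent of $\mathcal L$ on $C^{0,\alpha}_p$, the strong maximum principle to get $\ker\mathcal L=\{\text{constants}\}$, and $\mathcal L^*\nu=0$ to see that $\nu$ annihilates the range. The paper first checks that $(1-\mathcal L)^{-1}$ preserves $\{f:\nu(f)=0\}$, so $I-(1-\mathcal L)^{-1}$ is injective there and Fredholm gives surjectivity directly; you instead count codimensions on the whole space.

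Fix the sign in Step 2. With $G_\lambda=(\mathcal L-\lambda)^{-1}$ the equation is $(I+\lambda G_\lambda)\eta=G_\lambda f$. With your sign the kernel is $\{\eta:\mathcal L\eta=2\lambda\eta\}=\{0\}$, not the constants. Your codimension paragraph is unaffected by this.

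You genuinely diverge from the paper in the estimate. The paper removes the lower-order term in the Schauder bound quantitatively:
\begin{itemize}
\item it writes $\eta^f=(1-\mathcal L)^{-1}(\eta^f-f)$ to control $\|\eta^f\|_{0,\alpha}$ by sup norms;
\item it then proves $\|\eta^f\|_\infty\le C\|f\|_\infty$ from exponential ergodicity of $\dot Z_t$, following the reasoning of Lemma 2.1 in \cite{addarioberry2025shapetheorembbmperiodic}. This in effect goes through the representation $\eta^f=-\int_0^\infty\mathbb E_\cdot[f(Z_t)]\,dt$.
\end{itemize}

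Your compactness--contradiction argument needs only Schauder estimates, Arzel\`a--Ascoli and the uniqueness you already proved, so it is more self-contained. Its cost is that the constant is non-constructive and only known to exist for the fixed operator. It does not by itself show dependence only on $d,\|\sigma\|_{0,\alpha},\|b\|_{0,\alpha}$, or on $\theta$ and the H\"older norms as you suggest. To get that, you would have to rerun the contradiction along a sequence of operators in that class, which also requires convergence of their invariant densities.

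The paper's route ties $C$ to the mixing rate of the diffusion, which can in principle be tracked, and bounds $\|\eta^f\|_\infty$ by $\|f\|_\infty$ alone. Since the proposition is later applied only to fixed diffusions (Lemma~\ref{lemma:Ergodic:WeakErgodic}, Proposition~\ref{prop:Ergodic:ErgodicStrong} and their consequences), your per-operator constant is all that is actually needed.
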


\begin{proof}
    Let us consider the closed subspace of $C_p^{0,\alpha}(\R^d)$,
    \begin{equation*}
        C_{p,\nu}^{0,\alpha}(\R^d):=\{f\in C_p^{0,\alpha}(\R^d): \nu(f) = 0\}.
    \end{equation*}
    The pair $(C_{p,\nu}^{0,\alpha}(\R^d),\|\cdot\|_{0,\alpha})$ defines a Banach space which we will refer to as $C_{p,\nu}^{0,\alpha}(\R^d)$ for simplicity. 
    
    By \cite[Theorem 4.11.1 (iv),(v)]{MR1326606}, for $f\in C_{p,\nu}^{0,\alpha}(\R^d)$, there exists a unique $p^f \in C^{2,\alpha}_p(\R^d)$ such that $(1-\mathcal{L})p^f=f$, and a constant $C=C(d,\alpha, \theta)>0$, where $\theta$ is the constant defined in \textbf{\ref{AssumptionA:SigmaEllipticity}}, such that 
    \begin{equation}\label{eq:Scahuder:Resolvent}
        \|p^f\|_{2,\alpha}=\|(1-\mathcal{L})^{-1}f\|_{2,\alpha}\leq C\|f\|_{0,\alpha}
    \end{equation}
    We claim that $\nu(p^f)=0$. Integrating by parts, by properties of the adjoint, and since $\mathcal{L}^* \nu = 0$,
    \begin{equation*}
        \begin{split}
            -\nu(f) & =- \int_{\TT^d} f(x)\nu(x)dx = \int_{\TT^d} (\mathcal{L}-1)p^f \nu(x) dx = \int_{\TT^d} \mathcal{L}p^f(x) \nu(x) dx - \nu(p^f)\\
            & = \int_{\TT^d} p^f(x) \mathcal{L}^*\nu(x) dx - \nu(p^f) = -\nu(p^f).
        \end{split}
    \end{equation*}
    The left-hand side of the expression above is equal to zero since $f\in  C_{p,\nu}^{2,\alpha}(\R^d)$, it follows that $p^f$ belongs to $ C_{p,\nu}^{2,\alpha}(\R^d)$. In particular, $p^f=(1-\mathcal{L})^{-1}f \in C^{0,1}_p(\R^d)$. On the other hand, we know that for every $\epsilon>0$, $C^{0,\alpha+\epsilon}_p(\R^d)$ is compactly embedded into $C^{0,\alpha}_p(\R^d)$. Thus, since $\alpha<1$, $(1-\mathcal{L})^{-1}:C_{p,\nu}^{0,\alpha}(\R^d) \to C_{p,\nu}^{0,\alpha}(\R^d)$ is a compact linear operator. 
    
    With this at hand we have everything necessary to apply the Fredholm alternative. First, we show that the only solution $p \in C_{p,\nu}^{0,\alpha}$ to $p - (1-\mathcal{L})^{-1}p = 0$ is the trivial solution $p\equiv 0$. For instance, if $p\in C_{p}^{0,\alpha}(\R^d)$ solves the aforementioned equation, that means that in fact $p\in C_{p}^{2,\alpha}(\R^d)$, and it satisfies $ p - \mathcal{L}p  = p$, from where, $\mathcal{L}p=0$. By the strong maximum principle, the only solution to this equation is $p$ identically a constant. Since $\nu>0$, the only constant $p$ satisfying $\int_{\TT^d}p\nu(x)dx=0$ is $p=0$. Thus, the only solution $p \in C_{p,\nu}^{0,\alpha}(\R^d)$ to $p - (1-\mathcal{L})^{-1}p = 0$ is the trivial solution $p\equiv 0$. Thus, by the Fredholm alternative, for every $f\in C_{p,\nu}^{0,\alpha}(\R^d)$, there exists a unique $\eta^f \in C_{p,\nu}^{0,\alpha}(\R^d)$ such that $\mathcal{L}\eta^f = f $.
    
    Finally, we observe that by standard Schauder estimates (see \cite[Theorem 3.2.8]{MR1326606}), there exists a positive constant $C=C(d,\|\sigma\|_{0,\alpha}, \|b\|_{0,\alpha})$ such that
    \begin{equation*}
        \| \eta^f \|_{2,\alpha} \leq C(\|\eta^f\|_{0,\alpha} + \|f\|_{0,\alpha}).
    \end{equation*}
    On the other hand, observing that $\eta^f = (1-\mathcal{L})^{-1}(\eta^f-f)$, and employing the argument in the proof of \cite[Theorem 4.11.1(v)]{MR1326606}, we observe that $\|\eta^f\|_{0,\alpha}\leq \|\eta^f - f\|_{\infty}\leq \|\eta^f\|_{\infty} + \|f\|_{\infty}$. Then, we obtain $C=C(d,\|\sigma\|_{0,\alpha}, \|b\|_{0,\alpha})$ such that $\|\eta^f\|_{\infty}\leq C \|f\|_{\infty}$ using the exponential ergodicity of diffusions on the torus (see, \cite[Theorem 3.3.2]{MR2839402}). This follows by applying the reasoning in \cite[Lemma 2.1]{addarioberry2025shapetheorembbmperiodic}, where an analogous bound is obtained for the Laplacian. The bound \eqref{eq:Schauder:Fredholm} follows since $\|f\|_\infty \leq \|f\|_{0,\alpha}$.
\end{proof}

\begin{remark}
    The solution $\eta^f$ to the equation in the proposition above is unique up to an additive constant. To see this, consider two solutions $\eta^1$ and $\eta^2$ of the equation $\mathcal{L}\eta=f$. Since $\mathcal{L}\eta$ only involves the derivatives of $\eta$, $\mathcal{L}(\eta^2-\nu(\eta^2)) =f $ and $\mathcal{L}(\eta^1-\nu(\eta^1)) =f $. Clearly $\nu(\eta^1-\nu(\eta^1))=0=\nu(\eta^2-\nu(\eta^2))$, so the previous proposition implies that $\eta^1-\nu(\eta^1) = \eta^2-\nu(\eta^2)$. This shows that $\eta_1$ and $\eta_2$ only differ by a constant.
\end{remark}

The following ergodic result generalizes \cite[Lemma 2.1]{addarioberry2025shapetheorembbmperiodic}. The proof is identical, except for the step involving the existence of the solution for the Poisson equation $\tfrac{1}{2}\Delta \eta = f$, which is replaced by Proposition \ref{prop:Ergodic:Fredholm} above.

\begin{lemma}\label{lemma:Ergodic:WeakErgodic}
    Let $\nu\in C^{2,\alpha}_p(\R^d)$ be the density of the unique invariant measure of $(\dot{Z}_t)_{t\geq 0}$ on $
    \TT^d$ and $f\in C^{0,\alpha}_p(\R^d)$ be such that $\nu(f)=0$. Fix $\epsilon>0$. There exists a constant $C=C(d,\|f\|_{0,\alpha})>0$ such that, for all $t$ sufficiently large (depending on $d$, $\|f\|_{0,\alpha}$, and $\epsilon$),
    \begin{equation}
        \sup_{x\in \R^d} \probbb{x}{}{\frac{1}{t}\Big|\int_0^t f(Z_s)ds \Big|>\epsilon}\leq C\epsilon^{-2}t^{-1}.
    \end{equation}
\end{lemma}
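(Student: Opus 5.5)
We use the Poisson equation of Proposition \ref{prop:Ergodic:Fredholm} to turn the additive functional into a martingale plus a bounded remainder, and then apply Chebyshev. Since $\nu(f)=0$ and $f\in C^{0,\alpha}_p(\R^d)$, Proposition \ref{prop:Ergodic:Fredholm} gives $\eta:=\eta^f\in C^{2,\alpha}_p(\R^d)$ with $\mathcal{L}\eta=f$ and $\|\eta\|_{2,\alpha}\le C\|f\|_{0,\alpha}$. In particular $\|\eta\|_\infty$ and $\|\nabla\eta\|_\infty$ are bounded by $C\|f\|_{0,\alpha}$.

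Because $\eta\in C^2$ and $Z$ is a weak solution of \eqref{eq:Intro:Diffusion}, It\^o's formula gives
\begin{equation*}
\eta(Z_t)-\eta(Z_0)=\int_0^t f(Z_s)\,ds+M_t,\qquad M_t:=\int_0^t \nabla\eta(Z_s)^T\sigma(Z_s)\,dB_s.
\end{equation*}
Rearranging,
\begin{equation*}
\int_0^t f(Z_s)\,ds=\eta(Z_t)-\eta(Z_0)-M_t.
\end{equation*}
The integrand of $M_t$ is bounded: by \textbf{\ref{AssumptionA:SigmaEllipticity}} we have $|\sigma^T\nabla\eta|^2\le\theta^{-1}\|\nabla\eta\|_\infty^2$. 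Hence $M$ is a true square-integrable martingale, and by the It\^o isometry
\begin{equation*}
\bE_x[M_t^2]\le \theta^{-1}C^2\|f\|_{0,\alpha}^2\,t,
\end{equation*}
uniformly in $x\in\R^d$.

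Now take $t\ge 4\|\eta\|_\infty/\epsilon$, so that $|\eta(Z_t)-\eta(Z_0)|\le 2\|\eta\|_\infty\le t\epsilon/2$. On the event $\{|\int_0^t f(Z_s)ds|>t\epsilon\}$ we then must have $|M_t|>t\epsilon/2$. By Chebyshev,
\begin{equation*}
\bP_x\big[|M_t|>t\epsilon/2\big]\le \frac{4\,\bE_x[M_t^2]}{\epsilon^2t^2}\le C'\epsilon^{-2}t^{-1},
\end{equation*}
with $C'$ depending on $\|f\|_{0,\alpha}$ and on the constants of Proposition \ref{prop:Ergodic:Fredholm}, hence on $\sigma$, $b$ and $\theta$. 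This bound is uniform in $x$, so taking the supremum over $x$ proves the lemma. The threshold on $t$ depends on $\|f\|_{0,\alpha}$ and $\epsilon$, as claimed.

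There is no serious obstacle once Proposition \ref{prop:Ergodic:Fredholm} is available. The only points to check are that It\^o's formula applies to a weak solution with $\eta\in C^2$, which is standard, and that the stochastic integral is a genuine martingale, which follows from the bounded integrand. The dependence of the constant on $\sigma$, $b$ and $\theta$ is suppressed in the statement, since these are fixed throughout.
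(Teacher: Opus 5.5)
Your proof is correct and follows the paper's argument: solve the Poisson equation via Proposition~\ref{prop:Ergodic:Fredholm}, apply It\^o's formula, absorb the bounded term $\eta(Z_t)-\eta(Z_0)$ once $t\ge 4\|\eta\|_\infty/\epsilon$, and bound the martingale in $L^2$ by the It\^o isometry. The differences are cosmetic. You apply Chebyshev at the fixed time $t$, whereas the paper passes through $\sup_{\ell\in[0,t]}$ and Doob's maximal inequality, which is not needed here. You also correctly write the martingale as $\int_0^t \nabla\eta(Z_s)^T\sigma(Z_s)\,dB_s$, with the resulting factor $\theta^{-1}$, while the paper's displayed martingale drops $\sigma$.
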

\begin{proof}[Proof of Lemma \ref{lemma:Ergodic:WeakErgodic}]
    By Proposition \ref{prop:Ergodic:Fredholm}, up to an additive constant, there exists a unique $\eta^f\in C^{2,\alpha}_p(\R^d)$ satisfying $\mathcal{L} \eta^f= f$. By It\^o's formula,
    \begin{equation}
        \begin{split}
            \eta^f(Z_t)-\eta^f(Z_0)& =\int_0^t \mathcal{L} \eta^f(Z_s)ds + \sum_{i=1}^d \int_0^t \frac{\partial \eta^f}{\partial x_i}(Z_s) dB^i_s =\int_0^t f(Z_s)ds + \sum_{i=1}^d \int_0^t \frac{\partial \eta^f}{\partial x_i}(Z_s) dB^i_s.
        \end{split}
    \end{equation}
     Since $\eta^f$ is bounded, for $t \ge 4\|\eta^f\|_\infty/\epsilon$ we have 
    \begin{equation}
        \begin{split}
            \mathbb{P}_{x}{}\left[{\frac{1}{t}\bigg|\int_0^t f(Z_s) ds\bigg|>\epsilon}\right] & = \mathbb{P}_{x}{}\left[{\frac{1}{t}\bigg|\eta^f(Z_t)-\eta^f(Z_0) - \sum_{i=1}^d \int_0^t \frac{\partial \eta^f}{\partial x_i}(Z_s) dB^i_s\bigg|>\epsilon}\right]\\
            & \leq \mathbb{P}_{x}{}\left[{\frac{1}{t} \bigg|\sum_{i=1}^d \int_0^t \frac{\partial \eta^f}{\partial x_i}(Z_s) dB^i_s\bigg|>\epsilon/2}\right]\\
            & \leq \mathbb{P}_{x}{}\left[{\sup_{\ell\in [0,t]}\bigg| \sum_{i=1}^d \int_0^\ell \frac{\partial \eta^f}{\partial x_i}(Z_s) dB^i_s\bigg|>t\epsilon/2}\right].
        \end{split}
    \end{equation} 
    Viewed as a function of $\ell$, the inner sum in the final term of the estimate is a continuous martingale with finite second moment. Thus, by Doob's maximal inequality and It\^o's isometry,
    \begin{equation}\label{eq:Ergodic:DoobsMaximal}
        \begin{split}
            \mathbb{P}_{x}{}\left[{\frac{1}{t}\bigg|\int_0^t f(Z_s) ds\bigg|>\epsilon}\right] & \leq \frac{4}{t^2\epsilon^2}\expecdddd{x}{}{\bigg| \sum_{i=1}^d \int_0^t \frac{\partial \eta^f}{\partial x_i}(Z_s) dB^i_s\bigg|^2} = \frac{4}{t^2\epsilon^2}\sum_{i=1}^d\expecdddd{x}{}{  \int_0^t \Bigl|\frac{\partial \eta^f}{\partial x_i}(Z_s) \Bigl|^2 ds}\\
            & \leq \frac{4td\|\nabla\eta^f\|_{\infty}^2}{t^2\epsilon^2}= \frac{4d\|\nabla\eta^f\|_{\infty}^2}{t\epsilon^2}.
        \end{split}
    \end{equation}
    Finally, from Proposition \ref{prop:Ergodic:Fredholm}, there exists $C>0$ such that $\|\eta^f\|_{2,\alpha}\leq C\|f\|_{0,\alpha}$, which in particular implies that $\|\nabla\eta^f\|_{\infty}\leq C\|f\|_{0,\alpha}$, and the result follows.
\end{proof}

The following corollary follows immediately by applying the previous result to the function $f(\cdot)-\nu(f)$.

\begin{corollary}\label{corollary:Ergodic:QuadVariation}
    Let $f\in C^{0,\alpha}_p(\R^d)$ be a non-negative function with $f\not \equiv 0$. Then,
    \begin{equation}
        \inf_{x\in \R^d}\probbb{x}{}{\int_0^t f(Z_s)ds \to +\infty\ \mathrm{ as }\ t\to +\infty}=1.
    \end{equation}
\end{corollary}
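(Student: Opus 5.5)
The plan is to apply Lemma \ref{lemma:Ergodic:WeakErgodic} to the centred function $\tilde f := f - \nu(f)$. Then we upgrade the resulting convergence in probability to almost sure divergence, using the fact that $\int_0^t f(Z_s)\,ds$ is nondecreasing in $t$ because $f\geq 0$.

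The first step is to check that $\nu(f)>0$. By Proposition \ref{prop:Ergodic:InvariantMeasure}, $\nu$ is strictly positive and continuous on the compact torus, so $\nu\geq c_0>0$. Since $f$ is continuous, nonnegative and not identically zero, it is positive on some open set, and hence $\nu(f)=\int_{\TT^d} f\nu\,dx>0$. Moreover $\tilde f\in C^{0,\alpha}_p(\R^d)$ and $\nu(\tilde f)=0$, because $\nu$ is a probability density.

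Next we apply Lemma \ref{lemma:Ergodic:WeakErgodic} to $\tilde f$ with $\epsilon=\nu(f)/2$. For $t$ large, uniformly in $x$,
\begin{equation*}
\mathbb{P}_x\Bigl[\int_0^t f(Z_s)\,ds < \tfrac{1}{2}\nu(f)\,t\Bigr]\leq C\epsilon^{-2}t^{-1}.
\end{equation*}
This bound alone is not summable in $t$, so we pass along a subsequence. Take $t_n=n^2$; then $\sum_n C\epsilon^{-2}n^{-2}<\infty$. By Borel--Cantelli, $\mathbb{P}_x$-a.s. we have $\int_0^{t_n} f(Z_s)\,ds\geq \tfrac12\nu(f)n^2$ for all $n$ large, so the integral tends to $+\infty$ along $t_n$.

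Monotonicity of $t\mapsto\int_0^t f(Z_s)\,ds$ then gives divergence along all $t\to\infty$. Finally, the constant in the lemma does not depend on $x$, so the Borel--Cantelli sum is uniform in $x$ and the probability equals $1$ for every $x$; in particular the infimum over $x\in\R^d$ is $1$.

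There is no substantial obstacle. The only points needing care are the positivity of $\nu(f)$ and the use of monotonicity, which is what lets the subsequence argument replace any need for a strong ergodic theorem.
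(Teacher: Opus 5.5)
Your proposal is correct and follows the paper's route: the paper simply says the corollary follows immediately by applying Lemma \ref{lemma:Ergodic:WeakErgodic} to $f-\nu(f)$. Your check that $\nu(f)>0$, the Borel--Cantelli argument along $t_n=n^2$, and the monotonicity of $t\mapsto\int_0^t f(Z_s)\,ds$ supply exactly the details the paper leaves implicit; the uniformity in $x$ is not actually needed, since almost sure divergence for each fixed $x$ already makes the infimum equal to $1$.
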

We proceed to improve the bound in Lemma \ref{lemma:Ergodic:WeakErgodic}. Let $\nu$ be the invariant measure of $(\dot{Z}_t)_{t\geq 0}$ on $\TT^d$. By Proposition \ref{prop:Ergodic:Fredholm}, for $f\not \equiv 0$ with $\nu(f)=0$,  let $\eta^f \in C^2_p(\R^d)$ be a solution of $\mathcal{L}\eta^f = f$. Consider the continuous martingale
\begin{equation}
    M_t^f:= \eta^f(Z_t) - \eta^f(Z_0) -\int_0^t \mathcal{L}\eta^f(Z_s) ds  = \sum_{i=1}^d \int_0^t \frac{\partial \eta^f}{\partial x_i}(Z_s) dB^i_s,
\end{equation}
with quadratic variation
\begin{equation}
    \langle M^f \rangle_t := \int_0^t |\nabla \eta^f(Z_s)|^2 ds.
\end{equation}
Since $f\not \equiv 0$, by the equation $\mathcal{L}\eta^f = f$, $\nabla \eta^f \not\equiv 0$. Furthermore, observe that $|\nabla \eta^f|^2 \in C^{0,\alpha}_p(\R^d)$ since $\nabla \eta^f$ is Lipschitz, and for $x\neq y$,
\begin{equation*}
    \frac{\Big||\nabla \eta^f (x)|^2 - |\nabla \eta^f(y)|^2\Big |}{|x-y|^\alpha} \leq 2d \| \nabla \eta^f  \|_{\infty}\frac{\Big||\nabla \eta^f (x)| - |\nabla \eta^f(y)|\Big |}{|x-y|^\alpha}.
\end{equation*}
Hence, by Corollary \ref{corollary:Ergodic:QuadVariation}, for every $x\in \R^d$, $\prob{x}{}{\langle M^f \rangle_t \uparrow \infty}=1$. Thus, since $M_t^f$ is a continuous martingale with $\prob{x}{}{\langle M^f \rangle_t \uparrow \infty}=1$, we can apply the Dambis-Dubins-Schwarz theorem (see, for example, \cite[Theorem V.1.6]{MR1725357}), which implies that $M_t^f$ is a time change of a Brownian motion. Rigorously, set for every $t\geq 0$,
\begin{equation}\label{eq:Ergodic:DambisDubinsTimeChange}
    T^f_t:=\inf\{s\geq 0: \langle M^f \rangle_s>t\},
\end{equation}
which is an almost-surely finite stopping time. Then, $B^f_t:=M^f_{T_t^f}$ is a $\cF_{T_t^f}$-Brownian motion and $M^f_t=B^f_{\langle M^f \rangle_t}$. Now we improve the bound in Lemma \ref{lemma:Ergodic:WeakErgodic}.
\begin{proposition}\label{prop:Ergodic:ErgodicStrong}
    Let $\nu\in C^{2,\alpha}_p(\R^d)$ be the density of the unique invariant measure of $(\dot{Z}_t)_{t\geq 0}$ on $\TT^d$ and $f\in C^{0,\alpha}_p(\R^d)$ be such that $\nu(f)=0$. Fix $\epsilon>0$. There exist positive constants $C_1,C_2$ depending on $d$, $\|\sigma\|_{0,\alpha}$ and $\|b\|_{0,\alpha}$ such that, for all $t$ sufficiently large (depending on $d$, $\|\sigma\|_{0,\alpha}$, $\|b\|_{0,\alpha}$, $\|f\|_{0,\alpha}$ and $\epsilon$),
    \begin{equation}\label{eq:Ergodic:StrongErgodic}
        \sup_{x\in \R^d} \probbb{x}{}{\sup_{\ell \in [0,t]}\bigg|\int_0^\ell f(Z_s)ds \bigg|>t\epsilon}\leq C_1 \frac{\|f \|_{0,\alpha}}{\epsilon t^{1/2}} e^{-C_2t\epsilon^2/\|f\|_{0,\alpha}^2}.
    \end{equation}
\end{proposition}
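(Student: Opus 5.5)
We will follow the proof of Lemma \ref{lemma:Ergodic:WeakErgodic}, but replace Doob's $L^2$ inequality by the Dambis--Dubins--Schwarz representation introduced just before the statement. This turns the control of the martingale part into a Gaussian tail estimate for a Brownian maximum.

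\textbf{Step 1 (reduction to the martingale).} Let $\eta^f$ be the solution from Proposition \ref{prop:Ergodic:Fredholm}. By It\^o's formula, for every $\ell\in[0,t]$,
\begin{equation*}
\int_0^\ell f(Z_s)\,ds=\eta^f(Z_\ell)-\eta^f(Z_0)-M^f_\ell .
\end{equation*}
Hence $\sup_{\ell\le t}|\int_0^\ell f(Z_s)ds|\le 2\|\eta^f\|_\infty+\sup_{\ell\le t}|M^f_\ell|$. By \eqref{eq:Schauder:Fredholm}, $\|\eta^f\|_\infty\le C\|f\|_{0,\alpha}$. So once $t\ge 4C\|f\|_{0,\alpha}/\epsilon$, the event in \eqref{eq:Ergodic:StrongErgodic} is contained in $\{\sup_{\ell\le t}|M^f_\ell|>t\epsilon/2\}$. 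This condition is the ``$t$ sufficiently large'' requirement, and it depends on $\|f\|_{0,\alpha}$, $\epsilon$ and the constant $C$.

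\textbf{Step 2 (deterministic control of the clock).} The quadratic variation satisfies $\langle M^f\rangle_\ell\le \ell\,\|\nabla\eta^f\|_\infty^2\le t\,C^2\|f\|_{0,\alpha}^2=:\tau$ for $\ell\le t$, again using \eqref{eq:Schauder:Fredholm}. We write $M^f_\ell=B^f_{\langle M^f\rangle_\ell}$ with $B^f$ a Brownian motion. This is legitimate by the DDS discussion above; if $f\equiv0$ the statement is trivial, and in general one may enlarge the space, so the a.s. divergence of the clock is not even essential. It follows that $\sup_{\ell\le t}|M^f_\ell|\le \sup_{u\le \tau}|B^f_u|$.

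\textbf{Step 3 (Gaussian tail).} By the reflection principle and the Mills ratio bound $\bP[N>a]\le (a\sqrt{2\pi})^{-1}e^{-a^2/2}$, with $a=t\epsilon/(2\sqrt\tau)$,
\begin{equation*}
\bP\Big[\sup_{u\le\tau}|B_u|>t\epsilon/2\Big]\le 4\,\bP\big[N>t\epsilon/(2\sqrt{\tau})\big]\le \frac{8\sqrt{\tau}}{\sqrt{2\pi}\,t\epsilon}\,e^{-t^2\epsilon^2/(8\tau)} .
\end{equation*}
Substituting $\tau=tC^2\|f\|_{0,\alpha}^2$ gives exactly the form $C_1\|f\|_{0,\alpha}\epsilon^{-1}t^{-1/2}e^{-C_2t\epsilon^2/\|f\|_{0,\alpha}^2}$, with $C_1=8C/\sqrt{2\pi}$ and $C_2=1/(8C^2)$. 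Here $C$ depends only on $d,\|\sigma\|_{0,\alpha},\|b\|_{0,\alpha}$. The bound is uniform in $x$, which gives the supremum over $x\in\R^d$.

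The main point to check is Step 2: the supremum of the time-changed martingale must be bounded by the Brownian supremum up to a \emph{deterministic} time. This holds because the clock $\langle M^f\rangle_\ell$ is nondecreasing and uniformly bounded by $\tau$, so no stopping-time subtlety arises. Everything else is bookkeeping of the constants coming from Proposition \ref{prop:Ergodic:Fredholm}.
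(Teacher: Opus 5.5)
Your proposal is correct and follows the paper's proof essentially step for step. You reduce to the martingale part $M^f$ of $\eta^f(Z_\ell)$ using the bound $\|\eta^f\|_{2,\alpha}\le C\|f\|_{0,\alpha}$ from Proposition~\ref{prop:Ergodic:Fredholm}, dominate the Dambis--Dubins--Schwarz clock $\langle M^f\rangle_\ell$ by the deterministic time $t\,C^2\|f\|_{0,\alpha}^2$, and conclude with a Gaussian tail bound for the running maximum of Brownian motion; you simply make the reflection-principle and Mills-ratio constants explicit. One small point, shared with the paper: for the diffusion \eqref{eq:Intro:Diffusion} the martingale is really $M^f_\ell=\int_0^\ell \nabla\eta^f(Z_s)^T\sigma(Z_s)\,dB_s$, so $\langle M^f\rangle_\ell=\int_0^\ell (\nabla\eta^f)^TA\,\nabla\eta^f(Z_s)\,ds$ rather than $\int_0^\ell|\nabla\eta^f(Z_s)|^2ds$; this only inflates your $\tau$ by a factor depending on $d$ and $\|\sigma\|_\infty$, and so only changes $C_1,C_2$.
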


\begin{proof}
    As in the proof of Lemma \ref{lemma:Ergodic:WeakErgodic}, for $t \ge 4\|\eta^f\|_\infty/\epsilon$,
    \begin{equation*}
        \mathbb{P}_{x}{}\left[\sup_{\ell \in [0,t]}{\bigg|\int_0^\ell f(Z_s) ds\bigg|>t\epsilon}\right] \leq \probbb{x}{}{\sup_{\ell\in [0,t]}|M_\ell^f|>\frac{t \epsilon}{2}}.
    \end{equation*}
    By the time change in \eqref{eq:Ergodic:DambisDubinsTimeChange}, followed by a standard estimate for the Brownian motion, we obtain positive constants $C_1$ and $C_2$ only depending on $d$ such that
    \begin{equation}\label{eq:Ergodic:Strong2}
        \begin{split}
            \probbb{x}{}{\sup_{\ell\in [0,t]}|M_\ell^f|>\frac{t \epsilon}{2}} & = \probbb{x}{}{\sup_{\ell\in [0,t]}|B^f_{\langle M^f \rangle_\ell}|> \frac{t \epsilon}{2}}\leq \probbb{x}{}{\sup_{\ell\in [0,t]}|B^f_{\ell \| \ |\nabla \eta^f|^2\|_\infty}|>\frac{t \epsilon}{2}}\\ &\leq C_1 \frac{\|\nabla \eta^f \|_\infty}{\epsilon t^{1/2}} e^{-C_2t\epsilon^2/\|\nabla \eta^f\|_\infty^2}.
        \end{split}
    \end{equation}
    Finally, from Proposition \ref{prop:Ergodic:Fredholm}, there exists a positive constant $C=C(d,\|\sigma\|_{0,\alpha}, \|b\|_{0,\alpha})$ such that $\|\nabla \eta^f \|_\infty\leq C \|f\|_{0,\alpha}$, which gives the bound in \eqref{eq:Ergodic:StrongErgodic}.
\end{proof}

We finish this section with two applications, the first gives an asymptotic result for the periodic diffusion. For $i\in [d]$, let $b_i:\R^d \to \R$ be the $i$-th entry of the drift $b$. Let $\nu$ be the density of the invariant measure of $\dot{Z}_t$. For each $i$, let $\eta^{b_i}:\R^d\to \R$ be the a periodic solution of 
\begin{equation*}
    \mathcal{L} \eta^{b_i}(x) = b_i(x) - \nu(b_i),
\end{equation*}
which exists by Proposition \ref{prop:Ergodic:Fredholm}, and consider the vector-valued function $\eta^b := (\eta^{b_1},...,\eta^{b_d})$.

\begin{corollary}\label{corollary:Ergodic:AsymptoticDriftDiffusion}
    Fix $\epsilon>0$. There exists positive constants $C_1,C_2$ depending on $d$, $\|b\|_{0,\alpha}$ and $\|\sigma\|_{0,\alpha}$ such that, for all $t$ sufficiently large (depending on $d$, $\|b\|_{0,\alpha}$, $\|\sigma\|_{0,\alpha}$ and $\epsilon$),
    \begin{equation}
        \sup_{x\in \R^d} \mathbb{P}_{x}\bigg[\sup_{\ell \in [0,t]}\bigg| Z_\ell - Z_0 - \ell\nu(b)   \bigg|>t\epsilon\bigg]\leq C_1 t^{-1/2}e^{-C_2t\epsilon^2}.
    \end{equation}
\end{corollary}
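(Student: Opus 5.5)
The plan is to use the corrector $\eta^b=(\eta^{b_1},\dots,\eta^{b_d})$ to write the displacement $Z_\ell-Z_0-\ell\nu(b)$ as a bounded term plus a martingale. Each piece is then controlled coordinate-wise, either directly or with Proposition \ref{prop:Ergodic:ErgodicStrong}.

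For each $i\in[d]$, the SDE \eqref{eq:Intro:Diffusion} gives
\begin{equation*}
Z^i_\ell - Z^i_0 - \ell\nu(b_i) = \int_0^\ell \bigl(b_i(Z_s)-\nu(b_i)\bigr)\,ds + \sum_{j=1}^d\int_0^\ell \sigma_{ij}(Z_s)\,dB^j_s .
\end{equation*}
I will treat the two terms separately.

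The drift term is the simpler one. The function $f_i:=b_i-\nu(b_i)$ lies in $C^{0,\alpha}_p(\R^d)$, has $\nu(f_i)=0$, and satisfies $\|f_i\|_{0,\alpha}\le 2\|b\|_{0,\alpha}$. So Proposition \ref{prop:Ergodic:ErgodicStrong}, applied with $\epsilon/(2d)$ in place of $\epsilon$, gives
\begin{equation*}
\sup_x\bP_x\Bigl[\sup_{\ell\le t}\Bigl|\int_0^\ell f_i(Z_s)\,ds\Bigr|>\tfrac{t\epsilon}{2d}\Bigr]\le C_1' t^{-1/2}e^{-C_2' t\epsilon^2}
\end{equation*}
for $t$ large. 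The prefactor $\|f\|_{0,\alpha}/\epsilon$ and the dependence on $\|f_i\|_{0,\alpha}$ can be absorbed into constants depending on $\|b\|_{0,\alpha}$. If $f_i\equiv 0$ there is nothing to prove, and this also covers the case where Proposition \ref{prop:Ergodic:ErgodicStrong} is stated only for $f\not\equiv0$.

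For the stochastic integral $N^i_\ell:=\sum_j\int_0^\ell\sigma_{ij}(Z_s)\,dB^j_s$, I will apply the Dambis--Dubins--Schwarz argument used in the proof of Proposition \ref{prop:Ergodic:ErgodicStrong} directly. Its quadratic variation is $\int_0^\ell a_{ii}(Z_s)\,ds$, which by \textbf{\ref{AssumptionA:SigmaEllipticity}} lies between $\theta\ell$ and $\theta^{-1}\ell$; in particular it tends to infinity, so the time change is legitimate. Hence $\sup_{\ell\le t}|N^i_\ell|\le \sup_{u\le \theta^{-1}t}|W_u|$ for a Brownian motion $W$, and the reflection principle gives
\begin{equation*}
\bP\Bigl[\sup_{u\le\theta^{-1}t}|W_u|>\tfrac{t\epsilon}{2d}\Bigr]\le C t^{-1/2}\epsilon^{-1}e^{-c\theta t\epsilon^2/d^2}.
\end{equation*}
The factor $\epsilon^{-1}$ is absorbed for fixed $\epsilon$, since $t$ may be taken large depending on $\epsilon$. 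Alternatively, because $|a_{ii}|\le\|\sigma\|_\infty^2$, the constants can be made to depend on $\|\sigma\|_{0,\alpha}$ rather than on $\theta$.

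A union bound over $i\in[d]$ and over the two terms then yields the claim, since the event $|Z_\ell-Z_0-\ell\nu(b)|>t\epsilon$ forces some coordinate term to exceed $t\epsilon/(2d)$.

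The only step needing care is the bookkeeping of constants. The statement asks for $C_1,C_2$ that depend only on $d$, $\|b\|_{0,\alpha}$ and $\|\sigma\|_{0,\alpha}$, with the factor $\epsilon^{-1}$ absorbed by the choice of large $t$. That requires tracking how the constant in Proposition \ref{prop:Ergodic:Fredholm} (through $\|\nabla\eta^{f_i}\|_\infty$) enters. No real obstacle is expected beyond this.
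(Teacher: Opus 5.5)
Your proposal is correct and follows the paper's route. You decompose the displacement into $\int_0^\ell (b(Z_s)-\nu(b))\,ds$ plus $\int_0^\ell\sigma(Z_s)\,dB_s$, apply Proposition \ref{prop:Ergodic:ErgodicStrong} coordinate-wise to $b_i-\nu(b_i)$, bound each stochastic integral with a Dambis--Dubins--Schwarz time change and a Gaussian tail estimate, and finish with a union bound over coordinates.

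The differences are cosmetic. You justify the time change with the deterministic bound $\theta\ell\le\langle N^i\rangle_\ell$ from \textbf{\ref{AssumptionA:SigmaEllipticity}} rather than with Corollary \ref{corollary:Ergodic:QuadVariation}. You also make explicit two points the paper leaves implicit: the trivial case $f_i\equiv 0$, and absorbing the $\epsilon^{-1}$ prefactor by taking $t$ large. Finally, the corrector $\eta^b$ in your opening sentence only enters through Proposition \ref{prop:Ergodic:ErgodicStrong}, so you never use it directly.
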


\begin{proof}
    Notice that $Z_t - Z_0 -  t\nu(b)=  \int_0^t (b(Z_s) - \nu(b))ds + \int_0^t \sigma(Z_s)dB_s$. Hence,
    \begin{equation*}
        \begin{split}
            \mathbb{P}_{x}\bigg[\sup_{\ell \in [0,t]}\bigg| Z_\ell - Z_0 -& \ell\nu(b)  \bigg|>t\epsilon\bigg] \\ &\leq \mathbb{P}_{x}\bigg[\sup_{\ell \in [0,t]}\bigg| \int_0^\ell (b(Z_s) - \nu(b))ds \bigg|>\frac{t \epsilon}{2}\bigg] + \mathbb{P}_{x}\bigg[\sup_{\ell \in [0,t]}\bigg| \int_0^\ell \sigma(Z_s)dB_s \bigg|>\frac{t \epsilon}{2}\bigg]\\
            & \leq \sum_{i=1}^d \mathbb{P}_{x}\bigg[\sup_{\ell \in [0,t]}\bigg| \int_0^\ell (b_i(Z_s) - \nu(b_i))ds \bigg|>\frac{t \epsilon}{2d}\bigg]+ \sum_{i=1}^d\mathbb{P}_{x}\bigg[\sup_{\ell \in [0,t]}| M^i_s |>\frac{t \epsilon}{2d}\bigg],
        \end{split}
    \end{equation*}
    where we let $M^i_\ell$ be the $i$-th entry of $\int_0^\ell \sigma(Z_s)dB_s$. Next, we apply Proposition \ref{prop:Ergodic:ErgodicStrong} to bound each term of the first sum on the right-hand side of the expression above. For each $i\in [d]$, for $t \ge 8d\|\eta^{b_i}\|_\infty/\epsilon$, and for some positive constants $C_1$ and $C_2$ depending on $d$, $\|b\|_{0,\alpha}$ and $\|\sigma\|_{0,\alpha}$,
    \begin{equation*}
        \mathbb{P}_{x}\bigg[\sup_{\ell \in [0,t]}\bigg| \int_0^\ell (b_i(Z_s) - \nu(b_i))ds \bigg|>\frac{t \epsilon}{2d}\bigg] \leq  C_1 \frac{2d\|b_i \|_{0,\alpha}}{\epsilon t^{1/2}} e^{-C_2t\epsilon^2/(4d^2\| b_i\|_{0,\alpha}^2)}.
    \end{equation*}
    To bound each term of the second sum, observe that the quadratic variation of each of the martingales $M^i$ is given by
    \begin{equation*}
        \langle M^i \rangle_t = \sum_{k=1}^d \int_0^t \sigma^2_{ik}(Z_s) ds.
    \end{equation*}
    By \textbf{\ref{AssumptionA:SigmaEllipticity}}, $\sigma$ does not have a row which is identically zero, so $\sum_{k=1}^d \sigma^2_{ik} \not\equiv 0$. Since $M^i$ is a continuous martingale, and by Corollary \ref{corollary:Ergodic:QuadVariation}, we can employ the Dambis-Dubins-Schwarz theorem. Employing the time change introduced in \eqref{eq:Ergodic:DambisDubinsTimeChange}, as in \eqref{eq:Ergodic:Strong2}, we obtain different positive constants $C_1$ and $C_2$ depending on $d$, such that, for every $i\in [d]$ and $t>0$,
    \begin{equation*}
        \mathbb{P}_{x}\bigg[\sup_{\ell \in [0,t]}| M^i_s |>\frac{t \epsilon}{2d}\bigg] \leq C_1 \frac{\|\sigma\|_{\infty}}{\epsilon t^{1/2}} e^{-C_2t\epsilon^2/\|\sigma\|_{\infty}^2}.
    \end{equation*}
    Plugging these bounds into the first inequality the conclusion of the lemma follows.
\end{proof}

The second application is the following bound for the tail of the branching time $\tau_\emptyset$. This can be proved using the same argument given in \cite[Lemma 2.2]{addarioberry2025shapetheorembbmperiodic} replacing the ergodic result for the Brownian motion in the torus by Proposition \ref{prop:Ergodic:ErgodicStrong} so we omit the details.

\begin{corollary}\label{corollary:tailBranchingTime}
Let $\tau_\emptyset$ the first branching time of the periodic branching diffusion. There exist positive constants $\Gamma$ and $\Theta$ depending on $\sigma$, $b$ and $g$, such that, for every $t\geq 0$,
    \begin{equation*}
        \Theta^{-1}e^{-t\Gamma}\leq \inf_{x\in \TT^d}\proba{x}{}{\tau_\emptyset>t}\leq \sup_{x\in \TT^d}\proba{x}{}{\tau_\emptyset>t} \leq \Theta e^{-t\Gamma}.
    \end{equation*}
\end{corollary}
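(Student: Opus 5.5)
The plan is to get both bounds from a single positive periodic principal eigenfunction. This produces the \emph{same} exponential rate $\Gamma$ on both sides, with only the prefactor $\Theta$ differing. The starting point is the identity
\begin{equation*}
\proba{x}{}{\tau_\emptyset>t}=\bE_x\Big[e^{-\int_0^t g(Z_s)\,ds}\Big],
\end{equation*}
which holds because, conditionally on the trajectory of $\emptyset$, the branching time has instantaneous rate $g(Z_s)$. This turns the problem into the long-time behaviour of a Feynman--Kac semigroup with potential $K=-g$.

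\textbf{Step 1 (eigenfunction).} Apply \cite[Theorem 4.11.1]{MR1326606} to the periodic operator $\mathcal{L}-g$, i.e.\ with potential $V=-g\in C^{0,\alpha}_p(\R^d)$, in the same way Proposition \ref{prop:Ergodic:InvariantMeasure} did with $V=0$. This gives a principal eigenvalue, which I write as $-\Gamma$, together with a strictly positive eigenfunction $\varphi\in C^{2,\alpha}_p(\R^d)$ satisfying
\begin{equation*}
\mathcal{L}\varphi-g\varphi=-\Gamma\varphi .
\end{equation*}
Set $\Theta:=\max\varphi/\min\varphi\in[1,\infty)$; this is finite by periodicity and positivity.

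\textbf{Step 2 (Feynman--Kac identity).} The function $u(t,x)=e^{-\Gamma t}\varphi(x)$ is a classical solution of \eqref{eq:FK} with $K=-g$ and $q_0=\varphi$. It is bounded on $[0,T]\times\R^d$, and its derivatives in $t$ and $x$ up to second order in $x$ are bounded, since $\varphi\in C^{2,\alpha}_p$. The Feynman--Kac representation \eqref{eq:FeynKac} therefore applies and gives
\begin{equation*}
e^{-\Gamma t}\varphi(x)=\bE_x\Big[\varphi(Z_t)e^{-\int_0^t g(Z_s)ds}\Big].
\end{equation*}
Bounding $\varphi(Z_t)$ above by $\max\varphi$ and below by $\min\varphi$ yields
\begin{equation*}
\frac{\varphi(x)}{\max\varphi}e^{-\Gamma t}\le \proba{x}{}{\tau_\emptyset>t}\le \frac{\varphi(x)}{\min\varphi}e^{-\Gamma t}.
\end{equation*}
Taking the infimum and supremum over $x$ then gives exactly the stated two-sided bound, with constants $\Theta^{-1}$ and $\Theta$.

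\textbf{Step 3 ($\Gamma>0$).} This is the only real point, and the main obstacle is ensuring that $g\not\equiv0$ forces strict decay. I would argue as in \cite[Lemma 2.2]{addarioberry2025shapetheorembbmperiodic}.
\begin{itemize}
\item Let $\nu$ be the invariant density. Since $g\ge0$, $g\not\equiv0$, and $\nu>0$, we have $\nu(g)>0$.
\item Proposition \ref{prop:Ergodic:ErgodicStrong}, applied to $g-\nu(g)$ with $\epsilon=\nu(g)/2$, gives for large $t$
\begin{equation*}
\sup_x\bE_x\Big[e^{-\int_0^t g(Z_s)ds}\Big]\le e^{-t\nu(g)/2}+C_1e^{-C_2t}\longrightarrow 0 .
\end{equation*}
\item Combining this with the lower bound of Step 2, $\Theta^{-1}e^{-\Gamma t}\le \sup_x\proba{x}{}{\tau_\emptyset>t}$, shows that $e^{-\Gamma t}\to0$, hence $\Gamma>0$.
\end{itemize}
Finally, the constants depend only on $\sigma$, $b$ and $g$, through $\varphi$ and $\Gamma$.
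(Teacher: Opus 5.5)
Your proof is correct, but it is organized differently from the paper's. The paper does not write out an argument. It refers to the proof of \cite[Lemma 2.2]{addarioberry2025shapetheorembbmperiodic}, and the only new ingredient it names is the exponential ergodic estimate of Proposition~\ref{prop:Ergodic:ErgodicStrong}. So there, the decay of $\mathbf{P}_x[\tau_\emptyset>t]=\bE_x[e^{-\int_0^t g(Z_s)ds}]$ is driven by concentration of $\int_0^t g(Z_s)\,ds$ around $t\nu(g)$. In your argument that estimate only plays the auxiliary role of showing $\Gamma>0$, and both bounds come from Feynman--Kac applied to a positive periodic eigenfunction of $\mathcal{L}-g$.

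The concentration route uses nothing beyond Section~\ref{sec:Ergodic} and gives the upper bound $\sup_x\mathbf{P}_x[\tau_\emptyset>t]\le e^{-t\nu(g)/2}+C_1e^{-C_2t}$ (for large $t$) directly. A lower bound must then come from elsewhere. The obvious candidates are $g\le\|g\|_\infty$, or Jensen combined with the Poisson equation, which gives $e^{-t\nu(g)-2\|\eta^{g-\nu(g)}\|_\infty}$. Both have in general a different rate, so a common $\Gamma$ on both sides is not automatic. Your route identifies the exact decay rate ($-\Gamma$ is the principal eigenvalue of $\mathcal{L}-g$) and yields matching bounds with the explicit $\Theta=\max\varphi/\min\varphi$, which is precisely the form of the statement. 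The price is invoking Proposition~\ref{prop:PrincipalEigenEx} with potential $-g$.

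Two small remarks. First, positivity of $\Gamma$ does not even need ergodic input: integrating $\mathcal{L}\varphi-g\varphi=-\Gamma\varphi$ against the invariant density and using $\mathcal{L}^*\nu=0$ gives $\Gamma=\nu(g\varphi)/\nu(\varphi)>0$. Second, in Step 2 the global-in-time boundedness of $\partial_t u$ demanded by the version of \eqref{eq:FeynKac} stated in the paper needs $\Gamma\ge 0$, which you only establish afterwards. It holds by the maximum principle at a maximum point of $\varphi$. Alternatively, you can bypass the issue by applying It\^o's formula to $e^{\Gamma s-\int_0^s g(Z_r)dr}\varphi(Z_s)$ on $[0,t]$, whose martingale part has bounded integrand for either sign of $\Gamma$.
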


\subsection{The principal eigenvalue}\label{sec:EigenvalueProblem}

In this section we list properties of the principal eigenvalue $\gamma(\zeta)$ of \eqref{eq:Intro:EigenvalueProblem}, its associated eigenfunction $\psi(\cdot; \zeta)$, and of $\css(e)$ defined in \eqref{eq:Intro:SpeedRelativeTo} which will be used through this work. The following proposition justifies the definition of the principal eigenvalue given in \eqref{eq:Intro:EigenvalueProblem}.

\begin{proposition}\cite[Theorem 4.11.1, parts (vi) and (viii)]{MR1326606}\label{prop:PrincipalEigenEx}
    Let $R\in C^{0,\alpha}_p(\R^d)$. Fix $\zeta\in \R^d$, and consider the eigenvalue problem \eqref{eq:Intro:EigenvalueProblem}. Then, the principal eigenvalue $\gamma(\zeta)$ exists, that is, there is a maximal real number $\gamma(\zeta)$, for which we can find a positive principal eigenfunction $\psi(\cdot ;\zeta)\in C^2_p(\R^d)$ satisfying \eqref{eq:Intro:EigenvalueProblem}. Moreover, $\psi(\cdot ;\zeta)\in C_p^{2,\alpha}(\R^d)$ and the principal eigenfunction is unique up to rescaling by a multiplicative constant. 
\end{proposition}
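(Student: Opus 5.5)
The plan is to reduce the statement to \cite[Theorem 4.11.1]{MR1326606}, in the same way as the proof of Proposition \ref{prop:Ergodic:InvariantMeasure}. That theorem concerns periodic operators in divergence form with a zeroth order potential, so the first step is to put $\mathcal{L}^\zeta + K^\zeta$ in that form.

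\textbf{Step 1: divergence form.} Using $A=\sigma\sigma^T$ and the drift $\kappa$ from \eqref{eq:Ergodic:DivDrift}, we can write
\begin{equation*}
\mathcal{L}^\zeta\psi + K^\zeta\psi = \tfrac12\mathrm{div}(A\nabla\psi) + (\kappa + A\zeta)\cdot\nabla\psi + K^\zeta\psi .
\end{equation*}

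\textbf{Step 2: regularity of the coefficients.} We check that these coefficients meet hypothesis $\Tilde{H}$ of \cite[Section 3.7]{MR1326606}.
\begin{itemize}
\item By \textbf{\ref{AssumptionA:bSigmaRegularity}}, $A\in C^{2,\alpha}_p$.
\item The drift $\kappa+A\zeta$ lies in $C^{1,\alpha}_p$.
\item By \textbf{\ref{AssumptionA:SigmaEllipticity}}, $A$ is uniformly elliptic.
\item The potential $K^\zeta=\tfrac12\zeta^TA\zeta+\zeta\cdot b+R$ lies in $C^{0,\alpha}_p$, since $R\in C^{0,\alpha}_p$ by hypothesis.
\end{itemize}
In the notation of \cite[Theorem 4.11.1]{MR1326606}, $L_0$ is the operator $\tfrac12\mathrm{div}(A\nabla\cdot)+(\kappa+A\zeta)\cdot\nabla$ and $V=K^\zeta$.

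\textbf{Step 3: read off the conclusions.} Part (vi) of that theorem gives a real eigenvalue $\lambda_0$ with the largest real part among the spectrum of $L_0+V$ on the torus. Its eigenfunction is strictly positive, periodic and of class $C^{2,\alpha}$. Part (viii) gives simplicity: the eigenspace is one dimensional, so the eigenfunction is unique up to scaling. We set $\gamma(\zeta):=\lambda_0$ and $\psi(\cdot;\zeta)$ equal to this eigenfunction.

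\textbf{Step 4: maximality.} It remains to show that no larger real $\gamma'$ admits a non-trivial periodic $C^2$ solution. Any such solution is an eigenfunction of the operator on the torus, and $\lambda_0$ dominates the real parts of all eigenvalues, so $\gamma'\le\gamma(\zeta)$.

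If one wants a self-contained argument, one can use Feynman--Kac instead. For a solution $\phi$ with $\phi(x_0)>0$, the formula \eqref{eq:FeynKac} together with positivity gives
\begin{equation*}
e^{t\gamma'}\phi \le \|\phi\|_\infty\,\psi\, e^{t\gamma}/\min\psi ,
\end{equation*}
which forces $\gamma'\le\gamma$. If $\phi<0$ everywhere, apply the same argument to $-\phi$.

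\textbf{Main obstacle.} The delicate point is the translation between the paper's non-divergence operator and the form used in \cite{MR1326606}. One must check that the divergence rewriting is legitimate, and this is where the extra $C^{2,\alpha}$ regularity of $\sigma$ is needed, so that $\kappa\in C^{1,\alpha}$. One must also confirm that "principal" in that reference means the maximal real eigenvalue with positive eigenfunction, which is the sense used here.
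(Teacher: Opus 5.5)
Your proposal is correct and essentially matches the paper, which offers no argument beyond citing \cite[Theorem 4.11.1 (vi),(viii)]{MR1326606}. The divergence-form rewriting with $\kappa+A\zeta$ is the same translation the paper uses for the invariant-measure proposition and in its discussion of Assumption \textbf{(A1)}. Your optional Feynman--Kac argument for maximality is a valid self-contained supplement, with one small fix: the second case should be ``$\phi\le 0$ and $\phi\not\equiv 0$'' (i.e.\ ``otherwise''), not ``$\phi<0$ everywhere''.
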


\begin{remark}
    The argument given in \cite[Theorem 4.11.1]{MR1326606} can be generalized to show that, if $R\in C^{k,\alpha}_p(\R^d)$ for a given integer $k\geq 0$, $\psi(\cdot ;\zeta)\in C_p^{k+2,\alpha}(\R^d)$.
\end{remark}

For fixed $\zeta \in \R^d$, fix a positive principal  eigenfunction $\psi(\cdot ;\zeta)$ as in the proposition above. Since $\psi(\cdot ;\zeta)$ is $\Z^d$-periodic and twice differentiable, it achieves its minimum and maximum. In particular, we can rescale in such a way that
    \begin{equation}\label{eq:BoundEigenfunction}
        \int_{\TT^d}\psi(x;\zeta)dx =1 , \ \ \inf_{x\in \R^d}\psi(x ;\zeta) > 0 \ \mathrm{and} \ \sup_{x\in \R^d}\psi(x;\zeta) < +\infty.  
    \end{equation}
    Furthermore, for the same strictly positive eigenfunction $\psi(\cdot\ ;\zeta)$, 
    \begin{equation}\label{eq:BoundEigenfunctionQuotient}
        \inf_{x,y\in \R^d}\frac{\psi(x ;\zeta)}{\psi(y ;\zeta)} \geq \frac{\inf_{x\in \R^d}\psi(x ;\zeta)}{\sup_{y\in \R^d}\psi(y ;\zeta)} > 0 \ \ \mathrm{and} \ \sup_{x,y\in \R^d}\frac{\psi(x ;\zeta)}{\psi(y ;\zeta)} \leq \frac{\sup_{x\in \R^d}\psi(x ;\zeta)}{\inf_{y\in \R^d}\psi(y ;\zeta)} < +\infty.  
    \end{equation}

Now, we recall the following properties of $\gamma$. The proofs  essentially follows the same logic as the one given in \cite[Proposition 2.6]{addarioberry2025shapetheorembbmperiodic}, and we provide the necessary modifications.

\begin{proposition}\label{Prop:PropertiesGamma}
    Fix $\alpha \in (0,1)$ and let the effective branching rate $R\in C^{0,\alpha}_p(\R^d)$ defined in \eqref{eq:Intro:EffectiveBranching} be such that $R \ge 0$ and $R\not\equiv 0$. Then $\gamma(\zeta)=\gamma(\zeta;b,\sigma,R)$ satisfies the following properties.
    \begin{enumerate}[(i)]
        \item The function $\gs$ belongs to $C^2(\R^d)$, is strictly convex, and for $\zeta\in \R^d$,
        \begin{equation}\label{eq:QuadraticBoundsGamma}
             \gamma(\zeta) \in \left[\Big(\min_{x\in \R^d}R(x)\Big) + \frac{\theta^{-1} |\zeta|^2}{2} - |\zeta| \| b\|_\infty ,  \Big(\max_{x\in \R^d}R(x)\Big) + \frac{\theta |\zeta|^2}{2} + |\zeta| \| b\|_\infty  \right],
        \end{equation}
        where we recall that $\theta$ is a positive constant such that $0<\theta^{-1} \mathrm{Id}<\sigma \sigma^T<\theta \mathrm{Id}$ (see \textbf{\ref{AssumptionA:SigmaEllipticity}}).
        \item Let $\nu(\cdot;0):\TT^d\to \R$ be the density on the torus of the invariant measure of a periodic diffusion with generator $\mathcal{L}^0$ as defined in \eqref{eq:Intro:GeneratorExponential}. Recall that $\nu(R;0)$ denotes the integral $\int_{\TT^d}R(x)\nu(x;0)dx$. We have that $\gamma(0)\geq \nu(R;0)>0$, and for $e\in S^{d-1}$,
        \begin{equation}\label{eq:limGamma}
            \begin{split}
                &\lim_{\lambda \rightarrow 0^+ }\frac{\gs(\lambda e)}{\lambda}=\lim_{\lambda \rightarrow + \infty }\frac{\gs(\lambda e)}{\lambda}=+\infty, \ \mathrm{ and } \ \lim_{\lambda \rightarrow 0^- }\frac{\gs(\lambda e)}{\lambda}= \lim_{\lambda \rightarrow - \infty }\frac{\gs(\lambda e)}{\lambda}=-\infty.
            \end{split}
        \end{equation}
        \item For all $e\in S^{d-1}$, there exists a unique $\lambda_e\in (0,\infty)$ such that 
        \begin{equation*}
            \inf_{\lambda>0} \frac{\gs(\lambda e)}{\lambda}=\frac{\gs(\lambda_e e)}{\lambda_e}.
        \end{equation*}
        \item At every $\zeta\in \R^d$, the Hessian $\nabla^2 \gamma (\zeta)$ is invertible.
    \end{enumerate}
\end{proposition}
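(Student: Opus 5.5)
Most of the work comes from the probabilistic representation of $\gamma$. I would start from the Feynman--Kac formula \eqref{eq:FeynKac} for the tilted problem. For fixed $\zeta$, the function $u(t,x)=e^{\zeta\cdot x}\psi(x;\zeta)e^{\gamma(\zeta)t}$ solves $\partial_t u=\mathcal{L}u+Ru$. Together with the bounds \eqref{eq:BoundEigenfunctionQuotient}, this gives, uniformly in $x$,
\begin{equation*}
\gamma(\zeta)=\lim_{t\to\infty}\frac1t\log \bE_x\Big[e^{\zeta\cdot(Z_t-x)+\int_0^t R(Z_s)ds}\Big].
\end{equation*}

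\textbf{Part (i).} Convexity follows from H\"older's inequality in $\zeta$, since each prelimit is convex. The quadratic bounds \eqref{eq:QuadraticBoundsGamma} come from two applications of the maximum principle to \eqref{eq:Intro:EigenvalueProblem}. At a maximum point of $\psi$ we have $\nabla\psi=0$ and $A\nabla^2\psi\le0$ in trace, so $\gamma\le K^\zeta(x_{\max})$. At a minimum point we get $\gamma\ge K^\zeta(x_{\min})$. Then one bounds $K^\zeta$ using \textbf{\ref{AssumptionA:SigmaEllipticity}} and $|\zeta\cdot b|\le|\zeta|\|b\|_\infty$.

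For the $C^2$ regularity I would use analytic perturbation theory. Conjugating by $e^{\zeta\cdot x}$ gives the operators $\mathcal{L}^\zeta+K^\zeta$, which form a holomorphic family of type (A) on $C^{2,\alpha}_p$ in $\zeta\in\mathbb{C}^d$. The principal eigenvalue is simple and isolated by Proposition \ref{prop:PrincipalEigenEx} (Krein--Rutman). Hence $\gamma$ is real-analytic in each variable separately, and by a Siciak/Hartogs-type argument it is jointly analytic; in particular $\gamma\in C^2$. Strict convexity will follow from (iv), so I would prove (iv) before it.

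\textbf{Part (iv), the main obstacle.} Differentiating \eqref{eq:Intro:EigenvalueProblem} twice at $\zeta$ and pairing with the adjoint principal eigenfunction $\phi^*$ (normalized so that $\phi^*\psi$ is a probability density $\pi$) gives $\nabla\gamma(\zeta)=\pi(A\zeta+b+A\nabla\log\psi)$. The Hessian is given by a Green--Kubo-type formula
\begin{equation*}
\xi^T\nabla^2\gamma(\zeta)\xi=\int_{\TT^d}|\sigma^T(\xi+\nabla w_\xi)|^2\,\pi\,dx,
\end{equation*}
where $w_\xi$ solves a Poisson equation (Proposition \ref{prop:Ergodic:Fredholm}) for the Doob-transformed diffusion with generator $\mathcal{L}^\zeta+2^{-1}\cdot$(drift from $\log\psi$). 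This diffusion is again periodic and uniformly elliptic. Because $w_\xi$ is periodic, $\xi+\nabla w_\xi$ cannot vanish identically, since its integral over $\TT^d$ equals $\xi\ne0$. Combined with ellipticity and $\pi>0$, this gives positive definiteness. The hard step is deriving this formula rigorously, which requires the derivatives of $\psi(\cdot;\zeta)$ in $C^{2,\alpha}$; these come from the analytic perturbation theory above. An alternative route to the same formula is the martingale CLT for the tilted diffusion, using Proposition \ref{prop:Ergodic:ErgodicStrong}. Positive definiteness then yields strict convexity in (i).

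\textbf{Parts (ii) and (iii).} For (ii), integrate \eqref{eq:Intro:EigenvalueProblem} at $\zeta=0$ against $\nu(\cdot;0)$, or use Jensen's inequality in the Feynman--Kac limit together with the ergodic theorem (Lemma \ref{lemma:Ergodic:WeakErgodic}). This gives $\gamma(0)\ge\nu(R;0)$, and $\nu(R;0)>0$ because $R\ge0$, $R\not\equiv0$ and $\nu>0$. The limits \eqref{eq:limGamma} follow from $\gamma(0)>0$, continuity, and the quadratic growth in \eqref{eq:QuadraticBoundsGamma}. For (iii), $h(\lambda)=\gamma(\lambda e)/\lambda$ is continuous on $(0,\infty)$ and tends to $+\infty$ at both ends by (ii), so a minimizer exists. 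A critical point satisfies $\lambda\,\partial_\lambda\gamma(\lambda e)-\gamma(\lambda e)=0$. The map $\lambda\mapsto \lambda\,\partial_\lambda\gamma(\lambda e)-\gamma(\lambda e)$ has derivative $\lambda\, e^T\nabla^2\gamma(\lambda e) e>0$, so it is strictly increasing and has at most one zero. This gives uniqueness of $\lambda_e$.
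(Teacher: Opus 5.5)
Your proposal is correct, but it takes a genuinely different and more self-contained route than the paper. The paper's proof is almost entirely by citation. It takes $\gamma\in C^2$, strict convexity and $\lambda^T\nabla^2\gamma(\zeta)\lambda>0$ from Pinsky's Theorem~8.2.10. It gets \eqref{eq:QuadraticBoundsGamma} and $\gamma(0)\ge\nu(R;0)$ from the Feynman--Kac representation, via contradiction arguments imported from the $g$-BBM paper (using Lemma~\ref{lemma:Ergodic:WeakErgodic} and Proposition~\ref{prop:Ergodic:ErgodicStrong}). It reads (iii) off from strict convexity.

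You argue differently at each step:
\begin{enumerate}[(a)]
\item You obtain \eqref{eq:QuadraticBoundsGamma} from the maximum principle at the extrema of $\psi(\cdot;\zeta)$. This is shorter and needs no ergodic input.
\item You obtain $\gamma(0)\ge\nu(R;0)$ by Jensen's inequality in $\gamma(0)=\lim_{t}t^{-1}\log\bE_x[e^{\int_0^tR(Z_s)ds}]$. Bounded convergence upgrades Lemma~\ref{lemma:Ergodic:WeakErgodic} to convergence of $t^{-1}\bE_x[\int_0^tR(Z_s)ds]$.
\item You essentially reprove Pinsky's theorem. You use type (A) perturbation theory, which the paper only invokes later, in the appendix, for analyticity. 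You also use the variance formula $\xi^T\nabla^2\gamma(\zeta)\xi=\int_{\TT^d}|\sigma^T(\xi+\nabla w_\xi)|^2\pi\,dx$, which is the same formula the paper quotes in the proof of Lemma~\ref{lemma:LegendreTransformGamma}(iii).
\item Your uniqueness argument in (iii) is the monotonicity argument the paper uses in Lemma~\ref{lemma:DifferentiabilityLambda_e}.
\end{enumerate}
Your route buys independence from the reference and analyticity at no extra cost. It also gives a quantitative bound: since $\int_{\TT^d}(\xi+\nabla w_\xi)\,dx=\xi$, Jensen gives $\xi^T\nabla^2\gamma(\zeta)\xi\ge c\,(\min_{\TT^d}\pi)\,|\xi|^2$, with $c$ the lower ellipticity constant. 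The price is that you must actually establish differentiability of $\zeta\mapsto\psi(\cdot;\zeta),\Tilde{\psi}(\cdot;\zeta)$ in $C^{2,\alpha}$, which is precisely what the paper outsources.

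A few points need tightening.
\begin{enumerate}[(a)]
\item In (ii), integrating \eqref{eq:Intro:EigenvalueProblem} itself against $\nu(\cdot;0)$ only yields $\gamma(0)=\int R\psi\nu/\int\psi\nu$. That gives $\gamma(0)>0$ but not $\gamma(0)\ge\nu(R;0)$. You would need to integrate the equation for $\log\psi$, namely $\mathcal{L}\log\psi+\tfrac12|\sigma^T\nabla\log\psi|^2+R=\gamma(0)$, to get $\gamma(0)=\nu(R;0)+\tfrac12\nu(|\sigma^T\nabla\log\psi|^2;0)$. Your Jensen alternative is fine as it stands.
\item Separate real-analyticity in each coordinate does not imply joint analyticity. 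Use separate holomorphy on a complex polydisc together with Hartogs, or directly the jointly holomorphic Riesz projection in $\zeta\in\mathbb{C}^d$. Your type (A) setup provides both.
\item The Doob transform of $\mathcal{L}^\zeta+K^\zeta-\gamma(\zeta)$ by $\psi$ adds the drift $A\nabla\log\psi$ with coefficient one. That is, it is $\mathcal{K}^\zeta$ from \eqref{eq:GeneratorTilting}, with no factor $\tfrac12$.
\item Since $e^{\zeta\cdot x}\psi$ is unbounded, justify the limit formula for $\gamma(\zeta)$ through the exponential martingale of Lemma~\ref{lemma:ManyToOne:ExponentialMartingale2} (with Novikov), rather than \eqref{eq:FeynKac} as stated.
\end{enumerate}
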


\begin{proof}
        \textit{(i)} The fact that $\gamma(\cdot)$ belongs to $C^2(\R^d)$ and is strictly convex is a direct consequence of \cite[Theorem 8.2.10 (i) (iii)]{MR1326606}. For \eqref{eq:QuadraticBoundsGamma}, fix $\zeta\in \R^d$. Let $Z^\zeta_t$ be a periodic diffusion with generator $\mathcal{L}^\zeta$ as in \eqref{eq:Intro:GeneratorExponential}. Let $\gamma(\zeta)$ be the principal eigenvalue of \eqref{eq:Intro:EigenvalueProblem}, and $\psi(\cdot\ ;\zeta)$ the strictly positive principal eigenfunction satisfying \eqref{eq:BoundEigenfunction}. By the Feynman-Kac formula, for every $x\in \R^d$ and $t>0$,
        \begin{equation}\label{eq:FeynmanKacApp1}
            \psi(x;\zeta) = \mathbb{E}_x\left[ \psi(Z^\zeta_t;\zeta) e^{\int_0^t K^\zeta(Z_s^\zeta) ds - \gamma(\zeta)t} \right],
        \end{equation}
        where $K^\zeta$ is defined in \eqref{eq:Intro:GeneratorExponential}. From the definition of $K^\zeta$, for every $y\in \R^d$,
        \begin{equation*}
            \frac{1}{2}\theta^{-1}|\zeta|^2 - |\zeta|\|b\|_\infty + \Big(\min_{x\in \R^d}R(x)\Big) \leq K^\zeta(y) \leq \frac{1}{2}\theta|\zeta|^2 + |\zeta|\|b\|_\infty + \Big(\max_{x\in \R^d}R(x)\Big).
        \end{equation*}
        With this at hand, the bounds for $\gamma$ are obtained by a contradiction argument as in the proof of \cite[Proposition 2.6(i)]{addarioberry2025shapetheorembbmperiodic} using Lemma \ref{lemma:Ergodic:WeakErgodic}, and we omit the details.
        
\textit{(ii)} Let $Z^0_t$ be a periodic diffusion with generator $\mathcal{L}^0$ as in \eqref{eq:Intro:GeneratorExponential} and $\dot{Z}^0_t$ its projection into $\TT^d$. By Proposition \ref{prop:Ergodic:InvariantMeasure}, there exists a unique invariant measure for $\dot{Z}^0_t$ with a strictly positive density $\nu(\cdot;0)\in C^2_p(\R^d)$. By Proposition~\ref{prop:PrincipalEigenEx}, there exists a strictly positive function $\psi(\cdot\ ;0)\in C^{2,\alpha}_p(\R^d)$ such that $\mathcal{L}^0 \psi + R \psi = \gamma(0)\psi$, and by the Feynman-Kac formula,
    \begin{equation*}
        \psi(x;0)= \expecdd{x}{}{\psi(Z_t^0;0)e^{\int_0^t R(Z_s^0)ds -t\gamma(0) }}.
    \end{equation*}
The proof that $\gamma(0)\geq \nu(R,0)>0$ then follows via a contradiction argument, employing Proposition \ref{prop:Ergodic:ErgodicStrong} (this is exactly the same contradiction argument given in \cite[Proposition 2.6(ii)]{addarioberry2025shapetheorembbmperiodic}). The limits in \eqref{eq:limGamma} follow directly from the fact that $\gamma(0)\geq \nu(R;0)>0$ and from \eqref{eq:QuadraticBoundsGamma}.

\textit{(iii)} This follows from the fact that $\gamma(0)\geq \nu(R;0)>0$, the strict convexity of $\gamma$, and by \eqref{eq:QuadraticBoundsGamma}.
        
    \textit{(iv)} To see that the Hessian $\nabla^2 \gamma$ of $\gamma$ is invertible, let us fix $\zeta\in \R^d$ and $\lambda\neq 0\in \R^d$. By \cite[Theorem 8.2.10 (iii)]{MR1326606}, $\lambda ^T \nabla^2 \gamma(\zeta)  \lambda>0$. Since $\lambda\neq 0\in \R^d$ is arbitrary, it follows that $\nabla^2 \gamma(\zeta)$ is invertible.
     \end{proof}

\begin{remark}\label{r:derivative_cs}
    Proposition \ref{Prop:PropertiesGamma}(i) justifies the definition \eqref{eq:Intro:EffectiveDrift} of the effective drift. Given the strict convexity and differentiability of $\gamma$, Proposition \ref{Prop:PropertiesGamma}(iii) implies that \begin{equation}\label{eq:EigenvalueProblem:DirectionalDerivative}
         \tfrac{\gamma(\lambda_e e)}{\lambda_e} = \nabla \gamma(\lambda_e e) \cdot e = \partial_e \gamma(\lambda_e e) .
    \end{equation}
    Furthermore, by the strict convexity of $\gamma$, for every $e\in S^{d-1}$, $\css(e)= \nabla \gamma(\lambda_e e) \cdot e -  \nabla \gamma(0)\cdot e >0$, which justifies \eqref{eq:Intro:SpeedRelativeTo}.
\end{remark}

The function $\gamma$ is in fact analytic. The proof of the following proposition is provided in Appendix \ref{append:PrincipalEigenvalue}.

\begin{proposition}\label{prop:Eigenvalue:Analytic}
    The function $\gamma$ is analytic.
\end{proposition}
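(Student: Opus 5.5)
We would prove analyticity via analytic perturbation theory of isolated simple eigenvalues, combined with Siciak's separate-analyticity device as the introduction announces. The approach is to extend the family $\zeta\mapsto \mathcal{L}^\zeta+K^\zeta$ to complex parameters and show that $\gamma$ is an isolated simple eigenvalue of a holomorphic family of operators.

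\textbf{Step 1: complexify the operator family.} For $\zeta\in\mathbb{C}^d$ define
\[
T(\zeta)=\mathcal{L}+\zeta^T A\nabla+\tfrac12\zeta^TA\zeta+\zeta\cdot b+R ,
\]
acting on the complexified space $X=C^{0,\alpha}_p(\R^d;\mathbb{C})$ with domain $D=C^{2,\alpha}_p(\R^d;\mathbb{C})$. This domain is independent of $\zeta$. The map $\zeta\mapsto T(\zeta)\in\mathcal{B}(D,X)$ is a polynomial of degree two, so it is a holomorphic family of type (A) in each variable. By Schauder theory the resolvent $(\mu-T(\zeta))^{-1}$ is compact on $X$ for $\mu$ large; this is the same compactness used in Proposition \ref{prop:Ergodic:Fredholm}. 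Hence the spectrum of each $T(\zeta)$ is discrete.

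\textbf{Step 2: simplicity and isolation at real $\zeta_0$.} For real $\zeta_0$, Proposition \ref{prop:PrincipalEigenEx} and \cite[Theorem 4.11.1]{MR1326606} give the following facts.
\begin{enumerate}[(a)]
\item $\gamma(\zeta_0)$ is an eigenvalue with a positive eigenfunction.
\item It has algebraic multiplicity one. Any generalized eigenvector would give $(T-\gamma)\phi=\psi$; pairing this with the positive adjoint eigenfunction yields a contradiction.
\item Every other eigenvalue $\mu$ satisfies $\mathrm{Re}\,\mu<\gamma(\zeta_0)$. This is a Krein--Rutman / strong maximum principle statement, contained in Theorem 4.11.1.
\end{enumerate}
Since the spectrum is discrete, $\gamma(\zeta_0)$ is isolated and we can pick a small circle $\Gamma$ around it.

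\textbf{Step 3: local holomorphic extension.} The Riesz projection
\[
P(\zeta)=\frac{1}{2\pi i}\oint_\Gamma(\mu-T(\zeta))^{-1}\,d\mu
\]
is holomorphic for complex $\zeta$ near $\zeta_0$ and has rank one, by \cite[Section VII.2]{MR1335452}. Consequently the unique eigenvalue $\tilde\gamma(\zeta)$ inside $\Gamma$ is holomorphic in each complex variable $\zeta_j$ separately. One can see this by fixing a functional $\ell$ with $\ell(P(\zeta_0)\psi)\neq0$ and writing
\[
\tilde\gamma(\zeta)=\frac{\ell\bigl(T(\zeta)P(\zeta)\psi\bigr)}{\ell\bigl(P(\zeta)\psi\bigr)} .
\]
This formula in fact already gives joint holomorphy, since $P$ is jointly holomorphic: the resolvent is jointly continuous and separately holomorphic, so Hartogs/Osgood applies. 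For real $\zeta$ near $\zeta_0$, $\tilde\gamma$ equals $\gamma$. Indeed, $\tilde\gamma$ is continuous and real, because the conjugation symmetry of the real family forces it. Moreover, the principal eigenvalue of $T(\zeta)$ lies inside $\Gamma$ by continuity of $\gamma$ (Proposition \ref{Prop:PropertiesGamma}), and only one eigenvalue can lie inside $\Gamma$. Thus $\gamma$ extends holomorphically to a complex neighborhood of each real point, which is exactly real analyticity.

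\textbf{Main obstacle.} The delicate point is the multivariable step. Kato's theory is stated for one complex parameter, so we would apply it along each complex line $\zeta_0+z e_j$. This gives separate real-analyticity of $\gamma$ in each coordinate, with a radius that is locally uniform thanks to continuity of the spectral gap. To upgrade separate analyticity to joint analyticity without relying on Hartogs for the resolvent, we would invoke Siciak's theorem \cite[Theorem 1]{MR279263}: a function on $\R^d$ that is analytic in each variable separately with locally uniform holomorphic extension (a "cross" condition) is jointly real analytic. The main work would therefore be verifying the uniform lower bound on the radius of separate extension over compact sets. We would obtain it from compactness and continuity of $\zeta\mapsto\Gamma$-resolvent bounds near the real axis.
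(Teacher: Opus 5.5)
Your proof is correct, and its core (Steps 1--3) takes a genuinely more direct route than the paper's. The paper never complexifies in all $d$ variables at once. It first proves $\gamma\in C^\infty(\R^d)$ by iterating the finite-difference argument behind \cite[Theorem 8.2.10]{MR1326606}. It then applies Kato's one-parameter type (A) theory along each real line $\lambda\mapsto\zeta_1+\lambda\zeta_2$, getting isolation of $\gamma(\zeta_1)$ from the compact resolvent and Riesz--Schauder. Finally it combines these line results with the Siciak theorem it cites: a $C^\infty$ function that is real-analytic on every line is real-analytic. The $C^\infty$ step exists only to meet that hypothesis. You instead let $\zeta$ vary in a complex neighbourhood of $\zeta_0$ in $\mathbb{C}^d$. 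This gives joint holomorphy of $P(\zeta)$, and hence of $\ell(T(\zeta)P(\zeta)\psi)/\ell(P(\zeta)\psi)$, in one step, so you need neither the $C^\infty$ induction nor Siciak. Your version also makes explicit two points the paper leaves implicit. The first is the algebraic simplicity of $\gamma(\zeta_0)$: the paper only checks isolation, but multiplicity one is what makes the perturbed eigenvalue a single holomorphic branch rather than branches with algebraic singularities. The second is the identification of the continued eigenvalue with $\gamma$ at real points. In exchange, the paper's route only uses perturbation theory in its textbook one-parameter form. Two minor remarks follow. First, compactness of the resolvent is only needed at real $\zeta_0$; for complex $\zeta$ near $\zeta_0$, invertibility of $\mu-T(\zeta)$ for $\mu\in\Gamma$ follows from stability of bounded invertibility. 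Second, your ``main obstacle'' is not really one. The map $(\mu,\zeta)\mapsto(\mu-T(\zeta))^{-1}$ is jointly holomorphic directly from a Neumann series, because $T(\zeta)-T(\zeta_0)$ is a polynomial in $\zeta-\zeta_0$ with coefficients in $\mathcal{B}(D,X)$. If you did use the fallback, note that separate analyticity with locally uniform radii is a different (cross-type) theorem from the one the paper cites. The uniformity is essential there: $xy\,e^{-1/(x^2+y^2)}$ is $C^\infty$ and separately real-analytic, but not real-analytic at the origin.
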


In light of Proposition \ref{Prop:PropertiesGamma}, for the effective branching rate $R$, we can define a function $\cs:S^{d-1}\rightarrow \R$, by setting, for every $e\in S^{d-1}$, 
\begin{equation}\label{eq:prelimCriticalSpeed}
    \cs(e)=\cs(e;b,\sigma,R):= \min_{\lambda>0} \frac{\gs(\lambda e)}{\lambda}=\frac{\gs(\lambda_e e)}{\lambda_e}.
\end{equation}

\begin{lemma}\label{lemma:DifferentiabilityLambda_e}
    Let $\Lambda:=\{\lambda_e e: e\in S^{d-1}\} \subset \R^d $. Then $\Lambda$ is of class $C^1$, meaning that locally, up to a change of coordinates,  $\Lambda$ is the graph of a $C^1$ function on some open subset of $\R^{d-1}$. In particular $\inf_{e\in S^{d-1}}\lambda_e >0 $ and $\sup_{e\in S^{d-1}}\lambda_e<+\infty$. Furthermore, $\cs,\css:S^{d-1}\to \R$ are continuous, and, $\inf_{e\in S^{d-1}}\css(e) >0 $ and $\sup_{e\in S^{d-1}}\{|\cs(e)|,|\css(e)|\}<+\infty$.
\end{lemma}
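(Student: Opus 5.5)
The plan is to characterize $\lambda_e$ through an implicit equation and apply the implicit function theorem. By Remark \ref{r:derivative_cs}, $\lambda_e$ is the unique positive critical point of $\lambda\mapsto \gamma(\lambda e)/\lambda$. Equivalently, it is the unique positive zero of
\begin{equation*}
F(\lambda,e):=\lambda\,\nabla\gamma(\lambda e)\cdot e-\gamma(\lambda e).
\end{equation*}
I would extend $F$ to $(0,\infty)\times(\R^d\setminus\{0\})$ by the same formula, with $e$ replaced by an arbitrary nonzero vector $w$. By Proposition \ref{Prop:PropertiesGamma}(i), $\gamma\in C^2$, so $F$ is $C^1$. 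Its $\lambda$-derivative is
\begin{equation*}
\partial_\lambda F(\lambda,w)=\lambda\, w^T\nabla^2\gamma(\lambda w)\,w.
\end{equation*}
This is strictly positive for $\lambda>0$, since $\nabla^2\gamma$ is positive definite (Proposition \ref{Prop:PropertiesGamma}(iv) and its proof via \cite[Theorem 8.2.10 (iii)]{MR1326606}).

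The implicit function theorem then gives the following. Near each $(\lambda_{e_0},e_0)$, the zero set of $F$ is the graph of a $C^1$ function $w\mapsto\lambda(w)$ defined on a neighbourhood of $e_0$. Monotonicity of $F$ in $\lambda$ ensures $F(\cdot,w)$ has at most one positive zero. For $w=e\in S^{d-1}$ a zero exists by Proposition \ref{Prop:PropertiesGamma}(iii), so $\lambda(e)=\lambda_e$. Hence $e\mapsto\lambda_e$ is $C^1$ on $S^{d-1}$, and in particular continuous and positive. Compactness of $S^{d-1}$ then gives $0<\inf_e\lambda_e\le\sup_e\lambda_e<\infty$.

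The map $\Phi:e\mapsto\lambda_e e$ is a $C^1$ map from $S^{d-1}$ to $\R^d$. It is injective, since distinct unit vectors with positive scalings have distinct directions. It is an immersion: its differential at $e$, applied to a tangent vector $v\perp e$, is $\lambda_e v+(D\lambda_e\cdot v)e$, whose component orthogonal to $e$ is $\lambda_e v\neq0$. So $\Lambda=\Phi(S^{d-1})$ is an embedded $C^1$ hypersurface, being the image of a compact manifold under an injective immersion. Concretely, it is a radial graph over the sphere. In local coordinates on $S^{d-1}$, a rotation makes it locally the graph of a $C^1$ function over an open subset of $\R^{d-1}$.

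Continuity and boundedness of $\cs$ and $\css$ follow from these facts:
\begin{itemize}
\item $\cs(e)=\gamma(\lambda_e e)/\lambda_e$ is a composition of continuous maps, with $\lambda_e$ bounded below.
\item $\css(e)=\cs(e)-\nabla\gamma(0)\cdot e$ is therefore continuous as well.
\item Both are continuous on the compact set $S^{d-1}$, so they are bounded.
\item $\css>0$ pointwise by Remark \ref{r:derivative_cs}. Together with continuity and compactness this gives $\inf_e\css(e)>0$.
\end{itemize}

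The main obstacle is the implicit function step. One must check that the nondegeneracy $\partial_\lambda F>0$ holds uniformly, and that the local branch produced by the theorem coincides with $\lambda_e$ rather than some other zero. Strict positivity of the Hessian together with the monotonicity of $F$ in $\lambda$ handles both issues.
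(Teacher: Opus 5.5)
Your proposal is correct and follows the paper's argument. Both use the function $\zeta\mapsto\nabla\gamma(\zeta)\cdot\zeta-\gamma(\zeta)$: its radial derivative $\lambda\, e^T\nabla^2\gamma(\lambda e)e>0$ gives uniqueness of $\lambda_e$, and the implicit function theorem then gives $C^1$ regularity. The only difference is that you apply the implicit function theorem in the radial variable, so the $C^1$ dependence of $e\mapsto\lambda_e$ comes out directly, whereas the paper applies it to the level set $\{F=0\}\subset\R^d$ via $\nabla F(\zeta)=\nabla^2\gamma(\zeta)\zeta\neq 0$ and only then reads off continuity of $e\mapsto\lambda_e$.
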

\begin{proof}
    Consider the function $F:\R^d \to \R$ defined by $F(\zeta)=\nabla \gamma(\zeta)\cdot \zeta-\gamma(\zeta)$. Since $\gamma \in C^{2}(\R^d)$, $F\in C^1(\R^d)$. We claim that $\Lambda = \{\zeta\in \R^d: F(\zeta)=0\}$. By \eqref{eq:EigenvalueProblem:DirectionalDerivative}, for every $e\in S^{d-1}$, $F(\lambda_e e) = \nabla \gamma(\lambda_e e) \cdot (\lambda_e e) - \gamma(\lambda_e e)=0$, so $\Lambda \subset \{\zeta\in \R^d: F(\zeta)=0\}$. For fixed $e\in S^{d-1}$, by Proposition \ref{Prop:PropertiesGamma}(iv),
    \begin{equation*}
        \frac{\partial }{\partial \lambda} F(\lambda e) =  \frac{\partial }{\partial \lambda} (\nabla \gamma(\lambda e)\cdot (\lambda e)-\gamma(\lambda e)) = \nabla \gamma(\lambda e) \cdot e + \lambda e^T \nabla^2 \gamma(\lambda e) e - \nabla \gamma(\lambda e) \cdot e>0.
    \end{equation*}
    It follows that there is at most one $\lambda>0$ such that $F(\lambda e)=0$. Since $\lambda_e>0$ satisfies $F(\lambda_e e)=0$, we conclude that $\Lambda \supset \{\zeta\in \R^d: F(\zeta)=0\}$, and thus $\Lambda = \{\zeta\in \R^d: F(\zeta)=0\}$.

    With this at hand, we apply the implicit function theorem. We need to verify that for every $\zeta \in \Lambda$, $\nabla F(\zeta)\neq 0$. By Proposition \ref{Prop:PropertiesGamma}(iv), for $\zeta\neq 0$, $\nabla F(\zeta) = \nabla \gamma(\zeta) + \nabla^2 \gamma(\zeta) \zeta - \nabla \gamma(\zeta) \neq 0$. Since $\lambda_e>0$, $\nabla F(\lambda_e e)\neq 0$, and we conclude that $\Lambda$ is of class $C^1$ by applying the implicit function theorem to the zero-level set of $F$.

    From this, it follows that the map $e\mapsto \lambda_e$ is continuous, and since $\lambda_e>0$ for every $e\in S^{d-1}$, it follows that $\lambda_e$ is bounded away from zero. The continuity of $\cs$ and $\css$ follow directly from \eqref{eq:prelimCriticalSpeed} and \eqref{eq:Intro:SpeedRelativeTo}, as $\cs:S^{d-1}\to \R$ is the quotient of two continuous functions whose denominator does not vanish.
\end{proof}

\begin{remark}
    The argument above shows that, if $\gamma \in C^k(\R^d)$ for some $k \geq 2$, $\Lambda$ is of class $C^{k-1}$.
\end{remark}

\subsection{Many-to-one lemma}\label{sec:ManyToOne}

We introduce the many-to-one lemma corresponding to the periodic branching diffusion. We will express additive functionals of the periodic branching diffusion in terms of the functional of one trajectory of another periodic diffusion $(Y_t)_{t\geq 0}$ which we proceed to introduce.

Let us fix $\zeta \in \R^d$, and Let $\phi:\R^d\to \R$ be defined by 
\begin{equation}\label{eq:ManyToOne:PhiPotential}
    \phi(x)=\phi(x;\zeta):=\log \psi(x;\zeta) + x\cdot \zeta,
\end{equation}
where $\psi(\cdot ;\zeta)$ is the strictly positive principal eigenfunction introduced in Proposition \ref{prop:PrincipalEigenEx}. Observe that the gradient of $\phi$ satisfies
\begin{equation}\label{eq:GradPhidef}
   \nabla\phi(x)=\nabla\phi(x;\zeta)=(\partial_1\phi(x),...,\partial_d \phi(x))= \frac{\nabla \psi(x;\zeta)}{\psi(x;\zeta)} + \zeta,
\end{equation}
where by Proposition \ref{prop:PrincipalEigenEx}, each of the entries of $\nabla \phi$ belongs to $C^{1,\alpha}_p(\R^d)$.

Consider the formal stochastic differential equation with $\Z^d$-periodic coefficients
\begin{equation}\label{eq:SDETiling2.0}
    dY_t = (b(Y_t)+A(Y_t)\nabla\phi(Y_t;\zeta))dt + \sigma(Y_t)dW_t,
\end{equation}
where $(W_t)_{t\geq 0}$ denotes a $d$-dimensional Brownian motion. For every $x\in \R^d$, and any weak solution $(Y_t)_{t\geq 0}$ of \eqref{eq:SDETiling2.0} with $Y_0=x$, the generator $\mathcal{K}^\zeta$ of $(Y_t)_{t\geq 0}$ is defined for every function $q\in C^2(\R^d)$ by
\begin{equation}\label{eq:GeneratorTilting}
    \mathcal{K}^\zeta q(y)= \frac{1}{2}\sum_{i=1}^d \sum_{j=1}^d a_{ij}(y)\partial_{ij}q(y) + (b(y)+A(y)\nabla \phi(y;\zeta))\cdot \nabla q(y), \ \forall y\in \R^d.
\end{equation}

Consider an abstract filtered probability space $(\hat{\Omega}, \hat{\cF}, \{\hat{\cF_t}\}_{t\geq 0}, \bP^{\zeta})$, with $ \{\hat{\cF_t}\}_{t\geq 0}$ right-continuous and complete. Suppose that for every $x\in \R^d$ we are given a measure $\mathbb{P}^\zeta_x$ on $(\hat{\Omega}, \hat{\cF})$  and a $\{\hat{\cF_t}\}_{t\geq 0}$-adapted continuous process $(Y_t)_{t\geq 0}$ such that under the measure $\mathbb{P}^\zeta_x$, $(Y_t)_{t\geq 0}$ is a weak solution of \eqref{eq:SDETiling2.0} with $\mathbb{P}^\zeta_x[Y_0=x]=1$. Let us denote by $\mathbb{E}^\zeta_x$ the expectation operator associated to $\mathbb{P}^\zeta_x$.

The following lemma is the main result of this section. This many-to-one lemma is used in \cite{MR4162842} to obtain exact asymptotics of the transition kernel of the periodic branching diffusion. A one-dimensional version can be found in \cite[Lemma 2.3]{MR4492971}.

\begin{lemma}\label{lemma:ManyToOneLemma}
    Let $(Y_t)_{t\geq 0}$ be the weak solution introduced above, let $\cF^Y_t=\sigma(Y_s; s\in [0,t])$, and $F:C([0,t])\rightarrow [0,\infty)$ be a given function. Recall that for $\zeta \in \R^d$, $\gamma(\zeta)$ denotes the principal eigenvalue of the eigenvalue problem \eqref{eq:Intro:EigenvalueProblem}, and $\psi(\cdot;\zeta)$ its associated eigenfunction. Recall that $X_s(v)$ denotes the position of the particle $v$ at time $s$. Then, if $F((Y_s)_{s \in [0,t]})$ is $\cF^Y_t$-measurable,
    \begin{equation}\label{eq:manyTO}
            \begin{split}
                \mathbf{E}_{x}\left[ \sum_{v\in N_t} F\big((X_s(v))_{s \in [0,t]}\big)   \right] = \expecddd{x}{\zeta}{\frac{\psi(x;\zeta)}{\psi(Y_t;\zeta)}e^{-\zeta\cdot (Y_t-Y_0)+t\gs(\zeta) }F((Y_s)_{s\in [0,t]})}.
            \end{split}
    \end{equation}
\end{lemma}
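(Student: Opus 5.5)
Our route is to combine the standard many-to-one formula with a Girsanov change of measure. Since each particle's trajectory solves \eqref{eq:Intro:Diffusion} and branching occurs at rate $g$ with mean offspring $m_1$, the many-to-one lemma of \cite{MR3606740} gives
\begin{equation*}
\mathbf{E}_x\Big[\sum_{v\in \cN_t}F\big((X_s(v))_{s\in[0,t]}\big)\Big]=\mathbb{E}_x\Big[e^{\int_0^t R(Z_s)\,ds}F\big((Z_s)_{s\in[0,t]}\big)\Big].
\end{equation*}
Here $Z$ is a single diffusion solving \eqref{eq:Intro:Diffusion} and $R=(m_1-1)g$. Assumption \textbf{\ref{AssumptionC:StochasticDomination}} guarantees that all expectations are finite; it bounds $m_1$ and hence $R$. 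This step is essentially bookkeeping once the hypotheses of that general result have been checked.

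Next we build the exponential martingale using $\phi(\cdot;\zeta)=\log\psi(\cdot;\zeta)+x\cdot\zeta$ from \eqref{eq:ManyToOne:PhiPotential}. Let $h(x)=e^{\phi(x)}=\psi(x;\zeta)e^{\zeta\cdot x}$. A direct computation from \eqref{eq:Intro:GeneratorExponential} and \eqref{eq:Intro:EigenvalueProblem} gives
\begin{equation*}
\mathcal{L}h+Rh=e^{\zeta\cdot x}\big(\mathcal{L}^\zeta\psi+K^\zeta\psi\big)=\gamma(\zeta)h .
\end{equation*}
It follows by It\^o's formula that
\begin{equation*}
D_t:=\frac{h(Z_t)}{h(Z_0)}e^{\int_0^t R(Z_s)ds-t\gamma(\zeta)}
\end{equation*}
is a continuous local martingale. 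Writing $\log h(Z_t)$ through It\^o's formula, we also get $dD_t=D_t\,\nabla\phi(Z_t)^T\sigma(Z_t)\,dB_t$. Because $\nabla\phi=\nabla\psi/\psi+\zeta$ is bounded and periodic, Novikov's criterion holds on $[0,t]$, so $D$ is a true martingale with $\mathbb{E}_x[D_t]=1$.

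We then define the tilted measure by $d\mathbb{Q}_x/d\mathbb{P}_x|_{\cF_t}=D_t$. By Girsanov's theorem \cite[Proposition VIII.3.4]{MR1725357}, $W_t:=B_t-\int_0^t\sigma^T(Z_s)\nabla\phi(Z_s)\,ds$ is a $\mathbb{Q}_x$-Brownian motion. Hence under $\mathbb{Q}_x$ the process $Z$ solves \eqref{eq:SDETiling2.0}, since $\sigma\sigma^T\nabla\phi=A\nabla\phi$. The coefficients of \eqref{eq:SDETiling2.0} are locally Lipschitz (indeed $C^{1,\alpha}$), so it has pathwise uniqueness and therefore uniqueness in law. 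Consequently the law of $(Z_s)_{s\le t}$ under $\mathbb{Q}_x$ equals the law of $(Y_s)_{s\le t}$ under $\mathbb{P}^\zeta_x$.

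Finally we rewrite $e^{\int_0^tR}=D_t\,\frac{h(Z_0)}{h(Z_t)}e^{t\gamma(\zeta)}$ and note that $\frac{h(Z_0)}{h(Z_t)}=\frac{\psi(x;\zeta)}{\psi(Z_t;\zeta)}e^{-\zeta\cdot(Z_t-Z_0)}$. Then
\begin{equation*}
\mathbb{E}_x\big[e^{\int_0^tR}F\big]=\mathbb{E}^{\mathbb{Q}}_x\Big[\tfrac{\psi(x;\zeta)}{\psi(Z_t;\zeta)}e^{-\zeta\cdot(Z_t-Z_0)+t\gamma(\zeta)}F\Big],
\end{equation*}
which is \eqref{eq:manyTO} after transferring to $Y$. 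For nonnegative measurable $F$ the transfer is justified by monotone convergence. The main obstacle is the first step: matching the precise framework of \cite{MR3606740}, including measurability of the offspring law under \textbf{\ref{AssumptionC:measurability}} and integrability under \textbf{\ref{AssumptionC:StochasticDomination}}. If that framework does not fit directly, we would instead prove the first-moment identity by conditioning on the first branching time, which yields a renewal equation, and identify its solution via the Feynman--Kac representation \eqref{eq:FeynKac}.
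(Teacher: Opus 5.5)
Your proposal is correct and follows essentially the paper's route. The paper also uses the martingale $\mathcal{E}^\zeta=\frac{h(Z_t)}{h(Z_0)}e^{\int_0^t R(Z_s)\,ds-t\gamma(\zeta)}$ (a true martingale by Novikov), applies the many-to-one lemma of \cite{MR3606740} with this change of measure built in, identifies the tilted dynamics as \eqref{eq:SDETiling2.0} (via the carr\'e du champ rather than Girsanov on the driving Brownian motion), and transfers to $Y$ by uniqueness in law; your split into the plain many-to-one formula followed by an explicit Girsanov step is just an unpacked version of the same computation.
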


This result is a consequence of the general many-to-one lemma proven in \cite[Section 4.1]{MR3606740}. Let $C[0,t]^d$ denote the space of continuous functions $f:[0,t]\to \R^d$. In general, to prove a many-to-one lemma as in \cite[Section 4.1]{MR3606740}, we need two elements. First, a functional $\mathcal{E}:(C[0,T])^d \times [0,T]\to \R$ such that $t\mapsto \mathcal{E}( (f_s)_{s\in [0,t]} ,t)$ is a non-negative martingale with $\mathcal{E}( f_0 ,0)=1$ when $(f_s)_{s\geq 0}$ is a weak solution of \eqref{eq:Intro:Diffusion}. Second, we need to characterize the dynamics of $Z_t$ under the change of measure with respect to the aforementioned martingale. Let us start by introducing the functional $\mathcal{E}$.

For a given $T>0$ and $\zeta \in \R^d$, let $\mathcal{E}^\zeta:(C[0,T))^d\times [0,T) \to [0,T)$ be the functional defined for $(f_s)_{s\in [0,T]}\in (C[0,T])^d$ by
\begin{equation}\label{eq:ManyToOne:ExponentialMartingale2.0}
    \mathcal{E}^{\zeta}((f_s)_{s\in [0,t]},t) :=\exp  \left(\log\frac{\psi(f_t,\zeta)}{\psi(f_0,\zeta)} + \zeta\cdot(f_t-f_0) - t \gs(\zeta) + \int_0^t R(f_s) \ ds\right).
\end{equation}
Now we show that when $f_s$ is a weak solution of \eqref{eq:Intro:Diffusion}, \eqref{eq:ManyToOne:ExponentialMartingale2.0} is a martingale. Recall that we considered a general weak solution $Z_t$ of \eqref{eq:Intro:Diffusion} defined on a filtered probability space $(\Omega,\mathcal{F},\{\cF_t\}_{t \ge 0},\bP)$, where the filtration $\cF_t$ is right-continuous and complete. For $x\in \R^d$, we denoted by $\bP_x$ the measure on $(\Omega,\cF)$ under which $Z_t$ is a weak solution of \eqref{eq:Intro:Diffusion} with $Z_0 =x$, and by $\bE_x$ the expectation operator associated to $\bP_x$.

We will see that the functional $\mathcal{E}( (Z_s)_{s\in [0,t]} ,t)$ is the exponential martingale associated to another martingale. Recall $\mathcal{L}$, the generator of $Z_t$ introduced in \eqref{eq:Prere:Generator}. First, consider the mean-zero real-valued martingale,
\begin{equation}\label{eq:ManyToOne:Martingale}
    \begin{split}
        M^\zeta_t  :&= \phi(Z_t) - \phi(Z_0) - \int_0^t \mathcal{L}\phi(Z_s)ds = \int_0^t \nabla \phi(Z_s)^T \sigma(Z_s) dB_s,
    \end{split}
\end{equation}
where the second identity follows from a direct application of It\^o's formula to $\phi(Z_t)$.
This martingale has quadratic variation $\langle M^\zeta \rangle_t$ satisfying
\begin{equation}\label{eq:ManyToOne:QuadraticVariation}
    \langle M^\zeta \rangle_t = \int_0^t \nabla \phi(Z_s;\zeta)^T A(Z_s) \ \nabla \phi(Z_s;\zeta) ds \leq t \theta \sup_{x\in \R^d}|\nabla \phi(x;\zeta)|^2.
\end{equation}
\begin{lemma}\label{lemma:ManyToOne:ExponentialMartingale2}
    Let $(M^\zeta_t)_{t\geq 0}$ be the martingale introduced above, and $\mathbb{P}_x$ the underlying probability measure for $(Z_t)_{t\geq 0}$ in \eqref{eq:Intro:Diffusion}. Then, $\mathbb{P}_x$-almost surely,
    \begin{equation}\label{eq:claim}
        M^\zeta_t-\frac{1}{2} \langle M^\zeta \rangle_t= \log\frac{\psi(Z_t;\zeta)}{\psi(Z_0;\zeta)} + \zeta\cdot(Z_t-Z_0) - t \gs(\zeta) + \int_0^t R(Z_s) \ ds.
    \end{equation}
\end{lemma}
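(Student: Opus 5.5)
Since $\phi(Z_t)-\phi(Z_0)=\log\frac{\psi(Z_t;\zeta)}{\psi(Z_0;\zeta)}+\zeta\cdot(Z_t-Z_0)$ by \eqref{eq:ManyToOne:PhiPotential}, the definition \eqref{eq:ManyToOne:Martingale} gives $M^\zeta_t=\phi(Z_t)-\phi(Z_0)-\int_0^t\mathcal{L}\phi(Z_s)\,ds$. Together with \eqref{eq:ManyToOne:QuadraticVariation}, the claim \eqref{eq:claim} is therefore equivalent to the pathwise identity
\begin{equation*}
\int_0^t \Big(\mathcal{L}\phi(Z_s)+\tfrac12 \nabla\phi(Z_s)^T A(Z_s)\nabla\phi(Z_s)\Big)\,ds=\int_0^t \big(\gamma(\zeta)-R(Z_s)\big)\,ds .
\end{equation*}
It therefore suffices to prove the deterministic pointwise identity $\mathcal{L}\phi+\tfrac12\nabla\phi^TA\nabla\phi=\gamma(\zeta)-R$ on $\R^d$. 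Once this holds, the integrands agree for every $s$ and every path, so \eqref{eq:claim} holds $\mathbb{P}_x$-almost surely. The only probabilistic input is It\^o's formula for $\phi(Z_t)$, which is legitimate because $\phi\in C^2$: $\psi\in C^{2,\alpha}_p$ is strictly positive by Proposition \ref{prop:PrincipalEigenEx} and \eqref{eq:BoundEigenfunction}, so $\log\psi\in C^2$, and $x\mapsto x\cdot\zeta$ is linear.

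To verify the identity, I would write $\phi=\log\psi+x\cdot\zeta$ and compute derivatives. By \eqref{eq:GradPhidef}, $\nabla\phi=\nabla\psi/\psi+\zeta$, and $\partial_{ij}\phi=\partial_{ij}\psi/\psi-\partial_i\psi\,\partial_j\psi/\psi^2$ because the linear term has zero Hessian. Hence
\begin{equation*}
\mathcal{L}\phi=\frac{1}{2\psi}\sum_{i,j}a_{ij}\partial_{ij}\psi-\frac{1}{2\psi^2}\nabla\psi^TA\nabla\psi+\frac{b\cdot\nabla\psi}{\psi}+b\cdot\zeta .
\end{equation*}
Using the symmetry of $A$,
\begin{equation*}
\tfrac12\nabla\phi^TA\nabla\phi=\frac{1}{2\psi^2}\nabla\psi^TA\nabla\psi+\frac{\zeta^TA\nabla\psi}{\psi}+\tfrac12\zeta^TA\zeta .
\end{equation*}
When these are added, the terms $\pm\frac{1}{2\psi^2}\nabla\psi^TA\nabla\psi$ cancel. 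What remains is $\psi^{-1}\mathcal{L}^\zeta\psi+\tfrac12\zeta^TA\zeta+\zeta\cdot b$, with $\mathcal{L}^\zeta$ as in \eqref{eq:Intro:GeneratorExponential}, and this equals $\psi^{-1}\mathcal{L}^\zeta\psi+K^\zeta-R$. The eigenvalue equation \eqref{eq:Intro:EigenvalueProblem} gives $\psi^{-1}(\mathcal{L}^\zeta\psi+K^\zeta\psi)=\gamma(\zeta)$, so the sum equals $\gamma(\zeta)-R$, as needed.

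Finally, I would integrate this identity along the path in the first display and rearrange to obtain \eqref{eq:claim}. No step is a real obstacle; the calculation is routine. The only point needing care is the bookkeeping of the quadratic gradient terms: the $-\tfrac12\psi^{-2}\nabla\psi^TA\nabla\psi$ from the Hessian of $\log\psi$ must cancel against the same term in $\tfrac12\langle M^\zeta\rangle'$. In addition, the cross term $\zeta^TA\nabla\psi/\psi$ must be recognized as the first-order part of $\mathcal{L}^\zeta$.
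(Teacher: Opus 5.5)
Your proof is correct and follows essentially the same route as the paper. The paper also combines It\^o's formula for $\log\psi(Z_t;\zeta)$ (equivalently for $\phi$) with the quadratic variation formula \eqref{eq:ManyToOne:QuadraticVariation} and the eigenvalue equation \eqref{eq:Intro:EigenvalueProblem}, and your pointwise identity $\mathcal{L}\phi+\tfrac12\nabla\phi^TA\nabla\phi=\gamma(\zeta)-R$, including the cancellation of the quadratic gradient terms, is exactly the computation that argument leaves implicit.
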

\begin{proof}
    The proof follows the exact same logic as the proof of \cite[Lemma 2.13]{addarioberry2025shapetheorembbmperiodic}, applying It\^o's formula to $\log \tfrac{\psi(Z_t,\zeta)}{\psi(Z_0,\zeta)}$, combining with \eqref{eq:ManyToOne:Martingale} and \eqref{eq:ManyToOne:QuadraticVariation}, and using the fact that $\psi(\cdot;\zeta)$ is the eigenfunction associated to the principal eigenfunction $\gamma(\zeta)$ of the eigenvalue problem \eqref{eq:Intro:EigenvalueProblem}.
\end{proof}

Since $\nabla\phi$ is bounded, by \eqref{eq:ManyToOne:QuadraticVariation} we have that for every $t>0$, $\mathbb{E}_x[e^{\langle M^\zeta \rangle_t /2}]<\infty$. Thus, by Novikov's theorem (see \cite[Corollary 3.5.13]{MR1121940}), $\exp\parentesiss{M^\zeta_t - \frac{1}{2}\langle M^\zeta \rangle_t }$ is a non-negative martingale with $\exp\parentesiss{M^\zeta_0 - \frac{1}{2}\langle M^\zeta \rangle_0 } = 1$. Hence, by the lemma above and \eqref{eq:ManyToOne:ExponentialMartingale2.0}, $(\mathcal{E}^{\zeta}((Z_s)_{s\in [0,t]},t))_{t\geq 0}$ is a non-negative martingale with $\mathcal{E}^{\zeta}(Z_0,0)=1$. This is the martingale we will employ to apply the general many-to-one lemma in \cite[Section 4.1]{MR3606740}.

In addition to $(\mathcal{E}^{\zeta}((Z_s)_{s\in [0,t]},t))_{t\geq 0}$, we also characterize the dynamics of $Z_t$ under the Girsanov change of measure with respect to $(\mathcal{E}^{\zeta}((Z_s)_{s\in [0,t]},t))_{t\geq 0}$. Consider the completion of the natural filtration of $(Z_t)_{t\geq 0}$. Let us denote this filtration by $(\cF^Z_{t})_{t\geq 0}$, so $\cF^Z_t$ is the completion of $\sigma(Z_s;s\in [0,t])$. Since $\nabla\phi$ is continuous, one can show that there exists a measure $\bQ^\zeta_x$ on $\cF_\infty^Z:=\sigma(Z_s;s\geq 0)$ such that, for every $t\geq 0$ and $x\in \R^d$, the Radon-Nikodym derivative of the restriction of $\bQ^\zeta_x$ and $\bP_x$ to $\cF^Z_t$ satisfies
\begin{equation}\label{eq:measuretilting}
    \frac{d\bQ^\zeta_{x}}{d\bP_{x}}\Big |_{\cF^Z_t}=\mathcal{E}^{\zeta}((Z_s)_{s\in [0,t]},t)
\end{equation}
(see, for example, \cite[eq. (5.7) in the discussion before Corollary 3.5.2]{MR1121940}). To avoid introducing unnecessary notation, we will make a slight abuse of notation using the same symbol $\bQ^\zeta_{x}$ to denote the expectation operator associated to the measure $\bQ^\zeta_{x}$.

\begin{lemma}\label{lemma:ManyToOne:CarreDuChamp}
    Let $\mathbb{Q}^\zeta_x$ be the measure defined above. Under $\mathbb{Q}^\zeta_x$, $(Z_t)_{t\geq 0}$ is a weak solution of \eqref{eq:SDETiling2.0} with $Z_0=x$.
\end{lemma}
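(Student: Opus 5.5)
This is a Girsanov argument. By Lemma \ref{lemma:ManyToOne:ExponentialMartingale2}, on each $\cF^Z_t$ the density is
\[
\frac{d\bQ^\zeta_x}{d\bP_x}\Big|_{\cF^Z_t}=\mathcal{E}^\zeta((Z_s)_{s\in[0,t]},t)=\exp\bigl(M^\zeta_t-\tfrac12\langle M^\zeta\rangle_t\bigr),
\qquad
M^\zeta_t=\int_0^t\nabla\phi(Z_s;\zeta)^T\sigma(Z_s)\,dB_s .
\]
Novikov's condition holds because $\nabla\phi$ is bounded (see \eqref{eq:ManyToOne:QuadraticVariation}). So on every finite horizon $[0,T]$, Girsanov's theorem (e.g.\ \cite[Proposition VIII.3.4]{MR1725357}) applies to the Brownian motion $B$ driving \eqref{eq:Intro:Diffusion}. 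It gives that
\[
W_t:=B_t-\langle B,M^\zeta\rangle_t=B_t-\int_0^t\sigma(Z_s)^T\nabla\phi(Z_s;\zeta)\,ds
\]
is a $d$-dimensional $\bQ^\zeta_x$-Brownian motion on $[0,T]$. Here the covariation $\langle B^i,M^\zeta\rangle_t=\int_0^t(\sigma^T\nabla\phi)_i(Z_s)\,ds$ is computed directly from the stochastic integral representation of $M^\zeta$.

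Substituting $dB_t=dW_t+\sigma(Z_t)^T\nabla\phi(Z_t;\zeta)\,dt$ into \eqref{eq:Intro:Diffusion} gives
\[
dZ_t=\bigl(b(Z_t)+\sigma\sigma^T(Z_t)\nabla\phi(Z_t;\zeta)\bigr)dt+\sigma(Z_t)\,dW_t .
\]
Since $A=\sigma\sigma^T$, this is exactly \eqref{eq:SDETiling2.0}. The initial condition $Z_0=x$ holds $\bQ^\zeta_x$-a.s. because $\bQ^\zeta_x\ll\bP_x$ on $\cF^Z_0$. The $\bQ^\zeta_x$-Brownian property of $W$ is consistent across horizons, because the densities $\mathcal{E}^\zeta(\cdot,t)$ form a martingale and the restrictions of $\bQ^\zeta_x$ are compatible. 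Hence $W$ is a Brownian motion on all of $[0,\infty)$, and $(Z,W)$ is a weak solution of \eqref{eq:SDETiling2.0} under $\bQ^\zeta_x$.

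The main obstacle is measure-theoretic. $\bQ^\zeta_x$ is defined only on $\cF^Z_\infty$, whereas $B$, and hence $W$, need not be adapted to the filtration of $Z$. The first fix to try: $\sigma$ is invertible by \textbf{\ref{AssumptionA:SigmaEllipticity}}, so
\[
B_t=\int_0^t\sigma(Z_s)^{-1}\bigl(dZ_s-b(Z_s)\,ds\bigr)
\]
is $\cF^Z_t$-measurable. Then Girsanov's theorem can be run entirely inside the completed filtration $(\cF^Z_t)$. If that fails, the fallback is a martingale-problem argument: $\bQ^\zeta_x$ solves the martingale problem for $\mathcal{K}^\zeta$. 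To see this, apply It\^o's formula to $q(Z_t)\,\mathcal{E}^\zeta_t$ for $q\in C^2_c$. The cross term $\nabla q^TA\nabla\phi$ produces the extra drift, so
\[
q(Z_t)-\int_0^t\mathcal{K}^\zeta q(Z_s)\,ds
\]
is a $\bQ^\zeta_x$-local martingale. Weak solutions then follow from the standard equivalence between martingale problems and weak solutions.
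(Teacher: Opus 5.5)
Your proof is correct, but it takes a different route from the paper's. The paper never works with the driving Brownian motion. It quotes \cite[Proposition VIII.3.1]{MR1725357} to know that $Z$ still solves some SDE under $\bQ^\zeta_x$, and \cite[Proposition VIII.3.4]{MR1725357} to know that its generator is $\mathcal{L}f+\Gamma(\phi,f)$, where $\Gamma(f,h)=\mathcal{L}(fh)-f\mathcal{L}h-h\mathcal{L}f$ is the carr\'e du champ. The only computation is $\Gamma(\phi,f)=(A\nabla\phi)\cdot\nabla f$, which identifies the drift $b+A\nabla\phi$.

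Your fallback is essentially the paper's route made explicit. It applies It\^o's formula to $q(Z_t)\,\mathcal{E}^\zeta((Z_s)_{s\in[0,t]},t)$, and the cross-variation term $\nabla q^TA\nabla\phi$ is precisely $\Gamma(\phi,q)$. So it amounts to a self-contained proof of the cited generator statement.

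Your main route works at the level of the noise instead. Girsanov plus L\'evy's characterization give the explicit $\bQ^\zeta_x$-Brownian motion $W=B-\int_0^{\cdot}\sigma^T\nabla\phi(Z_s;\zeta)\,ds$, and the extra drift appears as $\sigma\,\sigma^T\nabla\phi=A\nabla\phi$.
\begin{itemize}
\item What this buys: an honest weak solution $(Z,W)$ on the original space, with no appeal to the correspondence between martingale problems and weak solutions.
\item What it costs: the adaptedness issue you correctly flag, since $\bQ^\zeta_x$ lives only on $\cF^Z_\infty$. Your fix via invertibility of $\sigma$ under \textbf{\ref{AssumptionA:SigmaEllipticity}} works. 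You should add that $B$, being an $(\cF_t)$-Brownian motion adapted to the smaller filtration $(\cF^Z_t)$, is then an $(\cF^Z_t)$-Brownian motion; that is what licenses running Girsanov there.
\end{itemize}
The generator route avoids this issue entirely because it only concerns the law of $Z$. That law is all the many-to-one lemma uses, via uniqueness in law for \eqref{eq:SDETiling2.0}.

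Minor point: the reference you give for Girsanov's theorem, \cite[Proposition VIII.3.4]{MR1725357}, is (as the paper uses it) the generator-transformation statement. The fact that $B-\langle B,M^\zeta\rangle$ is a $\bQ^\zeta_x$-local martingale is Girsanov's theorem from \S VIII.1 of that book.
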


\begin{proof}
    By \cite[Proposition VIII.3.1]{MR1725357}), $(Z_t)_{t\geq 0}$ is the weak solution of an SDE under $\mathbb{Q}^\zeta_x$. Now we determine the coefficients of the corresponding SDE. To do so, we apply \cite[Proposition VIII.3.4]{MR1725357}, which states that the generator of $(Z_t)_{t\geq 0}$ under $\bQ^\zeta$ is defined for every function $f\in C^2(\R^d)$ by $ \mathcal{L}f + \Gamma(\phi,f)$, where $\Gamma$ is the \textit{op\'erateur carr\'e du champ} defined on $C^2(\R^d)\times C^2(\R^d)$ by
\begin{equation*}
    \Gamma(f,h)= \mathcal{L}(fh)-f\mathcal{L}h -h\mathcal{L}f. 
\end{equation*}
For $f\in C^2(\R^d)$, by the product rule and the symmetry of $A$,
\begin{equation*}
    \begin{split}
        \Gamma(\phi, f) & = \frac{1}{2}\sum_{i=1}^d \sum_{j=1}^d a_{ij}\partial_{ij}(\phi f) +  b\cdot ( f\nabla \phi  + \phi\nabla f ) \\
        &\ \ \ \ \ \ \ \ \ \ -  \frac{f}{2}\sum_{i=1}^d \sum_{j=1}^d a_{ij}\partial_{ij}\phi - f b\cdot \nabla \phi  -  \frac{\phi}{2}\sum_{i=1}^d \sum_{j=1}^d a_{ij}\partial_{ij}f - \phi b\cdot \nabla f\\
        & = \frac{1}{2}\sum_{i=1}^d \sum_{j=1}^d a_{ij}(\partial_i \phi \partial_j f + \partial_i f \partial_j \phi )\\
        & =(A \nabla \phi) \cdot \nabla f        
    \end{split}
\end{equation*}
Thus, under $\mathbb{Q}^\zeta_x$, $(Z_t)_{t\geq 0}$ is a weak solution of \eqref{eq:SDETiling2.0} with $Z_0=x$. 
\end{proof}

\begin{remark}
    The proofs above did not use any particular information about the weak solution $Z_t$ of \eqref{eq:Intro:Diffusion} and they hold for any weak solution of \eqref{eq:Intro:Diffusion}.
\end{remark}

With this at hand, the proof of Lemma \ref{lemma:ManyToOneLemma} follows.

\begin{proof}[\textbf{Proof of Lemma \ref{lemma:ManyToOneLemma}}]
    Recall the effective branching rate $R$ in \eqref{eq:Intro:EffectiveBranching}. Since by Lemma \ref{lemma:ManyToOne:ExponentialMartingale2}, $(\mathcal{E}^{\zeta}((Z_s)_{s\in [0,t]},t))_{t\geq 0}$ is a non-negative martingale with $\mathcal{E}^{\zeta}(Z_0,0)=1$ under $\mathbb{P}_x$, by the many-to-one lemma in \cite[Section 4.1]{MR3606740} and plugging in the definition \eqref{eq:ManyToOne:ExponentialMartingale2.0} of $\mathcal{E}^\zeta$, we get 
    \begin{equation*}
        \begin{split}
            \mathbf{E}_{x}\left[ \sum_{v\in N_t} F\big((X_s(v))_{s \in [0,t]}\big)   \right] &= \mathbb{Q}^\zeta_x\left [\frac{1}{\mathcal{E}^\zeta((Z_s)_{s\geq 0},t)}e^{\int_{0}^t R(Z_s) ds}F\big((Z_s)_{s \in [0,t]}\big)\right]\\
            &= \mathbb{Q}^\zeta_x\left [\frac{\psi(x;\zeta)}{\psi(\xi_t;\zeta)}e^{-\zeta\cdot (Z_t-Z_0)+t\gs(\zeta) }F\big((Z_s)_{s \in [0,t]}\big)\right],
        \end{split}
    \end{equation*}
    where we employ the abuse of notation for $\bQ^\zeta_x$ mentioned after \eqref{eq:measuretilting}. By Lemma \ref{lemma:ManyToOne:CarreDuChamp},  under the measure $\mathbb{Q}^\zeta_x$, $(Z_s)_{s\geq 0}$ is a weak solution of \eqref{eq:SDETiling2.0}. Since the right-hand side of the previous expression is $\mathcal{F}^{Z}_t$-measurable, by uniqueness of weak solutions of \eqref{eq:SDETiling2.0}, the right side of the equation above does not depend on the particular measure under which we are taking the weak solution of \eqref{eq:SDETiling2.0}, so
    \begin{equation*}
        \begin{split}
            \mathbf{E}_{x}\left[ \sum_{v\in N_t} F\big((X_s(v))_{s \in [0,t]}\big)   \right] = \mathbb{E}^\zeta_x\left [\frac{\psi(x;\zeta)}{\psi(Y_t;\zeta)}e^{-\zeta\cdot (Y_t-Y_0)+t\gs(\zeta) }F\big((Y_s)_{s \in [0,t]}\big)\right],
        \end{split}
    \end{equation*}
    which finishes the proof.
\end{proof}

\subsection{Interpolation result}\label{sec:interpolation}
In this subsection, we present a general ``interpolation lemma'' which controls the probability of the periodic branching diffusion making ballistic displacements on intervals of order $O(1)$, after correcting by the effective drift.

For $E\subset \R^d$ and $\epsilon>0$, let $E_\epsilon:=\cup_{x\in E}B(x,\epsilon)$. Given two sets $E,D\subset \R^d$, recall that the {\em Hausdorff distance} between $E$ and $D$ is given by
\begin{equation*}
    d_\mathrm{H}(E,D):=\inf\{ \epsilon>0 :\ E\subset D_\epsilon \text{ and } D\subset E_\epsilon \}.
\end{equation*}
The following is the main result of this section, we will later refer to it as the interpolation lemma. 

\begin{lemma}\label{lemma:interpolation}
    For $t \geq 0$, let $\mathcal{X}_t:=\{X_t(v):v\in \cN_t\}$ and $\vb$ the effective drift. Fix $T>0$ and $\kappa>0$. Consider the event
    \begin{equation*}
        A_n:=\Bigl\{\sup_{\ell \in [nT,(n+1)T]} d_\mathrm{H}(\mathcal{X}_{nT}- nT \vb,\mathcal{X}_\ell-\ell\vb)> \kappa nT \Bigl \}, 
    \end{equation*}
    then, $\sum_{n=1}^\infty \left(\sup_{x\in \R^d} \proba{x}{}{A_n}\right) < + \infty$.    
\end{lemma}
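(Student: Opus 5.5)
We use the Markov property at time $nT$, a union bound over the particles alive then, and the many-to-one lemma, so that everything reduces to a large-deviation bound for a single periodic diffusion over a time window of fixed length $T$.

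\textbf{Step 1: reduce to a single-particle event.} If $A_n$ occurs, then some descendant strays far from its ancestor. Concretely, there are $\ell\in[nT,(n+1)T]$, $v\in\cN_\ell$ and its ancestor $u\in\cN_{nT}$ with
\begin{equation*}
|X_\ell(v)-X_{nT}(u)-(\ell-nT)\vb|>\kappa nT .
\end{equation*}
To see this, note that every point of $\mathcal{X}_\ell$ has an ancestor in $\mathcal{X}_{nT}$, and every point of $\mathcal{X}_{nT}$ has at least one descendant alive at time $\ell$, since by \textbf{\ref{AssumptionC:NoKillingPureBranching}} there is no killing. If all such displacements were at most $\kappa nT$, the Hausdorff distance would be at most $\kappa nT$. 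Since $|\vb|T$ is a fixed constant, it is enough, for $n$ large, to bound the event that some path over the window has
\begin{equation*}
\sup_{s\le T}|X_{nT+s}(v)-X_{nT}(v)|>\kappa nT/2 .
\end{equation*}

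\textbf{Step 2: apply the Markov property and the many-to-one lemma.} Condition on $\cF_{nT}$ and use the Markov branching property. Then
\begin{equation*}
\proba{x}{}{A_n}\le \mathbf{E}_x\bigl[|\cN_{nT}|\bigr]\,\sup_y \mathbf{P}_y[B_n],
\end{equation*}
where $B_n$ is the event that some particle path on $[0,T]$ has $\sup_{s\le T}|X_s(v)-y|>\kappa nT/2$. To bound $\mathbf{P}_y[B_n]$, use Markov's inequality on the number of particles alive at time $T$ whose ancestral path exceeds the threshold. Every path is ancestral to at least one particle alive at time $T$, so Lemma \ref{lemma:ManyToOneLemma} with $\zeta=0$ applies. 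The weight is then $\psi(y;0)/\psi(Y_T;0)\,e^{T\gamma(0)}$, which is bounded by a constant $C_T$ by \eqref{eq:BoundEigenfunctionQuotient}. Hence
\begin{equation*}
\mathbf{P}_y[B_n]\le C_T\,\mathbb{P}^0_y\Bigl[\sup_{s\le T}|Y_s-y|>\kappa nT/2\Bigr].
\end{equation*}
Here $Y$ is a diffusion with bounded periodic drift $b+A\nabla\phi(\cdot;0)$ and diffusion coefficient $\sigma$. Since $T$ is fixed and the drift displacement is at most $\|b+A\nabla\phi\|_\infty T$, a Gaussian tail bound for the martingale part (via Dambis--Dubins--Schwarz, with quadratic variation at most $\theta^{-1}T$) gives
\begin{equation*}
\mathbf{P}_y[B_n]\le C e^{-c n^2}
\end{equation*}
for $n$ large, with $c$ depending on $T$ and $\kappa$. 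Alternatively, this follows from Corollary \ref{corollary:Ergodic:AsymptoticDriftDiffusion} applied to $Y$.

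\textbf{Step 3: bound the population and sum.} We need $\sup_x\mathbf{E}_x|\cN_{nT}|\le Ce^{\gamma(0)nT}$. This is again Lemma \ref{lemma:ManyToOneLemma} with $\zeta=0$ and $F\equiv1$, combined with the bounded eigenfunction ratio \eqref{eq:BoundEigenfunctionQuotient}. We then obtain
\begin{equation*}
\sup_x\proba{x}{}{A_n}\le C e^{\gamma(0)nT-cn^2},
\end{equation*}
which is summable in $n$.

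\textbf{Main obstacle.} The delicate step is Step 1. One has to handle the supremum over continuous $\ell$ together with branching inside the window. The point is to control the whole ancestral path over $[nT,(n+1)T]$ rather than only its endpoints, and to check that the many-to-one lemma applies to the path functional $F=\indc\{\sup_{s\le T}|Y_s-Y_0|>r\}$. Its measurability is fine, since this $F$ is $\cF^Y_T$-measurable. It also needs the fact, from pure branching, that the number of paths is at most the number of particles alive at time $T$. The quadratic-in-$n$ exponent beats the linear growth $e^{\gamma(0)nT}$ without any fine tuning.
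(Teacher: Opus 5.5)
Your proof is correct and follows essentially the paper's route. You reduce $A_n$ to a single ancestral path deviating by more than order $\kappa nT$ on the window. You then combine Markov's inequality, the many-to-one lemma with $\zeta=0$, the eigenfunction-ratio bound, and a Gaussian tail $e^{-cn^2}$ for the tilted diffusion $Y$, which beats the growth $e^{\gamma(0)(n+1)T}$. The differences are minor:
\begin{itemize}
\item You split at time $nT$ via the branching Markov property and use many-to-one twice. The paper applies many-to-one once on $[0,(n+1)T]$ and then the Markov property of $Y$.
\item You treat $\ell\vb$ as a bounded shift, whereas the paper invokes Lemma~\ref{lemma:Interpolation:ProbaEffectiveDrift} and Corollary~\ref{corollary:Ergodic:AsymptoticDriftDiffusion}. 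That is not needed for a window of fixed length $T$.
\item You explicitly justify the Hausdorff reduction using the no-killing assumption, which the paper leaves implicit.
\end{itemize}
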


Before proceeding to the proof of this lemma, we prove the following known probabilistic characterization of the effective drift $\vb$. 

\begin{lemma}\cite[Theorem 8.2.10 (ii)]{MR1326606}\label{lemma:Interpolation:ProbaEffectiveDrift}
    Let $Y^0_t$ be a weak solution of \eqref{eq:SDETiling2.0} taking $\zeta=0$. Let $\dot{Y}^0_t$ be its projection into $\TT^d$ introduced in Proposition \ref{prop:Ergodic:InvariantMeasure}. Let $\nu(\cdot;0)$ be the density of the invariant measure of $\dot{Y}^0_t$ on $\TT^d$. Let $\vb$ be the effective drift as defined in \eqref{eq:Intro:EffectiveDrift} and $\psi(\cdot;0)$ the principal eigenfunction associated to the principal eigenvalue $\gamma(0)$ of \eqref{eq:Intro:EigenvalueProblem}. Then,
    \begin{equation}
        \vb  = \int_{\TT^d}\Big(b(x) + A(x)\frac{\nabla \psi(x;0)}{\psi(x;0)} \Big)\nu(x;0)dx.
    \end{equation}
\end{lemma}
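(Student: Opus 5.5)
The plan is to differentiate the eigenvalue problem \eqref{eq:Intro:EigenvalueProblem} with respect to $\zeta$ at $\zeta=0$. We then conjugate the resulting equation by $\psi(\cdot;0)$, which turns the linearized operator into the generator $\mathcal{K}^0$ of $Y^0_t$. Finally we integrate against the invariant density $\nu(\cdot;0)$ of $\dot Y^0_t$, so that the unknown derivative of the eigenfunction drops out.

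\textbf{Step 1 (differentiability of $\zeta\mapsto \psi(\cdot;\zeta)$).} We need $\psi(\cdot;\zeta)$, normalized as in \eqref{eq:BoundEigenfunction}, to be $C^1$ in $\zeta$ as a map into $C^{2,\alpha}_p(\R^d)$. We would get this from the implicit function theorem applied to
\begin{equation*}
G(\zeta,\psi,\gamma):=\Big(\mathcal{L}^\zeta\psi+K^\zeta\psi-\gamma\psi,\ \textstyle\int_{\TT^d}\psi-1\Big),
\end{equation*}
viewed as a map $\R^d\times C^{2,\alpha}_p(\R^d)\times\R\to C^{0,\alpha}_p(\R^d)\times\R$. $G$ is polynomial (hence smooth) in $\zeta$. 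Its partial derivative in $(\psi,\gamma)$ at $(0,\psi(\cdot;0),\gamma(0))$ is the map $(h,c)\mapsto\big((\mathcal{L}^0+R-\gamma(0))h-c\,\psi(\cdot;0),\ \int_{\TT^d} h\big)$.

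This derivative is an isomorphism, which we check by conjugation. Write $h=\psi(\cdot;0)u$. Since $(\mathcal{L}^0+R-\gamma(0))\psi(\cdot;0)=0$, a direct computation gives
\begin{equation*}
\psi(\cdot;0)^{-1}(\mathcal{L}^0+R-\gamma(0))(\psi(\cdot;0)u)=\mathcal{K}^0u .
\end{equation*}
So the first equation becomes $\mathcal{K}^0u=c+f/\psi(\cdot;0)$. By Proposition \ref{prop:Ergodic:Fredholm}, applied to the diffusion $Y^0$, which satisfies \textbf{\ref{AssumptionA:bSigmaRegularity}} and \textbf{\ref{AssumptionA:SigmaEllipticity}} up to the regularity of $\nabla\psi/\psi\in C^{1,\alpha}_p$:
\begin{itemize}
\item solvability forces the unique choice $c=-\nu(f/\psi(\cdot;0);0)$;
\item the solution $u$ is then unique up to an additive constant;
\item that constant is fixed by the normalization $\int_{\TT^d}\psi(\cdot;0)u=\int_{\TT^d} h$ prescribed by the second component, which is possible because $\psi(\cdot;0)>0$.
\end{itemize}
Boundedness of the inverse follows from \eqref{eq:Schauder:Fredholm}. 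Since the principal eigenpair is unique by Proposition \ref{prop:PrincipalEigenEx}, the implicit function theorem yields that $\zeta\mapsto(\psi(\cdot;\zeta),\gamma(\zeta))$ is $C^1$ near $0$. This is consistent with Proposition \ref{Prop:PropertiesGamma}(i).

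\textbf{Step 2 (differentiate).} Fix $i\in[d]$ and set $\psi=\psi(\cdot;0)$, $w=\partial_{\zeta_i}\psi(\cdot;\zeta)|_{\zeta=0}$. The $\zeta$-dependent terms are $\zeta^TA\nabla\psi$ and $K^\zeta=\tfrac12\zeta^TA\zeta+\zeta\cdot b+R$. Differentiating \eqref{eq:Intro:EigenvalueProblem} in $\zeta_i$ at $0$ therefore gives
\begin{equation*}
(\mathcal{L}^0+R-\gamma(0))w+(A\nabla\psi)_i+b_i\psi=\partial_i\gamma(0)\,\psi .
\end{equation*}
Write $w=\psi u$ and divide by $\psi$, using the conjugation identity from Step 1. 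This yields
\begin{equation*}
\mathcal{K}^0u+b_i+\Big(A\frac{\nabla\psi}{\psi}\Big)_i=\partial_i\gamma(0).
\end{equation*}

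\textbf{Step 3 (average).} Integrate this identity against $\nu(\cdot;0)$. Since $\nu(\cdot;0)$ is the invariant density of $\dot Y^0$ (Proposition \ref{prop:Ergodic:InvariantMeasure}), it solves $(\mathcal{K}^0)^*\nu=0$, and periodic integration by parts gives $\nu(\mathcal{K}^0u;0)=0$. Because $\int_{\TT^d}\nu(\cdot;0)=1$, the right-hand side integrates to $\partial_i\gamma(0)$. Collecting the $d$ components gives $\vb=\nabla\gamma(0)=\int_{\TT^d}\big(b+A\nabla\psi(\cdot;0)/\psi(\cdot;0)\big)\nu(\cdot;0)\,dx$.

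The main obstacle is Step 1, namely establishing $C^1$ dependence of the eigenfunction in a norm strong enough to differentiate the equation term by term. The conjugation to $\mathcal{K}^0$ together with Proposition \ref{prop:Ergodic:Fredholm} is what supplies the needed invertibility. If this turns out to be delicate, an alternative is to take difference quotients in $\zeta$ directly. One would then pass to the limit using the Schauder bound \eqref{eq:Schauder:Fredholm} and the uniform positivity bounds \eqref{eq:BoundEigenfunctionQuotient}.
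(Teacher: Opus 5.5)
Your argument is correct, but it takes a different route from the paper. The paper does no differentiation of its own. It quotes \cite[Theorem 8.2.10 (ii)]{MR1326606}, which already gives $\vb=\int_{\TT^d}(b+A\nabla\psi/\psi)\,\psi\Tilde{\psi}\,dx$, where $\Tilde{\psi}>0$ solves the adjoint problem $(\mathcal{L}^0+K^0)^*\Tilde{\psi}=\gamma(0)\Tilde{\psi}$ with $\int_{\TT^d}\psi\Tilde{\psi}=1$. It then verifies by direct expansion that $(\mathcal{K}^0)^*(\psi\Tilde{\psi})=0$, so $\psi\Tilde{\psi}=\nu(\cdot;0)$ by the uniqueness in Proposition \ref{prop:Ergodic:InvariantMeasure}.

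You instead carry out the Feynman--Hellmann computation yourself after conjugating by $\psi(\cdot;0)$. Conjugation sends the adjoint eigenfunction $\Tilde{\psi}$ to $\psi\Tilde{\psi}$, the left null vector of $\mathcal{K}^0$, so averaging against $\nu(\cdot;0)$ is the paper's identity seen from the other side, and $\Tilde{\psi}$ never appears. The cost is Step 1, which the paper outsources to Pinsky. Your implicit-function argument there is sound: the conjugation identity holds by symmetry of $A$, and the Fredholm step with $c=-\nu(f/\psi(\cdot;0);0)$ and $\int_{\TT^d}\psi(\cdot;0)=1$ checks out.

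What each route buys: the paper's produces the factorization $\nu(\cdot;\zeta)=\psi(\cdot;\zeta)\Tilde{\psi}(\cdot;\zeta)$, which it reuses in the proof of Lemma \ref{lemma:LegendreTransformGamma}. Yours is self-contained. Moreover, $G$ is polynomial in $\zeta$ and the same conjugation works at any base point $\zeta_*$, with $\mathcal{K}^{\zeta_*}$ replacing $\mathcal{K}^0$. So the analytic implicit function theorem should give real-analyticity of $\gamma$ directly, without the appendix's appeal to Kato and Siciak.

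One sentence in Step 1 needs repair. Uniqueness of the principal eigenpair at each fixed $\zeta$ does not by itself show that the branch produced by the implicit function theorem is the principal one. You can close this in three steps:
\begin{enumerate}[(1)]
\item The branch converges to $\psi(\cdot;0)$ in $C^{2,\alpha}_p(\R^d)$, hence is strictly positive for small $\zeta$.
\item A positive periodic eigenfunction with eigenvalue $\mu$ forces $\mu=\gamma(\zeta)$. Applying \eqref{eq:FeynmanKacApp1} to both eigenpairs shows that $\mathbb{E}_x[\exp(\int_0^tK^\zeta(Z^\zeta_s)\,ds)]$ is comparable, uniformly in $x$, to both $e^{\mu t}$ and $e^{\gamma(\zeta)t}$.
\item Uniqueness up to scaling and the normalization then identify the branch with $(\psi(\cdot;\zeta),\gamma(\zeta))$.
\end{enumerate}
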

\begin{proof}
    Since $0$ is fixed in this proof, let us set $\psi(x):=\psi(x;0)$. The result \cite[Theorem 8.2.10 (ii)]{MR1326606} applied to the operator $\mathcal{L}$ \eqref{eq:Generator:DivergenceForm} in divergence-form gives
    \begin{equation*}
        \vb = \int_{\TT^d}\Big(b(x) + A(x)\frac{\nabla \psi(x)}{\psi(x)} \Big)\psi(x)\Tilde{\psi}(x) dx,
    \end{equation*}
    where  $\Tilde{\psi}(\cdot )>0$ satisfies the adjoint equation $(\mathcal{L}^0 + K^0)^*\Tilde{\psi} = \gamma(0)\Tilde{\psi}$ for $\mathcal{L}^0$ and $K^0$ as defined in \eqref{eq:Intro:GeneratorExponential}, and the $\Z^d$-periodic functions $\psi$ and $\Tilde{\psi}$ are normalized so that $\int_{\TT^d}\psi(x)\Tilde{\psi}(x)dx =1$.
    What remains is to show that $\psi\Tilde{\psi}$ is the density of the invariant measure of $(\dot{Y}_t^0)_{t\geq 0}$ on the torus. Let us prove this fact using Proposition \ref{prop:Ergodic:InvariantMeasure}, which characterizes $\nu(\cdot;0)$ as a periodic solution of the adjoint equation $(\mathcal{K}^0)^* \nu(\cdot ; 0) = 0$, where $\mathcal{K}^0$ was introduced in \eqref{eq:GeneratorTilting}. Recall $\kappa(x) = b(x) - \frac{1}{2}\sum_{j=1}^d \partial_j a_j(x)$ introduced in \eqref{eq:Ergodic:DivDrift}. Since $\phi(y;0) = A(y)\tfrac{\nabla \psi(y)}{\psi(y)}$, expressed in divergence-form, $\mathcal{K}^0$ takes the form
    \begin{equation}
        \begin{split}
            (\mathcal{K}^0)q(y) &= \frac{1}{2}\mathrm{div}\big(A(y)\nabla q(y)\big) + \Big( \kappa(y)+A(y)\frac{\nabla \psi(y)}{\psi(y)}\Big)\cdot \nabla q(y).
        \end{split}
    \end{equation}
    Thus, applying the adjoint of $\mathcal{K}^0$ to the product $\psi \Tilde{\psi}$ we obtain
    \begin{equation*}
        \begin{split}
            (\mathcal{K}^0)^*(\psi\Tilde{\psi}) & =  \frac{1}{2}\mathrm{div}\big(A\nabla (\psi \Tilde{\psi}  )\big) - \Big(\kappa + A\frac{\nabla \psi}{\psi}\Big)\cdot \nabla( \psi \Tilde{\psi}  ) - \mathrm{div}\Big(\kappa + A\frac{\nabla \psi}{\psi}\Big) (\psi \Tilde{\psi})\\
            & = \frac{1}{2}\mathrm{div}\big(A(\nabla \psi \Tilde{\psi} +  \psi \nabla\Tilde{\psi} )\big) - \Big(\kappa + A\frac{\nabla \psi}{\psi}\Big)\cdot (\nabla \psi \Tilde{\psi} +  \psi \nabla\Tilde{\psi} ) - \mathrm{div}\Big(\kappa + A\frac{\nabla \psi}{\psi}\Big) (\psi \Tilde{\psi}).
        \end{split}
    \end{equation*}
    Now we expand each of the terms above to get
    \begin{equation*}
        \begin{split}
            \frac{1}{2}\mathrm{div}(A(\nabla \psi \Tilde{\psi} +  \psi \nabla\Tilde{\psi} )) &= \nabla \Tilde{\psi} A \nabla \psi + \frac{1}{2} \psi \mathrm{div}(A \nabla \Tilde{\psi}) + \frac{1}{2} \Tilde{\psi} \mathrm{div}(A \nabla \psi),\\
            - \Big(\kappa + A\frac{\nabla \psi}{\psi}\Big)\cdot (\nabla \psi \Tilde{\psi} +  \psi \nabla\Tilde{\psi} ) &= - \Tilde{\psi}\kappa \cdot \nabla \psi - \frac{\Tilde{\psi}}{\psi} \nabla \psi^T A \nabla \psi - \psi\kappa \cdot \nabla \Tilde{\psi} - \nabla \Tilde{\psi}^T A \nabla \psi,\\
            - \mathrm{div}\Big(\kappa + A\frac{\nabla \psi}{\psi}\Big) \psi \Tilde{\psi} &=  - \mathrm{div}(\kappa) \psi \Tilde{\psi} + \frac{\Tilde{\psi}}{\psi}\nabla \psi^T A \nabla \psi-\mathrm{div}(A\nabla \psi) \Tilde{\psi}.
        \end{split}
    \end{equation*}
    Recall that $\Tilde{\psi}$ satisfies the adjoint equation $(\mathcal{L}^0+K^0)^*\Tilde{\psi} = \gamma(0)\Tilde{\psi}$ for $\mathcal{L}^0$ and $K^0$ introduced in \eqref{eq:Intro:GeneratorExponential}, and that $\psi$ solves the eigenvalue problem $(\mathcal{L}^0+K^0)\psi = \gamma(0)\psi$ in \eqref{eq:Intro:EigenvalueProblem}. Plugging the previous expressions back into the original identity for $(\mathcal{K}^0)^*(\psi\Tilde{\psi})$, and grouping the terms conveniently, we obtain
    \begin{equation*}
        \begin{split}
            (\mathcal{K}^0)^*(\psi\Tilde{\psi}) &= \psi \bigg( \frac{1}{2} \mathrm{div}(A \nabla \Tilde{\psi}) - \kappa \cdot \nabla \Tilde{\psi } - \mathrm{div}(\kappa) \Tilde{\psi} \bigg) + \Tilde{\psi}\bigg( -\frac{1}{2} \mathrm{div}(A \nabla \psi) - \kappa \cdot \nabla \psi \bigg)\\
            & = \psi \big((\mathcal{L}^0)^*\Tilde{\psi}\big) + \Tilde{\psi}(-\mathcal{L}^0\psi) = \psi (\gamma(0)\Tilde{\psi} - K^0 \Tilde{\psi}) + \Tilde{\psi}(-\gamma(0)\psi+K^0\psi) = 0.
        \end{split}
    \end{equation*}
    Since $\int_{\TT^d}\psi(x)\Tilde{\psi}(x)dx=1$, by Proposition \ref{prop:Ergodic:InvariantMeasure}, we conclude that $\nu(x;0)=\psi(x)\Tilde{\psi}(x)$, which finishes the proof.
\end{proof}

\begin{proof}[Proof of Lemma \ref{lemma:interpolation}]
    Notice that if the event $A_n$ occurs, there must be one living particle $v\in \cN_{(n+1)T}$, and $\ell\in [nT,(n+1)T]$ such that $|X_v(\ell)-X_v(nT)+(nT-\ell)\vb| > \kappa(nT)$. Let $(Y_t)_{t\geq 0}$ be a weak solution of \eqref{eq:SDETiling2.0} with $\zeta = 0$.  By the Markov inequality and the many-to-one lemma with $\zeta=0$ (Lemma \ref{lemma:ManyToOneLemma}),
    \begin{equation}\label{eq:Interpolation}
        \begin{split}
            \proba{x}{}{A_n} &\leq \probaaa{x}{}{\exists v\in \cN_{(n+1)T}: \sup_{\ell \in [nT,(n+1)T]} |X_v(\ell)-X_v(nT)+(nT-\ell)\vb|> \kappa nT  }\\
            & \leq \expecteddd{x}{}{\sum_{v\in \cN_{(n+1)T}} \indc_{\{\sup_{\ell \in [nT,(n+1)T]} |X_v(\ell)-X_v(nT)+(nT-\ell)\vb|> \kappa nT\}}  }\\
            &=\expecddd{x}{0}{\tfrac{\psi(x;0)}{\psi(Y_t;0)}e^{-0\cdot(Y_t-Y_0)+(n+1)T\gs(0) }\indc_{\{\sup_{\ell \in [nT,(n+1)T]} |Y_\ell-Y_{nT}+(nT-\ell)\vb|> \kappa nT\}}} \\
            & \leq \tfrac{\max_{y\in \R^d}\psi(y;0) }{\min_{y\in \R^d}\psi(y;0)} e^{(n+1)T\gamma(0)}\sup_{y\in\R^d}\probbb{y}{0}{\sup_{\ell \in [0,T]} |Y_\ell-Y_0-\ell \vb|> \kappa nT}.
        \end{split}
    \end{equation}
    On the other hand, by Lemma \ref{lemma:Interpolation:ProbaEffectiveDrift}, we know that
    \begin{equation*}
        \vb = \int_{\TT^d} \Big(b(x) + A(x)\frac{\nabla \psi(x;0)}{\psi(x;0)} \Big)\nu(x;0)  dx,
    \end{equation*}
    where $\nu(x;0)$ is the density of the invariant measure of the projection of $Y_t$ into the torus. Observe from \eqref{eq:SDETiling2.0} that $b(x) + A(x)\frac{\nabla \psi(x;0)}{\psi(x;0)}$ is exactly the drift of $Y_t$. Thus, Corollary \ref{corollary:Ergodic:AsymptoticDriftDiffusion} gives a uniform bound on the probability in the right hand side of \eqref{eq:Interpolation}. Namely, there exist positive constants $C_1$ and $C_2$ depending on $d$, $\|b\|_{0,\alpha}$ and $\|\sigma\|_{0,\alpha}$ such that
    \begin{equation*}
        \sup_{y\in\R^d}\probbb{y}{}{\sup_{\ell \in [0,T]} |Y_\ell-Y_0-\ell \vb|> \kappa nT} \leq C_1 e^{-C_2 \kappa^2 n^2 T }.
    \end{equation*}
    Plugging this back into \eqref{eq:Interpolation}, the conclusion of the lemma follows.
    \end{proof}

\section{Proof of the shape theorem}\label{sec:TheProof}
In this section, we present the statements and modifications needed for the proof of Theorem \ref{thm:Intro:ShapeHauss}. Most of the results in this section follow using the exact same reasoning as corresponding results in \cite{addarioberry2025shapetheorembbmperiodic}, and for this reason, we omit most of the details. We will only describe the changes in notation, and the changes needed to obtain the corresponding result. Let us recall that in this section, we are always working under assumptions \textbf{\ref{AssumptionA:bSigmaRegularity}}, \textbf{\ref{AssumptionA:SigmaEllipticity}}, \textbf{\ref{AssumptionB:g}}, \textbf{\ref{AssumptionC:measurability}}, 
\textbf{\ref{AssumptionC:NoKillingPureBranching}}, \textbf{\ref{AssumptionC:HolderContinuity}}, and \textbf{\ref{AssumptionC:StochasticDomination}}. 

\subsection{Half-space estimates}\label{sec:Half-Space}

Most of our efforts will focus on proving Proposition \ref{prop:Intro:DirectionalBounds}, from which Proposition \ref{prop:Intro:QuantShape} will be a straightforward consequence. The upper half-space estimate \eqref{eq:Intro:UpperEstimate} follows directly from the following proposition, which provides a refinement of the upper estimate \eqref{eq:Intro:UpperEstimate}.

\begin{proposition}\label{lemma:UpperBound}
     Fix $e\in S^{d-1}$ and let $\vb$ be the effective drift introduced in \eqref{eq:Intro:EffectiveDrift}. 
    Let $\lambda_e>0$ be such that $\cs(e)=\frac{\gs(\lambda_e e)}{\lambda_e}$ as in 
     \eqref{eq:prelimCriticalSpeed}, $\css(e)>0$ is the speed relative to $\vb$ defined in \eqref{eq:Intro:SpeedRelativeTo}, and $\zeta_e:=\lambda_e e \in \R^d$. Let $\psi(\cdot ;\zeta_e)$ be the principal eigenfunction of the eigenvalue problem \eqref{eq:Intro:EigenvalueProblem},  normalized so that $\int_{\TT^d} \psi(x;\zeta_e) dx=1$. Then for every $t>0$ and $f:[0,\infty)\to \R$,

    \begin{equation}\label{eq:upper-half-estimate}
        \sup_{x\in \R^{d}}\proba{x}{}{\exists v\in \mathcal{N}_t : (X_t(v)-X_{0}(v)-t\vb)\cdot e> \css(e)t + f(t)} \leq \frac{\max_{y\in \R^d}\psi(y;\zeta_{e}) }{ \min_{y\in \R^d}\psi(y;\zeta_{e}) } e^{-\lambda_e f(t)}.
    \end{equation}
\end{proposition}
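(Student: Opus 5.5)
The plan is a first-moment (Markov) bound combined with the many-to-one lemma (Lemma \ref{lemma:ManyToOneLemma}) at the tilt $\zeta=\zeta_e=\lambda_e e$. This is the usual exponential change of measure that makes the half-space speed $\cs(e)$ appear exactly. Fix $x\in\R^d$ and $t>0$. Set
\[
F\big((f_s)_{s\in[0,t]}\big):=\indc_{\{(f_t-f_0-t\vb)\cdot e>\css(e)t+f(t)\}},
\]
which is $\cF^Y_t$-measurable. By Markov's inequality,
\[
\proba{x}{}{\exists v\in\cN_t:\ (X_t(v)-X_0(v)-t\vb)\cdot e>\css(e)t+f(t)}\le \mathbf{E}_x\Big[\sum_{v\in\cN_t}F\big((X_s(v))_{s\in[0,t]}\big)\Big].
\]
Lemma \ref{lemma:ManyToOneLemma} then rewrites the right-hand side as
\[
\mathbb{E}^{\zeta_e}_x\Big[\tfrac{\psi(x;\zeta_e)}{\psi(Y_t;\zeta_e)}e^{-\lambda_e e\cdot(Y_t-Y_0)+t\gamma(\zeta_e)}F\big((Y_s)_{s\in[0,t]}\big)\Big].
\]

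The key step is to bound the exponential on the event $\{F=1\}$. By Remark \ref{r:derivative_cs} and \eqref{eq:prelimCriticalSpeed},
\[
\css(e)+\vb\cdot e=\cs(e)=\gamma(\lambda_e e)/\lambda_e .
\]
Hence on $\{F=1\}$,
\[
\lambda_e e\cdot(Y_t-Y_0)>\lambda_e\big(t\vb\cdot e+\css(e)t+f(t)\big)=t\gamma(\zeta_e)+\lambda_e f(t).
\]
It follows that
\[
e^{-\lambda_e e\cdot(Y_t-Y_0)+t\gamma(\zeta_e)}F\le e^{-\lambda_e f(t)}.
\]
The prefactor is bounded deterministically by $\max_y\psi(y;\zeta_e)/\min_y\psi(y;\zeta_e)$, which is finite and positive by \eqref{eq:BoundEigenfunctionQuotient}. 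Bounding the remaining indicator expectation by $1$ gives \eqref{eq:upper-half-estimate}, uniformly in $x$, after taking the supremum. The normalization $\int_{\TT^d}\psi=1$ plays no role, since only the ratio of extrema enters.

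I expect no genuine obstacle; the computation is short. The only points needing care are the following.
\begin{itemize}
\item The sign bookkeeping in the identity $\lambda_e\cs(e)=\gamma(\zeta_e)$, where $\vb\cdot e$ must be added back to $\css(e)$.
\item Checking that Lemma \ref{lemma:ManyToOneLemma} applies to this nonnegative, path-measurable indicator functional.
\item Noting that $f(t)$ may be negative. This is harmless, since the bound is then simply weaker (possibly exceeding $1$).
\end{itemize}
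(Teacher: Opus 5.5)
Your proposal is correct and is essentially the paper's argument: a first-moment (Markov) bound rewritten via the many-to-one lemma (Lemma \ref{lemma:ManyToOneLemma}) at the tilt $\zeta_e=\lambda_e e$. On the event, the identity $\lambda_e(\css(e)+\vb\cdot e)=\gamma(\zeta_e)$ bounds the exponential weight by $e^{-\lambda_e f(t)}$, and the prefactor is bounded by the ratio of the extrema of $\psi(\cdot;\zeta_e)$.
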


\begin{remark}
    For $\epsilon\in (0,1)$, taking $f(t)=\epsilon \cs(e) t$ in the upper half-space estimate, we obtain 
    \begin{equation}
        \sup_{x\in \R^{d}}\proba{x}{}{\exists v\in \mathcal{N}_t : (X_t(v)-X_{0}(v)-t\vb)\cdot e> (1+\epsilon)\css(e)t} \leq Ce^{-t\epsilon \lambda_e\css(e)}.
    \end{equation} 
\end{remark}

The upper half-space estimate is proved via a many-to-one lemma as in \cite[Proposition 3.1]{addarioberry2025shapetheorembbmperiodic}.

\begin{proof}[Proof of Proposition \ref{lemma:UpperBound}]
    Notice that Proposition \ref{lemma:UpperBound} only differs from \cite[Proposition 3.1]{addarioberry2025shapetheorembbmperiodic} in that the position of the particles is corrected by the effective drift $t\vb$, and $\css(e)$ plays the role of $\cs(e)$. The proof follows identically, applying Lemma \ref{lemma:ManyToOneLemma} instead of \cite[Lemma 2.9, Lemma 2.10]{addarioberry2025shapetheorembbmperiodic}. Here, $(Y_t)_{t\geq 0}$ denotes the weak solution of \eqref{eq:SDETiling2.0} under the measure $\mathbb{P}_x^{\zeta_e}$. We take the principal eigenvalue $\gamma(\zeta_e)$, and we use the fact that $\css(e)>0$ introduced in \eqref{eq:Intro:SpeedRelativeTo}.
\end{proof}

\begin{remark}
    For $f(t)$ having the form $f(t)=at^\beta$ with $\beta>1/2$ for a positive constant $a$, the estimate \eqref{eq:upper-half-estimate} can be improved by involving the measure $\mathbb{P}^{\zeta_e}$ introduced in Lemma \ref{lemma:ManyToOneLemma}, although the estimate obtained above is enough for our purposes. For instance, employing the many-to-one lemma, and recalling that $Y_t$ is a weak solution of \eqref{eq:SDETiling2.0}, one obtains
    \begin{equation*}
        \begin{split}
            \sup_{x\in \R^{d}}\mathbf{P}_{x}[\exists  v\in \mathcal{N}_t : (X_t(v)-X_{0}(v)&-t\vb)\cdot e> \css(e)t + f(t)]\\ 
            &\leq \frac{\max_{y\in \R^d}\psi(y;\zeta_{e}) }{ \min_{y\in \R^d}\psi(y;\zeta_{e}) } e^{-\lambda_e f(t)}\mathbb{P}^{\zeta_e}[(Y_t-Y_0-t\vb)\cdot e > \Tilde{c}(e)t + f(t)].
        \end{split}
    \end{equation*}
    For $f(t)$ having the form $f(t)=t^\beta$ with $\beta>1/2$, we can employ the asymptotics of $Y_t$ proved in Section \ref{sec:Ergodic} to improve the bound above.
\end{remark}

The lower half-space estimate \eqref{eq:Intro:LowerEstimate} corresponds to \cite[Proposition 3.2]{addarioberry2025shapetheorembbmperiodic}. The proof of this result is more involved, we consider an embedded discrete-time branching process constructed inductively as follows. Fix $\epsilon>0$ and $T_0>0$. Let
\begin{equation}\label{eq:FirstGeneration}
    I_1^{T_0,\epsilon} := \{v\in \mathcal{N}_{T_0}: (X_{T_0}(v)-X_0(v)-T_0\vb)\cdot e \geq (1-\epsilon)\css(e) T_0 \}.
\end{equation}
We let $I_1^{T_0,\epsilon}$ be the first generation of the discrete-time spatial branching process. Observe that $I_1^{T_0,\epsilon}\subset \cN_{T_0}$. Suppose that for $n\geq 1$, we have defined $I_n^{T_0,\epsilon}$, the $n$-th generation of the discrete-time spatial branching process, and that $I_n^{T_0,\epsilon}\subset \cN_{nT_0}$. We will define the $(n+1)$-generation $I_{n+1}^{T_0,\epsilon}$ as a subset of $\cN_{(n+1)T_0}$. For $v\in I_n^{T_0,\epsilon}$, the descendants of $v$ in the discrete-time branching process are the particles $u\in \cN_{(n+1)T_0}$ such that $v\leq u$ and
\begin{equation*}
    (X_{(n+1)T_0}(u)-X_{nT_0}(u)-T_0\vb)\cdot e\geq (1-\epsilon)\css(e) T_0.
\end{equation*}
In other words, recalling that for $v\in \mathcal{N}_t$ and $s<t$ we write $v\in \mathcal{N}_s$ to denote the unique living ancestor of $v$ at time $s$,
\begin{equation}\label{eq:Generation}
    I_{n+1}^{T_0,\epsilon} := \{v \in \mathcal{N}_{(n+1)T_0}: v \in I_{n}^{T_0,\epsilon},  (X_{(n+1)T_0}(v)-X_{nT_0}(v)-T_0\vb)\cdot e\geq (1-\epsilon)\css(e) T_0\}.
\end{equation}
Observe that for $k\in \N$, if $v\in I_{k}^{T_0,\epsilon}$,
\begin{equation}\label{eq:generationEmbed}
    (X_{k T_0}(v)-X_0(v)-kT_0 \vb)\cdot e =\sum_{j=0}^{k-1}(X_{(j+1) T_0}(v)-X_{j T_0}(v) - T_0\vb)\cdot e \geq (1-\epsilon)\css(e) k T_0.
\end{equation}

We obtain the following survival result for $I^{T_0,\epsilon}_n$. The proof is given in Section \ref{sec:SurvivalLargeDeviation}.
\begin{lemma}\label{lemma:SurvivalProbability}
     Fix $\epsilon>0$. There exists $\beta=\beta(\epsilon)\in (0,1)$ such that, for any $T_0>0$ sufficiently large, 
    \begin{equation*}
        \inf_{x\in \R^d}\proba{x}{}{\# I_n^{T_0,\epsilon} \neq 0 \ \forall n\in \N } \geq \beta.
    \end{equation*}
\end{lemma}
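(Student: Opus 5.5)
The plan is to compare the embedded process $(I_n^{T_0,\epsilon})_{n}$ with a supercritical Galton--Watson process whose offspring law is stochastically smaller, uniformly in the starting point. By periodicity and the strong Markov property at the times $nT_0$, each $v\in I_n^{T_0,\epsilon}$ independently (given $\cF_{nT_0}$) produces its children in $I_{n+1}^{T_0,\epsilon}$ according to the law of $\# I_1^{T_0,\epsilon}$ under $\mathbf{P}_{X_{nT_0}(v)}$. This law depends on the position only modulo $\Z^d$. It therefore suffices to find $T_0$ and a random variable $\xi$ with $\mathbb{E}[\xi]>1$ and $\mathbb{E}[\xi^2]<\infty$, or more simply a truncated version, such that $\# I_1^{T_0,\epsilon}$ stochastically dominates $\xi$ for every starting point $x\in\TT^d$. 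A coupling then dominates the GW process with offspring $\xi$ from below, and the survival probability of that process gives $\beta>0$.

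Constructing $\xi$ is the main obstacle. I would aim to show that for $T_0$ large,
\begin{equation*}
\inf_{x}\mathbf{P}_x\big[\# I_1^{T_0,\epsilon}\geq 2\big]\geq p_0
\end{equation*}
with $p_0>1/2$. Then $\xi=2\cdot\indc_{\{\cdot\}}$ is a valid choice, with mean $2p_0>1$. A cleaner target is a second moment bound: a lower bound on the first moment $\inf_x\mathbf{E}_x[\# I_1]\geq e^{\delta T_0}$ together with $\sup_x\mathbf{E}_x[(\# I_1)^2]\leq C e^{2\delta T_0}$ (up to subexponential factors). Paley--Zygmund then yields $\mathbf{P}_x[\#I_1\geq \tfrac12 e^{\delta T_0}]\geq c$ uniformly in $x$. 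In that case $\xi=\lfloor\tfrac12e^{\delta T_0}\rfloor\indc$ with probability $c$ has mean $c\,e^{\delta T_0}/2>1$ once $T_0$ is large, and $c$ does not depend on $T_0$, which makes $\beta$ depend on $\epsilon$ only.

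For the first moment, I would use Lemma \ref{lemma:ManyToOneLemma} with $\zeta=\zeta_e=\lambda_e e$. This gives
\begin{equation*}
\mathbf{E}_x[\#I_1]\geq c\,\mathbb{E}^{\zeta_e}_x\big[e^{-\lambda_e (Y_{T_0}-x)\cdot e+T_0\gamma(\zeta_e)}\indc_{\{(Y_{T_0}-x-T_0\vb)\cdot e\in[(1-\epsilon)\css(e)T_0,(1-\epsilon/2)\css(e)T_0]\}}\big].
\end{equation*}
On this event the exponent is at least $T_0\lambda_e\epsilon\css(e)/2$, using $\gamma(\zeta_e)=\lambda_e\cs(e)$ and $\cs(e)=\css(e)+\vb\cdot e$. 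To bound the probability of the event below, I would use that the asymptotic drift of $Y$ under $\mathbb{P}^{\zeta_e}$ is $\nabla\gamma(\zeta_e)$. This follows from a version of Lemma \ref{lemma:Interpolation:ProbaEffectiveDrift} at $\zeta_e$ (\cite[Theorem 8.2.10 (ii)]{MR1326606}). By Remark \ref{r:derivative_cs}, $(\nabla\gamma(\zeta_e)-\vb)\cdot e=\css(e)$, which lies just above the window. So I would instead tilt at $\zeta=\lambda e$ with $\lambda<\lambda_e$ slightly. By strict convexity, $(\nabla\gamma(\lambda e)-\vb)\cdot e$ then lies strictly inside $((1-\epsilon)\css(e),\css(e))$, and Corollary \ref{corollary:Ergodic:AsymptoticDriftDiffusion} makes the window event have probability close to $1$. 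The exponent becomes $T_0(\gamma(\lambda e)-\lambda(\nabla\gamma(\lambda e)\cdot e))+o(T_0)$, and this is positive for $\lambda<\lambda_e$ because $F(\lambda e)<0$ there, with $F$ as in the proof of Lemma \ref{lemma:DifferentiabilityLambda_e}.

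For the second moment, I would use a many-to-two argument: split over the time of the most recent common ancestor, and use Assumption \ref{AssumptionC:StochasticDomination} to bound the mean number of pairs of children. To avoid needing a second moment of $\bar\mu$, I would first truncate the offspring at a level $K$. This gives a process dominated by the original one and with slightly smaller $R$; by continuity of $\gamma$ in $R$, its speed $\css$ stays within $\epsilon/2$. Bounding each branch with the upper estimate (Proposition \ref{lemma:UpperBound}) and the same tilted first moment bound gives the second moment bound, up to a factor $e^{o(T_0)}$ that can be absorbed by shrinking $\delta$. An alternative that avoids many-to-two is to first wait $O(1)$ time and use Corollary \ref{corollary:tailBranchingTime} to obtain at least two particles with positive probability. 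This route seems to give only $p_0$ bounded away from $0$, not above $1/2$, so I would keep the Paley--Zygmund route.
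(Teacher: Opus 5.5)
Your reduction to a Galton--Watson process dominated from below is legitimate, and your tilted first-moment bound is a valid substitute for Lemma \ref{lemma:lowerestimate}: tilting at $\lambda e$ with $\lambda<\lambda_e$ close to $\lambda_e$ gives growth rate $-F(\lambda e)>0$, with $F$ as in Lemma \ref{lemma:DifferentiabilityLambda_e}.

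\textbf{The gap is the second-moment step.} It is false that $\mathbf{E}_x[(\# I_1^{T_0,\epsilon})^2]\le e^{o(T_0)}\,\mathbf{E}_x[\# I_1^{T_0,\epsilon}]^2$ when $\epsilon$ is small. Small $\epsilon$ is the whole content of the lemma, since $I_n^{T_0,\epsilon}$ increases with $\epsilon$. In the many-to-two formula, the dominant pairs are those whose common ancestor first runs \emph{ahead} of the linear path and then splits.

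Carry out exactly the bound you describe: Proposition \ref{lemma:UpperBound} on each branch, then Lemma \ref{lemma:ManyToOneLemma} at $2\zeta_e$ for $\mathbf{E}_x\big[\sum_{u\in\cN_s}e^{2\lambda_e e\cdot(X_s(u)-x)}\big]$. A split at time $s$ then contributes $e^{2\lambda_e\epsilon\css(e)T_0+s(\gamma(2\zeta_e)-2\gamma(\zeta_e))}$. By strict convexity and $\gamma(\zeta_e)=\nabla\gamma(\zeta_e)\cdot\zeta_e$ (see \eqref{eq:EigenvalueProblem:DirectionalDerivative}),
$\gamma(2\zeta_e)>\gamma(\zeta_e)+\nabla\gamma(\zeta_e)\cdot\zeta_e=2\gamma(\zeta_e)$.
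So the excess over the squared first moment is $e^{\kappa T_0}$ with $\kappa$ not vanishing as $\epsilon\to0$.

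This is not merely a lossy estimate. Take binary BBM in $d=1$ with $c=(1-\epsilon)\sqrt2\in(1,\sqrt2)$, and let $N$ be the number of particles above $cT_0$. Then $\mathbf{E}[N]^2=e^{(2-c^2+o(1))T_0}$, whereas $\mathbf{E}[N^2]=e^{(3-2c+o(1))T_0}$, with optimal split time $(c-1)T_0$. The ratio is $e^{((c-1)^2+o(1))T_0}$. This is the familiar fact that the additive martingale with parameter $c>1$ is not bounded in $L^2$. Truncating the offspring law does not touch this, since the excess comes from spatial displacement, not from the tails of $\bar\mu$.

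\textbf{Consequence.} The Paley--Zygmund constant $\tfrac14\mathbf{E}_x[N]^2/\mathbf{E}_x[N^2]$ is exponentially small in $T_0$, so your $c$ depends on $T_0$. The mean of $\xi$ is of order $e^{\delta T_0}\mathbf{E}_x[N]^2/\mathbf{E}_x[N^2]$. This tends to $0$ for small $\epsilon$, because $\delta\le\lambda_e\epsilon\css(e)$ by the same many-to-one bound. For BBM the exponent is $2\epsilon-(c-1)^2+O(\epsilon^2)<0$.

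\textbf{Possible repair, and what the paper does.} To rescue a second-moment route you would have to count only particles whose path $(X_s(v)-x-s\vb)\cdot e$ stays below a barrier of slope close to $(1-\epsilon)\css(e)$. That requires barrier estimates for the tilted diffusion which are not available here. The paper avoids second moments entirely. Lemma \ref{lemma:criterion_survival} with $q\equiv1$ only needs $\inf_x\mathbf{E}_x[Ne^{-\delta N}]\ge\rho>1$ for $N=\# I_1^{T_0,\epsilon}$. This follows from three facts:
\begin{itemize}
\item the inequality $\mathbf{E}_x[Ne^{-\delta N}]\ge e^{-1}\big(\mathbf{E}_x[N]-\mathbf{E}_x[N\indc_{\{N\ge\delta^{-1}\}}]\big)$;
\item the first-moment bound $\inf_x\mathbf{E}_x[N]>2e\rho$ from Lemma \ref{lemma:lowerestimate};
\item uniform integrability of $N$, from stochastic domination by the homogeneous process with rate $\|g\|_\infty$ and offspring law $\bar\mu$, which needs only a finite first moment of $\bar\mu$.
\end{itemize}
This criterion also handles the dependence on the starting point directly, without constructing a uniform lower offspring law.
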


This lemma, combined with Lemma \ref{lemma:interpolation}, give the following weak version of Proposition \ref{lemma:lowerestimate}.

\begin{lemma}\label{lemma:Half-space:WeakLowerEstimate}
    Fix $\epsilon\in (0,1)$. There exist $T_0=T_0(\epsilon)>0$, $K=K(\epsilon, d)\in \N$, and $ \eta \in (0,1)$, such that
    \begin{equation}\label{eq:lower1}
        \inf_{x\in \R^d} \mathbf{P}_x[ \forall t>KT_0, \ \sup_{v\in \cN_t} \{(X_t(v)-x-t\vb)\cdot e\}\geq (1-\epsilon)\css(e)t]\geq \eta.
    \end{equation}
\end{lemma}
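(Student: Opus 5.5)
Combine the survival result of Lemma \ref{lemma:SurvivalProbability} for the embedded discrete-time process, applied with a slightly smaller parameter $\epsilon/2$, with the interpolation Lemma \ref{lemma:interpolation} to pass from discrete times $nT_0$ to all continuous times $t$.

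\textbf{Step 1: discrete-time control.} Choose $T_0$ large enough that Lemma \ref{lemma:SurvivalProbability} applies with $\epsilon/2$, giving $\beta=\beta(\epsilon/2)\in(0,1)$. On the event $S:=\{\# I_n^{T_0,\epsilon/2}\neq 0\ \forall n\}$, every $n\ge 1$ has some $v\in I_n^{T_0,\epsilon/2}\subset\cN_{nT_0}$. By \eqref{eq:generationEmbed} this particle satisfies $(X_{nT_0}(v)-x-nT_0\vb)\cdot e\ge (1-\epsilon/2)\css(e)nT_0$. Hence $\inf_x\mathbf P_x[S]\ge\beta$, uniformly in the starting point.

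\textbf{Step 2: interpolation.} Apply Lemma \ref{lemma:interpolation} with $T=T_0$ and $\kappa=\tfrac{\epsilon}{8}\css(e)$ (or $\kappa=\tfrac{\epsilon}{8}\inf_{e}\css(e)$, which is positive by Lemma \ref{lemma:DifferentiabilityLambda_e}). Since $\sum_n\sup_x\mathbf P_x[A_n]<\infty$, there is $K$ with $\sum_{n\ge K}\sup_x\mathbf P_x[A_n]\le\beta/2$, and $K$ depends only on $\epsilon$, $T_0$ and the coefficients. On $S\cap\bigcap_{n\ge K}A_n^c$, take $t\ge KT_0$ and let $n=\lfloor t/T_0\rfloor\ge K$. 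There is a particle at time $nT_0$ with drift-corrected projection at least $(1-\epsilon/2)\css(e)nT_0$. Since the Hausdorff distance between $\mathcal X_{nT_0}-nT_0\vb$ and $\mathcal X_t-t\vb$ is at most $\kappa nT_0$, some particle at time $t$ satisfies
\[
(X_t(u)-x-t\vb)\cdot e\ \ge\ (1-\epsilon/2)\css(e)nT_0-\kappa nT_0\ \ge\ (1-\tfrac{5\epsilon}{8})\css(e)\,nT_0 .
\]
Since $nT_0\ge t-T_0\ge t(1-1/K)$, enlarging $K$ so that $(1-5\epsilon/8)(1-1/K)\ge 1-\epsilon$ gives the required bound $(1-\epsilon)\css(e)t$.

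\textbf{Step 3: conclude.} Set $\eta=\beta/2$. Then $\mathbf P_x[S\cap\bigcap_{n\ge K}A_n^c]\ge\beta-\beta/2=\eta$, uniformly in $x$.

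The main obstacle is the uniformity in $x$ and the bookkeeping of constants. First, $T_0$ must be fixed before $K$, because $K$ comes from the summable tail of Lemma \ref{lemma:interpolation}, which depends on $T_0$ and $\kappa$. Second, the particle provided by the Hausdorff-distance bound need not be a descendant of $v$. This does not matter, since the statement takes a supremum over all particles alive at time $t$. The remaining real work, Lemma \ref{lemma:SurvivalProbability}, is taken as given here.
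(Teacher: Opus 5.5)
Your proof is correct and follows the same route as the paper, which adapts \cite[Lemma 3.10]{addarioberry2025shapetheorembbmperiodic}: survival of the embedded discrete-time process (Lemma~\ref{lemma:SurvivalProbability}, applied with a reduced parameter) is combined with the interpolation Lemma~\ref{lemma:interpolation}, which bounds $\mathbf{P}_x[\bigcup_{n\ge K}A_n]$ uniformly in $x$. Your bookkeeping fills in the details the paper leaves implicit: fixing $T_0$ before $K$, taking $\kappa$ proportional to $\epsilon\css(e)$, and absorbing the loss $nT_0\ge t(1-1/K)$ by enlarging $K$.
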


\begin{proof}
    Up to correcting by the effective drift $t\vb$, and letting $\css(e)$ play the role of $\cs(e)$, the statement of Lemma \ref{lemma:Half-space:WeakLowerEstimate} and \cite[Lemma 3.10]{addarioberry2025shapetheorembbmperiodic} are identical. The proof follows the same reasoning, but we employ Lemma \ref{lemma:SurvivalProbability} instead of \cite[Lemma 3.9]{addarioberry2025shapetheorembbmperiodic}, and to control the term $\mathbf{P}_x[\cup_{n=k}^\infty A_n]$, we employ Lemma \ref{lemma:interpolation}.
\end{proof}

Finally, a cutoff argument, which employs the upper half-space estimate, improves the previous bound into the lower half-space estimate \eqref{eq:Intro:LowerEstimate}.

\begin{proof}[Proof of \eqref{eq:Intro:LowerEstimate}]
    With Proposition \ref{lemma:UpperBound} at hand, and up to correcting by the effective drift $t\vb$, the cutoff argument to prove the lower half-space estimate \eqref{eq:Intro:LowerEstimate} follows exactly as the corresponding proof in \cite[Section 3.2.3]{addarioberry2025shapetheorembbmperiodic}.  The definitions for $M_t, M^{-}_t$ and $R_s^{v,L}$ in \cite[Section 3.2.3]{addarioberry2025shapetheorembbmperiodic} thus become 
    \begin{equation*}\label{eq:MaxDisplacement}
        M_t:=\max_{v\in \cN_t}\{(X_t(v)-X_0(v) - t\vb)\cdot e\}\ \text{ and } \ M_t^-:=\min_{v\in \cN_t}\{(X_t(v)-X_0(v)-t\vb)\cdot e\},
    \end{equation*}
    and, for $L\geq 0$, $v\in \cN_L$, and $s\geq 0$,
    \begin{equation*}\label{eq:MaxDisplacement1}
        R_s^{v,L}:=\text{argmax}_{\{X_{L+s}(u)-X_0(u)-(L+s)\vb \in \R^d:\ u\in \cN_{L+s}, v\leq u\}}\{(X_{L+s}(u)-X_0(u)-(L+s)\vb)\cdot e  \}.
    \end{equation*}
    Once again, $\css$ plays the role of $\cs$, $X_t(v)-X_0(v)-t\vb$ the role of $X_t(v)-X_0(v)$, and $R^{v,L}_{t-L}-X_L(v) - L\vb$ the role of $R^{v,L}_{t-L}-X_L(v)$. Finally, instead of employing \cite[Proposition 3.1, Lemma 3.10 and Lemma 2.2]{addarioberry2025shapetheorembbmperiodic} we use, respectively, Proposition \ref{lemma:UpperBound}, Lemma \ref{lemma:Half-space:WeakLowerEstimate} and Corollary \ref{corollary:tailBranchingTime}. With these changes the final rate function obtained is $\Gamma(\epsilon, e)= \min\{\tfrac{\kappa}{2}\lambda_{-e}\css(-e), \log(\zeta)\tfrac{\kappa}{Ta},-\log(1-\eta)\}$.
\end{proof}

\subsection{A large deviation principle and survival of the embedded branching process}\label{sec:SurvivalLargeDeviation}

Here we prove Lemma \ref{lemma:SurvivalProbability}, which employs a criterion for the survival of general branching processes (see \cite[Theorem 2.4]{maillard2025generalisedprincipaleigenvaluesglobal}). Applied to our setting, the criterion for survival reads as follows.

\begin{lemma}\cite[Theorem 2.4.2]{maillard2025generalisedprincipaleigenvaluesglobal}\label{lemma:criterion_survival}
    Suppose there exists a bounded measurable function $q:\R^d \to \R$, $\delta \in (0,1)$, and $\rho\geq 1$ such that, for every $x\in \R^d$,
    \begin{equation}\label{eq:criterion_survival}
        \mathbb{E}_x\Big[\sum_{v\in I_1^{T_0,\epsilon}}q(X_t(v))\exp\Big( -\delta \sum_{v\in I_1^{T_0,\epsilon}}q(X_t(v)) \Big)\Big] \geq \rho q(x).
    \end{equation}
    Then, if $q(x)>0$, $ \proba{x}{}{\# I_n^{T_0,\epsilon} \neq 0 \ \forall n\in \N } \geq  \proba{x}{}{\liminf_{n\to \infty}\rho^{-n}\# I_n^{T_0,\epsilon}>0  }\geq 1-e^{-\delta q(x)}$.
\end{lemma}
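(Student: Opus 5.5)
The plan is to treat $(I_n^{T_0,\epsilon})_{n\geq 1}$ as a discrete-time branching Markov process on $\R^d$. By the strong Markov property and the independence of the subtrees, each particle $v\in I_n^{T_0,\epsilon}$, located at $y=X_{nT_0}(v)$, produces its generation-$(n+1)$ descendants with the law of $I_1^{T_0,\epsilon}$ under $\mathbf{P}_y$. The time-homogeneity here comes from the fact that the thresholds in \eqref{eq:Generation} only involve increments over a window of length $T_0$.

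For a measurable $f:\R^d\to[0,1]$, I would introduce the generating functional
\[
G[f](x):=\mathbf{E}_x\Big[\prod_{v\in I_1^{T_0,\epsilon}}f(X_{T_0}(v))\Big].
\]
By the branching property, $G^{n}[f](x)=\mathbf{E}_x\big[\prod_{v\in I_n^{T_0,\epsilon}}f(X_{nT_0}(v))\big]$. Moreover, $G$ is monotone in $f$.

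\textbf{Step 1: extinction bound.} The key one-step inequality is
\[
G[e^{-\delta q}]\leq e^{-\delta q}.
\]
Write $S=\sum_{v\in I_1^{T_0,\epsilon}}q(X_{T_0}(v))$. The elementary inequality $1-e^{-s}\geq s e^{-s}$ for $s\geq 0$, applied with $s=\delta S$, together with \eqref{eq:criterion_survival}, gives
\[
1-\mathbf{E}_x[e^{-\delta S}]\geq \delta\,\mathbf{E}_x[S e^{-\delta S}]\geq \delta\rho q(x)\geq \delta q(x)\geq 1-e^{-\delta q(x)}.
\]
This is exactly $G[e^{-\delta q}](x)\leq e^{-\delta q(x)}$. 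Iterating with monotonicity, starting from $0\leq e^{-\delta q}$, yields
\[
\mathbf{P}_x[\# I_n^{T_0,\epsilon}=0]=G^n[0](x)\leq G^n[e^{-\delta q}](x)\leq e^{-\delta q(x)}.
\]
Letting $n\to\infty$ gives the first inequality in the conclusion.

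One point to check: the computation above implicitly takes $q\geq0$ (or at least treats the region $q\leq 0$ as trivial). Where $q(x)\leq 0$ the claimed bound is vacuous, so I would replace $q$ by $q^+$. For that I need to verify that \eqref{eq:criterion_survival} is preserved under $q\mapsto q^+$, or else assume $q\geq 0$ as in \cite{maillard2025generalisedprincipaleigenvaluesglobal}.

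\textbf{Step 2: growth statement.} For the stronger statement about $\liminf \rho^{-n}\#I_n^{T_0,\epsilon}$, I would work with
\[
W_n:=\sum_{v\in I_n^{T_0,\epsilon}}q(X_{nT_0}(v)).
\]
Since $q$ is bounded, $W_n\leq \|q\|_\infty \#I_n^{T_0,\epsilon}$, so it suffices to show
\[
\mathbf{P}_x\big[\liminf_n \rho^{-n}W_n=0\big]\leq e^{-\delta q(x)}.
\]
The plan is to prove a scaled one-step inequality: for $a\in(0,1]$,
\[
\mathbf{E}_x[e^{-a\rho^{-1}\delta S}]\leq e^{-a\delta q(x)}.
\]
I would derive it from the concavity of $s\mapsto 1-e^{-s}$, using $1-e^{-a s}\geq a(1-e^{-s})$, and from the same lower bound on $\mathbf{E}_x[Se^{-\delta S}]$ as in Step 1. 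Iterating $n$ times with $a=1$ would then give
\[
\mathbf{E}_x[e^{-\delta\rho^{-n}W_n}]\leq e^{-\delta q(x)}.
\]
Finally, letting $n\to\infty$ along a subsequence via Fatou, with $\liminf$ in the exponent, bounds the probability that $\liminf\rho^{-n}W_n=0$.

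\textbf{Main obstacle.} Step 2 is the delicate part. The factor $\rho$ must be absorbed cleanly at each generation, and the inequality $e^{-\delta\rho^{-1}S}$ versus $\rho q$ has to be arranged so that the scaling is exactly compensated. If that fails, I would instead follow the proof in \cite{maillard2025generalisedprincipaleigenvaluesglobal}: show that $e^{-\delta\rho^{-n}W_n}$ is a bounded supermartingale, then apply the supermartingale convergence theorem.
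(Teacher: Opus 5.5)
\textbf{The gap is in the final limiting step of Step 2.}

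Step 1 is correct. It already gives $\mathbf{P}_x[\# I_n^{T_0,\epsilon}\neq 0\ \forall n]\ge 1-e^{-\delta q(x)}$, which is all the paper uses later (with $q\equiv 1$). Your scaled one-step inequality is also correct once you apply concavity with factor $a/\rho\in(0,1]$:
\[
1-\mathbf{E}_x[e^{-a\delta S/\rho}]\ge \tfrac{a}{\rho}\big(1-\mathbf{E}_x[e^{-\delta S}]\big)\ge a\delta q(x)\ge 1-e^{-a\delta q(x)}.
\]
Iterating it with $a=\rho^{-(n-1)},\dots,\rho^{-1},1$ gives $\mathbf{E}_x[Y_n]\le e^{-\delta q(x)}$ for $Y_n:=e^{-\delta\rho^{-n}W_n}$. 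So absorbing $\rho$ is not the real difficulty.

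The problem is the last step. Fatou gives $\mathbf{E}_x[\liminf_n Y_n]\le e^{-\delta q(x)}$. But $\liminf_n Y_n=e^{-\delta\limsup_n\rho^{-n}W_n}$, so this only bounds $\mathbf{P}_x[\limsup_n\rho^{-n}W_n=0]$. To bound $\mathbf{P}_x[\liminf_n\rho^{-n}W_n=0]$ you would need $\mathbf{E}_x[\limsup_n Y_n]\le\liminf_n\mathbf{E}_x[Y_n]$, which is false in general; reverse Fatou goes the other way. For example, independent indicators $Y_n=\mathbf{1}_{A_n}$ with $\mathbf{P}(A_n)=1/n$ have $\mathbf{E}[Y_n]\to0$ but $\limsup_n Y_n=1$ almost surely. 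Passing to a subsequence does not help, because the subsequence realizing the $\liminf$ is random. Bounds on the one-time marginals of $Y_n$ cannot give a pathwise $\liminf$ statement.

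\textbf{The fix is your fallback, and your scaled inequality is exactly what it needs.} Condition on $\mathcal{F}_{nT_0}$, use the branching property, and take $a=\rho^{-n}$:
\[
\mathbf{E}_x[Y_{n+1}\mid\mathcal{F}_{nT_0}]=\prod_{v\in I_n^{T_0,\epsilon}}\mathbf{E}_{X_{nT_0}(v)}\big[e^{-\rho^{-n}\delta S/\rho}\big]\le\prod_{v\in I_n^{T_0,\epsilon}}e^{-\rho^{-n}\delta q(X_{nT_0}(v))}=Y_n .
\]
So $(Y_n)$ is a $[0,1]$-valued supermartingale and converges almost surely. Hence $\rho^{-n}W_n\to W_\infty\in[0,\infty]$ almost surely, and the $\liminf$ is a genuine limit. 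Bounded convergence then gives $\mathbf{P}_x[W_\infty=0]\le\mathbf{E}_x[e^{-\delta W_\infty}]\le Y_0=e^{-\delta q(x)}$. Finally, $W_n\le\|q\|_\infty\#I_n^{T_0,\epsilon}$ yields $\liminf_n\rho^{-n}\#I_n^{T_0,\epsilon}>0$ on $\{W_\infty>0\}$.

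\textbf{On the sign of $q$.} Do not try to show that the hypothesis survives $q\mapsto q^+$; it need not, since $s\mapsto se^{-\delta s}$ is not monotone. Instead, note that $1-e^{-s}\ge se^{-s}$ and the concavity bound hold for all real $s$. So the one-step bound holds at every $x$ with $q(x)\ge0$. Set $S^+:=\sum_{v\in I_1^{T_0,\epsilon}}q^+(X_{T_0}(v))$. Since $S^+\ge S$, the bound then holds with $q^+$ in place of $q$ at every $x$ (trivially where $q(x)\le0$). The whole argument then runs with $W_n$ built from $q^+$.
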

\begin{remark}
    The lower bound with the exponential term above is not explicitly given in the statement of \cite[Theorem 2.4.2]{maillard2025generalisedprincipaleigenvaluesglobal}, but it can be deduced from the last step of the proof.
\end{remark}

We will take $q\equiv 1$ in \eqref{eq:criterion_survival}. To obtain \eqref{eq:criterion_survival} for this choice of $q$, we employ a large deviation principle on $(Y_t)_{t\geq 0}$, the SDE introduced in the many-to-one lemma (see \eqref{eq:SDETiling2.0}).

\begin{proposition}\label{proposition:largeDeviation}
    Fix $x\in \R^d$ and $e\in S^{d-1}$. Let $\lambda_e>0$ be the minimizer in \eqref{eq:prelimCriticalSpeed}, and recall that $\zeta_e = \lambda_e e$. Let $(Y_t)_{t\geq 0}$ be the solution of \eqref{eq:SDETiling2.0} under the measure $\bP_x^{\zeta_e}$. Let $\hat{Y}_t:=(1/t)e\cdot(Y_t-Y_0)$. Let $I_e:\R\rightarrow (-\infty,+\infty]$ be the function defined by
    \begin{equation}\label{eq:goodratefunction}
        I_e(\lambda)=\sup_{\eta \in \R} \{ \eta \lambda - [\gamma((\lambda_e+\eta)e)-\gamma(\zeta_e)]\}=[\gamma((\lambda_{e}+\cdot)e)-\gamma(\zeta_e)]^{*}(\lambda).
    \end{equation}
    For a set $A\subset \R$, let $I_e(A):=\inf_{\lambda\in A}I_e(\lambda)$. The following holds:
    \begin{enumerate}[(a)]
        \item For every closed set $\mathcal{C}\subset \R$, $\limsup_{t\rightarrow \infty}\sup_{x\in \R^d}\frac{1}{t}\log \bP_x^{\zeta_e}[\hat{Y}_t \in \mathcal{C}]\leq -I_e(\mathcal{C})$.
        \item For every open set $\mathcal{O}\subset \R$, $\liminf_{t\rightarrow \infty}\inf_{x\in \R^d}\frac{1}{t}\log  \bP_x^{\zeta_e}[\hat{Y}_t \in \mathcal{O}]\geq -I_e(\mathcal{O})$.
    \end{enumerate}
\end{proposition}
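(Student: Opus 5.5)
The plan is to prove the two bounds through the Gärtner--Ellis theorem, made uniform in the starting point. The key input is a uniform computation of the scaled cumulant generating function of $\hat Y_t$. For $\eta\in\R$ we take $F(y):=e^{\eta e\cdot (y-x)}$ and apply the many-to-one Lemma \ref{lemma:ManyToOneLemma} twice: once with $\zeta=\zeta_e$ and once with $\zeta=(\lambda_e+\eta)e$. Both sides then become expectations over the branching process. Concretely, with $F\equiv 1$ and general $\zeta$,
\begin{equation*}
\mathbf{E}_x\big[\#\text{-weighted } e^{\zeta\cdot(X_t(v)-x)}\big]=\expecdd{x}{0}{\tfrac{\psi(x;0)}{\psi(Y^0_t;0)}e^{\zeta\cdot(Y^0_t-x)+t\gamma(0)}},
\end{equation*}
but more directly we avoid the branching process altogether. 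By the definition of $\mathbb{P}^{\zeta_e}$ through the Girsanov density \eqref{eq:measuretilting} with respect to the base diffusion $Z$, we obtain
\begin{equation*}
\mathbb{E}^{\zeta_e}_x\big[e^{\eta e\cdot(Y_t-x)}\big]=\mathbb{E}_x\Big[\tfrac{\psi(Z_t;\zeta_e)}{\psi(x;\zeta_e)}e^{(\zeta_e+\eta e)\cdot(Z_t-x)-t\gamma(\zeta_e)+\int_0^tR(Z_s)ds}\Big].
\end{equation*}
Let $\zeta'=\zeta_e+\eta e$. The ratio $\psi(Z_t;\zeta_e)/\psi(Z_t;\zeta')$ is bounded above and below by \eqref{eq:BoundEigenfunctionQuotient}, so the right-hand side is comparable to $e^{t(\gamma(\zeta')-\gamma(\zeta_e))}\,\mathbb{E}_x[\mathcal{E}^{\zeta'}((Z_s)_{s\le t},t)]$. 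That last expectation equals $1$, because $\mathcal{E}^{\zeta'}$ is a mean-one martingale (Lemma \ref{lemma:ManyToOne:ExponentialMartingale2} together with Novikov). Hence there is $C(\eta)<\infty$, independent of $x$ and $t$, such that
\begin{equation*}
C(\eta)^{-1}e^{t\Lambda(\eta)}\le \mathbb{E}^{\zeta_e}_x\big[e^{t\eta\hat Y_t}\big]\le C(\eta)e^{t\Lambda(\eta)},\qquad \Lambda(\eta):=\gamma((\lambda_e+\eta)e)-\gamma(\zeta_e).
\end{equation*}
In particular $\frac1t\log\mathbb{E}^{\zeta_e}_x[e^{t\eta\hat Y_t}]\to\Lambda(\eta)$ uniformly in $x$.

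For the upper bound (a) we use the standard Chebyshev argument. First, for $\mathcal{C}\subset[a,\infty)$ with $a>\Lambda'(0)$, and for $\eta>0$, we have
\begin{equation*}
\sup_x\mathbb{P}^{\zeta_e}_x[\hat Y_t\ge a]\le C(\eta)e^{-t(\eta a-\Lambda(\eta))}.
\end{equation*}
Optimizing over $\eta$ gives the exponent $I_e(a)$; the symmetric argument handles half-lines below. A general closed set is reduced to these half-lines by the usual convexity argument: $I_e$ is convex with minimum $0$ at $\Lambda'(0)$, and finitely many half-lines suffice because $\Lambda$ is finite everywhere, which gives exponential tightness. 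Every constant involved is uniform in $x$, so the supremum over $x$ passes through.

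For the lower bound (b) we follow the usual tilting proof of Gärtner--Ellis. Since $\Lambda$ is $C^2$, strictly convex and finite on $\R$ (Proposition \ref{Prop:PropertiesGamma}), it is essentially smooth. Moreover, $\Lambda'(\eta)=e\cdot\nabla\gamma((\lambda_e+\eta)e)$ ranges over an interval, and by \eqref{eq:QuadraticBoundsGamma} that interval is all of $\R$; so every $\lambda$ is an exposed point. Fix $\lambda\in\mathcal{O}$ and $\delta>0$, and choose $\eta$ with $\Lambda'(\eta)=\lambda$. We then tilt once more. The tilted law of $Y$, with density proportional to $e^{t\eta\hat Y_t}$, is, up to a bounded factor, again $\mathbb{P}^{\zeta'}_x$. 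This follows from the same Girsanov identity, since the density $\psi(Y_t;\zeta_e)\psi(x;\zeta')/(\psi(Y_t;\zeta')\psi(x;\zeta_e))$ is bounded above and below. It remains to show $\inf_x\mathbb{P}^{\zeta'}_x[|\hat Y_t-\lambda|<\delta]\to1$. Here Corollary \ref{corollary:Ergodic:AsymptoticDriftDiffusion} applies to the periodic diffusion \eqref{eq:SDETiling2.0} with parameter $\zeta'$, giving $\hat Y_t\to e\cdot\nu(b+A\nabla\phi(\cdot;\zeta');\zeta')$ uniformly. That this limiting speed equals $e\cdot\nabla\gamma(\zeta')=\lambda$ follows from the analogue of Lemma \ref{lemma:Interpolation:ProbaEffectiveDrift} at general $\zeta'$, which is \cite[Theorem 8.2.10 (ii)]{MR1326606}. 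Combining these,
\begin{equation*}
\mathbb{P}^{\zeta_e}_x[|\hat Y_t-\lambda|<\delta]\ge c\,e^{-t(\eta\lambda-\Lambda(\eta))-t|\eta|\delta}(1-o(1)),
\end{equation*}
uniformly in $x$. Letting $\delta\to0$ gives (b).

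The main obstacle is the identification in (b): the tilted measure must be comparable to $\mathbb{P}^{\zeta'}$, and the ergodic drift of $Y$ under $\mathbb{P}^{\zeta'}$ must equal $\nabla\gamma(\zeta')$. For the latter I would first try the computation of Lemma \ref{lemma:Interpolation:ProbaEffectiveDrift} with $\zeta'$ in place of $0$, using $\nu=\psi\tilde\psi$. If matching that with \cite{MR1326606} proves delicate, the alternative is to differentiate the uniform moment bound above at $\eta$.
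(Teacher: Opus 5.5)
Your proof is correct, but it is organized differently from the paper's. The paper obtains the uniform two-sided bound on $\frac1t\log\mathbb{E}^{\zeta_e}_x[e^{\eta t\hat Y_t}]$ (Lemma \ref{lemma:logmoment}) by applying the many-to-one lemma twice. It then invokes an abstract uniform G\"artner--Ellis theorem from the earlier $g$-BBM paper, checking its hypotheses on $\eta\mapsto\gamma((\lambda_e+\eta)e)-\gamma(\zeta_e)$ via Proposition \ref{Prop:PropertiesGamma}.

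You derive the same bound directly from the ratio $\mathcal{E}^{\zeta'}/\mathcal{E}^{\zeta_e}$ of Girsanov densities, without the branching process. This needs uniqueness in law for \eqref{eq:SDETiling2.0} to identify $\mathbb{P}^\zeta_x$ with $\mathbb{Q}^\zeta_x$, as in the paper. The comparison constant comes from \eqref{eq:BoundEigenfunction} at both $\zeta_e$ and $\zeta'$, rather than from \eqref{eq:BoundEigenfunctionQuotient}. Your first display is garbled, but you abandon it and it plays no role. You then prove the large-deviation bounds by hand.

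In (a), exponential tightness is unnecessary. A closed $\mathcal{C}$ with $\cs(e)\notin\mathcal{C}$ lies in $(-\infty,b]\cup[a,\infty)$ with $a,b\in\mathcal{C}$, and monotonicity of $I_e$ on each side of $\cs(e)$ finishes the argument.

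In (b), you replace the abstract lower-bound argument by identifying the tilted law with $\mathbb{P}^{\zeta'}_x$ and applying the ergodic law of large numbers. Corollary \ref{corollary:Ergodic:AsymptoticDriftDiffusion} applies since $b+A\nabla\phi(\cdot;\zeta')\in C^{1,\alpha}_p$. The price is the identity $\nu(b+A\nabla\phi(\cdot;\zeta');\zeta')=\nabla\gamma(\zeta')$. The paper proves this only at $\zeta=0$ (Lemma \ref{lemma:Interpolation:ProbaEffectiveDrift}), but the proof via $\nu(\cdot;\zeta)=\psi\tilde\psi$ and \cite[Theorem 8.2.10 (ii)]{MR1326606} carries over verbatim. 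The paper itself asserts $\nu(\cdot;\zeta)=\psi\tilde\psi$ for general $\zeta$ in the proof of Lemma \ref{lemma:LegendreTransformGamma}(iii).

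Your fallback is to rerun the Chebyshev bound under $\mathbb{P}^{\zeta'}_x$, whose log-moment function is $\Lambda(\eta+\cdot)-\Lambda(\eta)$ up to bounded factors. This is exactly the abstract G\"artner--Ellis lower bound, and it needs only differentiability of $\Lambda$ at $\eta$, so it sidesteps the drift identification entirely.

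The paper's route is shorter because it outsources the large-deviation machinery. Yours is self-contained and shows concretely that the optimal tilt is the periodic diffusion with parameter $(\lambda_e+\eta)e$, tying the rate function to the ergodic theory of Section \ref{sec:Ergodic}.
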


This large deviation principle is a consequence of a uniform version of the G\"artner-Ellis (See \cite[Theorem 3.3]{addarioberry2025shapetheorembbmperiodic}) and the following lemma.

\begin{lemma}\label{lemma:logmoment}
    Fix $x\in \R^d$ and $e\in S^{d-1}$. Let $\lambda_e>0$ be the minimizer in \eqref{eq:prelimCriticalSpeed}, and recall that $\zeta_e = \lambda_e e$. Let $(Y_t)_{t\geq 0}$ be the solution of \eqref{eq:SDETiling2.0} under the measure $\bP_x^{\zeta_e}$. Let $\hat{Y}_t:=(1/t)e\cdot(Y_t-Y_0)$. For every $\eta \in \R$ and $e\in S^{d-1}$, there exist $\theta_1,\theta_2\in \R$ such that, for every $x\in \R^d$, 
    \begin{equation}\label{eq:bound0}
        \frac{\theta_1}{t}\leq \frac{1}{t} \log \expec{x}{\zeta_e}{e^{\eta t \hat{Y}_t}} -\gamma((\lambda_e + \eta)e)+\gamma(\zeta_e) \leq \frac{\theta_2}{t},
    \end{equation}
    where $\gamma$ is the principal eigenvalue introduced in \eqref{eq:Intro:EigenvalueProblem}.
\end{lemma}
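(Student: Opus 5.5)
The plan is to compute the exponential moment $\mathbb{E}^{\zeta_e}_x[e^{\eta e\cdot(Y_t-Y_0)}]$ exactly, up to bounded multiplicative factors, by reversing the many-to-one change of measure. We apply Lemma \ref{lemma:ManyToOneLemma} twice, once with $\zeta=\zeta_e$ and once with $\zeta'=(\lambda_e+\eta)e$. For a nonnegative path functional $F$, Lemma \ref{lemma:ManyToOneLemma} with parameter $\zeta$ can be inverted: take $F(\omega)=G(\omega)\frac{\psi(\omega_t;\zeta)}{\psi(x;\zeta)}e^{\zeta\cdot(\omega_t-\omega_0)-t\gamma(\zeta)}$. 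This gives
\begin{equation*}
\mathbb{E}^{\zeta}_x[G(Y)] = \mathbf{E}_x\Big[\sum_{v\in\cN_t}\frac{\psi(X_t(v);\zeta)}{\psi(x;\zeta)}e^{\zeta\cdot(X_t(v)-x)-t\gamma(\zeta)}G(X(v))\Big].
\end{equation*}
It is cleaner to avoid the branching process entirely and use the Girsanov description directly. Under $\bQ^{\zeta}_x$ the law of $Z$ has density $\mathcal{E}^\zeta$ with respect to $\bP_x$, by \eqref{eq:measuretilting} and Lemma \ref{lemma:ManyToOne:CarreDuChamp}. By weak uniqueness for \eqref{eq:SDETiling2.0}, this gives
\begin{equation*}
\mathbb{E}^{\zeta}_x[G(Y)]=\mathbb{E}_x\Big[\frac{\psi(Z_t;\zeta)}{\psi(x;\zeta)}e^{\zeta\cdot(Z_t-x)-t\gamma(\zeta)+\int_0^tR(Z_s)ds}G(Z)\Big].
\end{equation*}

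Choose $\zeta=\zeta_e$ and $G=e^{\eta e\cdot(Z_t-x)}$. Since $\zeta_e+\eta e=\zeta'$, we get
\begin{equation*}
\mathbb{E}^{\zeta_e}_x[e^{\eta t\hat Y_t}]=e^{-t\gamma(\zeta_e)}\,\mathbb{E}_x\Big[\frac{\psi(Z_t;\zeta_e)}{\psi(x;\zeta_e)}e^{\zeta'\cdot(Z_t-x)+\int_0^tR(Z_s)ds}\Big].
\end{equation*}
Next, multiply and divide inside the expectation by $\frac{\psi(Z_t;\zeta')}{\psi(x;\zeta')}e^{-t\gamma(\zeta')}$. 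The quotient $\frac{\psi(Z_t;\zeta_e)\psi(x;\zeta')}{\psi(x;\zeta_e)\psi(Z_t;\zeta')}$ lies between two positive constants $c_1\le c_2$ that depend only on $e$ and $\eta$, uniformly in $x$ and $Z_t$, by \eqref{eq:BoundEigenfunctionQuotient}. What remains is $\mathbb{E}_x[\mathcal{E}^{\zeta'}((Z_s)_{s\le t},t)]e^{t\gamma(\zeta')}$. This expectation equals $1$, because $\mathcal{E}^{\zeta'}$ is a mean-one martingale: this follows from Lemma \ref{lemma:ManyToOne:ExponentialMartingale2} and Novikov's criterion, as noted after \eqref{eq:ManyToOne:QuadraticVariation}.

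Hence $c_1e^{t(\gamma(\zeta')-\gamma(\zeta_e))}\le \mathbb{E}^{\zeta_e}_x[e^{\eta t\hat Y_t}]\le c_2e^{t(\gamma(\zeta')-\gamma(\zeta_e))}$. Taking logarithms and dividing by $t$ gives \eqref{eq:bound0} with $\theta_1=\log c_1$ and $\theta_2=\log c_2$, both independent of $x$ and $t$.

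The main point needing care is justifying the change-of-measure identity for the unbounded functional $G$. We apply it first to $G\wedge n$ and pass to the limit by monotone convergence, which is legitimate since all quantities are nonnegative. We also need that $\mathbb{E}^{\zeta_e}_x$ is determined by the law of a weak solution of \eqref{eq:SDETiling2.0}; uniqueness in law holds since the coefficients are Lipschitz, by \textbf{\ref{AssumptionA:bSigmaRegularity}} and $\nabla\phi\in C^{1,\alpha}_p$.
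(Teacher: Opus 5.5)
Your argument is correct and is essentially the paper's proof, which transplants \cite[Lemma 3.4]{addarioberry2025shapetheorembbmperiodic}. You rewrite the $\zeta_e$-tilted expectation as an expectation for the untilted diffusion $Z$ weighted by $e^{\int_0^t R(Z_s)\,ds}$, re-tilt with $(\lambda_e+\eta)e$, and are left with $e^{t(\gamma((\lambda_e+\eta)e)-\gamma(\zeta_e))}$ times the bounded eigenfunction ratios from \eqref{eq:BoundEigenfunctionQuotient}. The only cosmetic difference is how the second step closes: you use the mean-one property of $\mathcal{E}^{(\lambda_e+\eta)e}$ under $\mathbb{P}_x$, whereas the paper applies Lemma \ref{lemma:ManyToOneLemma} a second time with $\Tilde{Y}$ under $\mathbb{P}^{(\lambda_e+\eta)e}_x$, whose total mass plays the same role.
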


\begin{proof}
    Lemma \ref{lemma:logmoment} corresponds to \cite[Lemma 3.4]{addarioberry2025shapetheorembbmperiodic}. The proof is identical, applying Lemma \ref{lemma:ManyToOneLemma} instead of the corresponding \cite[Lemma 2.10]{addarioberry2025shapetheorembbmperiodic}. Thus, the weak solution $Z_t$ of \eqref{eq:Intro:Diffusion} plays the role of $B_t$, the effective branching rate $R$ plays the role of $g$, and we consider weak solutions $(Y_t)_{t\geq 0}$ and $(\Tilde{Y}_t)_{t\geq 0}$ of \eqref{eq:SDETiling2.0} under $\mathbb{P}^{\zeta_e}_x$ and $\mathbb{P}^{(\lambda_e+\eta)e}_x$ respectively.
\end{proof}

\begin{proof}[Proof of Proposition \ref{proposition:largeDeviation}]
     This follows directly from applying the uniform G\"artner-Ellis (\cite[Theorem 3.3]{addarioberry2025shapetheorembbmperiodic}) and Lemma \ref{lemma:logmoment}. We need to verify that the function $\eta  \mapsto \gamma((\lambda_e+\eta)e) - \gamma(\zeta_e)$ for $\eta\in \R$ satisfies the hypothesis of the aforementioned theorem. This is achieved identically as in the proof of \cite[Proposition 3.6]{addarioberry2025shapetheorembbmperiodic}, but instead of employing \cite[Proposition 2.5]{addarioberry2025shapetheorembbmperiodic}, we employ the corresponding Proposition \ref{Prop:PropertiesGamma}. 
\end{proof}

The function $I_e$ has the following properties.

\begin{lemma}\label{lemma:propI} 
The following holds:
\begin{enumerate}[(i)]
    \item The function $I_e(\lambda)$ is finite for every $\lambda\in \R$, and $I_e$ is strictly convex and differentiable. Moreover, for every $\lambda\in \R$, there exists $\lambda^*\in \R$ such that
    \begin{equation*}
        I_e(\lambda)=\sup_{\eta \in \R} \{ \eta \lambda - [\gamma((\lambda_e+\eta)e)-\gamma(\zeta_e)]\}= \lambda \lambda^* - [\gamma((\lambda_e+\lambda^*)e)-\gamma(\zeta_e)]. 
    \end{equation*}
    \item For any $\lambda\in \R$, $I_e(\lambda)\geq 0$ and $I_e(\lambda)=0$ if and only if $\lambda=c^*(e)$, where $c^{*}(e)$ is defined in \eqref{eq:prelimCriticalSpeed}. Furthermore, $I_e$ is strictly decreasing on the interval $(-\infty,c^*(e))$ and strictly increasing on the interval $(c^*(e),+\infty)$.
    \item For any constant $\beta>0$, there exists $\delta=\delta(\beta)\in (0,1)$ such that for any $\kappa\in (0,\delta)$, 
    \begin{equation*}
        I_e(c^*(e) - \kappa ) < \kappa \beta\quad\text{and}\quad I_e(c^*(e) + \kappa ) < \kappa \beta. 
    \end{equation*}
\end{enumerate} 
\end{lemma}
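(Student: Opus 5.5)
Throughout, write $\Lambda_e(\eta):=\gamma((\lambda_e+\eta)e)-\gamma(\zeta_e)$, so that $I_e=\Lambda_e^*$. We reduce every claim to properties of the one-variable function $\Lambda_e$ and then apply standard convex duality. By Proposition \ref{Prop:PropertiesGamma}(i) and (iv), $\Lambda_e\in C^2(\R)$ and $\Lambda_e''(\eta)=e^T\nabla^2\gamma((\lambda_e+\eta)e)e>0$, so $\Lambda_e$ is strictly convex. Also $\Lambda_e(0)=0$. The quadratic bounds \eqref{eq:QuadraticBoundsGamma} give $\Lambda_e(\eta)\ge c|\eta|^2-C|\eta|-C$, so $\Lambda_e$ is superlinear, and hence $\Lambda_e'$ is a strictly increasing $C^1$ bijection from $\R$ onto $\R$.

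For (i), superlinearity shows that for fixed $\lambda$ the concave function $\eta\mapsto \eta\lambda-\Lambda_e(\eta)$ tends to $-\infty$ as $|\eta|\to\infty$. So the supremum is finite and attained at a point $\lambda^*$. This point is unique and is characterized by $\Lambda_e'(\lambda^*)=\lambda$, that is, $\lambda^*=(\Lambda_e')^{-1}(\lambda)$. Since $\Lambda_e''>0$, the inverse function theorem makes $\lambda\mapsto\lambda^*$ of class $C^1$. The envelope formula then gives $I_e'(\lambda)=\lambda^*=(\Lambda_e')^{-1}(\lambda)$, so $I_e$ is differentiable with strictly increasing derivative, and therefore $I_e$ is strictly convex. (In fact $I_e\in C^2$ with $I_e''=1/\Lambda_e''(\lambda^*)>0$.)

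For (ii), taking $\eta=0$ gives $I_e\ge 0$. We have $I_e(\lambda)=0$ exactly when the maximizer is $\lambda^*=0$, since otherwise strict concavity gives a value strictly above the value $0$ at $\eta=0$. This happens iff $\lambda=\Lambda_e'(0)=\nabla\gamma(\zeta_e)\cdot e$, and by \eqref{eq:EigenvalueProblem:DirectionalDerivative} this equals $\gamma(\lambda_e e)/\lambda_e=c^*(e)$. Monotonicity follows from $I_e'(\lambda)=(\Lambda_e')^{-1}(\lambda)$: this is negative for $\lambda<c^*(e)$ and positive for $\lambda>c^*(e)$, because $(\Lambda_e')^{-1}$ is strictly increasing and vanishes at $c^*(e)$.

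For (iii), we use $I_e(c^*(e))=0$ and $I_e'(c^*(e))=0$ together with differentiability. These give $I_e(c^*(e)\pm\kappa)=o(\kappa)$ as $\kappa\to0^+$. Hence for any $\beta>0$ there is $\delta\in(0,1)$ with $I_e(c^*(e)\pm\kappa)<\beta\kappa$ for all $\kappa\in(0,\delta)$. We can make this quantitative through $I_e(c^*\pm\kappa)\le \kappa\sup_{|s|\le\kappa}|(\Lambda_e')^{-1}(c^*+s)|$, and the right-hand side tends to $0$ by continuity of $(\Lambda_e')^{-1}$.

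The only point needing care is the differentiability and uniqueness of the maximizer in (i). Both rest on $\Lambda_e''>0$, which comes from the invertibility and positive definiteness of the Hessian in Proposition \ref{Prop:PropertiesGamma}(iv), and on coercivity, which comes from \eqref{eq:QuadraticBoundsGamma}. Once these are in place, the rest is routine.
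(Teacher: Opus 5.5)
Your proof is correct and is essentially the paper's argument. The paper simply defers to the analogous Lemma 3.7 of the earlier $g$-BBM paper, which carries out the same convex-duality analysis of the one-variable function $\eta\mapsto\gamma((\lambda_e+\eta)e)-\gamma(\zeta_e)$, using the $C^2$ regularity, nondegenerate Hessian and quadratic bounds of Proposition~\ref{Prop:PropertiesGamma} together with the identity $\nabla\gamma(\zeta_e)\cdot e=c^*(e)$ from \eqref{eq:EigenvalueProblem:DirectionalDerivative}; you make that duality explicit via the inverse function theorem and the envelope formula $I_e'=(\Lambda_e')^{-1}$, which also yields the $o(\kappa)$ bound in (iii) directly.
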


\begin{proof}
    Notice that, while in most of the other propositions $\css(e)$ plays the role of $\cs(e)$, in Lemma \ref{lemma:propI} it does not. The properties of the function $I_e$ follow in the same manner as the corresponding \cite[Lemma 3.7]{addarioberry2025shapetheorembbmperiodic}, invoking Proposition \ref{Prop:PropertiesGamma} instead of \cite[Proposition 2.3]{addarioberry2025shapetheorembbmperiodic}.
\end{proof}

\begin{remark}
   A straightforward application of Proposition \ref{proposition:largeDeviation}, Lemma \ref{lemma:propI}, and the interpolation lemma (Lemma \ref{lemma:interpolation}) shows that $t^{-1}((Y_t-Y_0)\cdot e)$ converges $\mathbb{P}_x^{\zeta_e}$-almost surely to $\cs(e)$ as $t\to \infty$.
\end{remark}

Using the large deviation principle for $\hat{Y}_t$ as in Lemma \ref{lemma:logmoment} and the properties of $I_e$, we obtain the following uniform lower bound on the expected number of particles with average (relative) speed close to $\css(e)$, and thus, on the expected size of $I_1^{T_0,\epsilon}$.

\begin{lemma}\label{lemma:lowerestimate}
    Fix $e\in S^{d-1}$. Let $I_e$ be defined by \eqref{eq:goodratefunction}, $\css(e)$ be defined by \eqref{eq:Intro:SpeedRelativeTo}, and $\vb$ be the effective drift \eqref{eq:Intro:EffectiveDrift}. Fix $\epsilon\in (0,1)$. There exist constants $C=C(e)>0$ and $T=T(e)>0$ such that for every $t>T$ and $\kappa\in (0,1)$,
    \begin{equation}\label{eq:lowerdirectionalestimate}
        \inf_{x\in \R^d}\mathbb{E}_{x}\Big[\sum_{v\in \mathcal{N}_t} \indc_{\{(1-\kappa \epsilon)t\css(e)\geq (X_t(v)-X_0(v)-t\vb)\cdot e\geq (1-\epsilon)t\css(e) \}} \Big] \geq Ce^{ t[\kappa\epsilon\lambda_e \css(e)-2I_{e}(\cs(e)-\kappa\epsilon\css(e))] }.
    \end{equation}
    Furthermore, there is $\epsilon_0=\epsilon_0(e)\in (0,1)$ such that for $\epsilon\in (0,\epsilon_0)$ and for any $\kappa \in (0,1)$, 
    $$\kappa\epsilon\lambda_e \css(e)-2I_{e}(\cs(e)-\kappa\epsilon\css(e)) >0.$$ In particular, for any $\epsilon\in (0,\epsilon_0)$, the expected value in \eqref{eq:lowerdirectionalestimate} grows to $+\infty$ as $t\uparrow +\infty$.
\end{lemma}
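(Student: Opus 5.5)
The plan is to apply the many-to-one lemma (Lemma \ref{lemma:ManyToOneLemma}) with the tilt $\zeta_e=\lambda_e e$, and then to bound the resulting probability for the tilted diffusion from below using the large deviation principle of Proposition \ref{proposition:largeDeviation}(b).

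Write $W_t:=(Y_t-Y_0)\cdot e$. Take $F$ to be the indicator that $(Z_t-Z_0-t\vb)\cdot e$ lies in $[(1-\epsilon)t\css(e),(1-\kappa\epsilon)t\css(e)]$. The many-to-one lemma then gives
\begin{equation*}
\mathbf{E}_x\Big[\sum_{v\in\cN_t}\indc_{\{\cdots\}}\Big]\geq \frac{\min\psi(\cdot;\zeta_e)}{\max\psi(\cdot;\zeta_e)}\,\mathbb{E}^{\zeta_e}_x\Big[e^{-\lambda_e W_t+t\gamma(\zeta_e)}\indc_{\{W_t-t\vb\cdot e\in[(1-\epsilon)t\css(e),(1-\kappa\epsilon)t\css(e)]\}}\Big].
\end{equation*}
Since $\css(e)+\vb\cdot e=\cs(e)$, the constraint reads $W_t\in[t(\cs(e)-\epsilon\css(e)),\,t(\cs(e)-\kappa\epsilon\css(e))]$. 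On this event
\begin{equation*}
-\lambda_e W_t+t\gamma(\zeta_e)\geq -\lambda_e t\cs(e)+\lambda_e t\kappa\epsilon\css(e)+t\gamma(\zeta_e)=t\kappa\epsilon\lambda_e\css(e),
\end{equation*}
where the last equality uses $\gamma(\zeta_e)=\lambda_e\cs(e)$. The expectation is therefore at least $e^{t\kappa\epsilon\lambda_e\css(e)}\,\bP^{\zeta_e}_x[\hat Y_t\in \mathcal{O}]$, with $\mathcal{O}:=(\cs(e)-\epsilon\css(e),\,\cs(e)-\kappa\epsilon\css(e))$ open and nonempty because $\kappa<1$.

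Next apply Proposition \ref{proposition:largeDeviation}(b) uniformly in $x$. By Lemma \ref{lemma:propI}(ii), $I_e$ is strictly decreasing below $\cs(e)$, so $I_e(\mathcal{O})=I_e(\cs(e)-\kappa\epsilon\css(e))=:I$, and $I>0$ since the right endpoint lies strictly below $\cs(e)$. Hence for $t$ large, $\inf_x\bP^{\zeta_e}_x[\hat Y_t\in\mathcal O]\geq e^{-2tI}$; the factor $2$ absorbs the $o(t)$ error because $I>0$. One caveat is that the threshold $T$ produced this way depends on $\kappa$ and $\epsilon$. If $T$ must be uniform in $\kappa$, I would instead use the explicit bound from Lemma \ref{lemma:logmoment}: tilt by $\eta=\lambda^*$, the optimizer from Lemma \ref{lemma:propI}(i), and combine with Lemma \ref{lemma:Ergodic:WeakErgodic} or the LDP upper bound under the retilted measure. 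In the plan I accept the dependence as the paper's statement likely tolerates it. The constant $C$ is the ratio $\min\psi/\max\psi$ from \eqref{eq:BoundEigenfunctionQuotient}.

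For positivity of the exponent, use Lemma \ref{lemma:propI}(iii) with $\beta:=\lambda_e\css(e)/4$. This gives $\delta$ such that $I_e(\cs(e)-s)<s\beta$ for $s\in(0,\delta)$. With $s=\kappa\epsilon\css(e)$ this yields $2I_e(\cdot)<\kappa\epsilon\lambda_e\css(e)/2$, so the exponent exceeds $\kappa\epsilon\lambda_e\css(e)/2>0$. This holds whenever $\epsilon\css(e)<\delta$, so we take $\epsilon_0:=\min\{1,\delta/\css(e)\}$, which works for every $\kappa\in(0,1)$. Growth to $+\infty$ follows immediately.

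The main obstacle is the uniformity of $T$ in $\kappa$, discussed above; the rest is bookkeeping.
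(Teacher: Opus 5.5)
Your argument is correct and follows the paper's route (the paper defers to the $g$-BBM version of this lemma). The ingredients match: many-to-one with the tilt $\zeta_e=\lambda_e e$; the identity $\gamma(\zeta_e)=\lambda_e\cs(e)$ to extract $e^{t\kappa\epsilon\lambda_e\css(e)}$; and the uniform lower bound of Proposition \ref{proposition:largeDeviation}(b) on the open interval. Lemma \ref{lemma:propI}(ii) then gives $I_e(\mathcal{O})=I_e(\cs(e)-\kappa\epsilon\css(e))>0$, so the factor $2$ absorbs the $o(t)$ error.

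Two corrections.

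First, the choice of $\beta$ in the positivity step is off. The shift fed into Lemma \ref{lemma:propI}(iii) is $s=\kappa\epsilon\css(e)$, and you need $2I_e(\cs(e)-s)<\lambda_e s$. The right choice is therefore $\beta=\lambda_e/4$. With your $\beta=\lambda_e\css(e)/4$ you only get $2I_e(\cs(e)-s)<\kappa\epsilon\lambda_e\css(e)^2/2$, which suffices only when $\css(e)<2$.

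Second, do not pursue uniformity of $T$ in $\kappa$: it is false as stated. Fix $t$ and let $\kappa\uparrow 1$.
\begin{itemize}
\item The slab $\{y:(y-x-t\vb)\cdot e\in[(1-\epsilon)t\css(e),(1-\kappa\epsilon)t\css(e)]\}$ decreases to a hyperplane.
\item The expected occupation measure $A\mapsto\mathbf{E}_x[\#\{v\in\cN_t: X_t(v)\in A\}]$ is finite and absolutely continuous, so the left-hand side tends to $0$.
\item The right-hand side converges to $Ce^{t[\epsilon\lambda_e\css(e)-2I_e(\cs(e)-\epsilon\css(e))]}>0$.
\end{itemize}
So the dependence $T=T(e,\epsilon,\kappa)$ that your argument (and the paper's) produces is unavoidable, and your fallback via Lemma \ref{lemma:logmoment} could not remove it. It is also harmless, since Lemma \ref{lemma:SurvivalProbability} only uses the estimate for a single fixed $\kappa$.
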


\begin{proof}
    This lemma corresponds with \cite[Lemma 3.8]{addarioberry2025shapetheorembbmperiodic}, the differences in the statement are, adjusting the trajectory of the particles by $t\vb$, using $\css(e)$ instead of $\cs(e)$ inside the expectation operator, and the rate of the exponential term in the right-hand side of the inequality is $\kappa\epsilon\lambda_e \css(e)-2I_{e}(\cs(e)-\kappa\epsilon\css(e))$. Once again, instead of \cite[Lemma 2.9 and Lemma 2.10]{addarioberry2025shapetheorembbmperiodic}, we employ Lemma \ref{lemma:ManyToOneLemma}. Instead of using the properties of $I_e$ (\cite[Lemma 3.7]{addarioberry2025shapetheorembbmperiodic}) we employ the corresponding Lemma \ref{lemma:propI}, and finally, instead of applying the large deviation principle for the $g$-BBM (\cite[Proposition 3.6]{addarioberry2025shapetheorembbmperiodic}) we apply Proposition \ref{proposition:largeDeviation}.
\end{proof}

We finish this section with the proof of the survival result.

\begin{proof}[Proof of Lemma \ref{lemma:SurvivalProbability}]
    We apply Lemma \ref{lemma:criterion_survival} to obtain a uniform lower bound on the probability of survival of the discrete branching process. Through this proof $e$ will denote Euler's number. Let us take $\rho>1$, and, by Lemma \ref{lemma:lowerestimate}, $T_0>0$ sufficiently large such that
    \begin{equation}\label{eq:T0}
        \inf_{x\in \R^d}\expected{x}{}{\# I_1^{T_0,\epsilon}}>2e \rho.
    \end{equation}
    To choose $\delta\in (0,1)$, recall we assumed that every offspring distribution $\mu^{(x)}$ is stochastically dominated by the same offspring distribution $\bar{\mu}$ with finite first moment $\bar{m}_1$ (see Assumption \textbf{\ref{AssumptionC:StochasticDomination}}). Let $\bar{N}_{T_0}$ denote the total number of particles at time $T_0$ of a branching diffusion with constant branching rate $\|g\|_\infty$ and offspring distribution $\bar{\mu}$. Since $\bar{N}_{T_0}$ is integrable, and its distribution does not depend on the initial position $x\in \R^d$, let us fix $\delta\in (0,1)$ sufficiently small such that 
    \begin{equation}\label{eq:HomBranching}
        \sup_{x\in \R^d} \mathbb{E}_x\big[  \bar{N}_{T_0}\indc_{\{ \bar{N}_{T_0} > \delta^{-1} \}}\big] \leq e\rho.
    \end{equation}
    With this at hand, observe that for $q\equiv 1$,
    \begin{equation*}
        \begin{split}
            \mathbb{E}_x\Big[\sum_{v\in I_1^{T_0,\epsilon}}q(X_t(v))\exp\Big( -\delta \sum_{v\in I_1^{T_0,\epsilon}}q(X_t(v)) &\Big)\Big]  =  \mathbb{E}_x\big[ (\# I_1^{T_0,\epsilon})\exp (-\delta (\# I_1^{T_0,\epsilon}) ) \big]\\
            &\geq \mathbb{E}_x\big[ (\# I_1^{T_0,\epsilon})\indc_{\{ \delta (\# I_1^{T_0,\epsilon}) < 1 \}}\exp (-\delta (\# I_1^{T_0,\epsilon}) ) \big]\\
            & \geq e^{-1} \mathbb{E}_x\big[ (\# I_1^{T_0,\epsilon})\indc_{\{ \# I_1^{T_0,\epsilon} < \delta^{-1} \}}\big]\\
            & = e^{-1} \big( \mathbb{E}_x\big[ \# I_1^{T_0,\epsilon}\big]-\mathbb{E}_x\big[ (\# I_1^{T_0,\epsilon})\indc_{\{ \# I_1^{T_0,\epsilon} \geq  \delta^{-1} \}}\big] \big).
        \end{split}
    \end{equation*}
    Finally, $\# I_1^{T_0,\epsilon}$ is stochastically dominated by $\bar{N}_{T_0}$, and thus, by \eqref{eq:T0} and \eqref{eq:HomBranching}, 
    \begin{equation*}
        e^{-1} \big( \mathbb{E}_x\big[ \# I_1^{T_0,\epsilon}\big]-\mathbb{E}_x\big[ (\# I_1^{T_0,\epsilon})\indc_{\{ \# I_1^{T_0,\epsilon} \geq  \delta^{-1} \}}\big] \big) \geq e^{-1}\big( 2e\rho -\mathbb{E}_x\big[  \bar{N}_{T_0}\indc_{\{ \bar{N}_{T_0} > \delta^{-1} \}}\big] \big) \geq \rho = \rho q(x).
    \end{equation*}
    Plugging this back into our previous inequality proves that \eqref{eq:criterion_survival} holds with our choice of $q$, $\delta$, and $\rho$. Thus,
    \begin{equation*}
        \proba{x}{}{\# I_n^{T_0,\epsilon} \neq 0 \ \forall n\in \N } \geq 1-e^{-\delta }>0.
    \end{equation*}
    Since the function $q$ and $\delta\in (0,1)$ were chosen independent of the initial position $x\in \R^d$, the bound above is uniform for every initial condition $x\in \R^d$, and the conclusion follows.
\end{proof}

\subsection{The proof of Proposition \ref{prop:Intro:QuantShape} and Theorem \ref{thm:Intro:ShapeHauss}}\label{sec:ProofOfHaussShape}

The proof of Proposition \ref{eq:quantShapeThm} follows from Proposition \ref{prop:Intro:DirectionalBounds} by applying the following approximation result for Wulff shapes.

\begin{proposition}\cite[Proposition 1.6]{addarioberry2025shapetheorembbmperiodic}\label{prop:wulffapprox}
    Fix $a \in (0,1)$, let $\mathfrak{c}^*:S^{d-1}\rightarrow (a,a^{-1})$, and let $\cW(\mathfrak{c}^*)$ be the Wulff shape of $\mathfrak{c}^*$. Then for $\epsilon\in (0,1)$ there exist finite sets $\cR,\cQ\subset S^{d-1}$ such that 
    \begin{enumerate}[(i)]
        \item $ \bigcap_{r\in \cR} \cH_{r,\mathfrak{c}^*(r)}^{-}\subset(1+\epsilon)\cW(\mathfrak{c}^*)$, and
        \item for every compact convex set $K\subset B(0,2a^{-1})$, if for every $q\in \cQ$, $K\cap \cH^+_{q,\mathfrak{c}^*(q)} \neq \emptyset$, then 
        \begin{equation*}
            (1-\epsilon)\cW(\mathfrak{c}^*)\subset K.
        \end{equation*}
    \end{enumerate}
    \end{proposition}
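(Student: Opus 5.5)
The plan is to use compactness of $S^{d-1}$ together with two facts about the support function of $\cW(\mathfrak{c}^*)$: it is bounded above by $\mathfrak{c}^*$, and $\cW(\mathfrak{c}^*)$ is sandwiched between balls. Since $\mathfrak{c}^*>a$, every half-space $\cH^-_{e,\mathfrak{c}^*(e)}$ contains $B(0,a)$, so $B(0,a)\subset \cW(\mathfrak{c}^*)$. Also $\cW(\mathfrak{c}^*)\subset \cH^-_{e,a^{-1}}\cap \cH^-_{-e,a^{-1}}$ for every $e$, hence $\cW(\mathfrak{c}^*)\subset \overline{B}(0,a^{-1})$. Because $0$ is an interior point at distance at least $a$ from the boundary, the sets $(1\pm\epsilon)\cW(\mathfrak{c}^*)$ are at Hausdorff distance at least $\epsilon a$ from $\cW(\mathfrak{c}^*)$. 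More precisely, $\cW_{\epsilon a}\subset(1+\epsilon)\cW$ and $(1-\epsilon)\cW + B(0,\epsilon a)\subset \cW$ (with $\cW=\cW(\mathfrak{c}^*)$). I expect these two inclusions, which follow from convexity and $B(0,a)\subset\cW$, to reduce both parts to finite-net arguments. No continuity of $\mathfrak{c}^*$ is assumed, so I will only use its bounds.

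For (i), I argue by contradiction via compactness. Consider the compact annulus $F=\overline{B}(0,2a^{-1}/\epsilon')\setminus \mathrm{int}((1+\epsilon)\cW)$, or more simply work radially. Every point $y\notin (1+\epsilon)\cW$ violates some constraint: there is $e_y$ with $y\cdot e_y>(1+\epsilon)\mathfrak{c}^*(e_y)>\mathfrak{c}^*(e_y)+\epsilon a$. This strict inequality by a margin $\epsilon a$ is open in $y$, so the set $U_e=\{y: y\cdot e>\mathfrak{c}^*(e)+\epsilon a/2\}$ is an open neighbourhood of $y$. To handle unboundedness, I restrict to the compact set $S=\{y: y\in \partial((1+\epsilon)\cW)\}$, which lies in $\overline{B}(0,2a^{-1})$. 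I cover $S$ by finitely many $U_r$, $r\in\cR$, so each boundary point is cut off. Then $P=\bigcap_{r\in\cR}\cH^-_{r,\mathfrak{c}^*(r)}$ is convex, contains $0$, and is disjoint from $\partial((1+\epsilon)\cW)$. Any point of $P$ outside $(1+\epsilon)\cW$ would give a segment from $0$ crossing the boundary inside $P$, which is a contradiction. Hence $P\subset(1+\epsilon)\cW$.

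For (ii), I pick a finite $\delta$-net $\cQ$ of $S^{d-1}$ with $\delta$ small relative to $\epsilon a^2$. Suppose $K\subset B(0,2a^{-1})$ is convex, meets each $\cH^+_{q,\mathfrak{c}^*(q)}$, and $(1-\epsilon)\cW\not\subset K$. By Hahn–Banach there is $e$ and a point $w\in(1-\epsilon)\cW$ with $K\subset \cH^-_{e,w\cdot e}$. Let $h(e)=\sup_{x\in\cW}x\cdot e\le\mathfrak{c}^*(e)$ be the support function, which is $a^{-1}$-Lipschitz on $S^{d-1}$. Then $\sup_K x\cdot e\le(1-\epsilon)h(e)$.

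Taking $q\in\cQ$ with $|q-e|<\delta$, we get $\sup_K x\cdot q\le (1-\epsilon)h(e)+2a^{-1}\delta$. We need this to be at most $\mathfrak{c}^*(q)$. The main obstacle is here: $\mathfrak{c}^*(q)$ could exceed $h(q)$, and we only know $\mathfrak{c}^*(q)\ge h(q)$, which is the correct direction. So it suffices that $(1-\epsilon)h(e)+2a^{-1}\delta\le h(q)$. Using $h(q)\ge h(e)-a^{-1}\delta$ and $h(e)\ge a$, this holds once $3a^{-1}\delta\le\epsilon a$. This contradicts the assumption that $K$ meets $\cH^+_{q,\mathfrak{c}^*(q)}$, which proves (ii).

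The delicate points are thus the Lipschitz bound on $h$ and the inequality $h\le \mathfrak{c}^*$. The bound on $h$ is immediate from $\cW\subset \overline{B}(0,a^{-1})$, and $h\le\mathfrak{c}^*$ holds by the definition of $\cW$.
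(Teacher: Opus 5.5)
The paper does not prove this proposition; it quotes it from \cite{addarioberry2025shapetheorembbmperiodic}, so there is no in-paper argument to compare yours with. Your proposal is a correct, self-contained proof. It uses only the bounds $a<\mathfrak{c}^*<a^{-1}$ and no continuity of $\mathfrak{c}^*$, which is all the statement grants.

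Part (ii) checks out line by line. The support function $h$ of $\cW(\mathfrak{c}^*)$ satisfies $a\le h\le \mathfrak{c}^*$ and is $a^{-1}$-Lipschitz on $S^{d-1}$. Hence a $\delta$-net $\cQ$ with $\delta\le \epsilon a^2/3$ gives $\sup_{x\in K}x\cdot q\le h(q)\le \mathfrak{c}^*(q)$, which contradicts $K\cap\cH^+_{q,\mathfrak{c}^*(q)}\neq\emptyset$. In part (i), the finite-subcover argument followed by the segment from the origin is sound.

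One step in (i) needs tightening. You justify the open cover by saying that points $y\notin(1+\epsilon)\cW$ violate some constraint with a margin. But you then cover $S=\partial((1+\epsilon)\cW)$, and $S\subset(1+\epsilon)\cW$ because that set is closed, so the justification as written does not apply to the points you actually cover.

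The repair follows from your own convexity inclusion. Since $(1+\tfrac{\epsilon}{2})\cW+B(0,\tfrac{\epsilon a}{2})\subset(1+\epsilon)\cW$, we get $S\cap(1+\tfrac{\epsilon}{2})\cW=\emptyset$. So every $y\in S$ has some $e$ with $y\cdot e>(1+\tfrac{\epsilon}{2})\mathfrak{c}^*(e)>\mathfrak{c}^*(e)+\tfrac{\epsilon a}{2}$, i.e.\ $y\in U_e$. The margin is not even needed: $S\cap\cW(\mathfrak{c}^*)=\emptyset$ already makes the open sets $\{y: y\cdot e>\mathfrak{c}^*(e)\}$ a cover of $S$. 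The annulus $F$ you introduce is never used and can be dropped.
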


\begin{proof}[Proof of Proposition \ref{prop:Intro:QuantShape}]
    After correcting by the effective drift $t\vb$, and letting $\css$ play the role of $\cs$, the proof follows the exact same reasoning as the proof of \cite[Proposition 1.2]{addarioberry2025shapetheorembbmperiodic}. 
\end{proof}

The following fact about convex bodies, combined with Proposition \ref{prop:Intro:QuantShape}, implies that particles travel near every extreme point of the shape.

\begin{lemma}\cite[Lemma 4.2]{addarioberry2025shapetheorembbmperiodic}\label{lemma:ApproxExtShape}
    Let \( W\subset \R^{d} \) be compact and convex. Fix \( \epsilon \in (0,1) \). There exists \( \delta \in (0,\epsilon) \) such that, for any set \( E \) with 
    \[
    (1 - \delta)W \subset \mathrm{conv}(E) \subset (1 + \delta)W,
    \]
    if $\xi\in W$ is an extreme point, then $B(\xi, \epsilon)\cap E\neq \emptyset$.
\end{lemma}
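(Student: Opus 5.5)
This is a purely geometric statement, so I will argue by contradiction using compactness. Suppose no such $\delta$ exists. Then there are sets $E_n$ with
\[
(1-\delta_n)W\subset \mathrm{conv}(E_n)\subset(1+\delta_n)W,\qquad \delta_n\downarrow 0,
\]
and extreme points $\xi_n$ of $W$ with $B(\xi_n,\epsilon)\cap E_n=\emptyset$. The first thing to settle is the degenerate case. Since the statement scales $W$ about the origin, I will assume, as in the application where $0\in\mathrm{int}(\tilde{\cW})$, that $W$ contains $0$. If $W$ has empty interior the same argument should run inside its affine hull, but I will mainly treat the case $0\in W$, which is the one the paper uses.

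The key step is to pass from $E_n$ to a truncated set and show that deleting the points near $\xi_n$ costs too much. Set
\[
K_n:=\mathrm{conv}\bigl(E_n\setminus B(\xi_n,\epsilon)\bigr)=\mathrm{conv}(E_n).
\]
Every point of $E_n$ lies in $(1+\delta_n)W$ and at distance at least $\epsilon$ from $\xi_n$. Hence $K_n$ is contained in
\[
C_n:=\mathrm{conv}\bigl((1+\delta_n)W\setminus B(\xi_n,\epsilon)\bigr).
\]
At the same time $K_n\supset(1-\delta_n)W$, which contains $(1-\delta_n)\xi_n$, a point within $\delta_n\,\mathrm{diam}$ of $\xi_n$.

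To get a contradiction I pass to subsequences: $\xi_n\to\xi$, and $E_n\cap(1+\delta_n)W$ has compact closure. The limit point $\xi$ lies in $W$ but need not be extreme, since the set of extreme points need not be closed. I will handle this by working with exposed points directly at stage $n$ instead of in the limit. Using Straszewicz's theorem (exposed points are dense in the extreme points), I may replace $\xi_n$ by an exposed point $\xi_n'$ within $\epsilon/4$, keeping $B(\xi_n',\epsilon/2)\cap E_n=\emptyset$.

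The main obstacle, and the step I expect to be hardest, is a uniform version of the following fact: for an extreme point $\xi$ of a compact convex $W$,
\[
\xi\notin \mathrm{conv}\bigl(W\setminus B(\xi,r)\bigr),
\]
with a positive gap $g(r)$ that is uniform over all extreme points. In general this uniformity can fail, for example for non-closed extreme sets in $d\ge3$. So I would instead run the whole argument at the limit $\xi$ and use Choquet/Milman-type reasoning as follows.

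Let $F$ be the limit (in Hausdorff distance, along a subsequence) of the compact sets $\overline{E_n}\cap(1+\delta_n)W$. Then $\mathrm{conv}(F)=W$. By Milman's converse to Krein–Milman, every extreme point of $W$ lies in $F$. If $\xi$ is extreme, then $\xi\in F$, which contradicts the fact that $F$ avoids the open ball $B(\xi,\epsilon/2)$. If $\xi$ is not extreme, I use the lower semicontinuity of the extreme set. The $\xi_n$ are extreme and converge to $\xi$, and Milman gives that every extreme point of $W$ lies in $F$. In particular each $\xi_n\in F$, so $\xi_n$ is within $\epsilon/4$ of $F$ for large $n$. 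Since $F$ is the Hausdorff limit of the sets built from $E_n$, this puts points of $E_n$ within $\epsilon/2$ of $\xi_n$ for large $n$, a contradiction. This last route avoids needing $\xi$ itself to be extreme, and is the plan I would commit to.
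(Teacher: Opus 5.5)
Your proof is correct, but only the last paragraph is needed. The obstacle you identified is real: $\mathrm{ext}(W)$ need not be closed when $d\ge 3$, so there is no gap bound of the form $\xi\notin\mathrm{conv}(W\setminus B(\xi,r))$ that is uniform over extreme points. Your final route gets around this correctly. The needed uniformity comes from the Hausdorff convergence $\sup_{y\in F} d(y,\overline{E_n})\to 0$, which is uniform over $y\in F$. Combined with $\mathrm{ext}(W)\subset F$, this gives what you need without ever requiring $\lim \xi_n$ to be extreme.

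This paper does not prove the lemma itself; it imports it from \cite[Lemma 4.2]{addarioberry2025shapetheorembbmperiodic}. So there is no in-paper argument to compare with, and your proof stands on its own.

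It should be pruned. Delete the material before the final paragraph: the sets $K_n$ and $C_n$, and the Straszewicz replacement. Also delete the case split on whether $\xi=\lim\xi_n$ is extreme. Your second case uses only $\xi_n\in F$, so it proves the lemma outright. It does not use lower semicontinuity of the extreme set, so drop that phrase. Finally, you need neither $0\in W$ nor nonempty interior. Negating the statement directly produces $\delta_n\to 0$, sets $E_n$, and extreme points $\xi_n$, and $d_{\mathrm{H}}(tW,W)\le |t-1|\max_{w\in W}|w|$ holds for any compact $W$.

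Three steps you assert deserve one line each.
\begin{enumerate}[(i)]
\item The identity $\mathrm{conv}(F)=W$. Set $F_n:=\overline{E_n}$; this set lies in $(1+\delta_n)W$ because $(1+\delta_n)W$ is closed. Then $(1-\delta_n)W\subset \mathrm{conv}(F_n)\subset(1+\delta_n)W$, so $\mathrm{conv}(F_n)\to W$. On the other hand, $d_{\mathrm{H}}(\mathrm{conv}A,\mathrm{conv}B)\le d_{\mathrm{H}}(A,B)$ gives $\mathrm{conv}(F_n)\to\mathrm{conv}(F)$. Since $\mathrm{conv}(F)$ is compact, the two limits agree.
\item The contradiction at the end. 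Because $B(\xi_n,\epsilon)$ is open, $F_n\cap B(\xi_n,\epsilon)=\emptyset$ still holds. Since $\xi_n\in F$, you get $d(\xi_n,F_n)\le d_{\mathrm{H}}(F,F_n)<\epsilon$ for large $n$ along the subsequence.
\item The inclusion $\mathrm{ext}(W)\subset F$. Milman's theorem is more than you need in $\R^d$. Since $\mathrm{conv}(F)$ is already compact, Carath\'eodory writes any extreme point of $W$ as a finite convex combination of points of $F$. Being extreme, it must equal one of those points, so it lies in $F$.
\end{enumerate}
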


As in the proof of \cite[Proposition 4.3]{addarioberry2025shapetheorembbmperiodic}, we can use the particles that travel near extreme points of the shape to approximate any other point either on the interior of the shape, or in the boundary of the shape away from extreme points.

\begin{proposition}\label{prop:probabilityParticlesBalls}
    For all $\xi\in \cW(\css)$ and $\epsilon \in (0,1)$, there exist positive constants $C=C(\xi,\epsilon,d), \Gamma=\Gamma(\xi,\epsilon,d)$, and $T_0=T_0(\epsilon, \xi, d, \|R\|_{C^{0,\alpha}})$ such that, for every $t>T_0$,
    \begin{equation}\label{eq:ApproxIntW}
        \sup_{x\in \R^d} \proba{x}{}{(\mathcal{X}_t-x-t\vb)\cap tB(\xi,\epsilon)=\emptyset } \leq Ce^{-\Gamma t}.
    \end{equation}
\end{proposition}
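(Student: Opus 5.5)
The plan is to follow the two-stage strategy of \cite[Proposition 4.3]{addarioberry2025shapetheorembbmperiodic}. First, at an intermediate time, use Proposition \ref{prop:Intro:QuantShape} together with Lemma \ref{lemma:ApproxExtShape} to produce particles near extreme points of the shape. Then let the subtrees of these particles run, with the relative times chosen so that the resulting convex combination lands in $tB(\xi,\epsilon)$. Write $\cW=\cW(\css)=\Tilde{\cW}$, which is compact and convex with $0\in\mathrm{int}(\cW)$. Fix $\xi\in\cW$. By Carath\'eodory's theorem (or Krein--Milman in finite dimensions), write $\xi=\sum_{i=0}^{d}\theta_i\xi_i$ with $\xi_i$ extreme points of $\cW$ and $\theta_i\ge 0$, $\sum\theta_i=1$. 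I will exploit the scaling structure: a particle at position $y$ at time $s$ has descendants at time $t$ whose displacement, corrected by $(t-s)\vb$, fills $(t-s)\cW$ up to small error. Hence points of the form $s\xi_i+(t-s)\cW$ are reachable.

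Concretely, I first reduce to points $\xi$ with a convenient decomposition. Since $0\in\mathrm{int}(\cW)$, for any $\xi\in\cW$ there is $\xi'=(1-\epsilon/4)\xi$ close to $\xi$, and it suffices to handle $\xi'$ with $\epsilon/2$. I would then argue inductively along a path. Choose $s_1=\theta t$ for a suitable $\theta\in(0,1]$ and an extreme point $\xi_1$. Apply Proposition \ref{prop:Intro:QuantShape} at time $s_1$ with small $\delta$, and then Lemma \ref{lemma:ApproxExtShape}. Outside an event of probability $Ce^{-\Gamma s_1}$, this gives some $v\in\cN_{s_1}$ with $X_{s_1}(v)-x-s_1\vb\in s_1B(\xi_1,\epsilon')$. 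Then apply the Markov property at time $s_1$. Using again Proposition \ref{prop:Intro:QuantShape} and Lemma \ref{lemma:ApproxExtShape}, which hold uniformly in the starting point (this uniformity is essential), the subtree of $v$ over $[s_1,t]$ contains, outside an exponentially small event, a particle near $X_{s_1}(v)+(t-s_1)(\vb+\eta)$ for any prescribed extreme point $\eta$.

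Iterating at most $d+1$ times with times $s_k=(\theta_0+\dots+\theta_{k-1})t$ yields a particle near $x+t\vb+\sum_i\theta_i t\xi_i=x+t\vb+t\xi$. The accumulated error is at most $\sum_i\theta_i t\epsilon'\le t\epsilon'$, so taking $\epsilon'=\epsilon/2$ gives the ball. Weights $\theta_i$ equal to zero are simply skipped. Positive weights that are very small make the corresponding time $\theta_i t$ short, but the constants are allowed to depend on $\xi$, so we require $t\min_{\theta_i>0}\theta_i>T_0(\epsilon')$. The failure probability is then bounded by a union of at most $d+1$ terms, each of order $Ce^{-\Gamma\theta_i t}$, giving $Ce^{-\Gamma' t}$ with $\Gamma'=\Gamma\min_{\theta_i>0}\theta_i$.

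The main obstacle is the error bookkeeping in Lemma \ref{lemma:ApproxExtShape}. That lemma is stated for the set $\cW$ and the rescaled cloud $s^{-1}(\mathcal{X}_s-y-s\vb)$. One must check that the $\epsilon$-ball it provides, after rescaling by the elapsed time $t-s_k$, contributes error $(t-s_k)\epsilon'$ and not more. This holds because Proposition \ref{prop:Intro:QuantShape} gives $(1-\delta)\cW\subset s^{-1}(H_s-y-s\vb)\subset(1+\delta)\cW$, which is exactly the hypothesis of Lemma \ref{lemma:ApproxExtShape} with $E=s^{-1}(\mathcal{X}_s-y-s\vb)$. The second point to verify is that the conditioning on $\cF_{s_k}$ is legitimate. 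This follows from the branching Markov property, since subtrees after time $s_k$ are independent copies of the process started from the particle positions, so the uniform-in-$x$ bounds apply.
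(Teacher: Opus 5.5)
Your proposal is correct and matches the paper's argument, which is the Carath\'eodory-plus-Markov-property iteration of \cite[Proposition 4.3]{addarioberry2025shapetheorembbmperiodic}, driven by Proposition \ref{prop:Intro:QuantShape} and Lemma \ref{lemma:ApproxExtShape} applied to the drift-corrected cloud, with the uniformity in the starting point and the bookkeeping for small weights handled as you describe. The preliminary reduction to $(1-\epsilon/4)\xi$ is never used, and as stated it would require $|\xi|\le 2$, so you can simply drop it.
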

\begin{proof}
    We use Carath\'eodory's theorem for convex hulls and the Markov property. This is the exact same argument given in \cite[Proposition 4.3]{addarioberry2025shapetheorembbmperiodic}, up to correcting by the effective drift $t\vb$, and applying Proposition \ref{eq:quantShapeThm} instead of \cite[Proposition 1.2]{addarioberry2025shapetheorembbmperiodic}.
\end{proof}

The proof of Theorem \ref{thm:Intro:ShapeHauss} follows by the compactness of the shape.
\begin{proof}[Proof of Theorem \ref{thm:Intro:ShapeHauss}]
    The result follows by applying the interpolation lemma (Lemma \ref{lemma:interpolation}) combined with the previous proposition, just as in the proof of \cite[Theorem 1.1]{addarioberry2025shapetheorembbmperiodic}.
\end{proof}

\section{Properties of the shape}\label{sec:PropertiesShape}

In this section we prove a few properties of the shape.

\subsection{The support function}

Given a convex body $K\subset \R^d$, the support function $h_{K}:\R^{d}\setminus \{0\} \to \R$ of $K$ is defined by 
\begin{equation}
    h_{K}(\zeta)=\sup\{ x\cdot \zeta: x\in K \}, \ \forall \zeta \in \R^{d}\setminus \{0\}.
\end{equation}
Notice that the support function of a convex body $K\subset \R^d$ is $1$-homogeneous, meaning that for every $\zeta \in \R^d$, $h_K(\zeta)=|\zeta|h_K(\zeta/|\zeta|)$. We consider the $1$-homogeneous extension of $\css:S^{d-1}\to \R$, defined, for every $\zeta\in \R^d\setminus \{0\}$ by
\begin{equation*}
    \css(\zeta) = |\zeta|\css(\zeta/|\zeta|).
\end{equation*}
Using Proposition \ref{prop:Intro:DirectionalBounds} we show that the $1$-homogeneous extension of $\css$ is the support function of its Wulff shape $\Tilde{\cW}:=\cW(\css)$. 

\begin{proposition}\label{prop:ShapeTheorem:ContinuityCSS}
    Let $\css:S^{d-1}\to (0,\infty)$ be the function defined in \eqref{eq:Intro:SpeedRelativeTo}. Then, for every $e\in S^{d-1}$, $\cH_{e,\css(e)}$ is a supporting hyperplane of $\Tilde{\cW}$. In particular, the $1$-homogeneous extension of $\css$ is the support function of $\Tilde{\cW}$.
\end{proposition}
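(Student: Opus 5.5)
Fix $e\in S^{d-1}$. By the definition \eqref{eq:Intro:WulffShape}, $\Tilde{\cW}\subset \cH^-_{e,\css(e)}$, so $h_{\Tilde{\cW}}(e)\le \css(e)$. It therefore suffices to show that $\Tilde{\cW}$ meets $\cH_{e,\css(e)}$, i.e.\ $h_{\Tilde{\cW}}(e)\ge \css(e)$. Indeed, $\Tilde{\cW}$ is compact and convex, being an intersection of closed half-spaces with $\css$ bounded by Lemma \ref{lemma:DifferentiabilityLambda_e}. So once the supremum of $x\cdot e$ over $\Tilde{\cW}$ equals $\css(e)$ it is attained, and $\cH_{e,\css(e)}$ is a supporting hyperplane. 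Then $1$-homogeneity of both $h_{\Tilde{\cW}}$ and the extension of $\css$ gives the second claim.

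The plan is to argue by contradiction, using the probabilistic content of Proposition \ref{prop:Intro:DirectionalBounds} and Proposition \ref{prop:Intro:QuantShape}. Suppose $h_{\Tilde{\cW}}(e)=\css(e)-3\delta$ for some $\delta>0$. Choose $\epsilon\in(0,1)$ small enough that both
\[
(1+\epsilon)h_{\Tilde{\cW}}(e)<\css(e)-2\delta
\qquad\text{and}\qquad
(1-\epsilon)\css(e)>\css(e)-\delta .
\]
This is possible since $h_{\Tilde{\cW}}(e)\le \sup_{e'}\css(e')<\infty$.

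By Proposition \ref{prop:Intro:QuantShape}, with probability at least $1-Ce^{-t\Gamma}$ we have $t^{-1}(H_t-x-t\vb)\subset(1+\epsilon)\Tilde{\cW}$. On that event every particle satisfies
\[
(X_t(v)-x-t\vb)\cdot e\le (1+\epsilon)t\,h_{\Tilde{\cW}}(e)<t(\css(e)-2\delta).
\]
Hence $(\mathcal{X}_t-x-t\vb)\subset t\cH^-_{e,(1-\epsilon)\css(e)}$. By the lower half-space estimate \eqref{eq:Intro:LowerEstimate}, this last event has probability at most $Ce^{-t\Gamma}$ for $t>T(e)$. The two bounds together give $1-Ce^{-t\Gamma}\le Ce^{-t\Gamma}$, which is impossible for large $t$. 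So no such $\delta$ exists, and $h_{\Tilde{\cW}}(e)=\css(e)$.

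One subtlety needs checking: Proposition \ref{prop:Intro:QuantShape} is itself derived from the half-space estimates through the Wulff approximation of Proposition \ref{prop:wulffapprox}, so we must make sure no circularity arises. It does not, because we only use its statement, which holds for the $\Tilde{\cW}$ defined in \eqref{eq:Intro:WulffShape}.

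The main point requiring care is precisely that inclusion $t^{-1}(H_t-x-t\vb)\subset(1+\epsilon)\Tilde{\cW}$. Alternatively, one can bypass Proposition \ref{prop:Intro:QuantShape} and use Proposition \ref{prop:wulffapprox}(i) directly: pick a finite set $\cR$ with $\bigcap_{r\in\cR}\cH^-_{r,\css(r)}\subset(1+\epsilon)\Tilde{\cW}$, and apply the upper estimate \eqref{eq:Intro:UpperEstimate} in each direction $r\in\cR$ with a union bound. This confines the rescaled cloud to $(1+\epsilon')\bigcap_{r}\cH^-_{r,\css(r)}$ with high probability, after which we conclude as above. Continuity of $\css$ (Lemma \ref{lemma:DifferentiabilityLambda_e}) also follows once $\css=h_{\Tilde{\cW}}$, as support functions of convex bodies are continuous; this is the probabilistic continuity proof mentioned in the introduction.
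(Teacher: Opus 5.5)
Your proof is correct and follows essentially the same route as the paper. You argue by contradiction, use upper half-space estimates to confine the rescaled cloud to $\cH^-_{e,(1-\epsilon)\css(e)}$ with probability tending to one, and contradict the lower estimate \eqref{eq:Intro:LowerEstimate}. The only difference is packaging: you invoke the upper inclusion of Proposition \ref{prop:Intro:QuantShape}, whereas the paper does the finite-direction confinement by hand, via Lemma \ref{lemma:Wulff:InteriorZero} (built on Proposition \ref{prop:wulffapprox}(i)) and a union bound over \eqref{eq:Intro:UpperEstimate} — which is exactly your stated alternative.
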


\begin{remark}\label{remark:SupportFunction}
    Letting $\cW = \vb + \Tilde{\cW}$ and since $\css(e) = \cs(e)-\vb\cdot e$ (see \eqref{eq:prelimCriticalSpeed} and \eqref{eq:Intro:SpeedRelativeTo}), for every $e\in S^{d-1}$, $\cH_{e,\cs(e)}$ is a supporting hyperplane of $\cW$. Similarly, $\cW$ and $\Tilde{\cW}$ have the same regularity, and the $1$-homogeneous extensions of $\css$ and $\cs$ are the support functions of $\Tilde{\cW}$ and $\cW$ respectively. Moreover, the $1$-homogeneous extensions of $\cs$ and $\css$ satisfy the relation
    \begin{equation*}
        \cs(\zeta) = \css(\zeta) - \vb\cdot \zeta,\ \forall \zeta\in \R^d\setminus\{0\},
    \end{equation*}
    so the $1$-homogeneous extensions $\cs$ and $\css$ have the same regularity as each other.
\end{remark}

Before proving the proposition above, we prove the following auxiliary lemma. 

\begin{lemma}\label{lemma:Wulff:InteriorZero}
    Fix $a\in (0,1)$. Let $\mathfrak{c}^*:S^{d-1} \to (a,a^{-1}]$. Suppose that for some $e\in S^{d-1}$, $\cH_{e,\csf(e)}$ is not a supporting hyperplane of $\cW(\csf)$. Then, for every $\epsilon\in (0,1)$, there exists $\delta \in (0,\epsilon)$ and $\cR\subset S^{d-1}$ finite such that
    \begin{equation*}
        \bigcap_{r\in \cR}\cH^-_{r,\csf(r)+\delta} \subset \cH^-_{e,(1-\epsilon)\csf(e)}.
    \end{equation*}
\end{lemma}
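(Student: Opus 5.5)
The plan is a compactness argument. Write $W:=\cW(\csf)=\bigcap_{r\in S^{d-1}}\cH^-_{r,\csf(r)}$. Since $\csf>a$, we have $B(0,a)\subset W$, and since $\csf\le a^{-1}$, we have $W\subset \overline{B}(0,a^{-1})$. So $W$ is a convex body, and its support function $h_W$ is well defined. The hypothesis that $\cH_{e,\csf(e)}$ is not a supporting hyperplane means $s:=h_W(e)<\csf(e)$, because $W\subset\cH^-_{e,\csf(e)}$ always holds.

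There is a caveat about the range of $\epsilon$. The target half-space $\cH^-_{e,(1-\epsilon)\csf(e)}$ must contain $W$, because every finite intersection $\bigcap_{r\in\cR}\cH^-_{r,\csf(r)+\delta}$ contains $W$. So the conclusion can only hold when $(1-\epsilon)\csf(e)>s$, i.e.\ for $\epsilon<\epsilon_0:=1-s/\csf(e)$. This is the regime in which the lemma is used; the statement should be read as holding for every $\epsilon\in(0,\epsilon_0)$, and I would prove it in that form. Fix such an $\epsilon$.

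\textbf{Step 1: boundedness from finitely many constraints.} Choose a finite $\tfrac12$-net $\cR_0\subset S^{d-1}$. If $x\cdot r\le \csf(r)+1\le a^{-1}+1$ for all $r\in\cR_0$, then, writing $x=|x|u$ and picking $r\in\cR_0$ with $|r-u|<\tfrac12$, we get $x\cdot r\ge |x|/2$. Hence $|x|\le M:=2(a^{-1}+1)$. Thus for every $\delta\le 1$,
\[
\bigcap_{r\in\cR_0}\cH^-_{r,\csf(r)+\delta}\subset \overline{B}(0,M).
\]

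\textbf{Step 2: compactness.} Let $K:=\overline{B}(0,M)\cap\{x: x\cdot e\ge(1-\epsilon)\csf(e)\}$. This set is compact, and it is disjoint from $W$ because $(1-\epsilon)\csf(e)>s$. Hence for each $y\in K$ there is $r_y\in S^{d-1}$ with $y\cdot r_y>\csf(r_y)$. Set $\eta_y:=\tfrac12\bigl(y\cdot r_y-\csf(r_y)\bigr)>0$ and consider the open set
\[
U_y:=\{x: x\cdot r_y>\csf(r_y)+\eta_y\},
\]
which contains $y$. By compactness, finitely many sets $U_{y_1},\dots,U_{y_m}$ cover $K$. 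Take
\[
\cR:=\cR_0\cup\{r_{y_1},\dots,r_{y_m}\},\qquad \delta:=\tfrac12\min\{1,\epsilon,\eta_{y_1},\dots,\eta_{y_m}\}\in(0,\epsilon).
\]
Note that no continuity of $\csf$ is needed, since each $r_{y_i}$ is fixed.

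\textbf{Step 3: conclusion.} Let $x\in\bigcap_{r\in\cR}\cH^-_{r,\csf(r)+\delta}$. By Step 1, $|x|\le M$. Suppose $x\cdot e>(1-\epsilon)\csf(e)$. Then $x\in K$, so $x\in U_{y_i}$ for some $i$, giving $x\cdot r_{y_i}>\csf(r_{y_i})+\eta_{y_i}>\csf(r_{y_i})+\delta$. This contradicts $x\in \cH^-_{r_{y_i},\csf(r_{y_i})+\delta}$. Hence $x\in\cH^-_{e,(1-\epsilon)\csf(e)}$, which is the claim.

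The only delicate point is the one flagged above: the admissible range of $\epsilon$, which must be restricted to $(0,\epsilon_0)$. Once that restriction is in place, the argument is routine compactness.
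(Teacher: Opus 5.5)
Your argument is correct, and so is your caveat. As written, the statement fails whenever $\epsilon \ge 1-h_{\cW(\csf)}(e)/\csf(e)$: the left-hand side always contains the $\delta$-neighbourhood of $\cW(\csf)$, so the inclusion forces $(1-\epsilon)\csf(e)>h_{\cW(\csf)}(e)$. This strict form also justifies the strict inequality you state, which your ``must contain $W$'' remark alone only gives non-strictly.

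The paper's proof carries the same implicit restriction. With $c=h_{\cW(\csf)}(e)$, it chooses $\epsilon',\kappa$ such that $(1+\epsilon')(1+\kappa)c<(1-\epsilon)\csf(e)$, and this is possible only when $(1-\epsilon)\csf(e)>c$. The application in Proposition~\ref{prop:ShapeTheorem:ContinuityCSS} needs only one small $\epsilon$, so your reformulation is harmless.

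Your route differs from the paper's. The paper does not run a compactness argument itself. It proceeds as follows:
\begin{enumerate}[(1)]
\item It applies the imported Wulff-approximation result, Proposition~\ref{prop:wulffapprox}(i), to the rescaled function $(1+\kappa)\csf$. This yields a finite $\cR$ with $\bigcap_{r\in\cR}\cH^-_{r,(1+\kappa)\csf(r)}\subset(1+\epsilon')(1+\kappa)\cW(\csf)$.
\item It uses homogeneity of the Wulff construction and the supporting level $c$ to place this set inside $\cH^-_{e,(1+\epsilon')(1+\kappa)c}$.
\item It converts the multiplicative slack into additive slack via $\delta=\kappa\min_{r\in\cR}\csf(r)$.
\end{enumerate}

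You instead build the finite family by hand. A $\tfrac12$-net gives a priori boundedness of every finite intersection. A finite subcover of the compact region $K$ by the open half-spaces $\{x\cdot r_y>\csf(r_y)+\eta_y\}$ then supplies witnesses with a uniform margin, and that margin absorbs the $+\delta$ enlargement.

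The paper's version is shorter, given the black-box approximation result, which itself rests on a compactness argument of this type. Yours is self-contained, uses only $\csf\in(a,a^{-1}]$ with no continuity, and treats the additive perturbation directly rather than through rescaling.
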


\begin{proof}
    Let $e\in S^{d-1}$ as in the statement. Since $\cH_{e,\csf(e)}$ is not a supporting hyperplane of $\cW(\csf)$, and $\csf$ is bounded away from zero, and $0\in \mathrm{int}(\cW(\mathfrak{c}^*))$, there exists $0<c<\csf(e)$ such that $\cH_{e,c}$ is a supporting hyperplane of $\cW(\csf)$. Let $\epsilon', \kappa \in (0,1)$ be sufficiently small such that $(1+\epsilon')(1+\kappa)c < (1-\epsilon)\csf(e)$. By Proposition \ref{prop:wulffapprox}(i), there exists $\cR\subset S^{d-1}$ finite such that
    \begin{equation*}
        \bigcap_{r\in \cR}\cH^-_{r,(1+\kappa)\csf(r)} \subset (1+\epsilon')\cW((1+\kappa)\csf) \subset (1+\epsilon')(1+\kappa)\cH^-_{e,c} \subset (1-\epsilon)\cH^-_{e,\csf(e)}.
    \end{equation*}
    Letting $\delta = \min_{r\in \cR}\{\kappa \csf(r)\}$, it follows that $\bigcap_{r\in \cR}\cH^-_{r,\csf(r) + \delta } \subset (1-\epsilon)\cH^-_{e,\csf(e)}$, which finishes the proof.
\end{proof}

\begin{proof}[Proof of Proposition \ref{prop:ShapeTheorem:ContinuityCSS}]
    For the purpose of contradiction, assume that there exists $e\in S^{d-1}$ such that $\cH_{e,\css(e)}$ is not a supporting hyperplane of $\cW(\css)$. Let us fix $\epsilon\in (0,1)$. Recall that $\css$ is bounded away from zero and bounded from above by Lemma \ref{lemma:DifferentiabilityLambda_e}. Hence, we can apply Lemma \ref{lemma:Wulff:InteriorZero} to find $\delta \in (0,1)$, and a finite subset $\cR\subset S^{d-1}$ such that 
    \begin{equation*}
        \bigcap_{r\in \cR}\cH^-_{r,\css(r)+\delta} \subset \cH^-_{e,(1-\epsilon)\css(e)}.
    \end{equation*}
    For $r\in \cR$, let $\epsilon'(r) = \delta/\css(r)$, so that $\css(r)+\delta = (1+\epsilon'(r))\css(r)$. Thus, by \eqref{eq:Intro:UpperEstimate}, there exists $\Gamma = \Gamma(r,\delta)>0 $ such that 
    \begin{equation}
        \begin{split}
            \mathbf{P}_0\Big[t^{-1}(\mathcal{X}_t-t\vb)\subset \cH^-_{e,(1-\epsilon)\css(e)}\Big]& \geq  \mathbf{P}_0\Big[t^{-1}(\mathcal{X}_t-t\vb)\subset \bigcap_{r\in \cR}\cH^-_{r,\css(r)+\delta}\Big] \\
            & = 1 - \mathbf{P}_0[\exists r\in \cR \ \mathrm{s.t}\ \cH^+_{r,\css(r)+\delta}\cap t^{-1}(\mathcal{X}_t-t\vb)\neq \emptyset ] \\
            &\geq 1 - \sum_{r\in \cR}\mathbf{P}_0[ \cH^+_{r,(1+\epsilon'(r))\css(r)}\cap t^{-1}(\mathcal{X}_t-t\vb)\neq \emptyset ] \\
         & \geq 1 - (\# \cR) e^{-t  \min_{r\in \mathcal{R}}\Gamma(r,\delta)  }.
        \end{split}
    \end{equation}
    The right-hand side of the expression above tends to $1$ as $t\to +\infty$, but by \eqref{eq:Intro:LowerEstimate}, the left-hand side tends to zero. This gives a contradiction, and the lemma follows.

    The last claim follows by observing that the for $e\in S^{d-1}$, the above means that
    \begin{equation*}
        \css(e)=\inf\{c\in \R:\Tilde{\cW} \subset \cH^{-}_{e,c} \} = \sup\{ x\cdot e: x\in \Tilde{\cW} \}.
    \end{equation*}
    Hence, for $\zeta\in \R^d\setminus \{0\}$, the $1$-homogeneous extension $\css$ satisfies
    \begin{equation*}
        \css(\zeta)= |\zeta|\sup\{ (x\cdot \zeta)/|\zeta|: x\in \Tilde{\cW} \} = \sup\{ x\cdot \zeta: x\in \Tilde{\cW} \},
    \end{equation*}
    so it is the support function of $\Tilde{\cW}$.
\end{proof}

\subsection{Regularity of the shape and the support function}

Recall the definition of the principal eigenvalue $\gamma:\R^d \to \R$ of the family of eigenvalue problems \eqref{eq:Intro:EigenvalueProblem}. We consider the Legendre transform of $\gamma$ defined by
\begin{equation}\label{eq:PropertiesShape:Legendre}
    \Phi(\eta):= \inf_{\zeta\in \R^d}\{\zeta\cdot \eta -\gamma(\zeta)\}.
\end{equation}
The function $\Phi$ is employed in \cite[Chapter 7.2]{Freidlin+1985} to describe the spreading of solutions of the F-KPP equations with compactly supported initial condition. It also plays a key role in describing the exact asymptotics of the transition kernel of the periodic branching diffusion in \cite{MR4162842}. We list the following known properties of $\Phi$.

\begin{lemma}\label{lemma:LegendreTransformGamma}
    The following holds:
    \begin{enumerate}[(i)]
        \item The function $\Phi:\R^d \to \R$ is strictly convex, belongs to $C^2(\R^d)$ and it is co-finite, meaning that, for every $x\in \R^d$, $\lim_{\lambda\to \infty}\Phi(\lambda x)/\lambda =+\infty$.
        \item The vector-valued function $\nabla \gamma:\R^d \to \R^d$ is bijective, and $\nabla \Phi = (\nabla \gamma )^{-1}$.
        \item At every $\zeta\in \R^d$, the Hessian $\nabla^2 \gamma (\zeta)$ is invertible. The Hessian of $\Phi$ satisfies $\nabla^2 \Phi(\cdot)|_{\nabla \gamma(\zeta)} = \nabla^2 \gamma (\zeta)^{-1}$.
        \item For every $\alpha\in \R$, the level set $\Phi_\alpha:=\{y\in \R^d:\Phi(y)\leq \alpha\}$ is compact, convex, and its topological boundary satisfies $\partial \Phi_\alpha = \{y\in \R^d:\Phi(y)= \alpha\}$. 
    \end{enumerate}
\end{lemma}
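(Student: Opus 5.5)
Before starting I note a convention. As written with an infimum, $\eta\mapsto\inf_\zeta\{\zeta\cdot\eta-\gamma(\zeta)\}$ is concave, so I will read \eqref{eq:PropertiesShape:Legendre} as the convex conjugate $\Phi(\eta)=\sup_{\zeta\in\R^d}\{\zeta\cdot\eta-\gamma(\zeta)\}$. This is the version consistent with the introduction, where $\Phi$ is strictly convex with minimum $-\gamma(0)<0$ at $\vb$.

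The inputs are all in Proposition \ref{Prop:PropertiesGamma}. The function $\gamma$ is $C^2$ (indeed analytic by Proposition \ref{prop:Eigenvalue:Analytic}) and strictly convex. Its Hessian is positive definite everywhere, which is how \cite[Theorem 8.2.10(iii)]{MR1326606} is used in part (iv) of that proposition. Finally, \eqref{eq:QuadraticBoundsGamma} gives the quadratic bounds $c_1|\zeta|^2-C|\zeta|-C\le\gamma(\zeta)\le c_2|\zeta|^2+C|\zeta|+C$.

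\textbf{Step 1: bijectivity of $\nabla\gamma$ and attainment of the supremum.} Fix $\eta\in\R^d$. The map $\zeta\mapsto\gamma(\zeta)-\zeta\cdot\eta$ is coercive by the quadratic lower bound and strictly convex, so it has a unique minimizer $\zeta(\eta)$, characterized by $\nabla\gamma(\zeta(\eta))=\eta$. This gives surjectivity of $\nabla\gamma$. Injectivity follows from strict convexity, since $(\nabla\gamma(a)-\nabla\gamma(b))\cdot(a-b)>0$ for $a\neq b$. The same argument shows that the supremum defining $\Phi(\eta)$ is attained and finite, with
\[
\Phi(\eta)=\zeta(\eta)\cdot\eta-\gamma(\zeta(\eta)).
\]

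\textbf{Step 2: regularity and part (iii).} Since $\nabla^2\gamma$ is invertible everywhere, the inverse function theorem shows that $\zeta(\cdot)=(\nabla\gamma)^{-1}$ is $C^1$, with $D\zeta(\eta)=\nabla^2\gamma(\zeta(\eta))^{-1}$. Differentiating the formula for $\Phi$ and using $\nabla\gamma(\zeta(\eta))=\eta$ (the envelope identity), the terms involving $D\zeta$ cancel and leave $\nabla\Phi(\eta)=\zeta(\eta)$. This proves (ii). It follows that $\Phi\in C^2$ and $\nabla^2\Phi(\eta)=\nabla^2\gamma(\zeta(\eta))^{-1}$, which is (iii) after substituting $\eta=\nabla\gamma(\zeta)$. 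The inverse of a positive definite matrix is positive definite, so $\nabla^2\Phi>0$ and $\Phi$ is strictly convex.

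\textbf{Step 3: co-finiteness and part (iv).} Take $x\neq0$ and $M>0$, and test the supremum with $\zeta=Mx/|x|$. This gives
\[
\Phi(\lambda x)\ge \lambda M|x|-\max_{|\zeta|=M}\gamma(\zeta),
\]
so $\liminf_{\lambda\to\infty}\Phi(\lambda x)/\lambda\ge M|x|$. Since $M$ is arbitrary, $\Phi(\lambda x)/\lambda\to+\infty$. For $x=0$ the co-finiteness condition as literally stated reads $\Phi(0)/\lambda\to+\infty$, which is false; I will state it for $x\neq0$, which is the standard meaning.

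The same test, with $\zeta=My/|y|$ for $y\neq0$, gives the uniform bound $\Phi(y)\ge M|y|-\max_{|\zeta|=M}\gamma$. Hence every sublevel set $\Phi_\alpha$ is bounded. It is closed by continuity and convex by convexity, so it is compact and convex.

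For the boundary, the open set $\{\Phi<\alpha\}$ lies in the interior of $\Phi_\alpha$, so $\partial\Phi_\alpha\subset\{\Phi=\alpha\}$. For the reverse inclusion I split according to the value of $\alpha$.
\begin{enumerate}[(a)]
\item If $\alpha<\min\Phi$, then $\Phi_\alpha$ is empty and both sides are empty.
\item If $\alpha=\min\Phi=-\gamma(0)$, then $\Phi_\alpha$ is the single point $\nabla\gamma(0)=\vb$, by strict convexity and Step 2. A single point is its own boundary.
\item If $\alpha>\min\Phi$, take $y$ with $\Phi(y)=\alpha$. Then $y$ is not the minimizer. The minimizer is the unique critical point of $\Phi$, so $\nabla\Phi(y)\neq0$. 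Moving from $y$ in the direction $\nabla\Phi(y)$ produces points arbitrarily close to $y$ where $\Phi>\alpha$, so $y\in\partial\Phi_\alpha$.
\end{enumerate}

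The only genuinely delicate point is the global bijectivity of $\nabla\gamma$ in Step 1, that is, surjectivity rather than a merely local inverse. This is handled by the coercivity coming from \eqref{eq:QuadraticBoundsGamma}, and the rest is standard convex analysis.
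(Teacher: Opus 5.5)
Your proof is correct. The conclusions match the paper's, but you reach them by a more self-contained route.

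\textbf{How the paper argues.} It obtains the first-order facts by citation. \cite[Theorem 26.6]{Rockafellar+1970} gives that $\Phi$ is $C^1$, strictly convex and co-finite, and that $\nabla\gamma$ is a bijection. \cite[Theorem 26.5]{Rockafellar+1970} gives $\nabla\Phi=(\nabla\gamma)^{-1}$. The paper then re-derives positive definiteness of $\nabla^2\gamma$ from the corrector formula of \cite[Theorem 8.2.10(iii)]{MR1326606}. Finally it applies the inverse mapping theorem to get $\Phi\in C^2$ and the Hessian identity.

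\textbf{How you argue.}
\begin{enumerate}[(1)]
\item You prove surjectivity of $\nabla\gamma$ directly, by minimizing the coercive function $\zeta\mapsto\gamma(\zeta)-\zeta\cdot\eta$. The quadratic lower bound in \eqref{eq:QuadraticBoundsGamma} is exactly the co-finiteness of $\gamma$ that the paper needs as a hypothesis for Rockafellar's theorem.
\item You get $\nabla\Phi$ from the envelope identity.
\item You deduce strict convexity of $\Phi$ from $\nabla^2\Phi=(\nabla^2\gamma)^{-1}>0$, rather than from duality.
\item You take invertibility of $\nabla^2\gamma$ from Proposition \ref{Prop:PropertiesGamma}(iv) instead of reproving it. Nothing is lost here, since positive semidefiniteness (from convexity) plus invertibility already gives positive definiteness.
\end{enumerate}

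\textbf{Comparison.} Your version makes transparent where the growth of $\gamma$ enters and avoids the Legendre-type machinery. The paper's version is shorter. Your case analysis for $\partial\Phi_\alpha$ is more careful than the paper's one-line appeal to strict convexity and continuity. It separates the empty set, the singleton $\{\vb\}$ at $\alpha=-\gamma(0)$, and the case $\nabla\Phi(y)\neq0$.

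\textbf{Your caveats on the statement.} Both are right.
\begin{enumerate}[(a)]
\item \eqref{eq:PropertiesShape:Legendre} must be read with $\sup$. With $\inf$, the quadratic growth of $\gamma$ makes $\Phi$ identically $-\infty$. The introduction's description of $\Phi$ as strictly convex with a negative minimum at $\vb$ is the $\sup$ version.
\item Co-finiteness can only hold for $x\neq 0$.
\end{enumerate}
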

\begin{proof}
    First, we show a slightly weaker claim than \textit{(i)}, that $\Phi$ is strictly convex, belongs to $C^1(\R^d)$ and it is co-finite. To do so, we employ \cite[Theorem 26.6]{Rockafellar+1970}, which states that if $\gamma \in C^1(\R^d)$ is strictly convex and co-finite, then its Legendre transform $\Phi$ is strictly convex, belongs to $C^1(\R^d)$, and is co-finite. Thus, the properties of $\gamma$ proved in Proposition \ref{Prop:PropertiesGamma} imply that $\gamma$ satisfies the required hypotheses to apply \cite[Theorem 26.6]{Rockafellar+1970} and the slightly weaker claim follows.

    Now we prove \textit{(ii)}. Under the hypotheses mentioned above, another conclusion of \cite[Theorem 26.6]{Rockafellar+1970} is that $\nabla \gamma:\R^d \to \R^d$ is a bijection. The fact that $\nabla \Phi = (\nabla \gamma)^{-1}$ then follows from \cite[Theorem 26.5]{Rockafellar+1970}, which gives this formula for the gradient of the Legendre transform.
    
    Now we prove that $\Phi \in C^2(\R^d)$ and \textit{(iii)}. Let us fix $\zeta\in \R^d$, first we prove that the Hessian $\nabla \gamma^2(\zeta)$ is invertible. Recall $\mathcal{K}^\zeta$ introduced in \eqref{eq:GeneratorTilting}. Let $\nu(\cdot;\zeta)$ be the density of the invariant measure of the diffusion with generator $\mathcal{K}^\zeta$ taken as a diffusion on the torus. Fix $\lambda \in \R^d$, and let $\eta^\lambda$ be a periodic solution of $\mathcal{K}^\zeta \eta^\lambda(x) = \nu(\nabla\phi(\cdot;\zeta);\zeta)\cdot \lambda - \nabla\phi(x;\zeta)\cdot \lambda$, which exists by Proposition \ref{prop:Ergodic:Fredholm}. Recall the principal eigenfunction $\psi(\cdot;\zeta)$ of \eqref{eq:Intro:EigenvalueProblem} and let $\Tilde{\psi}(\cdot;\zeta )>0$ satisfy the adjoint equation $(\mathcal{L}^\zeta + K^\zeta)^*\Tilde{\psi} = \gamma(\zeta)\Tilde{\psi}$ for $\mathcal{L}^\zeta$ and $K^\zeta$ as defined in \eqref{eq:Intro:GeneratorExponential}. The argument given in the proof of Lemma \ref{lemma:Interpolation:ProbaEffectiveDrift} can be replicated to show that if we take $\psi(\cdot;\zeta)$ and $\Tilde{\psi}(\cdot;\zeta)$ so that $\int_{\TT^d}\psi(x;\zeta)\Tilde{\psi}(x;\zeta)dx =1$, $\nu(\cdot;\zeta)= \psi(\cdot;\zeta)\Tilde{\psi}(\cdot;\zeta)$. Thus, by \cite[Theorem 8.2.10 (iii)]{MR1326606} and \textbf{\ref{AssumptionA:SigmaEllipticity}}, we have
    \begin{equation*}
        \lambda ^T \nabla^2 \gamma(\zeta)  \lambda = \int_{\TT^d} \Big( \lambda + \nabla \eta^\lambda(x)\Big)^T A\Big(\lambda + \nabla \eta^\lambda(x)\Big) \nu(x;\zeta) dx \geq \int_{\TT^d} \theta^{-1} \|\lambda + \nabla \eta^\lambda(x) \|^2 \nu(x;\zeta)dx.
    \end{equation*}
        If $\lambda + \nabla \eta^\lambda \equiv 0$, $\nabla \eta^\lambda \equiv -\lambda$, which would contradict the fact that $\eta^\lambda$ is periodic. It follows that the right-hand side of the inequality above is strictly positive. Thus, for every $\lambda\in \R^d$, $\lambda ^T \nabla^2 \gamma(\zeta)  \lambda>0$, which implies that $\nabla^2 \gamma(\zeta)$ is invertible. The fact that $\Phi$ is twice differentiable is a direct consequence of the inverse mapping theorem applied to the continuously differentiable vector-valued function $\nabla \gamma:\R^d \to \R^d$. Clearly, the Jacobian of said function is the Hessian $\nabla^2 \gamma$, which we showed is invertible. Thus, $(\nabla \gamma)^{-1}$ is differentiable, and its Jacobian is exactly the inverse of the Jacobian of $\nabla \gamma$, which is given by $(\nabla^2 \gamma)^{-1}$. In other words, the Hessian of $\Phi$ is $(\nabla^2 \gamma)^{-1}$. Finally, $(\nabla^2 \gamma)^{-1}$ is continuous because $\nabla^2 \gamma$ is, and thus, $\Phi \in C^2(\R^d)$. 
        
        For the last claim, it is clear that the level sets are always closed and convex. The fact that $\Phi$ is co-finite and strictly convex imply that $\Phi$ is coercive, meaning that $\lim_{|x|\to \infty}\Phi(x)=+\infty$, and the compactness of the level sets follows. The characterization of the boundary follows by the strict convexity and continuity of $\Phi$.
\end{proof}

\begin{remark}\label{remark:RegularityGamma_Legendre}
    If $\gamma \in C^k(\R^d)$ for $k\geq 2$, the argument in the proof of Lemma \ref{lemma:LegendreTransformGamma}(i) gives $\Phi \in C^k(\R^d)$. 
\end{remark}

We characterize the shape in terms of $\Phi$.
\begin{proposition}\label{prop:shape=LargeDev}
    The shape of the periodic branching diffusion satisfies $\cW = \{y\in \R^d: \Phi(y)\leq 0\}$.
\end{proposition}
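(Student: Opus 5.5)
The plan is to prove the identity by convex duality, using only the Wulff-shape description of $\cW$ and the definition of $c^*$. We do not need the kernel asymptotics \eqref{eq:Intro:AsympKernel}.

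Throughout, I read $\Phi$ as the convex Legendre transform of $\gamma$, namely $\Phi(\eta)=\sup_{\zeta\in\R^d}\{\zeta\cdot\eta-\gamma(\zeta)\}$. This is the reading under which Lemma \ref{lemma:LegendreTransformGamma} holds: $\Phi$ is convex, co-finite, and has minimum $-\gamma(0)<0$ at $\nabla\gamma(0)=\vb$. The displayed $\inf$ form is its negative up to this sign convention, and I will flag this explicitly before starting.

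\textbf{Step 1: describe $\cW$ as an intersection of half-spaces with the raw speed $\cs$.}
By definition $\cW=\vb+\Tilde{\cW}$ and $\Tilde{\cW}=\bigcap_{e}\cH^-_{e,\css(e)}$. Since $\css(e)=\cs(e)-\vb\cdot e$, we have $y\in\cW$ iff $(y-\vb)\cdot e\le \cs(e)-\vb\cdot e$ for all $e\in S^{d-1}$. Equivalently,
\[
\cW=\bigcap_{e\in S^{d-1}}\cH^-_{e,\cs(e)},
\]
which is also the content of Remark \ref{remark:SupportFunction}. Only the Wulff description is needed here, not the support-function property of Proposition \ref{prop:ShapeTheorem:ContinuityCSS}.

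\textbf{Step 2: unfold $\cs$ as a minimum over rays.}
By \eqref{eq:prelimCriticalSpeed}, $\cs(e)=\min_{\lambda>0}\gamma(\lambda e)/\lambda$, with the minimum attained at $\lambda_e$ by Proposition \ref{Prop:PropertiesGamma}(iii). Hence $y\cdot e\le \cs(e)$ holds iff $y\cdot e\le\gamma(\lambda e)/\lambda$ for every $\lambda>0$. Multiplying by $\lambda>0$, this is iff $y\cdot(\lambda e)-\gamma(\lambda e)\le 0$ for all $\lambda>0$.

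Ranging over all $e\in S^{d-1}$ and $\lambda>0$ covers every $\zeta\ne0$. The remaining point $\zeta=0$ gives $y\cdot 0-\gamma(0)=-\gamma(0)<0$ by Proposition \ref{Prop:PropertiesGamma}(ii), so it imposes no constraint. Therefore
\[
y\in\cW \iff \sup_{\zeta\in\R^d}\{\zeta\cdot y-\gamma(\zeta)\}\le0 \iff \Phi(y)\le 0.
\]

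\textbf{Step 3: consistency checks.}
Lemma \ref{lemma:LegendreTransformGamma}(iv) shows that $\{\Phi\le0\}$ is compact and convex with boundary $\{\Phi=0\}$. This matches $\cW$ being a convex body. Moreover $\vb$ lies in the interior of $\{\Phi\le 0\}$ because $\Phi(\vb)=-\gamma(0)<0$, which matches $0\in\mathrm{int}(\Tilde{\cW})$.

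\textbf{Main obstacle.}
There is no analytic difficulty. The care lies in the conventions: reconciling the $\inf$ versus $\sup$ (sign) form of $\Phi$, and checking that the $\zeta=0$ direction and the restriction $\lambda>0$ (rather than all $\lambda\in\R$) lose nothing. Restricting to $\lambda>0$ is harmless because each direction $e$ is covered for all positive multiples, and every nonzero $\zeta$ is of the form $\lambda e$ with $\lambda>0$ and $e=\zeta/|\zeta|$.

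If one prefers a route independent of the Wulff definition, the alternative is to use \eqref{eq:Intro:AsympKernel} together with Theorem \ref{thm:Intro:ShapeHauss}: points with $\Phi<0$ carry exponentially many particles in expectation, while points with $\Phi>0$ carry none by the first-moment bound. I expect the duality argument above to suffice.
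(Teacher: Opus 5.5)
Your proof is correct, and it follows a genuinely different and much shorter route than the paper's.

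\textbf{The paper's route.} The paper proves the two inclusions probabilistically.
\begin{itemize}
\item For $\cW\subset\{\Phi\le 0\}$, it combines Proposition \ref{prop:probabilityParticlesBalls} with Lemma \ref{lemma:LargeDevBalls}(i). The proposition says that with probability tending to one some particle lies in $tB(\zeta,\epsilon)$ for $\zeta\in\cW$. The lemma uses the sharp kernel asymptotics of \cite{MR4162842} to show that the first moment in $tB(\zeta,\epsilon)$ vanishes where $\Phi>0$.
\item For $\mathrm{int}\{\Phi\le 0\}\subset\cW$, it plays Lemma \ref{lemma:LargeDevBalls}(ii) (the first moment blows up) against the upper estimate \eqref{eq:Intro:UpperEstimate}. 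Here the supporting hyperplane comes from Proposition \ref{prop:ShapeTheorem:ContinuityCSS}. It then takes closures.
\end{itemize}

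\textbf{Your route.} You observe that the identity is pure convex duality once $\cW$ is taken to be $\vb+\Tilde{\cW}$, with $\Tilde{\cW}$ the Wulff shape of $\css(e)=\cs(e)-\vb\cdot e$. First, $\cW=\bigcap_{e}\cH^-_{e,\cs(e)}$. Second, the condition $y\cdot e\le\inf_{\lambda>0}\gamma(\lambda e)/\lambda$ for all $e$ is exactly $\zeta\cdot y-\gamma(\zeta)\le 0$ for all $\zeta\ne 0$. The point $\zeta=0$ imposes nothing because $\gamma(0)>0$. You do not even need the minimum to be attained at $\lambda_e$.

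\textbf{Comparison.} Your argument avoids the external kernel asymptotics, the shape theorem, and the support-function property. It therefore shows that the strict convexity, $C^2$ boundary and curvature bounds of Section \ref{sec:PropertiesShape} depend only on properties of $\gamma$. What the paper's route adds is the probabilistic interpretation of $\cW$ as the region where the expected number of particles grows exponentially, in the spirit of Freidlin. That interpretation is logically separate from the identity itself.

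\textbf{A small correction on conventions.} The displayed $\inf$ in \eqref{eq:PropertiesShape:Legendre} is not the negative of the $\sup$. Because $\gamma$ grows quadratically by \eqref{eq:QuadraticBoundsGamma}, the $\inf$ is identically $-\infty$. So the $\sup$ reading is the only meaningful one. It is also the one consistent with Lemma \ref{lemma:LegendreTransformGamma} and with $\Phi$ attaining its negative minimum $-\gamma(0)$ at $\vb$.
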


Before proving Proposition \ref{prop:shape=LargeDev}, we first prove the following auxiliary lemma employing the asymptotics of the transition kernel \cite[Theorem 2.2]{MR4162842}. 

\begin{lemma}\label{lemma:LargeDevBalls}
    The following holds:
    \begin{enumerate}[(i)]
        \item Let $\zeta \in \{y\in \R^d: \Phi(y)> 0\}$ and $\epsilon \in (0,1)$ be such that $B(\zeta, \epsilon)\subset \{y\in \R^d: \Phi(y)\geq \epsilon\}$, then
    \begin{equation}
        \mathbf{E}_0\Bigg[\sum_{v\in \cN_t}\indc_{\{X_v(t)\in tB(\zeta,\epsilon)\}}\Bigg] \to 0 \ \ \mathrm{ as } \ \ t\to +\infty.
    \end{equation}
        \item Let $\zeta \in \mathrm{int}(\{y\in \R^d: \Phi(y)\leq 0\})$ and $\epsilon \in (0,1)$ be such that $B(\zeta, \epsilon)\subset \{y\in \R^d: \Phi(y)\leq \epsilon\}$, then
    \begin{equation}
        \mathbf{E}_0\Bigg[\sum_{v\in \cN_t}\indc_{\{X_v(t)\in tB(\zeta,\epsilon)\}}\Bigg] \to + \infty \ \ \mathrm{ as } \ \ t\to +\infty.
    \end{equation}
    \end{enumerate}
\end{lemma}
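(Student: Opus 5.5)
The plan is to write the expectation through the transition kernel and then use the Hebbar--Koralov--Nolen asymptotics \eqref{eq:Intro:AsympKernel}. By \eqref{eq:Intro:AdditiveFunctional:TransitionKernel} with $f=\indc_{tB(\zeta,\epsilon)}$,
\begin{equation*}
    \mathbf{E}_0\Bigg[\sum_{v\in \cN_t}\indc_{\{X_v(t)\in tB(\zeta,\epsilon)\}}\Bigg] = \int_{tB(\zeta,\epsilon)} p(t,0,y)\,dy = t^d\int_{B(\zeta,\epsilon)} p(t,0,tu)\,du .
\end{equation*}
The ball $B(\zeta,\epsilon)$ is bounded, so we can fix $L>|\zeta|+1$. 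Then $|tu|<Lt$ for all $u\in B(\zeta,\epsilon)$, and \eqref{eq:Intro:AsympKernel} applies uniformly:
\begin{equation*}
    p(t,0,tu)=(2\pi t)^{-2d}e^{-t\Phi(u)}q(u,0,tu)(1+o_L(1)).
\end{equation*}
Here we must check that the sign convention for $\Phi$ in \cite{MR4162842} agrees with \eqref{eq:PropertiesShape:Legendre}. We expect it does, since $-\Phi(u)=\sup_\zeta\{\gamma(\zeta)-\zeta\cdot u\}$ is the standard growth exponent. The sign needed in (i) and (ii) is consistent with this: $\Phi$ has its negative minimum at $\vb$, and there we expect growth.

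\textbf{Part (i).} Since $\Phi\geq\epsilon$ on $B(\zeta,\epsilon)$ and $q$ is bounded above on the relevant compact set, the integral is at most $C t^{d}(2\pi t)^{-2d}|B(\zeta,\epsilon)|e^{-t\epsilon}$ for large $t$, which tends to $0$.

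\textbf{Part (ii).} We need a lower bound, and here the hypothesis as stated ($\Phi\le\epsilon$ on the ball) is not what gives growth. What we use is that $\zeta$ lies in the interior of $\{\Phi\le 0\}$. By the strict convexity of $\Phi$ (Lemma \ref{lemma:LegendreTransformGamma}(i)), $\Phi<0$ on this interior. Otherwise an interior point with $\Phi=0$ would be a local maximum of $\Phi$ restricted to the sublevel set, contradicting strict convexity; alternatively we can invoke $\partial\Phi_0=\{\Phi=0\}$ from Lemma \ref{lemma:LegendreTransformGamma}(iv). Hence $\Phi(\zeta)<0$. By continuity there are $r\in(0,\epsilon)$ and $\delta>0$ with $\Phi\le-\delta$ on $B(\zeta,r)$. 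Restricting the integral to $B(\zeta,r)$ gives the lower bound
\begin{equation*}
    c\,t^{-d}e^{t\delta}\inf_{u\in B(\zeta,r)} q(u,0,tu),
\end{equation*}
which tends to $+\infty$, provided $q$ is bounded below by a positive constant.

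\textbf{Main obstacle.} The expected difficulty is the positive lower bound on $q$ on compact sets, uniform in the periodic third argument. The plan is to read it off the explicit form of $q$ in \cite[Theorem 2.2]{MR4162842}, which should be a product of the positive periodic eigenfunctions $\psi(\cdot;\zeta)$ and $\Tilde\psi(\cdot;\zeta)$ at $\zeta=(\nabla\gamma)^{-1}(u)$ times a positive determinant factor. These quantities depend continuously on $u$ and are bounded away from $0$ and $\infty$ by \eqref{eq:BoundEigenfunction}. If that route fails, a fallback for (ii) avoids $q$ altogether. Apply Lemma \ref{lemma:ManyToOneLemma} with $\zeta'=\nabla\Phi(\zeta)$, so that $\nabla\gamma(\zeta')=\zeta$. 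The tilted diffusion $Y$ then has asymptotic drift $\zeta$, and the ergodic bound of Corollary \ref{corollary:Ergodic:AsymptoticDriftDiffusion} gives $Y_t/t\in B(\zeta,r)$ with probability tending to one. The many-to-one weight on that event is at least
\begin{equation*}
    c\,e^{t(\gamma(\zeta')-\zeta'\cdot\zeta)-t|\zeta'|r}=c\,e^{-t\Phi(\zeta)-t|\zeta'|r},
\end{equation*}
which grows exponentially once $r$ is small enough that $|\zeta'|r<-\Phi(\zeta)$. Here the identification of the drift of $Y$ with $\nabla\gamma(\zeta')$ needs the $\zeta$-analogue of Lemma \ref{lemma:Interpolation:ProbaEffectiveDrift}.
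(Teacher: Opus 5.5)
Your proof is correct. Part (i) is the paper's argument. The form you guessed for $q$ is exactly the one the paper uses, namely $q(u,0,tu)=\det[\nabla^2\Phi(u)]^{1/2}\psi(0;\rho(u))\Tilde{\psi}(tu;\rho(u))$ with $\rho=\nabla\Phi$.

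For (ii) the paper only says ``a symmetric argument'', and your proposal is more careful than the source here. Under the stated hypothesis $B(\zeta,\epsilon)\subset\{\Phi\le\epsilon\}$, the symmetric bound gives only $c\,t^{-d}e^{-t\epsilon}$, which does not diverge. What actually makes (ii) true is your observation that interiority forces $\Phi(\zeta)<0$ (Lemma \ref{lemma:LegendreTransformGamma}(iv)), followed by shrinking to a ball on which $\Phi\le-\delta$. This is also all that is available where the lemma is used in Proposition \ref{prop:shape=LargeDev}, since there one only knows $B(\zeta,\epsilon)\subset\{\Phi\le 0\}$. The symmetric argument also needs an infimum rather than a supremum of the prefactor. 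As you indicate, this comes from continuity of $\rho\mapsto\psi(\cdot;\rho),\Tilde{\psi}(\cdot;\rho)$ in sup norm, compactness of $\nabla\Phi(\bar B(\zeta,r))$, and periodicity in $x$. The bounds \eqref{eq:BoundEigenfunction} by themselves are only pointwise in $\zeta$.

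Your fallback is a genuinely different and more self-contained route for (ii). You take $\zeta'=\nabla\Phi(\zeta)$ and combine the many-to-one lemma (Lemma \ref{lemma:ManyToOneLemma}), concentration of the tilted diffusion (Corollary \ref{corollary:Ergodic:AsymptoticDriftDiffusion}), and the identity $\gamma(\zeta')-\zeta'\cdot\zeta=-\Phi(\zeta)$. This gives exponential growth using only tools proved in the paper, with no external kernel asymptotics and no lower bound on $q$. The one extra input is that the drift of $Y$ under $\mathbb{P}^{\zeta'}$ averages to $\nabla\gamma(\zeta')$. That follows from the $\zeta'$-version of Lemma \ref{lemma:Interpolation:ProbaEffectiveDrift}, or from the analogue of Lemma \ref{lemma:logmoment} at base point $\zeta'$. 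Part (i) can be done the same way, via an exponential Chebyshev bound through many-to-one with a minimax choice of $\zeta'$. That would make the whole lemma independent of \cite{MR4162842}.

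One small slip: $-\Phi(u)=\inf_{\zeta}\{\gamma(\zeta)-\zeta\cdot u\}$, not the supremum. The intended definition is $\Phi(u)=\sup_{\zeta}\{\zeta\cdot u-\gamma(\zeta)\}$; the ``inf'' in \eqref{eq:PropertiesShape:Legendre} is a typo, since $\gamma$ grows quadratically and the literal formula gives $\Phi\equiv-\infty$. Your fallback computation already uses the correct convention.
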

\begin{proof}
    For the first claim, for $z\in \R^d$ let $\rho(z):=\nabla \Phi(z)\in \R^d$. For $\rho \in \R^d$, recall the principal eigenfunction $\psi(\cdot;\rho)$ of \eqref{eq:Intro:EigenvalueProblem} and let $\Tilde{\psi}(\cdot;\rho)>0$ satisfy the adjoint equation $(\mathcal{L}^\rho + K^\rho)^*\Tilde{\psi} = \gamma(\rho)\Tilde{\psi}$ for $\mathcal{L}^\rho$ and $K^\rho$ as defined in \eqref{eq:Intro:GeneratorExponential}. In \cite[Theorem 2.2]{MR4162842}, let us take $L>0$ sufficiently large so that $1+o_L(1)<2$. Employing said theorem, for every $t$ sufficiently large,
    \begin{equation}\label{eq:LargeDeviationBalls}
        \begin{split}
            \mathbf{E}_0\Bigg[\sum_{v\in \cN_t}&\indc_{\{X_v(t)\in tB(\zeta,\epsilon)\}}\Bigg] = \int_{\R^d}m(t,0,y) \indc_{\{y\in tB(\zeta,\epsilon)\}}dy\\
            & \leq 2\int_{tB(\zeta, \epsilon)} (2\pi t)^{-2d}\mathrm{det}[\nabla^2 \Phi(y/t)]^{1/2}
            e^{-t \Phi (\frac{y}{t})} \psi(0;\rho(y/t))\Tilde{\psi}(y;\rho(y/t))  dy\\
            & \leq 2 (2\pi t)^{-2d} \sup_{z\in B(\zeta,\epsilon)}\Big\{\mathrm{det}[\nabla^2 \Phi(z)]^{1/2} \psi(0;\rho(z))\Tilde{\psi}(tz;\rho(z))\Big \} t^{d}\int_{B(\zeta,\epsilon)} e^{-t\Phi(z)} dz\\
            & \leq C t^{-d} \sup_{z\in B(\zeta,\epsilon)}\Big\{\mathrm{det}[\nabla^2 \Phi(z)]^{1/2} \psi(0;\rho(z))\Tilde{\psi}(tz;\rho(z))\Big \} |B(\zeta,\epsilon)|e^{-t\inf_{z\in B(\zeta,\epsilon)}\Phi(z)}.
        \end{split}
    \end{equation}
    By the continuity of $\nabla \Phi$, of $\nabla^2 \Phi$, and of the function $\rho \mapsto \psi(x;\rho)\Tilde{\psi}(x;\rho)$ for every $\rho \in \R^d$,
    \begin{equation*}
        \sup_{z\in B(\zeta,\epsilon)}\Big\{\mathrm{det}[\nabla^2 \Phi(z)]^{1/2} \psi(0;\rho(z))\Tilde{\psi}(tz;\rho(z))\Big \}<+\infty.
    \end{equation*}
    Thus, the right-hand side of the equation above converges to zero as $t\to \infty$.
    
    A symmetric argument shows the second claim.
\end{proof}

\begin{proof}[Proof of Proposition \ref{prop:shape=LargeDev}]
    Let us write $\cW':=\{y\in \R^d: \Phi(y)\leq 0\}$. Recall that $\cW = \Tilde{\cW} + \vb$. First, we show that $\cW\subset \cW'$. To do so, fix 
    $\zeta \in \cW$. For $\epsilon>0$ and $t>0$, observe that
    \begin{equation*}
        \mathbf{E}_0\Bigg[\sum_{v\in \cN_t}\indc_{\{X_v(t)\in tB(\zeta,\epsilon)\}}\Bigg] = \mathbf{E}_0\Bigg[\sum_{v\in \cN_t}\indc_{\{X_v(t)-t\vb\in tB(\zeta-\vb,\epsilon)\}}\Bigg] \geq \mathbf{P}_0\Bigg[\sum_{v\in \cN_t}\indc_{\{X_v(t)-t\vb\in tB(\zeta-\vb,\epsilon)\}}\geq 1\Bigg].
    \end{equation*}
     Since $\zeta-\vb \in \Tilde{\cW}$, by Proposition \ref{prop:probabilityParticlesBalls}, for every $\epsilon>0$, as $t\to \infty$ the right-hand side of the equation above converges to $1$. On the other hand, observe that if $\zeta\not \in \cW'$, we could choose $\epsilon >0$ such that $B(\zeta, \epsilon)\subset (\cW')^c$, and Lemma \ref{lemma:LargeDevBalls}(i) tells us that $\mathbf{E}_0[\sum_{v\in \cN_t}\indc_{\{X_v(t)\in tB(\zeta,\epsilon)\}}] \to 0 $ as $ t\to +\infty$, a clear contradiction to the display above. Hence, $\zeta$ must belong to $\cW'$, and since $\zeta$ was chosen arbitrarily on $\cW$, we conclude that $\cW\subset \cW'$.

    To prove $ \cW'\subset \cW$ we will show that $ \mathrm{int}(\cW')\subset \cW$. Let us fix $\zeta \in \mathrm{int}(\cW')$ and $\epsilon>0$ such that $B(\zeta, \epsilon)\subset \cW'$. By Lemma \ref{lemma:LargeDevBalls}(ii), we know that, as $t\to \infty$,
    \begin{equation}\label{eq:auxLargeDevBall}
        \mathbf{E}_0\Bigg[\sum_{v\in \cN_t}\indc_{\{X_v(t)\in tB(\zeta,\epsilon)\}}\Bigg]\to +\infty.
    \end{equation}
    On the other hand, observe that if $\zeta\not \in\cW$, we can further make $\epsilon>0$ smaller in such a way that $\mathrm{dist}(B(\zeta,\epsilon), \cW)=\delta >0$. Thus, by the convexity of $\cW(\css)$, there exists a supporting hyperplane $\cH$ of $\cW(\css)$ such that 
    \begin{equation*}
        \cW \subset \bar{v}+\cH^- \ \ \mathrm{ and } \ \ B(\zeta,\epsilon) \subset \bar{v}+(1+\delta)\cH^+.  
    \end{equation*}
    By Proposition \ref{prop:ShapeTheorem:ContinuityCSS}(ii), we know that there exists $e\in S^{d-1}$ such that $\cH=\cH_{e, \css(e)}$. Hence,
    \begin{equation*}
        \begin{split}
            \mathbf{E}_0\Bigg[\sum_{v\in \cN_t}\indc_{\{X_v(t)\in tB(\zeta,\epsilon)\}}\Bigg]& \leq \mathbf{E}_0\Bigg[\sum_{v\in \cN_t}\indc_{\{(X_v(t) \in t\bar{v}+t(1+\delta)\cH^{+}_{e,\css(e)} \}}\Bigg]= \mathbf{E}_0\Bigg[\sum_{v\in \cN_t}\indc_{\{(X_v(t)-t\bar{v})\cdot e \geq t(1+\delta)\css(e) \}}\Bigg],
        \end{split}
    \end{equation*}
    but the right-hand side of the expression above tends to $0$ as $t\to \infty$  by \eqref{eq:Intro:UpperEstimate} which is in a clear violation of \eqref{eq:auxLargeDevBall}. It follows that $\zeta \in \cW$, and since $\zeta$ was chosen arbitrarily in $\mathrm{int}(\cW')$, we conclude that $\cW\supset \mathrm{int}(\cW')$. Finally, since $\cW$ is closed, $\cW' = \mathrm{cl}(\mathrm{int}(\cW'))\subset \cW$, and the claim follows, finishing the proof. 
\end{proof}

\begin{corollary}
    The shape $\cW$ is strictly convex.
\end{corollary}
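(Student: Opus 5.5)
The plan is to combine Proposition \ref{prop:shape=LargeDev}, which identifies $\cW$ with the sublevel set $\Phi_0=\{y\in\R^d:\Phi(y)\leq 0\}$, with the strict convexity of $\Phi$ from Lemma \ref{lemma:LegendreTransformGamma}(i). Recall that a convex body $K$ is strictly convex if for every $y_1\neq y_2$ in $K$ and every $s\in(0,1)$ the point $sy_1+(1-s)y_2$ lies in $\mathrm{int}(K)$. Equivalently, $\partial K$ contains no nondegenerate segment.

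First I fix $y_1\neq y_2\in\cW$ and $s\in(0,1)$, and set $y_s:=sy_1+(1-s)y_2$. By Proposition \ref{prop:shape=LargeDev}, $\Phi(y_1)\leq 0$ and $\Phi(y_2)\leq 0$. Strict convexity of $\Phi$ then gives
\begin{equation*}
\Phi(y_s)<s\Phi(y_1)+(1-s)\Phi(y_2)\leq 0 .
\end{equation*}
Second, I use the continuity of $\Phi$ (it is $C^2$ by Lemma \ref{lemma:LegendreTransformGamma}(i)): the set $\{\Phi<0\}$ is open, so there is a ball around $y_s$ on which $\Phi<0$. That ball lies in $\cW$, hence $y_s\in\mathrm{int}(\cW)$. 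The same conclusion follows from Lemma \ref{lemma:LegendreTransformGamma}(iv), since $\partial\cW=\{\Phi=0\}$ and $y_s\notin\{\Phi=0\}$. This shows that no boundary point of $\cW$ lies in the relative interior of a segment contained in $\cW$, which is strict convexity.

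There is no serious obstacle. The only point to check is that $\cW$ is a genuine convex body rather than a degenerate set, so that the notion of strict convexity is meaningful; this is guaranteed by Theorem \ref{thm:Intro:ShapeHauss}, which gives $\cW$ non-empty interior. One should also make sure that the identification $\partial\Phi_0=\{\Phi=0\}$ is applied with level $\alpha=0$, which is admissible because $\Phi$ attains a negative minimum (at $\vb$, since $\Phi(\vb)=-\gamma(0)<0$ by Proposition \ref{Prop:PropertiesGamma}(ii)). Consequently $\{\Phi<0\}$ is non-empty, and the argument above applies directly.
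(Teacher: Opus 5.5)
Your proof is correct and is essentially the paper's argument. Both combine $\cW=\{\Phi\le 0\}$ from Proposition \ref{prop:shape=LargeDev} with the strict convexity of $\Phi$ to get $\Phi<0$ on the open segment between two distinct points; the only difference is the last step, where the paper takes the endpoints on $\partial\cW$ and uses $\partial\cW=\{\Phi=0\}$, while you use the equivalent fact that $\{\Phi<0\}$ is open. Your closing remark about the negative minimum of $\Phi$ is harmless but not needed.
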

\begin{proof}
    By Proposition \ref{prop:shape=LargeDev}, the boundary of $\cW$ is given by the set $\{y\in \R^d:\Phi(y)=0\}$. Let $x,y\in \partial \cW$ distinct, and $\lambda\in (0,1)$. Since $\Phi$ is strictly convex, 
     \begin{equation*}
         \Phi(\lambda x + (1-\lambda)y) < \lambda\Phi(x) + (1-\lambda)\Phi(y) = 0.
     \end{equation*}
     Thus, $\lambda x + (1-\lambda)y \notin \partial \cW$. Since we took $x$, $y$ and $\lambda$ to be arbitrary in $\partial \cW$ and $(0,1)$ respectively, we conclude that $\partial \cW$ does not contain straight line segments, and thus, $\cW$ is strictly convex.
\end{proof}

\begin{proposition}\label{prop:RegularityW}
    The boundary $\partial \cW$ is of class $C^2$, meaning that locally, up to a change of coordinates,  $\partial \cW$ is the graph of a $C^2$ function on some open subset of $\R^{d-1}$. In addition, considering $\partial \cW$ as a submanifold, let $K_{\cW}:\partial\cW\to \R$ be the Gaussian curvature of $\partial\cW$. Then, $\inf_{x\in \partial\cW}K_{\cW}(x)>0$.
\end{proposition}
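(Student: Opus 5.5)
The plan is to use Proposition \ref{prop:shape=LargeDev}, which identifies $\cW=\{y:\Phi(y)\le 0\}$, and Lemma \ref{lemma:LegendreTransformGamma}. This reduces both claims to the implicit function theorem together with a curvature formula for level sets of $\Phi$.

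First I would show that $0$ is a regular value of $\Phi$. By Lemma \ref{lemma:LegendreTransformGamma}(ii), $\nabla\Phi(y)=0$ iff $y=\nabla\gamma(0)=\vb$. At that point $\Phi(\vb)=-\gamma(0)<0$, since Proposition \ref{Prop:PropertiesGamma}(ii) gives $\gamma(0)\ge\nu(R;0)>0$. Hence $\nabla\Phi\neq0$ on $\{\Phi=0\}=\partial\cW$, where the equality of sets is Lemma \ref{lemma:LegendreTransformGamma}(iv). Because $\Phi\in C^2(\R^d)$ by Lemma \ref{lemma:LegendreTransformGamma}(i), the implicit function theorem shows that near each boundary point, after rotating so that $\nabla\Phi$ points along $e_d$, $\partial\cW$ is the graph of a $C^2$ function on an open subset of $\R^{d-1}$. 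So $\partial\cW$ is a compact $C^2$ hypersurface.

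Next I would compute the curvature. For a regular level set of a $C^2$ function, the second fundamental form at $y\in\partial\cW$, with respect to the outward normal $n=\nabla\Phi/|\nabla\Phi|$, is the restriction of $\nabla^2\Phi(y)/|\nabla\Phi(y)|$ to the tangent space $T_y=\nabla\Phi(y)^\perp$. The Gaussian curvature is therefore
\[
K_\cW(y)=\det\Big(P_y\frac{\nabla^2\Phi(y)}{|\nabla\Phi(y)|}P_y\big|_{T_y}\Big),
\]
where $P_y$ is the orthogonal projection onto $T_y$. By Lemma \ref{lemma:LegendreTransformGamma}(iii), $\nabla^2\Phi(y)=(\nabla^2\gamma(\zeta))^{-1}$ with $\zeta=\nabla\Phi(y)$. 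Since $\nabla^2\gamma(\zeta)$ is symmetric and positive definite (Proposition \ref{Prop:PropertiesGamma}(iv), from the strict positivity of the quadratic form in \cite[Theorem 8.2.10(iii)]{MR1326606}), its inverse is positive definite too. So each principal curvature is at least $\lambda_{\min}(\nabla^2\Phi(y))/|\nabla\Phi(y)|>0$, which gives $K_\cW(y)>0$ pointwise.

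Finally, uniform positivity follows from compactness. The map $y\mapsto\lambda_{\min}(\nabla^2\Phi(y))$ is continuous, because eigenvalues depend continuously on a continuous symmetric matrix. The map $y\mapsto|\nabla\Phi(y)|$ is continuous and nonvanishing on $\partial\cW$. Since $\partial\cW$ is compact by Lemma \ref{lemma:LegendreTransformGamma}(iv), $\inf_{\partial\cW}\lambda_{\min}>0$ and $\sup_{\partial\cW}|\nabla\Phi|<\infty$, which yields $\inf K_\cW\ge\big(\inf\lambda_{\min}/\sup|\nabla\Phi|\big)^{d-1}>0$.

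The main subtlety is the positive definiteness of $\nabla^2\gamma$, as opposed to mere invertibility. Proposition \ref{Prop:PropertiesGamma}(iv) is stated as invertibility, but its proof via \cite[Theorem 8.2.10(iii)]{MR1326606} gives $\lambda^T\nabla^2\gamma\lambda>0$, and I would cite that explicitly. The other point needing care is justifying the level-set curvature formula with only $C^2$ regularity. The standard computation with the local graph function $u$ handles this: the Hessian of $u$ at a point with horizontal tangent is $-\nabla^2_{x'}\Phi/\partial_d\Phi$, and this uses only second derivatives.
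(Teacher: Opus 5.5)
Your proposal is correct and follows essentially the same route as the paper. You show $\nabla\Phi\neq 0$ on $\partial\cW=\{\Phi=0\}$ because the only critical point $\nabla\gamma(0)$ has $\Phi<0$, apply the implicit function theorem, and then get the curvature bound from positive definiteness of $\nabla^2\Phi=(\nabla^2\gamma)^{-1}$ plus compactness of $\partial\cW$; the only cosmetic difference is that you bound principal curvatures by $\lambda_{\min}(\nabla^2\Phi)/|\nabla\Phi|$, whereas the paper uses the adjugate formula $K_{\cW}=\nabla\Phi^T\mathrm{adj}(\nabla^2\Phi)\nabla\Phi/|\nabla\Phi|^{d+1}$ with uniform ellipticity of $\nabla^2\gamma$ on a compact set.
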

\begin{proof}
    The first part is a consequence of the implicit function theorem for level sets of $C^k$ functions. Recall that $\partial \cW = \{x\in \R^d: \Phi(x)=0\}$. First, we show that every $x\in \partial \cW$ is regular, meaning that $\nabla \Phi(x)\neq 0$. Since $\nabla \Phi = (\nabla\gamma)^{-1}$ by Lemma \ref{lemma:LegendreTransformGamma}(ii), it is enough to show that $\nabla \gamma(0)$ does not belong to $\partial \cW$. By the properties of $\gamma$, the $\inf$ in \eqref{eq:PropertiesShape:Legendre} is achieved by a point $\zeta'\in \R^d$, and thus, by the strict convexity of $\gamma$ and Proposition \ref{Prop:PropertiesGamma}(ii), 
    \begin{equation*}
        \Phi(\nabla \gamma(0)) = \zeta'\cdot \nabla\gamma(0)-\gamma(\zeta') \leq -\gamma(0) < 0.
    \end{equation*}
    Since $\partial \cW = \{y\in \R^d: \Phi(y)=0\}$, we conclude that $\nabla \gamma(0)$ does not belong to $\partial \cW$. Thus, every $x=(x_1,...,x_d)\in \partial \cW$ is regular. By the implicit function theorem, we can write $x_d = \phi(x_1,...,x_{d-1})$ locally for a $C^2$ function $\phi$ on some open subset of $\R^{d-1}$, and the result follows.

    The fact that the Gaussian curvature is positive follows from the following formula for the Gaussian curvature of level sets (see, for example, \cite[Page 647]{MR2169053}). Letting $\mathrm{adj}(\cdot)$ denote the adjugate matrix, for $x\in \partial \cW$, the Gaussian curvature at $x$ satisfies
    \begin{equation}
        K_{\cW}(x) = \frac{(\nabla \Phi(x))^T \mathrm{adj}(\nabla^2 \Phi(x)) \nabla \Phi(x) }{|\nabla \Phi(x)|^{d+1}}.
    \end{equation}
    From Lemma \ref{lemma:LegendreTransformGamma}(iii), we know that $\nabla^2 \Phi$ is invertible, hence, $\mathrm{adj}(\nabla^2 \Phi(x))= \mathrm{det}(\nabla^2 \Phi(x))\nabla^2 \Phi(x)^{-1}$. From the same lemma, $\nabla^2 \Phi(x)^{-1} = \nabla^2 \gamma(\zeta)$, where $\zeta\in \R^d$ is such that $\nabla \gamma(\zeta)=x$. We know from Proposition \ref{Prop:PropertiesGamma} that $\gamma$ is strictly convex and belongs to $C^2(\R^d)$, and from Lemma \ref{lemma:LegendreTransformGamma}(iii) we know that $\nabla^2 \gamma$ is invertible. Using this, by regularity of $\gamma$ and the continuity of $\nabla^2 \gamma$, $\nabla^2 \gamma$ is uniformly elliptic on compact sets of $\R^d$, meaning that, for every compact subset $K\subset \R^d$, there exists $\theta(K)>0$ such that $\theta(K)\mathrm{Id} < \nabla^2 \gamma(\zeta)$ for every $\zeta \in K$. Hence, since the gradient of $\Phi$ and $\mathrm{det}(\nabla^2 \Phi)$ do not vanish on $\partial \cW$,
    \begin{equation*}
       \begin{split}
            \inf_{x\in \partial \cW}K_{\cW}(x) &= \inf_{x\in \partial \cW}\frac{\nabla^T \Phi(x) \mathrm{adj}(\nabla^2 \Phi(x)) \nabla \Phi(x) }{|\nabla \Phi(x)|^{d+1}} = \inf_{x\in \partial \cW} \mathrm{det}(\nabla^2 \Phi(x)) \frac{\nabla^T \Phi(x) \nabla^2 \gamma(\zeta) \nabla \Phi(x) }{|\nabla \Phi(x)|^{d+1}}\\
            & \geq \theta(\partial \cW)\inf_{x\in \partial \cW} \mathrm{det}(\nabla^2 \Phi(x)) \frac{|\nabla \Phi(x)|^2 }{|\nabla \Phi(x)|^{d+1}}>0,
       \end{split}
    \end{equation*}
    which finishes the proof.
\end{proof}

\begin{remark}
    A direct generalization of the argument above shows that, if $\Phi \in C^k(\R^d)$ for $k\geq 2$, $\partial \cW$ is a $C^k$ manifold. By Remark \ref{remark:RegularityGamma_Legendre}, for $k\geq 2$, $\Phi \in C^k(\R^d)$ if $\gamma \in C^k(\R^d)$, so $\partial \cW$ inherits the regularity of $\gamma$.
\end{remark}

\begin{proposition}\label{prop:PropertiesShape:regularitySupport}
    Consider the $1$-homogeneous extension $\cs:\R^{d}\setminus\{0\}\to \R$ of $\cs$, then $\cs$ belongs to $C^2(\R^d\setminus\{0\})$.
\end{proposition}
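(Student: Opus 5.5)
The plan is to obtain an explicit formula for the $1$-homogeneous extension of $\cs$ in terms of $\gamma$ and $\lambda_e$, and then read off its regularity from the implicit function theorem applied to a suitable $C^1$ map. By Remark \ref{remark:SupportFunction}, $\cs$ is the support function of $\cW$. For $\zeta\neq 0$, write $\zeta = |\zeta| e$. By \eqref{eq:prelimCriticalSpeed} and \eqref{eq:EigenvalueProblem:DirectionalDerivative},
\begin{equation*}
    \cs(\zeta) = |\zeta|\,\nabla\gamma(\lambda_e e)\cdot e = \nabla\gamma(\Lambda(\zeta))\cdot \zeta,
\end{equation*}
where $\Lambda(\zeta):=\lambda_{\zeta/|\zeta|}\,\zeta/|\zeta|\in \Lambda$ is the radial projection of $\zeta$ onto the hypersurface $\Lambda=\{F=0\}$ from Lemma \ref{lemma:DifferentiabilityLambda_e}. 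Equivalently, $\Lambda(\zeta)=s(\zeta)\zeta$, where $s(\zeta)>0$ is the unique solution of $F(s\zeta)=0$ and $F(\xi)=\nabla\gamma(\xi)\cdot\xi-\gamma(\xi)$.

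\textbf{Step 1.} First I show that $s$ is $C^1$ on $\R^d\setminus\{0\}$. Set $G(s,\zeta):=F(s\zeta)$. Since $\gamma\in C^2$, we have $F\in C^1$, and the computation in the proof of Lemma \ref{lemma:DifferentiabilityLambda_e} gives
\begin{equation*}
    \partial_s G = s\,\zeta^T\nabla^2\gamma(s\zeta)\zeta >0
\end{equation*}
by Proposition \ref{Prop:PropertiesGamma}(iv) and positive definiteness. The implicit function theorem then yields $s\in C^1$. This alone only gives $\cs\in C^1$, because $\nabla\gamma\circ\Lambda$ is $C^1$. Gaining the extra derivative is the main obstacle.

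\textbf{Step 2.} To gain that derivative I use an envelope-type identity. Since $\cs(\zeta)=\min_{\lambda>0}\gamma(\lambda\zeta)/\lambda$ for $\zeta\ne0$ (the minimum over $\lambda$ rescales homogeneously), set $H(\lambda,\zeta)=\gamma(\lambda\zeta)/\lambda$, which is $C^2$ on $(0,\infty)\times\R^d$. The minimizer is $\lambda=s(\zeta)$, characterized by $\partial_\lambda H(s(\zeta),\zeta)=0$. Moreover
\begin{equation*}
    \partial^2_\lambda H(s,\zeta)=s^{-1}\zeta^T\nabla^2\gamma(s\zeta)\zeta>0
\end{equation*}
at a critical point (differentiate $\lambda^{-2}(\lambda\nabla\gamma\cdot\zeta-\gamma)$, whose numerator vanishes there). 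The envelope theorem gives
\begin{equation*}
    \nabla\cs(\zeta)=\nabla_\zeta H(s(\zeta),\zeta)=\nabla\gamma(s(\zeta)\zeta).
\end{equation*}
This is a composition of the $C^1$ map $\nabla\gamma$ with the $C^1$ map $\zeta\mapsto s(\zeta)\zeta$. Hence $\nabla\cs\in C^1$, i.e.\ $\cs\in C^2(\R^d\setminus\{0\})$. The step needing care is justifying the envelope formula rigorously. I would differentiate $\cs(\zeta)=H(s(\zeta),\zeta)$ by the chain rule using $s\in C^1$ from Step 1, and then observe that the term $\partial_\lambda H\,\nabla s$ vanishes at the critical point.

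As a consistency check, $\nabla\cs(\zeta)=\nabla\gamma(\Lambda(\zeta))$ lies on $\partial\cW=\{\Phi=0\}$: indeed, $\Phi(\nabla\gamma(\xi))=\xi\cdot\nabla\gamma(\xi)-\gamma(\xi)=F(\xi)=0$ for $\xi\in\Lambda$. This matches the standard fact that the gradient of a support function is the boundary point with that outer normal. Alternatively, one could cite that the support function of a convex body with $C^2$ boundary and positive Gaussian curvature (Proposition \ref{prop:RegularityW}) is $C^2$ away from the origin. The direct computation above avoids that citation. Finally, Remark \ref{remark:SupportFunction} transfers the regularity to $\css$, since $\css(\zeta)=\cs(\zeta)-\vb\cdot\zeta$ differs from $\cs$ by a linear function.
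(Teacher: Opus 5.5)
Your proof is correct, and it takes a genuinely different route from the paper. The paper argues geometrically. By Remark \ref{remark:SupportFunction}, the extension of $\cs$ is the support function of $\cW$. Proposition \ref{prop:RegularityW} gives that $\partial\cW$ is $C^2$ with Gaussian curvature bounded away from zero. A standard fact from convex geometry then finishes: the support function of a body with $C^2$ boundary and positive curvature is $C^2$ off the origin.

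You instead work directly with $\cs(\zeta)=\min_{\lambda>0}\gamma(\lambda\zeta)/\lambda$. You apply the implicit function theorem to $F(s\zeta)=0$, using $\tfrac{d}{ds}F(s\zeta)=s\,\zeta^T\nabla^2\gamma(s\zeta)\zeta>0$; positive definiteness follows from the invertibility in Proposition \ref{Prop:PropertiesGamma}(iv) together with convexity. This gives $s\in C^1(\R^d\setminus\{0\})$. Then, with $H(\lambda,\zeta)=\gamma(\lambda\zeta)/\lambda$, the chain rule and $\partial_\lambda H(s(\zeta),\zeta)=F(s(\zeta)\zeta)/s(\zeta)^2=0$ give $\nabla\cs(\zeta)=\nabla\gamma(s(\zeta)\zeta)$, a composition of $C^1$ maps.

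What your route buys:
\begin{itemize}
\item It needs only $\gamma\in C^2$ with nondegenerate Hessian. The support-function identification you mention at the start is never actually used. So your proof does not depend on the half-space estimates, on Proposition \ref{prop:shape=LargeDev} (and hence on the transition-kernel asymptotics of Hebbar, Koralov and Nolen), or on the external convex-geometry citation.
\item It gives the explicit formula $\nabla\cs(\zeta)=\nabla\gamma(\lambda_e e)$ with $e=\zeta/|\zeta|$.
\item It bootstraps immediately: $\gamma\in C^k$ yields $\cs\in C^k$. Proposition \ref{prop:Eigenvalue:Analytic}, combined with the real-analytic implicit function theorem, yields that $\cs$ is real-analytic on $\R^d\setminus\{0\}$.
\end{itemize}

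The paper's route is shorter given what it has already established, and it ties the regularity of $\cs$ to the curvature of $\partial\cW$. The cost is that it inherits the dependence on all of that machinery.
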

\begin{proof}
    This result follows from known properties of the support function. By Remark \ref{remark:SupportFunction}, we know that the $1$-homogeneous extension of $\cs$ is the support function of the convex body $\cW$. It is known (see \cite[Section 2.5]{MR3155183}) that if $\cs$ is the support function of $\cW$, a convex body with $\partial\cW$ a $C^2$ manifold of dimension $d-1$ with positive Gaussian curvature, then $\cs$ is of class $C^2$ on $\R^d\setminus \{0\}$. By Proposition \ref{prop:RegularityW}, we conclude $\cs \in C^2(\R^d\setminus \{0\})$.
\end{proof}

\appendix

\section{Higher order regularity of the principal eigenvalue}\label{append:PrincipalEigenvalue}

We sketch the proof of the following result.

\begin{proposition}\label{remark:Cinfinity:Gamma}
    The principal eigenvalue $\gamma:\R^d \to \R$ defined in \eqref{eq:Intro:EigenvalueProblem} belongs to $C^{\infty}(\R^d)$.
\end{proposition}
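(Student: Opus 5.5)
The plan is to realize $\gamma(\zeta)$ as an isolated simple eigenvalue of an operator that depends polynomially on $\zeta$, and then invoke analytic perturbation theory. By \eqref{eq:Intro:GeneratorExponential}, for $\zeta\in\R^d$ we have
\[
\mathcal{L}^\zeta+K^\zeta=\mathcal{L}+R+\sum_{i}\zeta_i B_i+\tfrac12\zeta^TA\zeta,\qquad B_i\psi:=(A\nabla\psi)_i+b_i\psi .
\]
This is a polynomial of degree two in $\zeta$. We regard it as an unbounded operator on $X:=C^{0,\alpha}_p(\R^d)$, or alternatively on $L^2(\TT^d)$, with fixed domain $D:=C^{2,\alpha}_p(\R^d)$. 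We then complexify $\zeta\in\mathbb{C}^d$, so that $T(\zeta):=\mathcal{L}+R+\sum_i\zeta_iB_i+\tfrac12\zeta^TA\zeta$ becomes a holomorphic family of type (A) in each variable: the domain is constant and $\zeta\mapsto T(\zeta)\psi$ is polynomial for each $\psi\in D$.

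\textbf{Step 1: isolation and simplicity.} Fix a real $\zeta_0$. By Proposition \ref{prop:PrincipalEigenEx}, $\gamma(\zeta_0)$ is an eigenvalue with a positive eigenfunction that is unique up to scaling. The Schauder estimates of \cite[Theorem 4.11.1]{MR1326606}, as used in Proposition \ref{prop:Ergodic:Fredholm}, give that the resolvent $(\mu-T(\zeta_0))^{-1}$ is compact on $X$ for $\mu$ large. Hence the spectrum of $T(\zeta_0)$ is discrete and consists of eigenvalues of finite algebraic multiplicity. Algebraic simplicity of $\gamma(\zeta_0)$ comes from the Krein--Rutman theory in \cite[Theorem 4.11.1]{MR1326606}. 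Concretely, if $(T(\zeta_0)-\gamma)u=\psi$ had a solution, we would pair with the positive adjoint eigenfunction $\tilde\psi$ to get $\int\psi\tilde\psi=0$, which contradicts the positivity of both functions. So $\gamma(\zeta_0)$ is an isolated eigenvalue of algebraic multiplicity one.

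\textbf{Step 2: perturbation.} By Kato's theory \cite[Section VII.2]{MR1335452}, applied one complex variable at a time, there is a neighborhood $U\subset\mathbb{C}^d$ of $\zeta_0$ and a function $\Gamma:U\to\mathbb{C}$ with the following properties. $\Gamma$ is holomorphic in each variable separately, $\Gamma(\zeta_0)=\gamma(\zeta_0)$, and $\Gamma(\zeta)$ is the unique eigenvalue of $T(\zeta)$ inside a small circle around $\gamma(\zeta_0)$. It is given by the Riesz projection $P(\zeta)=\frac{1}{2\pi i}\oint(\mu-T(\zeta))^{-1}d\mu$ as $\Gamma(\zeta)=\mathrm{tr}(T(\zeta)P(\zeta))$. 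The projection $P(\zeta)$ is jointly continuous in $\zeta$ because $\|T(\zeta)-T(\zeta_0)\|_{D\to X}\to0$. It follows that $\Gamma$ is separately holomorphic and locally bounded, and Hartogs' theorem, or more elementarily Osgood's lemma, gives joint analyticity. The device of Siciak \cite{MR279263} can serve the same purpose if one prefers to prove analyticity only along real lines and then pass to joint analyticity.

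\textbf{Step 3: identification.} It remains to check that $\Gamma(\zeta)=\gamma(\zeta)$ for real $\zeta$ near $\zeta_0$. For real $\zeta$ the projection $P(\zeta)$ is close to $P(\zeta_0)$, so its range is spanned by a function close in $C^{2,\alpha}$ to the positive function $\psi(\cdot;\zeta_0)$, and that function is therefore still positive. It is a real eigenfunction with eigenvalue $\Gamma(\zeta)$, since complex conjugation preserves the family and the eigenvalue inside the circle is unique, hence real. Positive eigenfunctions correspond only to the principal eigenvalue, by \cite[Theorem 4.11.1]{MR1326606}. Therefore $\Gamma(\zeta)=\gamma(\zeta)$, and $\gamma$ is real-analytic near every $\zeta_0$; in particular $\gamma\in C^\infty(\R^d)$.

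The main obstacle I expect is Step 1, namely establishing algebraic (not just geometric) simplicity together with discreteness of the spectrum for this non-self-adjoint operator in the Hölder setting. I would handle it through the adjoint eigenfunction pairing argument, using the existence of a positive $\tilde\psi$ already invoked in Lemma \ref{lemma:Interpolation:ProbaEffectiveDrift}.
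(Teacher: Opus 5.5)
Your argument is correct, but it follows a different route from the paper's.

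\textbf{The paper's route.} The paper proves $\gamma\in C^\infty(\R^d)$ by bootstrapping the real-variable argument of \cite[Theorem 8.2.10]{MR1326606}. First, $\partial_{\zeta_i}\gamma$ is written as an integral over $\TT^d$ of a polynomial in $\psi,\nabla_x\psi,\tilde\psi$. Next, difference quotients of $\psi(\cdot;\zeta)$ and $\tilde\psi(\cdot;\zeta)$ in $\zeta$ are shown, via Schauder bounds and compactness, to converge to solutions of the linearized equation $(\mathcal{L}^\zeta+K^\zeta-\gamma(\zeta))\partial_{\zeta_j}\psi=Q_j^1$. One then differentiates the formula again and inducts.

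\textbf{Your route and what it buys.} You apply Kato's perturbation theory directly to the polynomial family $\zeta\mapsto T(\zeta)$, $\zeta\in\mathbb{C}^d$, and get real-analyticity in one step. This gives more than the statement asks for.
\begin{itemize}
\item It makes the paper's two-stage structure unnecessary. There, $C^\infty$ is proved first precisely so that Siciak's theorem \cite{MR279263} can upgrade analyticity along lines to joint analyticity in Proposition \ref{prop:Eigenvalue:Analytic}. For you, joint holomorphy follows from Hartogs/Osgood, or directly from the Neumann series for the resolvent in $\zeta-\zeta_0\in\mathbb{C}^d$.
\item It makes explicit two inputs that the paper's Kato argument leaves implicit. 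The first is algebraic simplicity: isolation alone, which is all the appendix verifies, in general only gives algebroid branches. The second is the identification of the perturbed eigenvalue with $\gamma(\zeta)$ for real $\zeta$ via positivity.
\end{itemize}

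\textbf{A small point in your Step 1.} Your pairing argument with $\tilde\psi$ excludes Jordan chains. For algebraic simplicity you also need the whole complex kernel of $T(\zeta_0)-\gamma(\zeta_0)$ to be one-dimensional, not just uniqueness of the positive eigenfunction. This follows from Proposition \ref{prop:PrincipalEigenEx} as stated in the paper (every nontrivial solution is a multiple of $\psi$), together with the reality of the coefficients.

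\textbf{What the paper's route buys.} It stays within real-variable elliptic PDE. It also produces explicit integral formulas for the derivatives of $\gamma$ in terms of $\psi,\tilde\psi$ and their $\zeta$-derivatives, the kind of formula used for $\nabla^2\gamma$ in Lemma \ref{lemma:LegendreTransformGamma}. Its cost is that each inductive step needs the solvability condition $\int_{\TT^d}Q\,\tilde\psi\,dx=0$, because the linearized operator has kernel spanned by $\psi$. The sketch leaves this condition implicit.
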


\begin{proof}
     The argument in \cite[Theorem 8.2.10]{MR1326606} can be employed inductively to show that $\gamma \in C^{\infty}(\R^d)$. We now sketch the argument given in \cite[Theorem 8.2.10]{MR1326606} to prove $\gamma \in C^{2}(\R^d)$, and we describe the inductive step to show that in fact $\gamma \in C^\infty(\R^d)$. 

     In \cite[Theorem 8.2.10(ii)]{MR1326606}, it is shown that 
     \begin{equation}\label{Append:eq:DerivativeGamma}
         \frac{\partial }{\partial e_i} \gamma(\zeta) = \int_{\TT^d} P_i^1(\psi(x;\zeta),\nabla_x \psi(x;\zeta), \Tilde{\psi}(x;\zeta), \zeta,x) dx,
     \end{equation}
     where for fixed $x\in \R^d$, $P_i^1$ is a multivariate polynomial on its remaining variables. The fact that $P_i^1$ has this polynomial form transforms the problem into justifying a finite difference approximation of the partial derivatives of $\psi$ with respect to $\zeta$. More precisely, we observe from \eqref{Append:eq:DerivativeGamma} that $\gamma \in C^2(\R^d)$ provided one can justify the following limits in the $C^1$-norm with respect to $x$,
     \begin{equation}\label{appen:eq:FiniteDifferencePrincipal}
         \lim_{\epsilon \to 0} \frac{\psi(\cdot;\zeta+\epsilon e_j)-\psi(\cdot;\zeta)}{\epsilon} = \partial_{e_j}\psi(\cdot;\zeta) \ \ \mathrm{and } \ \ \lim_{\epsilon \to 0} \frac{\Tilde{\psi}(\cdot;\zeta+\epsilon e_j)-\Tilde{\psi}(\cdot;\zeta)}{\epsilon} = \partial_{e_j}\Tilde{\psi}(\cdot;\zeta).
     \end{equation}
     Let us sketch how the first limit above is justified, the second limit follows analogously. By a finite difference approximation of the equation of the principal eigenvalue \eqref{eq:Intro:EigenvalueProblem} with respect to the $j$-th entry of $\zeta$, we observe that $\partial_{e_j}\psi(\cdot;\zeta)$ must satisfy a second-order elliptic equation having the form
     \begin{equation}
         (\mathcal{L}^\zeta + K^\zeta - \gamma(\zeta))\partial_{e_j}\psi(x;\zeta) = Q_j^1(\psi(x;\zeta),\nabla_x \psi(x;\zeta), \gamma(\zeta),\nabla \gamma(\zeta), \zeta,x),
     \end{equation}
    where for fixed $x\in \R^d$, $Q_i^1$ is a multivariate polynomial on its remaining variables. Notice that $Q_j^1$ only depends on $\psi$, $\nabla_x \psi$, $\gamma$, $\nabla \gamma$, which are all well-defined up to this point. The limit in \eqref{appen:eq:FiniteDifferencePrincipal} is then justified by a classical compactness argument, employing global Schauder estimates of $\psi$, and the well-posedness theory of the elliptic equation above. This is the argument given in \cite[Theorem 8.2.10]{MR1326606} to prove that $\gamma \in C^2(\R^d)$.
     
    With the limits \eqref{appen:eq:FiniteDifferencePrincipal} justified, we can then differentiate the right-hand side of \eqref{Append:eq:DerivativeGamma} with respect to the $j$-th entry of $\zeta$ to obtain
    \begin{equation*}
        \frac{\partial^2}{\partial e_i \partial e_j} \gamma(\zeta) = \int_{\TT^d}P_{ij}^2(\psi(x;\zeta),\nabla_x \psi(x;\zeta), \Tilde{\psi}(x;\zeta), \nabla_{\zeta}\psi(x;\zeta), \nabla_x \nabla_{\zeta}\psi(x;\zeta), \nabla_{\zeta}\Tilde{\psi}(x;\zeta), \zeta,x) dx,
     \end{equation*}
     where for fixed $x\in \R^d$, $P_{ij}^2$ is a multivariate polynomial on its remaining variables. We then repeat the argument sketched above inductively to obtain $\gamma \in C^3(\R^d)$, and in general, to prove that $\gamma \in C^k(\R^d)$ for any $k\in \N$.     
\end{proof}

\begin{proof}[Proof of Proposition \ref{prop:Eigenvalue:Analytic}]
    For $\zeta_1, \zeta_2\in \R^d$ with $\zeta_2\neq 0$, let us introduce $\gamma^{\zeta_1,\zeta_2}:\R\to \R$ defined, for every $\lambda\in \R$, by $\gamma^{\zeta_1,\zeta_2}(\lambda)= \gamma(\zeta_1+\lambda \zeta_2)$. The proof employs \cite[Theorem 1]{MR279263}, which states that if $\gamma \in C^{\infty}(\R^d)$, and, if for every $\zeta_1, \zeta_2\in \R^d$ the function $\gamma^{\zeta_1,\zeta_2}$ is real-analytic near zero, then $\gamma$ is analytic. 
    
    We already know that $\gamma \in C^\infty(\R^d)$ by Proposition \ref{remark:Cinfinity:Gamma}, so we focus on proving that for every $\zeta_1, \zeta_2\in \R^d$, the function $\gamma^{\zeta_1,\zeta_2}$ is locally real-analytic. To do so, we employ \cite[Remark VII.2.9]{MR1335452}, which provides a criterion to prove that the principal eigenvalue of a family of operators parametrized by $z\in \mathbb{C}$ is holomorphic, we proceed to introduce the criterion properly adjusted to our setting.
    
    For $k\in \N\cup\{0\}$, let us denote the complexification of $C^{k,\alpha}_p(\R^d)$ by $C^{k,\alpha}_p(\R^d;\mathbb{C})$, endowed with the Bochner norm $\|\varphi_1 + i\varphi_2\|_{C^{k,\alpha}_p(\R^d;\mathbb{C})} = \sup_{\theta \in [0,2\pi]} \|\varphi_1\cos\theta - \varphi_2\sin\theta\|_{k,\alpha}$. Let $D\subset \mathbb{C}$ be a domain of the complex plane with $0\in D$, and consider a family of linear operators $(R(\Tilde{\lambda}))_{\Tilde{\lambda}\in D}$ such that $R(\Tilde{\lambda}):C^{2,\alpha}(\R^d;\mathbb{C})\to C^{0,\alpha}(\R^d;\mathbb{C})$. We say that the family $(R(\Tilde{\lambda}))_{\Tilde{\lambda}\in D}$ is holomorphic of type A if for every $\varphi\in C^{2,\alpha}(\R^d;\mathbb{C})$, we can find unique linear operators $R^{(1)},R^{(2)},R^{(3)},...:C^{2,\alpha}(\R^d;\mathbb{C})\to C^{0,\alpha}(\R^d;\mathbb{C})$ such that, for every $\varphi\in C^{2,\alpha}(\R^d;\mathbb{C})$, the series
    \begin{equation}
        R(\Tilde{\lambda}) \varphi = R(0)\varphi + z R^{(1)}\varphi + z^2 R^{(2)}\varphi + ...,
    \end{equation}
    converges in a disk $|z|<r$ with radius $r$ independent of $\varphi$. We will apply the criterion in \cite[Remark VII.2.9]{MR1335452} to the principal eigenvalue of a family of operators $(R(\Tilde{\lambda}))_{\Tilde{\lambda}\in D}$, applied to our setting, this criterion reads as follows: Suppose that the family $(R(\Tilde{\lambda}))_{\Tilde{\lambda}\in D}$ is holomorphic of type A, and that the principal eigenvalue of $R(0)$ is isolated, then the principal eigenvalue is holomorphic near zero in the complex plane. 
    
    We do not apply directly this criterion to $\gamma$ because the family of eigenvalue problems \eqref{eq:Intro:EigenvalueProblem} is parametrized by $\zeta\in \R^d$ instead of by $z\in \mathbb{C}$. To overcome this, we fix $\zeta_1,\zeta_2\in \R^d$ with $\zeta_2\neq 0$. For $\lambda\in \R$, we consider the operator $T^{\zeta_1,\zeta_2}(\lambda):C^{2,\alpha}_p(\R^d)\to C^{0,\alpha}_p(\R^d)$, defined for $\varphi \in C^{2,\alpha}_p(\R^d)$ by
    \begin{equation*}
        T^{\zeta_1,\zeta_2}(\lambda) \varphi = \mathcal{L}^{\zeta_1+\lambda\zeta_2}\varphi + K^{\zeta_1+\lambda\zeta_2}\varphi,
    \end{equation*}
    where $\mathcal{L}^\zeta$ and $K^\zeta$ are the operators defined in \eqref{eq:Intro:GeneratorExponential}. Clearly, for every $\lambda\in \R$, $\gamma^{\zeta_1,\zeta_2}(\lambda)$ is the principal eigenvalue of the operator $T^{\zeta_1,\zeta_2}(\lambda)$. We extend the family of operators $(T^{\zeta_1,\zeta_2}(\lambda))_{\lambda \in \R}$ parametrized by $\R$, to a family $(T^{\zeta_1,\zeta_2}(\Tilde{\lambda}))_{{\Tilde{\lambda}} \in \mathbb{C}}$ by setting
    \begin{equation*}
        T^{\zeta_1,\zeta_2}(\Tilde{\lambda})\varphi = T^{\zeta_1,\zeta_2}(\mathrm{Re}(\Tilde{\lambda}))\varphi + i T^{\zeta_1,\zeta_2}(\mathrm{Im}(\Tilde{\lambda}))\varphi.
    \end{equation*}
    This is the family of operators to which we will apply the criterion. 
    
    Let us show how to conclude assuming that the hypotheses of the criterion \cite[Remark VII.2.9]{MR1335452} are satisfied. After applying the criterion, the function $\gamma^{\zeta_1,\zeta_2}:\mathbb{C}\to \mathbb{C}$ which sends $\Tilde{\lambda}\in \mathbb{C}$ to the principal eigenvalue of $T^{\zeta_1,\zeta_2}(\Tilde{\lambda})$ is analytic in a neighborhood of $0\in \mathbb{C}$. Hence, we can find $a_0^{\zeta_1,\zeta_2}, a_1^{\zeta_1,\zeta_2}, a_2^{\zeta_1,\zeta_2},... \in \mathbb{C}$ such that, in an open ball around $0\in \mathbb{C}$,
    \begin{equation*}
        \gamma^{\zeta_1,\zeta_2}(\Tilde{\lambda}) = \sum_{n=0}^\infty a_n^{\zeta_1,\zeta_2} \Tilde{\lambda}^n.
    \end{equation*}
    Recall that for $\lambda\in \R$, $\gamma^{\zeta_1,\zeta_2}(\lambda)\in \R$, from which we conclude that the coefficients $a_0^{\zeta_1,\zeta_2}, a_1^{\zeta_1,\zeta_2},...$ belong to $\R$. We deduce that the restriction of $\gamma^{\zeta_1,\zeta_2}$ to $\R$ is real-analytic near $0 \in \R$. Hence, up to verifying the hypotheses of the criterion stated at the start of the proof, $\gamma:\R^d \to \R$ is analytic.

    Let us verify that the hypotheses of \cite[Remark VII.2.9]{MR1335452} are satisfied. First, we show that the family $(T^{\zeta_1,\zeta_2}(\Tilde{\lambda}))_{{\Tilde{\lambda}} \in \mathbb{C}}$ is holomorphic of type A. To do so we apply \cite[Theorem VII.2.2.6]{MR1335452}, for which we group the terms of \eqref{eq:Intro:GeneratorExponential}, to express $T^{\zeta_1,\zeta_2}$ as a series on $\Tilde{\lambda}$ by letting
    \begin{equation*}
        T^{\zeta_1,\zeta_2}(\Tilde{\lambda}) \varphi = T_0^{\zeta_1,\zeta_2} \varphi + \Tilde{\lambda} T_1^{\zeta_1,\zeta_2} \varphi + \Tilde{\lambda}^2 T_2^{\zeta_1,\zeta_2} \varphi,
    \end{equation*}
    where
    \begin{equation*}
        \begin{split}
            T_0^{\zeta_1,\zeta_2} \varphi &= \frac{1}{2} \sum_{i=1}^d \sum_{j=1}^d a_{ij} \partial_{ij} \varphi + \zeta^T_1 A \nabla \varphi + b\cdot \nabla \varphi + \frac{1}{2}\zeta^T_1 A \zeta_1 \varphi + \zeta_1\cdot b\varphi + R\varphi,\\
            T_1^{\zeta_1,\zeta_2} \varphi &= \zeta^T_2 A \nabla \varphi + \zeta^T_2 A \zeta_1 \varphi + \zeta_2\cdot b\varphi, \ \ \mathrm{and} \ \ T_2^{\zeta_1,\zeta_2} \varphi = \frac{1}{2}\zeta^T_2 A \zeta_2 \varphi.
        \end{split}
    \end{equation*}
    Since the series above is finite, it is sufficient to prove that $T_0^{\zeta_1,\zeta_2}, T_1^{\zeta_1,\zeta_2}$ and $T_2^{\zeta_1,\zeta_2}$ are bounded operators from $C^{2,\alpha}(\R^d;\mathbb{C})$ to $C^{0,\alpha}(\R^d;\mathbb{C})$.
    
    Clearly, there exists $C_1=C_1(\zeta_1,\zeta_2)>0$ such that $\|T_2^{\zeta_1,\zeta_2} \varphi \|_{0,\alpha} \leq C_1\|\varphi\|_{2,\alpha}$ and such that $\|\zeta^T_2 A \zeta_1 \varphi + \zeta_2\cdot b\varphi \|_{0,\alpha} \leq C_1\|\varphi\|_{2,\alpha}$. On the other hand, 
    \begin{equation*}
        \| \zeta^T_2 A \nabla \varphi \|_{0,\alpha} \leq C_2 \|\nabla \varphi\|_\infty + C_3 \sup_{y\neq x, i\in [d]}\frac{|\partial_i \varphi(x)-\partial_i \varphi(y)|}{|x-y|^\alpha} \leq C_2 \|\nabla \varphi\|_\infty + C_4 \|\nabla^2 \varphi\|_\infty\leq C_5  \|  \varphi\|_{2,\alpha}.
    \end{equation*}
    Which implies that we can find $C_1=C_1(\zeta_1,\zeta_2)$ such that
    \begin{equation*}
        \|T_0^{\zeta_1,\zeta_2} \varphi \|_{0,\alpha} \leq C_1\|\varphi\|_{2,\alpha} \ \mathrm{and} \ \|T_1^{\zeta_1,\zeta_2} \varphi \|_{0,\alpha} \leq C_1\|\varphi\|_{2,\alpha}.
    \end{equation*}
    These bounds are sufficient to argue that $T_0^{\zeta_1,\zeta_2}, T_1^{\zeta_1,\zeta_2}$ and $T_2^{\zeta_1,\zeta_2}$ are bounded operators from $C^{2,\alpha}(\R^d;\mathbb{C})$ to $C^{0,\alpha}(\R^d;\mathbb{C})$. By \cite[Theorem VII.2.2.6]{MR1335452}, it follows that $(T^{\zeta_1,\zeta_2}(\Tilde{\lambda}))_{\Tilde{\lambda}\in \mathbb{C}}$ is holomorphic of type A.

    Finally, we verify that the principal eigenvalue $\gamma(\zeta_1)$ associated to $T^{\zeta_1,\zeta_2}(0)=\mathcal{L}^{\zeta_1} + K^{\zeta_1}$ is isolated. This follows by a classical argument. We express the spectrum of $T^{\zeta_1,\zeta_2}(0)$ in terms of the resolvent of an associated linear operator. Let $\theta_1 = \sup_{x\in \R^d}K^{\zeta_1}(x)$. The operator $(T^{\zeta_1,\zeta_2}(0) - \theta_1)^{-1}:C^{0,\alpha}_p(\R^d) \to C^{2,\alpha}_p(\R^d)$ is well defined and is compact. Applying the argument in \cite[Proposition 3.5.4]{MR1326606}, it follows that $\Tilde{\lambda}$ belongs to the spectrum of $T^{\zeta_1,\zeta_2}(0)$ if and only if $-1/(\theta_1-\Tilde{\lambda})$ belongs to the spectrum of $(T^{\zeta_1,\zeta_2}(0) - \theta_1)^{-1}$. Finally, since $(T^{\zeta_1,\zeta_2}(0) - \theta_1)^{-1}$ is compact, we can apply the Riesz-Schauder theorem (see \cite[Theorem 3.1.1]{MR1326606}), which states that the spectrum of $(T^{\zeta_1,\zeta_2}(0) - \theta_1)^{-1}$ is a compact subset of $\mathbb{C}$ containing $0$ as it unique accumulation point. It follows that $\gamma(\zeta_1)>0$ is isolated, and \cite[Remark VII.2.9]{MR1335452} implies that $\Tilde{\lambda} \mapsto \gamma^{\zeta_1,\zeta_2}(\Tilde{\lambda})$ is complex-analytic. This verifies that the hypotheses of the criterion are satisfied, and finishes the proof.
\end{proof}

\footnotesize
\bibliographystyle{plain}
\bibliography{refs}

\end{document}